\theoremstyle{plain}
\newtheorem{Lemma}{Lemma}
\newtheorem{Thm}[Lemma]{Theorem}
\newtheorem{Prop}[Lemma]{Proposition}
\newtheorem{Cor}[Lemma]{Corollary}
\theoremstyle{definition}
\newtheorem{Defn}[Lemma]{Definition}
\newtheorem{Not}[Lemma]{Notation}
\newtheorem{Example}[Lemma]{Example}
\newtheorem{Constr}[Lemma]{Construction}
\theoremstyle{remark}
\newtheorem{Remark}[Lemma]{Remark}
\numberwithin{equation}{section}
\numberwithin{enumi}{section}
\newcommand{\BBB}{\mathscr{B}}
\newcommand{\CCC}{\mathcal{C}}
\newcommand{\DDD}{\mathscr{D}}
\newcommand{\EEE}{\mathscr{E}}
\newcommand{\FFF}{\mathcal{F}}
\newcommand{\GGG}{\mathscr{G}}
\newcommand{\JJJ}{\mathcal{J}}
\newcommand{\III}{\mathcal{I}}
\newcommand{\MMM}{\mathcal{M}}
\newcommand{\NNN}{\mathcal{N}}
\newcommand{\OOO}{\mathcal{O}}
\newcommand{\PPP}{\mathscr{P}}
\newcommand{\SSS}{\mathscr{S}}
\newcommand{\TTT}{\mathscr{T}}
\newcommand{\WWW}{\mathscr{W}}
\newcommand{\Fa}{\mathfrak{a}}
\newcommand{\Fb}{\mathfrak{b}}
\newcommand{\Fc}{\mathfrak{c}}
\newcommand{\Fm}{\mathfrak{m}}
\newcommand{\Fp}{\mathfrak{p}}
\newcommand{\FS}{\mathfrak{S}}
\newcommand{\CC}{{\mathbb{C}}}
\newcommand{\DD}{{\mathbb{D}}}
\newcommand{\FF}{{\mathbb{F}}}
\newcommand{\GG}{{\mathbb{G}}}
\newcommand{\II}{{\mathbb{I}}}
\newcommand{\JJ}{{\mathbb{J}}}
\newcommand{\KK}{{\mathbb{K}}}
\newcommand{\NN}{{\mathbb{N}}}
\newcommand{\QQ}{{\mathbb{Q}}}
\newcommand{\WW}{{\mathbb{W}}}
\newcommand{\ZZ}{{\mathbb{Z}}}
\newcommand{\ff}{{\mathbbm{f}}}
\newcommand{\vv}{{\mathbbm{v}}}
\newcommand{\uu}{{\mathbbm{u}}}
\DeclareMathOperator{\Art}{Art}
\DeclareMathOperator{\Bil}{Bil}
\DeclareMathOperator{\BT}{BT}
\DeclareMathOperator{\Cris}{Cris}
\DeclareMathOperator{\Def}{Def}
\DeclareMathOperator{\End}{End}
\DeclareMathOperator{\Hom}{Hom}
\DeclareMathOperator{\Ker}{Ker}
\DeclareMathOperator{\Lie}{Lie}
\DeclareMathOperator{\Log}{Log}
\DeclareMathOperator{\Nil}{Nil}
\DeclareMathOperator{\uExt}{\underline{Ext}}
\DeclareMathOperator{\Rad}{Rad}
\DeclareMathOperator{\Spec}{Spec}
\DeclareMathOperator{\Tor}{Tor}
\DeclareMathOperator{\Sym}{Sym}
\DeclareMathOperator{\V}{V}
\DeclareMathOperator{\adm}{adm}
\DeclareMathOperator{\divv}{div}
\DeclareMathOperator{\fin}{fin}
\DeclareMathOperator{\red}{red}
\newcommand{\ArtC}{({\Art})}
\newcommand{\pfinC}{(p\text{-}{\fin})}
\newcommand{\pdivC}{(p\text{-}{\divv})}
\newcommand{\Kideal}{U}
\newcommand{\Stumm}[1]{}
\begin{document}

\title[Dieudonn\'e displays and crystalline Dieudonn\'e theory]
{Relations between Dieudonn\'e displays
and crystalline Dieudonn\'e theory}
\author{Eike Lau}
\address{Institut f\"{u}r Mathematik,
Universit\"{a}t Paderborn, D-33098 Paderborn}
\email{elau@math.upb.de}

\begin{abstract}
We discuss the relation between crystalline Dieu\-donn\'e
theory and Dieudonn\'e displays of $p$-divisible groups. 
The theory of Dieu\-donn\'e displays
is extended to the prime 2 without restriction, which
implies that the classification of finite locally free group
schemes by Breuil-Kisin modules holds for the prime 2 as well.
\end{abstract}

\maketitle

\section*{Introduction}

\renewcommand{\theLemma}{\Alph{Lemma}}

Formal $p$-divisible groups $G$ over a $p$-adically complete ring $R$
are classified by Zink's nilpotent displays \cite{Zink-Disp, Lau-Disp}.
These are projective modules over 
the ring of Witt vectors $W(R)$ equipped with a filtration 
and with certain Frobenius-linear operators.
A central point of the theory is a description of the Dieudonn\'e crystal
of $G$ in terms of the nilpotent display associated to $G$.

Arbitrary $p$-divisible groups over $R$ can
be classified by displays only when $R$ is a perfect ring.
In certain cases, there is the following refinement.

Assume that $R$ is a local Artin ring with perfect residue
field $k$ of characteristic $p$ and with maximal ideal $\NNN_R$. 
Then $W(R)$ has a unique subring $\WW(R)$,
called here the Zink ring of $R$, which is stable under
the Frobenius and which sits in an exact sequence
$$
0\to\hat W(\NNN_R)\to\WW(R)\to W(k)\to 0
$$
where $\hat W$ means Witt vectors
with only finitely many non-zero components.
Let us call $R$ \emph{odd} if $p>2$ or if $p$ annihilates $R$.
The Verschiebung homomorphism $v$ of $W(R)$,
which appears in the definition of displays, stabilises
the subring $\WW(R)$ if and only if $R$ is odd.
In this case, Zink \cite{Zink-DDisp} defines Dieudonn\'e displays over $R$
as displays with $\WW(R)$ in place of $W(R)$ and shows that 
they classify all $p$-divisible groups over $R$.

The restriction for $p=2$ can be avoided with a small trick:
The ring $\WW(R)$ is always stable under the modified
Verschiebung $\vv(x)=v(u_0x)$, where $u_0\in W(R)$ is
the unit defined by the relation $v(u_0)=p-[p]$. This allows
to define Dieudonn\'e displays without assuming that $R$ is odd.
It turns out that the Zink ring
and Dieudonn\'e displays can be defined for the following 
class of rings $R$, which we call \emph{admissible}: 
The order of nilpotence of nilpotent
elements of $R$ is bounded, and $R_{\red}$
is a perfect ring of characteristic $p$.

\begin{Thm}
\label{Th-A}
For each admissible ring $R$ there is a functor
\[
\Phi_R:(p\text{-divisible groups over }R)\to(\text{Dieudonn\'e displays over }R),
\]
which is an equivalence of exact categories.
\end{Thm}

The equivalence easily extends to projective limits of admissible rings,
which includes complete local rings with perfect residue field.
If $R$ is perfect, the theorem says that
$p$-divisible groups over $R$ are equivalent to Dieudonn\'e
modules. This is a result of Gabber, which is used in the proof.
We repeat that for Artin rings (which is certainly the case of interest for
most applications%
\footnote{In subsequent work, Dieudonn\'e displays over a larger
class of base rings will be used to study the image of the crystalline Dieudonn\'e functor 
over l.c.i.\ schemes.}%
), Theorem \ref{Th-A}  is known when $R$ is odd;
in this case $\Phi_R$ is the inverse of the functor $BT$ of \cite{Zink-DDisp} and \cite{Lau-Dual}.
But the present construction of  the functor $\Phi_R$ based on the 
crystalline Dieudonn\'e module is new, and also gives the following
second result.

Let $\DD(G)$ denote the covariant Dieudonn\'e crystal of a $p$-divisible
group $G$.
Following \cite{Zink-Windows}, to a Dieudonn\'e display $\PPP$ over an
admissible ring $R$ one can associate a crystal in locally free
modules $\DD(\PPP)$. 

\begin{Thm}
\label{Th-B}
For a $p$-divisible group $G$ over an admissible ring $R$ with associated 
Dieudonn\'e display $\PPP=\Phi_R(G)$ there is a natural isomorphism
\[
\DD(G)\cong\DD(\PPP).
\]
\end{Thm}

This compatibility was not known before and can be useful in applications; 
see for example \cite{Viehmann-Wedhorn}.
Our proofs of Theorems \ref{Th-A} and \ref{Th-B} are closely related.
The main point is to construct the functor $\Phi_R$ and variants of it.
Let $\II_R$ be the kernel of the natural homomorphism $\WW(R)\to R$.

First, if $R$ is an odd admissible ring, 
the ideal $\II_R$ carries natural divided powers.
Thus the crystalline Dieudonn\'e module
of a $p$-divisible group over $R$ can be evaluated at $\WW(R)$,
which gives a filtered $F$-$V$-module over $\WW(R)$.
We show that this construction can be extended to a functor $\Phi_R$ 
as in Theorem \ref{Th-A}. This is not evident because
a filtered $F$-$V$-module does not in general determine 
a Dieudonn\'e display. 
But the construction of $\Phi_R$ can be reduced to the
case where $R$ is a universal deformation ring;
then the Dieudonn\'e display is determined uniquely 
because $p$ is not a zero divisor in $\WW(R)$.

Next, for a divided power extension of admissible rings $S\to R$ one can
define Dieudonn\'e displays relative to $S\to R$, called triples in the
work of Zink. They are modules over an extension $\WW(S/R)$ of $\WW(S)$.
If $R$ is odd and the divided powers are compatible with the canonical 
divided powers of $p$, then the evaluation of the crystalline Dieudonn\'e module
at the divided power extension $\WW(S/R)\to R$ can be extended to a functor
\[
\Phi_{S/R}:(\text{$p$-divisible groups over $R$})
\to(\text{Dieudonn\'e displays for $S/R$}).
\]
Again this is not evident; the proof comes down to the fact
that $p$ is not a zero divisor in the Zink ring of the divided power envelope of the
diagonal of the universal deformation space of a $p$-divisible group.
Once the functors $\Phi_{S/R}$ are known to exist, Theorems
\ref{Th-A} and \ref{Th-B} for odd admissible rings are straightforward consequences.

Let now $R$ be an admissible ring which is not odd, so $p=2$.
In this case the preceding constructions do not apply directly because the ideal $\II_R$
does not in general carry divided powers. This changes 
when $\WW(R)$ is replaced by the slightly larger
$v$-stabilised Zink ring $\WW^+(R)=\WW(R)[v(1)]$.
With an obvious definition of $v$-stabilised Dieudonn\'e displays we get a functor
\[
\Phi_R^+:(2\text{-div.\ groups over }R)\to(v\text{-stabilised Dieudonn\'e displays over }R),
\]
which is however not an equivalence.
In order to construct a functor $\Phi_R$ as in Theorem \ref{Th-A}
we have to descend from $\WW ^+(R)$ to $\WW(R)$. 
This can be reduced to the minimal case where $2\NNN_R=0$.
Then the ideal $\II_R$ carries exceptional divided powers,
which allows to evaluate the crystalline Dieudonn\'e module at $\WW(R)$.
In order to get the functor $\Phi_R$ we need some lift towards
characteristic zero, which is provided by the fact that the exceptional 
divided powers exist on $\II_R/(v([4]))$ as soon as $4\NNN_R=0$.
Once $\Phi_R$ is known to exist in general, Theorems \ref{Th-A}
and \ref{Th-B} follow again quite formally.

\subsubsection*{Breuil-Kisin modules}

Let now $R$ be a complete regular local ring with
perfect residue field $k$ of characteristic $p$. 
Theorem \ref{Th-A} implies that the classification of
$p$-divisible groups over $R$ by Breuil windows 
derived in \cite{Vasiu-Zink} and \cite{Lau-Frames} 
for odd $p$ holds for $p=2$ as well.
Let us recall what this means:
We write $R=\FS/E\FS$ where $\FS$ 
is a power series ring over $W(k)$ and where $E$ 
has constant term $p$; we also have to choose an
appropriate Frobenius lift $\sigma$ on $\FS$.
A Breuil window is a free $\FS$-module $Q$ equipped with
an $\FS$-linear map $\phi:Q\to Q^{(\sigma)}$ whose
cokernel is annihilated by $E$; this is equivalent to the
notion of a Breuil-Kisin module. As usual one also gets
a classification of finite locally free $p$-group schemes over $R$.

In the case of discrete valuation rings this completes the proof 
of a conjecture of Breuil \cite{Breuil}, which was proved by 
Kisin in \cite{Kisin-crys} if $p$ is odd, and in \cite{Kisin-2adic}
for connected $p$-divisible groups if $p=2$.x
Shortly after the first version of this article was posted, 
independent proofs of Breuil's conjecture 
by W.~Kim \cite{Kim} and T.~Liu \cite{Liu} appeared online.

Assume that $R$ has characteristic zero, and let
$S$ be the $p$-adic completion of the divided power envelope of the ideal
$E\FS\subset\FS$. As a consequence of Theorem \ref{Th-B} we show that
for a $p$-divisible group over $R$ the value of
its crystalline Dieudonn\'e module at $S$ coincides with the base change
of its Breuil window under $\sigma:\FS\to S$.

\subsubsection*{The functor $BT$}

The original proof of Theorem \ref{Th-A} for odd local Artin rings
in \cite{Zink-DDisp} depends on the construction
of a functor $BT$ from Dieudonn\'e displays to $p$-divisible groups,
which is a combination of the functor $BT$ from nilpotent displays
to formal $p$-divisible groups and a calculation of extensions.
A modified construction of this functor was given in \cite{Lau-Dual}.
Once the definition of Dieudonn\'e displays for non-odd local Artin
rings is available, all these arguments can be carried over almost literally
to give an alternative proof of Theorem \ref{Th-A} in that case.
In the present approach this construction serves only as an
explicit description of the inverse of the functor $\Phi_R$; 
this is used in \cite{Lau-Displayed}.

\medskip

All rings are commutative with a unit unless the contrary is stated.
For a $p$-divisible group $G$ we denote by $\DD(G)$
the \emph{covariant} Dieudonn\'e crystal.

\medskip

The author thanks Xavier Caruso, Tyler Lawson, and
Thomas Zink for interesting and helpful conversations,
and the anonymous referee for a very careful reading 
of the manuscript.

\setcounter{tocdepth}{1}
\tableofcontents


\numberwithin{Lemma}{section}
\section{The Zink ring}

In this section we study the Zink ring $\WW(R)$,
which is introduced in \cite{Zink-DDisp} under
the notation $\hat W(R)$, and variants of $\WW(R)$
in the presence of divided powers, following \cite{Zink-Windows}.
The definitions are stated in more generality, allowing 
arbitrary perfect rings instead of perfect fields.
The modified Verschiebung $\vv$ for $p=2$ is new.

\subsection{Preliminaries}

We fix a prime $p$.
A commutative ring without unit
$N$ is called bounded nilpotent if there is
a number $n$ such that $x^n=0$ 
for every $x\in N$. 
We will consider the following type of base rings.

\begin{Defn}
\label{Def-admissible}
A ring $R$ is called \emph{admissible} if its nilradical
$\NNN_R$ is bounded nilpotent and if $R_{\red}=R/\NNN_R$
is a perfect ring of characteristic $p$. 
\end{Defn}

Local Artin rings with perfect residue field are admissible.
The ring $\OOO_{\CC_p}/p$ is not admissible.
We will also consider projective limits of admissible rings:

\begin{Defn}
\label{Def-top-admissible}
An \emph{admissible topological ring} is a complete and separated 
topological ring $R$ with linear topology such that the
ideal $\NNN_R$ of topologically nilpotent elements is open,
the ring $R_{\red}=R/\NNN_R$ is perfect of characteristic $p$,
and for each open ideal $N$ of $R$ contained in $\NNN_R$
the quotient $\NNN_R/N$ is bounded nilpotent.
Thus $R$ is the projective limit of the admissible rings $R/N$.
\end{Defn}

Examples of admissible topological rings
are complete local rings with perfect
residue field. Admissible topological rings in which
$\NNN_R$ is not topologically nilpotent arise from
divided power envelopes; see Lemma \ref{Le-pd-alg}.

\begin{Not}
For a commutative, not necessarily unitary ring $A$ let $W(A)$ be
the ring of $p$-typical Witt vectors of $A$.
We write $f$ and $v$ for the Frobenius and 
Verschiebung of $W(A)$. Let $I_A=v(W(A))$, let 
$w_i:W(A)\to A$ be given by the $i$-th Witt polynomial,
and let $\hat W(A)$ be the group of all elements of 
$W(A)$ with nilpotent coefficients which are almost 
all zero.
\end{Not}

Let us recall two well-known facts.

\begin{Lemma}
\label{Le-lift-W}
Let $A$ be a perfect ring of characteristic $p$ 
and let $B$ be a ring with a bounded nilpotent ideal 
$J\subseteq B$.
Every ring homomorphism $A\to B/J$ lifts to a
unique ring homomorphism $W(A)\to B$.
\end{Lemma}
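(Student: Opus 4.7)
The plan is to construct a multiplicative Teichm\"uller-type lift $\tau\colon A\to B$ of $\phi$ and then to promote it to a ring homomorphism $W_n(A)\to B$ via the universal property of Witt vectors. Let $N$ satisfy $J^N=0$. Since any lift of $\phi$ forces $p\mapsto 0$ in $A$, we already have $p\in J$, so $p^N=0$ in $B$ and $B$ is a $\ZZ/p^N$-algebra; in particular $B$ is $p$-adically complete.

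For the Teichm\"uller step, given $a\in A$ and $k\geq 0$, I would choose a lift $\tilde a_k\in B$ of $\phi(a^{1/p^k})$, which makes sense because $A$ is perfect. The key elementary estimate is
\[
u\equiv v\pmod{J^m}\;\Longrightarrow\;u^p\equiv v^p\pmod{J^{m+1}},
\]
obtained from the factorisation $u^p-v^p=(u-v)\sum_i u^iv^{p-1-i}$: the sum is congruent to $pv^{p-1}$ modulo $J$, and since $p\in J$ the sum itself lies in $J$. Iterating, $\tilde a_k^{p^k}$ is determined modulo $J^{k+1}$ independently of the chosen lift; for $k\geq N-1$ this congruence becomes an equality, so $\tau(a):=\tilde a_k^{p^k}$ is unambiguous. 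The same stabilisation argument, applied to products of representatives, yields $\tau(ab)=\tau(a)\tau(b)$ and $\tau(a^p)=\tau(a)^p$.

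To produce the ring homomorphism $\psi\colon W_n(A)\to B$, I would invoke the classical universal property of $W(A)$ for perfect $A$: any ring homomorphism from $A$ to the reduction mod $p$ of a $p$-adically complete ring lifts uniquely to $W(A)$. Using the nilpotent surjection $B/pB\twoheadrightarrow B/J$, the Teichm\"uller construction already supplies a canonical lift $A\to B/pB$ of $\phi$, and hence a ring homomorphism $W(A)\to B$. Since $v^nW(A)=p^nW(A)$ for perfect $A$ and $p^n=0$ in $B$ once $n\geq N$, this descends to $\psi\colon W_n(A)=W(A)/p^nW(A)\to B$. Uniqueness holds directly: any lift must send $[a]$ to a lift of $\phi(a)$ satisfying $\psi([a])=\psi([a^{1/p^k}])^{p^k}$, which the stabilisation estimate forces to equal $\tau(a)$; and since $p[c]=v([c^p])$ in characteristic $p$, every element of $W_n(A)$ has a unique expansion $\sum_{i<n}p^i[c_i]$, on which $\psi$ is then determined.

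The main obstacle is precisely the stabilisation estimate: the bounded nilpotence of $J$ is what allows the $p^k$-th power Teichm\"uller construction to terminate at a finite stage and yield actual equalities rather than merely $p$-adic limits.
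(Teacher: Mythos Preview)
Your approach is the standard one and is precisely what the paper's cited reference (Grothendieck) does; the paper's own ``proof'' is only the remark that Grothendieck's argument, stated for nilpotent $J$, still applies when $J$ is merely bounded nilpotent. The gap in your write-up is exactly at this point: you begin with ``Let $N$ satisfy $J^N=0$'', but bounded nilpotence in this paper means $x^N=0$ for every $x\in J$, not that the ideal power $J^N$ vanishes. These are genuinely different: for instance, in $B=\FF_2[x_1,x_2,\ldots]/(x_i^2)$ with $J=(x_i)$, every element of $J$ squares to zero while $J^m\neq 0$ for all $m$. Your stabilisation estimate $u\equiv v\pmod{J^m}\Rightarrow u^p\equiv v^p\pmod{J^{m+1}}$ is correct, but it no longer forces $\tilde a_k^{p^k}$ to become constant.

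The fix is short. For fixed lifts $u,v$ of the same element, your computation actually shows $u^{p^k}-v^{p^k}\in I^{k+1}$ for the \emph{finitely generated} ideal $I=(p,u-v)$. Since $p^N=0$ and $(u-v)^N=0$, one has $I^{2N-1}=0$, so the sequence $\tilde a_k^{p^k}$ stabilises once $k\ge 2N-2$, uniformly in $a$; multiplicativity and uniqueness go through the same way. Alternatively, and more cheaply: pass first to $B/pB$, which has characteristic $p$, where $u^{p^k}-v^{p^k}=(u-v)^{p^k}=0$ for $p^k\ge N$; this gives a ring homomorphism $A\to B/pB$ directly, after which your invocation of the classical universal property of $W(A)$ for the \emph{nilpotent} ideal $pB$ finishes the job. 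Either way, the point the paper leaves implicit is that bounded nilpotence suffices because every step of the construction only involves finitely many elements of $J$ at a time.
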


\begin{proof}
See \cite[Chap.~IV, Prop.~4.3]{Grothendieck-Montreal},
where the ideal $J$ is assumed nilpotent, 
but the proof applies here as well. 
\end{proof}

\begin{Lemma}[{\cite[Lemma 2.2]{Zink-Windows}}]
\label{Le-W(N)}
Let $N$ be a non-unitary ring which is bounded 
nilpotent and annihilated by a power of $p$. 
Then $W(N)$ is bounded 
nilpotent and annihilated by a power of $p$. 
\qed
\end{Lemma}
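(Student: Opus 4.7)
For bounded nilpotence, suppose $N^n=0$. Combining the projection formula $v(x)y=v(xf(y))$ with $fv=p$ (so $f^iv^j=p^iv^{j-i}$ for $i\le j$) and $[c][d]=[cd]$ for Teichm\"uller lifts, one obtains for $i\le j$
\[
v^i([a])\cdot v^j([b]) \;=\; p^{i}\, v^{j}\!\bigl([\,a^{p^{j-i}}b\,]\bigr).
\]
By iteration, any $n$-fold product $\prod_{\ell=1}^n v^{k_\ell}([a_\ell])$, reordered so that $k_1\le\cdots\le k_n$, equals
\[
p^{k_1+\cdots+k_{n-1}}\, v^{k_n}\!\bigl([\,a_1^{p^{k_n-k_1}}\cdots a_{n-1}^{p^{k_n-k_{n-1}}}a_n\,]\bigr),
\]
whose Teichm\"uller argument is a product of $n$ elements of $N$ and hence vanishes. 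For an arbitrary $x\in W(N)$, reduction modulo $v^kW(N)$ expresses $x\equiv\sum_{i<k}v^i([a_i])$ in $W_k(N)$, so expanding $x^n$ as a finite sum of pure monomials of the above form shows $x^n\in v^kW(N)$ for every $k$. Since $\bigcap_k v^kW(N)=0$, one gets $x^n=0$; in particular $W(N)^n=0$.

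For annihilation by a power of $p$, assume additionally $p^mN=0$. I would first prove the base case: if $M$ is a non-unital ring with $M^2=0$ and $p^mM=0$, then $p^mW(M)=0$. The key observation is that the universal Witt sum polynomials $S_i\in\ZZ[a_\bullet,b_\bullet]$ satisfy: $S_i-a_i-b_i$ is a sum of monomials of total degree $\ge 2$ in the input variables, as one checks by induction on $i$ by solving the defining relation $\sum_{j\le i}p^jS_j^{p^{i-j}}=\sum_{j\le i}p^j(a_j^{p^{i-j}}+b_j^{p^{i-j}})$ for $S_i$ in terms of the previously constructed $S_j$ with $j<i$. On a square-zero $M$ every such monomial vanishes, so addition---and therefore multiplication by $p$---on $W(M)$ is componentwise, and the hypothesis $p^mM=0$ gives $p^mW(M)=0$. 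For the general case I would induct along $N\supset N^2\supset\cdots\supset N^n=0$. Each subquotient $N^i/N^{i+1}$ is square-zero and killed by $p^m$, and functoriality of $W$ yields the exact sequence
\[
0 \to W(N^{i+1}) \to W(N^i) \to W(N^i/N^{i+1}) \to 0.
\]
The base case applied to the quotient gives $p^mW(N^i)\subseteq W(N^{i+1})$, and iterating $n-1$ times yields $p^{(n-1)m}W(N)\subseteq W(N^n)=0$.

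The main obstacle is the polynomial claim in the base case---that $S_i-(a_i+b_i)$ consists entirely of monomials of total degree $\ge 2$---which is precisely what makes Witt addition collapse to componentwise addition on a square-zero ideal and thereby reduces the $p$-power bound to that of the underlying module. The remaining ingredients (the projection formula for $v$, $v$-adic separatedness of $W(N)$, and exactness of $W$ on kernel/quotient sequences of ideals) are formal properties of the Witt vector functor.
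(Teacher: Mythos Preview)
The paper does not give its own proof of this lemma; it merely cites \cite[Lemma~2.2]{Zink-Windows}. Your argument is correct and essentially reproduces the standard proof.

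A few remarks. For bounded nilpotence, your monomial computation via the projection formula is fine; note that the same expansion applies to a product $x_1\cdots x_n$ of $n$ distinct elements (not just $x^n$), so the stronger conclusion $W(N)^n=0$ that you state at the end is indeed justified, though the paper's definition of ``bounded nilpotent'' only asks for $x^n=0$ with a uniform $n$. For the $p$-power annihilation, the filtration $N\supset N^2\supset\cdots$ together with the square-zero base case is exactly the approach in Zink's cited paper. Your justification of the base case---that $S_i-a_i-b_i$ lies in the ideal of monomials of total degree $\ge 2$, so that Witt addition collapses to componentwise addition on a square-zero ideal---is the right observation and your inductive verification of it is correct (each $a_j^{p^{i-j}}$, $b_j^{p^{i-j}}$, and $S_j^{p^{i-j}}$ with $j<i$ has total degree at least $p\ge 2$). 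The exactness of $0\to W(N^{i+1})\to W(N^i)\to W(N^i/N^{i+1})\to 0$ is immediate since the kernel is determined componentwise.
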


\subsection{The Zink ring}

Let $R$ be an admissible ring. 
By Lemma \ref{Le-lift-W} the exact sequence
$$
0\to W(\NNN_R)\to W(R)\to W(R_{\red})\to 0
$$
has a unique ring homomorphism section $s:W(R_{\red})\to W(R)$, 
which is $f$-equivariant by its uniqueness. Let
$$
\WW(R)=sW(R_{\red})\oplus\hat W(\NNN_R).
$$
Since $\hat W(\NNN_R)$ is an $f$-stable ideal of $W(R)$,
the group $\WW(R)$ is an $f$-stable subring of $W(R)$,
which we call the Zink ring of $R$. 

\begin{Lemma}
\label{Le-WW-v}
The ring\/ $\WW(R)$ is stable under the Verschiebung
homomorphism $v:W(R)\to W(R)$ if and only if
$p\ge 3$ or $pR=0$. In this case we have an
exact sequence
$$
0\to\WW(R)\xrightarrow v\WW(R)\xrightarrow{w_0} R\to 0.
$$
\end{Lemma}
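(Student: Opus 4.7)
My plan is to exploit the decomposition $\WW(R) = sW(R_\red) \oplus \hat W(\NNN_R)$ to reduce the stability question to a single check on $v(1)$, analyse that via explicit Witt coordinates, and then derive the exact sequence once stability is established. The subgroup $\hat W(\NNN_R)$ is automatically $v$-stable, since $v$ shifts Witt coordinates and $\NNN_R$ is closed under the Witt operations, so stability of $\WW(R)$ reduces to $v(sW(R_\red)) \subseteq \WW(R)$. Combining the Witt identity $v(x)\cdot y = v(x\cdot f(y))$ with the $f$-equivariance of $s$ and the bijectivity of $f$ on the perfect ring $W(R_\red)$, one checks that every element of $v(sW(R_\red))$ takes the form $s(y)\cdot v(1)$, so the whole problem collapses to whether $v(1)\in\WW(R)$.

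To decide this, I will compare $v(1)$ with $p$ inside $W(R)$. Since $R_\red$ is perfect of characteristic $p$, we have $v(1)=p$ in $W(R_\red)$, and since $s$ is a ring homomorphism the corresponding element of $sW(R_\red)\subset W(R)$ is again $p$; hence $v(1)\in\WW(R)$ if and only if $v(1)-p\in\hat W(\NNN_R)$. The ghost components of $v(1)-p$ are $(-p,0,0,\dots)$, which determines its Witt coordinates $(b_n)$ via the standard recursion with $b_0=-p$; all $b_n$ turn out to be divisible by $p$, so they automatically lie in $\NNN_R$ by admissibility, and the question reduces to whether $v_p(b_n)$ eventually exceeds the nilpotency order of $p$ in $R$. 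An induction on the recursion shows that $v_p(b_n)$ grows at the rate $p^n$ when $p$ is odd, so in any admissible $R$ the $b_n$ vanish for $n$ large and $v(1)-p\in\hat W(\NNN_R)$. For $p=2$, by contrast, the same recursion yields $v_2(b_n)\equiv 1$ --- the minimum of $i+2^{n-i}$ over $0\le i\le n-1$ is attained uniquely at $i=n-1$, which prevents cancellation --- so the $b_n$ remain nonzero in $R$ whenever $2R\ne 0$. This delicate $p=2$ analysis is the heart of the proof and precisely the phenomenon that motivates the modified Verschiebung $\vv$ introduced immediately after.

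Finally, assuming $\WW(R)$ is $v$-stable, the exact sequence can be verified directly: injectivity of $v$ on $\WW(R)$ is inherited from $W(R)$, and $w_0\colon\WW(R)\to R$ is surjective because its restrictions to the summands $sW(R_\red)$ and $\hat W(\NNN_R)$ hit $R_\red$ and $\NNN_R$ respectively. For exactness in the middle, given $x\in\WW(R)$ with $w_0(x)=0$, I would write $x=v(y)$ in $W(R)$ and decompose $y=s(a)+c$ with $a\in W(R_\red)$ and $c\in W(\NNN_R)$; then $v(c)=x-v(s(a))$ lies in $\hat W(\NNN_R)$ by the stability hypothesis together with the decomposition of $x$, and reversing the shift identifies $c\in\hat W(\NNN_R)$, so $y\in\WW(R)$ and $x\in v\WW(R)$.
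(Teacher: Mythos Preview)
Your reduction to the single check $v(1)\in\WW(R)$ via the identity $v(s(f(a)))=v(1)\,s(a)$, the $f$-equivariance of $s$, and the surjectivity of $f$ on $W(R_{\red})$ is exactly the paper's argument. For the $v(1)$ check itself the paper passes to the subring $R_0=\ZZ/p^r\ZZ\subset R$ (noting $\WW(R_0)=W(R_0)\cap\WW(R)$) and cites Zink for the explicit calculation, whereas you compute the Witt coordinates of $p-v(1)$ directly from its ghost vector $(p,0,0,\ldots)$; this is a valid self-contained alternative. One minor imprecision: for odd $p$ the valuations $v_p(b_n)$ do tend to infinity, but not at rate $p^n$ --- a clean inductive bound is $v_p(b_n)\ge n+1$, which suffices. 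For the exact sequence the paper simply observes that it is the extension of $0\to W(R_{\red})\xrightarrow{v}W(R_{\red})\to R_{\red}\to 0$ by $0\to\hat W(\NNN_R)\xrightarrow{v}\hat W(\NNN_R)\to\NNN_R\to 0$; your direct argument also works, though your phrase ``hit $R_{\red}$'' should be read as ``surject onto $R/\NNN_R$ after projection''.
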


\begin{proof}
See \cite[Lemma 2]{Zink-DDisp}.
For some $r\ge 0$ the ring $R_0=\ZZ/p^r\ZZ$ is a
subring of $R$, and we have $\WW(R_0)=W(R_0)\cap\WW(R)$. 
The calculation in loc.\ cit.\ shows that the element 
$v(1)\in W(R_0)$ lies in $\WW(R_0)$ if and only if 
$p\ge 3$ or $r=1$. For $a\in W(R_{\red})$ we have 
$v(s(f(a)))=v(f(s(a)))=v(1)s(a)$.
Since $\hat W(\NNN_R)$ is stable under $v$ and since
$f$ is surjective on $W(R_{\red})$, the first assertion 
of the lemma follows. The sequence is an extension of
$$
0\to W(R_{\red})\xrightarrow v W(R_{\red})\to R_{\red}\to 0
$$
and 
$$
0\to\hat W(\NNN_R)\xrightarrow v\hat W(\NNN_R)\to\NNN_R\to 0,
$$
which are both exact.
\end{proof}

With a slight modification the exception at the prime $2$
can be removed. The element $p-[p]$ of $W(\ZZ_p)$ lies 
in the image of $v$ because it maps to zero in $\ZZ_p$. 
Moreover $v^{-1}(p-[p])$ maps to $1$ in $W(\FF_p)$, so
this element is a unit in $W(\ZZ_p)$. We define
$$
u_0=
\begin{cases}
v^{-1}(2-[2]) & \text{if $p=2$,} \\
1 & \text{if $p\ge 3$.}
\end{cases}
$$
The image of $u_0$ in $W(R)$ is also denoted $u_0$.
For $x\in W(R)$ let
$$
\vv(x)=v(u_0x).
$$
One could also take $u_0=v^{-1}(p-[p])$ for all $p$, which 
would allow to state some results more uniformly, but for odd 
$p$ this would be overcomplicated.

\begin{Lemma}
\label{Le-WW-vv}
The ring\/ $\WW(R)$ is stable under\/ $\vv:W(R)\to W(R)$, 
and there is an exact sequence
$$
0\to\WW(R)\xrightarrow {\vv}\WW(R)\xrightarrow{w_0} R\to 0.
$$
\end{Lemma}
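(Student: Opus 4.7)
For $p\ge 3$ we have $\vv=v$ and the statement coincides with Lemma \ref{Le-WW-v}, so the work is entirely in the case $p=2$. My plan is to verify stability of $\WW(R)$ under $\vv$ by direct calculation, and then deduce the exact sequence from a snake-lemma argument applied to a three-by-three diagram whose outer columns are easy to handle.

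\emph{Stability.} I would take $x\in\WW(R)$ and write $x=s(a)+n$ with $a\in W(R_{\red})$ and $n\in\hat W(\NNN_R)$. Since $R_{\red}$ is perfect, $a=f(b)$ for some $b$, and the $f$-equivariance of $s$ gives $s(a)=f(s(b))$. The projection formula $v(\alpha f(\beta))=v(\alpha)\,\beta$ then yields
\[
\vv(x)=v\bigl(u_0\cdot f(s(b))\bigr)+v(u_0 n)=(p-[p])\,s(b)+v(u_0 n).
\]
Both summands lie in $\WW(R)$: in the first, $p=s(p)\in\WW(R)$ and $[p]\in\hat W(\NNN_R)$ because $p\in\NNN_R$; in the second, $\hat W(\NNN_R)$ is a $v$-stable ideal of $W(R)$, so $v(u_0 n)\in\hat W(\NNN_R)\subset\WW(R)$.

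\emph{Exactness.} Next, I would consider the commutative diagram whose top two rows are copies of the defining sequence $0\to\hat W(\NNN_R)\to\WW(R)\to W(R_{\red})\to 0$, whose bottom row is $0\to\NNN_R\to R\to R_{\red}\to 0$, and whose vertical arrows are $\vv$ (upper squares) and $w_0$ (lower squares). Commutativity requires that $u_0\equiv 1$ in $W(R_{\red})$, which I would argue as follows: $v(u_0)=p-[p]$ reduces to $p$ in $W(R_{\red})$ since $[p]=0$ there, so $p(u_0-1)=fv(u_0)-p=0$, and $W(R_{\red})$ is $p$-torsion free because $R_{\red}$ is perfect. Thus $\vv$ agrees with $v$ on $W(R_{\red})$, making the right column the standard exact sequence. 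For the left column, given $m\in\hat W(\NNN_R)$ with $w_0(m)=0$, the shifted vector $m':=(m_1,m_2,\dots)$ still lies in $\hat W(\NNN_R)$ and satisfies $v(m')=m$, so $z:=u_0^{-1}m'\in\hat W(\NNN_R)$ by the ideal property of $\hat W(\NNN_R)$ in $W(R)$, yielding $\vv(z)=m$. The long exact sequence attached to the short exact sequence of three-term complexes formed by the columns then delivers exactness of the middle column, which is the claim.

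The chief subtlety is that $u_0$ itself need not lie in $\WW(R)$---the element $u_0-1$ belongs to $W(\NNN_R)$ but can have infinitely many nonzero Witt coefficients---so multiplication by $u_0$ does not preserve $\WW(R)$ and the direct sum decomposition is not respected. Only the composite $x\mapsto v(u_0 x)$ is well-behaved, because the trailing $v$ together with the ideality of $\hat W(\NNN_R)$ in $W(R)$ together absorb the defect.
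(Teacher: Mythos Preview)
Your proof is correct and follows essentially the same approach as the paper: the stability computation $\vv(s(f(b)))=v(u_0)s(b)=(p-[p])s(b)$ is exactly the paper's, and your $3\times 3$ diagram is just an explicit rendering of the paper's remark that the sequence for $\WW(R)$ is an extension of the corresponding sequences for $W(R_{\red})$ and $\hat W(\NNN_R)$, each exact because $\vv=v\circ u_0$ with $u_0$ acting bijectively. Your verification that $u_0\equiv 1$ in $W(R_{\red})$ and your explicit surjectivity argument on $\hat W(\NNN_R)$ simply spell out what the paper leaves implicit.
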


\begin{proof}
By Lemma \ref{Le-WW-v} we can assume that $p=2$.
For $a\in W(R_{\red})$ we have 
$\vv(s(f(a)))=v(u_0f(s(a)))=v(u_0)s(a)=(2-[2])s(a)$,
which lies in $\WW(R)$. Since $\hat W(\NNN_R)$ is stable
under $\vv$ and since $f$ is surjective on $W(R_{\red})$
it follows that $\WW(R)$ is stable under $\vv$.
The sequence is an extension of
$$
0\to W(R_{\red})\xrightarrow\vv W(R_{\red})\to R_{\red}\to 0
$$
and 
$$
0\to\hat W(\NNN_R)\xrightarrow\vv\hat W(\NNN_R)\to\NNN_R\to 0.
$$
They are exact because in both cases $\vv=v\circ u_0$ 
where $u_0$ acts bijectively. 
\end{proof}

\subsection{The enlarged Zink ring}
\label{Subse-enlarged-Zink}

Let us recall the logarithm of the Witt ring.
For a divided power extension of rings $(B\to R,\delta)$ 
with kernel $\Fb\subseteq B$,
the $\delta$-divided Witt polynomials define an
isomorphism of $W(B)$-modules
\begin{equation*}
\Log:W(\Fb)\cong\Fb^\NN
\end{equation*}
where $x\in W(B)$ acts on $\Fb^\NN$ by
$[b_0,b_1,\ldots]\mapsto[w_0(x)b_0,w_1(x)b_1,\ldots]$.
The Frobenius and Verschiebung of $W(\Fb)$ act on $\Fb^\NN$ 
by 
$$
f([b_0,b_1,\ldots])=[pb_1,pb_2,\ldots],
\qquad
v([b_0,b_1,\ldots])=[0,b_0,b_1,\ldots].
$$ 
Moreover $\Log$ induces an injective map 
$\hat W(\Fb)\to\Fb^{(\NN)}$,
which is bijective when the divided powers $\delta$
are nilpotent; see \cite[(149)]{Zink-Disp}  
and the subsequent discussion. In general let
$$
\tilde W(\Fb)=\Log^{-1}(\Fb^{(\NN)}).
$$
This is an $f$-stable and $v$-stable ideal of $W(B)$
containing $\hat W(\Fb)$. 

Assume now that $(B\to R,\delta)$ is a divided power
extension of admissible rings (it suffices to assume that
$R$ is admissible and that $p$ is nilpotent in $B$
because then $\Fb$ is bounded nilpotent due to the divided powers, 
so $B$ is admissible as well). Let
$$
\WW(B,\delta)=\WW(B)+\tilde W(\Fb).
$$
This is an $f$-stable subring of $W(B)$, which we call 
the enlarged Zink ring of $B$ with respect to the
divided power ideal $(\Fb,\delta)$. 
We also write $\WW(B/R)$ for $\WW(B,\delta)$.
If the divided powers $\delta$ are nilpotent then 
$\WW(B,\delta)=\WW(B)$. We have the following analogues of 
Lemmas \ref{Le-WW-vv} and \ref{Le-WW-v}.

\begin{Lemma}
\label{Le-WW-large-vv}
The ring\/ $\WW(B,\delta)$ is stable under\/ $\vv:W(R)\to W(R)$,
and there is an exact sequence
$$
0\to\WW(B,\delta)\xrightarrow\vv\WW(B,\delta)
\xrightarrow{w_0} B\to 0.
$$
\end{Lemma}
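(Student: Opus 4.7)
The plan is to mirror the proof of Lemma \ref{Le-WW-vv}, extending it via the decomposition $\WW(B,\delta)=\WW(B)+\tilde W(\Fb)$ together with a parallel short exact sequence on the summand $\tilde W(\Fb)$.

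First I would dispose of the formal points. Since $u_0\in W(\ZZ_p)\subset W(B)$ is a unit and $v$ is injective on $W(B)$, the map $\vv=v\circ u_0$ is injective on $W(B)$, hence on $\WW(B,\delta)$; the composition $w_0\circ\vv$ vanishes because $w_0\circ v$ does, and $w_0$ is surjective on $\WW(B,\delta)$ already because it is so on the subring $\WW(B)$ by Lemma \ref{Le-WW-vv}. For stability under $\vv$, Lemma \ref{Le-WW-vv} handles $\WW(B)$, while $\tilde W(\Fb)$ is a $v$-stable $W(B)$-ideal, so $\vv(x)=v(u_0x)\in v(\tilde W(\Fb))\subset\tilde W(\Fb)$; hence $\vv(\WW(B,\delta))\subset\WW(B,\delta)$.

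The substantive step is exactness in the middle, for which I would first establish the parallel sequence
$$0\to\tilde W(\Fb)\xrightarrow{\vv}\tilde W(\Fb)\xrightarrow{w_0}\Fb\to 0.$$
Under the isomorphism $\Log:\tilde W(\Fb)\cong\Fb^{(\infty)}$ the operator $v$ corresponds to the shift and $w_0$ to the $0$th projection, so the sequence is exact with $v$ in place of $\vv$; since $u_0$ is a unit in $W(B)$ and $\tilde W(\Fb)$ is a $W(B)$-ideal, one has $\vv(\tilde W(\Fb))=v(u_0\tilde W(\Fb))=v(\tilde W(\Fb))$, yielding the displayed exactness.

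Now given $x\in\WW(B,\delta)$ with $w_0(x)=0$, write $x=a+b$ with $a\in\WW(B)$ and $b\in\tilde W(\Fb)$, and set $c=w_0(a)=-w_0(b)\in\Fb$. Because $\Fb$ is bounded nilpotent one has $\Fb\subset\NNN_B$, so the Teichm\"uller lift $[c]$ lies in $\hat W(\Fb)$, which sits inside both $\WW(B)$ and $\tilde W(\Fb)$. Replacing $a$ by $a-[c]$ and $b$ by $b+[c]$ we may assume $w_0(a)=w_0(b)=0$; Lemma \ref{Le-WW-vv} then gives $a=\vv(\alpha)$ with $\alpha\in\WW(B)$, and the displayed sequence gives $b=\vv(\beta)$ with $\beta\in\tilde W(\Fb)$, so $x=\vv(\alpha+\beta)$ with $\alpha+\beta\in\WW(B,\delta)$. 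The main subtlety is the auxiliary exact sequence on $\tilde W(\Fb)$, which requires reading off $v$ and $w_0$ through the $\Log$ isomorphism rather than directly on Witt coordinates; once that is in hand, the remainder is a short bookkeeping argument in which the Teichm\"uller correction $[c]$ reduces the $w_0$-obstruction on the two summands separately.
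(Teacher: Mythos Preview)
Your proof is correct and uses the same core ingredients as the paper: stability of both $\WW(B)$ and $\tilde W(\Fb)$ under $\vv$, the auxiliary exact sequence $0\to\tilde W(\Fb)\xrightarrow{\vv}\tilde W(\Fb)\xrightarrow{w_0}\Fb\to 0$, and Lemma~\ref{Le-WW-vv}. The packaging differs slightly. You work with the non-disjoint sum $\WW(B,\delta)=\WW(B)+\tilde W(\Fb)$, apply Lemma~\ref{Le-WW-vv} to $B$, and use the Teichm\"uller correction $[c]$ to push the $w_0$-obstruction into the overlap $\hat W(\Fb)$. The paper instead passes to the quotient $\WW(B,\delta)/\tilde W(\Fb)\cong\WW(R)$ (with $R=B/\Fb$), applies Lemma~\ref{Le-WW-vv} to $R$, and reads off the result as an extension of two exact sequences; this avoids the Teichm\"uller step entirely. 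Your route is a little more explicit, the paper's a little cleaner, but they are the same argument in substance.
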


\begin{proof}
The ring $\WW(B,\delta)$ is stable under $\vv$ because
so are $\WW(B)$ and $\tilde W(\Fb)$; see Lemma \ref{Le-WW-vv}.
We have $\WW(B,\delta)/\tilde W(\Fb)=\WW(R)$.
Thus the exact sequence follows from the exactness of
$0\to\tilde W(\Fb)\xrightarrow\vv\tilde\WW(\Fb)\to\Fb\to 0$
together with the exact sequence of Lemma \ref{Le-WW-vv}.
\end{proof}

\begin{Lemma}
\label{Le-WW-large-v}
The ring\/ $\WW(B,\delta)$ is stable under $v:W(R)\to W(R)$
if $p\ge 3$, or if $p\in\Fb$ and the divided powers
$\delta$ on $\Fb$ induce the canonical divided powers 
on $pB$. In this case we have an exact sequence
$$
0\to\WW(B,\delta)\xrightarrow v\WW(B,\delta)
\xrightarrow{w_0}B\to 0.
$$
\end{Lemma}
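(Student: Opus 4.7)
The plan is to treat the two cases of the hypothesis in turn. If $p\ge 3$ then $u_0=1$, so $\vv=v$ and the conclusion is literally the content of Lemma~\ref{Le-WW-large-vv}; one can also argue directly, using the $v$-stability of $\WW(B)$ (Lemma~\ref{Le-WW-v}) and of $\tilde W(\Fb)$ (obvious from the shift description on $\Fb^\NN$), and deriving the exact sequence from a diagram chase based on the short exact sequence $0\to\tilde W(\Fb)\to\WW(B,\delta)\to\WW(R)\to 0$ for $R=B/\Fb$.

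The case $p=2$ with $p\in\Fb$ and $\delta|_{pB}=\delta_{\mathrm{can}}$ requires real work. The strategy is to deduce this case from the $\vv$-statement of Lemma~\ref{Le-WW-large-vv} by showing that $u_0\in W(\ZZ_p)\subseteq W(B)$ is a unit in the subring $\WW(B,\delta)$. Once that is known, multiplication by $u_0$ is an automorphism of $\WW(B,\delta)$; the identity $\vv(x)=v(u_0x)$ then yields $v(\WW(B,\delta))=\vv(\WW(B,\delta))\subseteq\WW(B,\delta)$, and since $v$ is injective on $W(B)$, the exact sequence of the present lemma is identified with the one already proved for $\vv$.

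The unit claim for $u_0$ will follow from the key inclusion
\[
u_0-1\in\tilde W(\Fb,\delta).
\]
Given this, $u_0\in 1+\tilde W(\Fb,\delta)\subseteq\WW(B,\delta)$, and since $\tilde W(\Fb,\delta)$ is an ideal and the powers of $u_0-1$ tend to zero in $W(B)$ (their Witt coordinates are highly $p$-divisible), the geometric series $u_0^{-1}=\sum_{n\ge 0}(-1)^n(u_0-1)^n$ converges in $W(B)$ to an element of $1+\tilde W(\Fb,\delta)\subseteq\WW(B,\delta)$. A ghost calculation from $v(u_0)=p-[p]$ yields $w_j(u_0)=1-p^{p^{j+1}-1}$, and an analysis of the Witt coordinates shows $u_0-1\in W(p\ZZ_p)\subseteq W(pB)\subseteq W(\Fb)$. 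The hypothesis that $\delta|_{pB}=\delta_{\mathrm{can}}$ supplies an inclusion $\tilde W(pB,\delta_{\mathrm{can}})\subseteq\tilde W(\Fb,\delta)$, reducing the task to showing $u_0-1\in\tilde W(pB,\delta_{\mathrm{can}})$.

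The main obstacle is this final step: one must verify that the image of $u_0-1$ under the logarithm $\Log$ associated with the canonical divided powers on $pB$ lies in $(pB)^{(\infty)}$, i.e.\ has only finitely many nonzero coordinates. This is a concrete but delicate computation using the explicit form of the divided Witt polynomials for canonical divided powers, together with the specific ghost components of $u_0-1$ recorded above and the resulting pattern of cancellations.
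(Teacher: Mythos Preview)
Your handling of the case $p\ge 3$ is fine and matches the paper.

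For $p=2$, your approach via showing that $u_0$ is a unit of $\WW(B,\delta)$ works, but it is considerably more elaborate than what the paper does. The paper bypasses $u_0$ entirely and shows directly that $v(1)\in\WW(B,\delta)$: setting $\xi=p-v(1)\in W(pB)\subseteq W(\Fb)$, one reads off from the ghost components $w_0(\xi)=p$ and $w_n(\xi)=0$ for $n\ge 1$ that $\Log(\xi)=[p,0,0,\ldots]$, so $\xi\in\tilde W(\Fb)$ and hence $v(1)=p-\xi\in\WW(B,\delta)$. Once $v(1)$ is known to lie in $\WW(B,\delta)$, the proof of Lemma~\ref{Le-WW-v} (which identifies the obstruction to $v$-stability of $\WW(B)$ as precisely the element $v(1)$, via the identity $v(s(f(a)))=v(1)\,s(a)$) yields $v$-stability of $\WW(B,\delta)$ with no further work; the exact sequence follows as in Lemma~\ref{Le-WW-large-vv}.

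Your route requires instead the computation of $\Log(u_0-1)$. This is not really ``delicate'': in the torsion-free setting the log-coordinates satisfy $p^jc_j=w_j$, so your formula $w_j(u_0-1)=-p^{p^{j+1}-1}$ gives $c_j=-p^{p^{j+1}-1-j}$, which is nonzero for every $j$ over $\ZZ_p$ but eventually zero in the admissible ring $B$. You also need $u_0^{-1}\in\WW(B,\delta)$; the geometric-series argument you sketch is a bit loose (high $p$-divisibility of ghost components does not immediately give high $p$-divisibility of Witt coordinates), and it is cleaner simply to note that $u_0^{-1}\in W(\ZZ_p)$ already and then compute $\Log(u_0^{-1}-1)$ by the same recipe. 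So your argument can be completed, but the paper's one-line calculation of $\Log(p-v(1))$ is the more natural move here.
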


\begin{proof}
If $p\ge 3$ then $\WW(B,\delta)$ is stable under $v$ 
because $\WW(B)$ and $\tilde W(\Fb)$ are stable under $v$;
see Lemma \ref{Le-WW-v}.
Assume that $p\in\Fb$ and that $\delta$ induces the canonical
divided powers on $pB$. Let $\xi=p-v(1)\in W(B)$.
This element lies in $W(pB)\subseteq W(\Fb)$ and
satisfies $\Log(\xi)=[p,0,0,\ldots]$. 
Thus $\xi\in\tilde W(\Fb)$, 
which implies that $v(1)\in\WW(B,\delta)$. 
Using this, the proof of Lemma \ref{Le-WW-v} 
shows that $\WW(B,\delta)$ is stable under $v$. 
The exact sequence follows as usual.
\end{proof}

\subsection{The $v$-stabilised Zink ring}
\label{Subse-v-stab-Zink}

Assume that $p=2$. For an admissible ring $R$
let $\gamma$ be the canonical divided powers
on the ideal $2R$.  
We denote the associated enlarged Zink ring by
$$
\WW^+(R)=\WW(R,\gamma)=\WW(R)+\tilde W(2R)\subseteq W(R).
$$
The kernel of the projection $\WW^+(R)\to W(R_{\red})$
will be denoted $\hat W^+(\NNN_R)$.
In view of the following lemma we call $\WW^+(R)$
the $v$-stabilised Zink ring.
 
\begin{Lemma}
\label{Le-WW+}
Let $p=2$. We have 
$$
\WW^+(R)=\WW(R)+\WW(R)v(1).
$$ 
The ring\/ $\WW^+(R)$ is equal to\/ $\WW(R)$ if and only if $2R=0$.
The\/ $\WW(R)$-module\/ $\WW^+(R)/\WW(R)$ is 
an $R_{\red}$-module generated by $v(1)$.
\end{Lemma}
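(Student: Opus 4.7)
The plan is to first observe that $v(1)\in\WW^+(R)$, which immediately gives one inclusion in the first assertion; then prove the second assertion using Lemma~\ref{Le-WW-v}; and finally establish the first and third assertions simultaneously by analyzing the quotient $\WW^+(R)/\WW(R)$. From the proof of Lemma~\ref{Le-WW-large-v}, the element $\xi=p-v(1)$ lies in $\tilde W(pR)$ with $\Log(\xi)=[p,0,0,\ldots]$; since $p\in\WW(R)$, this gives $v(1)=p-\xi\in\WW(R)+\tilde W(pR)=\WW^+(R)$ and hence $\WW(R)+\WW(R)v(1)\subseteq\WW^+(R)$. For the second assertion: if $pR=0$ then $\tilde W(pR)=0$, so $\WW^+(R)=\WW(R)$; conversely, if $pR\ne 0$, applying the calculation in the proof of Lemma~\ref{Le-WW-v} to the subring $\ZZ/p^r\ZZ\subseteq R$ with $r\ge 2$ shows $v(1)\notin\WW(R)$, whence $\WW^+(R)\supsetneq\WW(R)$.

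For the factoring part of the third assertion, I would show that the kernel $J=\hat W(\NNN_R)+s(vW(R_{\red}))$ of the augmentation $\WW(R)\to R_{\red}$ annihilates the class of $v(1)$ in $\WW^+(R)/\WW(R)$. For $n\in\hat W(\NNN_R)$, the identity $nv(1)=v(f(n))$ together with the $f$- and $v$-stability of $\hat W(\NNN_R)$ yields $nv(1)\in\hat W(\NNN_R)\subseteq\WW(R)$. For $y=s(v(a'))$ with $a'\in W(R_{\red})$, the identities $f\circ s=s\circ f$ and $fv=p$ in $W(R_{\red})$ give
\[
yv(1)=v(s(pa'))=p\,v(s(a')).
\]
Writing $p=v(u_0)+[p]$ and using $v(x)z=v(xf(z))$ together with $fv=p$, the summand $v(u_0)v(s(a'))$ equals $p\,\vv(s(a'))$, which lies in $p\WW(R)\subseteq\WW(R)$ by Lemma~\ref{Le-WW-vv}; the summand $[p]v(s(a'))=v(f([p])s(a'))=v([p^2]s(a'))$ lies in $v(\hat W(\NNN_R))\subseteq\hat W(\NNN_R)\subseteq\WW(R)$ because $p^2\in\NNN_R$.

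For the cyclic generation---which simultaneously completes the first and third assertions---I would use the logarithm to identify $\WW^+(R)/\WW(R)$ with $\tilde W(pR)/\hat W(pR)\cong(pR)^{(\infty)}/\Log(\hat W(pR))$, in which the class of $v(1)$ (equivalently of $-\xi$) corresponds to the class of $[p,0,0,\ldots]$. Iterating the relations $\Log(v^k([pa]))\equiv 0$ for $a\in R$ and $k\ge 0$, together with the perfection of $R_{\red}$ (to extract $p$-th roots of coefficients) and the bounded nilpotence of $p\in\NNN_R$ (to ensure termination with finite support), one shows that every element of $(pR)^{(\infty)}$ is congruent modulo $\Log(\hat W(pR))$ to an element supported in position $0$, and thus to an $\WW(R)$-multiple of $\xi$. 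The main technical obstacle is to control this recursive reduction of positions so that the resulting expressions remain of finite support, using simultaneously the perfection of $R_{\red}$ to move $p$-power coefficients across positions and the nilpotence of $p$ for termination.
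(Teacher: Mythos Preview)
Your treatment of $v(1)\in\WW^+(R)$, the equivalence $\WW^+(R)=\WW(R)\Leftrightarrow pR=0$, and the verification that the augmentation ideal $J\subset\WW(R)$ annihilates the class of $v(1)$ are all correct and essentially match the paper (the paper phrases the last point more implicitly, but the content is the same). Your identification $\WW^+(R)/\WW(R)\cong\tilde W(pR)/\hat W(pR)$ is also correct and is in fact a cleaner formulation than the paper's filtration language.

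Where you diverge from the paper is in the generation step, and here your sketch is genuinely incomplete. You propose to work directly in $(pR)^{(\infty)}/\Log(\hat W(pR))$ and reduce an arbitrary finitely supported sequence to position~$0$ by iterating relations coming from $\Log(v^k([pa]))$. The difficulty you flag is real: over $pR$ the divided Witt polynomials are not simply $\gamma_2(x_{i-1})+x_i$, because $2\cdot pR\ne 0$ in general, so the recursion produces higher-order correction terms that are awkward to control. The paper sidesteps this entirely by a filtration trick: the canonical divided powers restricted to $p\NNN_R$ are \emph{nilpotent} (since $p\in\NNN_R$ and $\NNN_R$ is bounded nilpotent), hence $\tilde W(p\NNN_R)=\hat W(p\NNN_R)$, and one reduces to computing $\tilde W(\Fc)/\hat W(\Fc)$ for $\Fc=pR/p\NNN_R$. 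In $\Fc$ one has $2\Fc=0$, so the divided Witt polynomials collapse modulo~$2$ to $\tilde w_i(x)\equiv\gamma_2(x_{i-1})+x_i$, and the $\Log$ becomes
\[
\Log(2a_0,2a_1,\ldots)=2[a_0,\,a_0^2+a_1,\,a_1^2+a_2,\ldots]
\]
with $a_i\in R/\Fb$ for a suitable $\Fb\supseteq\NNN_R$. This makes the quotient transparently the Frobenius direct limit $R/\Fb\to R/\Fb\to\cdots\cong R/\sqrt\Fb$, with $\xi$ mapping to~$1$.

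In short: your strategy can be made to work, but only after you perform the same reduction to $\Fc$ that the paper does; without it, the ``main technical obstacle'' you identify is not just technical but is precisely the missing idea.
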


\begin{proof}
By Lemma \ref{Le-WW-large-v} we have $v(1)\in\WW^+(R)$.
Clearly $2R=0$ implies that $\WW^+(R)=\WW(R)$.
In general we consider the filtration 
$$
W(2\NNN_R)\subseteq W(2R)\subseteq W(R)
$$
and the graded modules for the induced filtrations 
on $\WW(R)$ and on $\WW^+(R)$. First, the restriction of the
divided powers $\gamma$ to the ideal $2\NNN_R$ 
is nilpotent, which implies that
$$
\WW^+(R)\cap W(2\NNN_R)=\tilde W(2\NNN_R)=
\hat W(2\NNN_R)=\WW(R)\cap W(2\NNN_R).
$$
Next we have $\WW^+(R/2R)=\WW(R/2R)$, or equivalently
$$
\WW^+(R)/\WW^+(R)\cap W(2R)=\WW(R)/\WW(R)\cap W(2R).
$$
Let $\Fc=2R/2\NNN_R$. 
By the preceding remarks we have an isomorphism
$$
\WW^+(R)/\WW(R)\cong\tilde W(\Fc)/\hat W(\Fc).
$$
This is an $R/\NNN_R$-module. Assume that $2R\ne 0$, 
which implies that $\Fc\ne 0$.
For some ideal $\NNN_R\subseteq\Fb\subseteq R$,
multiplication by $2$ induces an isomorphism $R/\Fb\cong\Fc$.
Modulo $2$ the divided Witt polynomials are 
$\tilde w_i(x)\equiv\gamma_2(x_{i-1})+x_i$,
so the isomorphism $\Log:W(\Fc)\to\Fc^\NN$
takes the form
$$
\Log(2a_0,2a_1,\ldots)=2[a_0,a_0^2+a_1,a_1^2+a_2,a_2^2+a_3,\ldots]
$$
with $a_i\in R/\Fb$.
It follows that $\tilde W(\Fc)/\hat W(\Fc)$ can be
identified with the direct limit of the Frobenius homomorphism
$R/\Fb\to R/\Fb\to\ldots\,$, which is isomorphic to 
$R/\sqrt\Fb$. Under this identification, the element 
$\xi=2-v(1)$ of $\tilde W(\Fc)$ maps to $1$ in $R/\sqrt\Fb$
because we have $\Log(\xi)=[2,0,\ldots]$.
Hence $\WW^+(R)/\WW(R)$ is generated by $v(1)$, 
with annihilator $\sqrt\Fb$.
\end{proof}

Assume again that $p=2$.
Let $(B\to R,\delta)$ be a divided power
extension of admissible rings with kernel $\Fb\subseteq B$
such that $\delta$ is compatible with the canonical divided powers
$\gamma$ on $2B$. Let $\delta^+$
be the divided powers on $\Fb^+=\Fb+2B$ that extend $\delta$
and $\gamma$. In this case we write
$$
\WW^+(B,\delta)=\WW(B,\delta^+)=\WW(B)+\tilde W(\Fb^+).
$$
Clearly $\WW(B,\delta)\subseteq\WW^+(B,\delta)\supseteq\WW^+(B)$.
If the divided powers on $\Fb^+/2B$ induced by $\delta$ 
are nilpotent then $\WW^+(B,\delta)=\WW^+(B)$.

\subsection{Passing to the limit}

The preceding considerations carry over to the topological case
as follows. For an admissible topological ring $R$ let
$$
\WW(R)=\varprojlim_N\WW(R/N),
$$ 
the limit taken over all open ideals $N$ of $R$ with 
$N\subseteq\NNN_R$. Then Lemmas \ref{Le-WW-v} and 
\ref{Le-WW-vv} hold for admissible topological rings. 
The enlarged Zink ring can be defined for topological 
divided power extensions in the following sense.

\begin{Defn}
\label{Def-pd-top-admissible}
Let $B$ and $R$ be admissible topological rings.
A topological divided power extension is a surjective ring
homomorphism $B\to R$ whose kernel $\Fb$ is equipped with 
divided powers $\delta$ such that $\Fb$ is closed in $B$, 
the topology of $R$ is the quotient topology of $B/\Fb$, 
and the linear topology of $B$ is induced by open ideals 
$N$ for which $N\cap\Fb$ is stable under $\delta$.
Let $\delta/N$ be the divided powers
on $N/N\cap\Fb$ induced by $\delta$.
We say that $\delta$ is topologically compatible with the
canonical divided powers of $p$ 
if the topology of $B$ is induced by open ideals $N$
such that $\delta/N$ is defined and compatible with the 
canonical divided powers of $p$.
\end{Defn}

\begin{Remark}
The existence of divided powers on $\Fb$ implies that
$\Fb\subseteq\NNN_B$. If $B$ is a noetherian complete
local ring then every ideal $\Fb$ of $B$ is closed, moreover
if $\Fb$ is given, for each $n$ there is an open ideal $N\subseteq\Fm_B^n$ 
such that $\Fb\cap N$ is stable under arbitrary divided powers 
$\delta$ on $\Fb$. Indeed, by Artin-Rees there is an $l$ with 
$\Fm_B^n\Fb\supseteq\Fm_B^l\cap\Fb$; then 
take $N=\Fm_B^n\Fb+\Fm_B^l$, which implies that 
$\Fb\cap N=\Fm_B^n\Fb$.
\end{Remark}

For a topological divided power extension of admissible 
topological rings $(B\to R,\delta)$ with kernel 
$\Fb\subseteq B$ we define
$$
\WW(B,\delta)=\varprojlim_N\WW(B/N,\delta/N)
$$
where $N$ runs through the open ideals of $B$ contained
in $\NNN_B$ such that $N\cap\Fb$ is stable under $\delta$.
Lemmas \ref{Le-WW-large-vv} and \ref{Le-WW-large-v} hold
in the topological case.

Assume that $p=2$. Then for an admissible topological ring 
we put
$$
\WW^+(R)=\varprojlim_N\WW^+(R/N),
$$
the limit taken over all open ideals $N$ of $R$ contained
in $\NNN_R$. If $(B\to R,\delta)$ is a topological 
divided power extension of admissible topological rings
which is topologically compatible with the canonical divided
powers of $2$, we can define
$$
\WW^+(B,\delta)=\varprojlim_N\WW^+(B/N,\delta/N)
$$
where $N$ runs through the open ideals of $B$ contained in $\NNN_B$
such that $\delta/N$ is defined and compatible with the canonical divided
powers of $2$.

The following example of admissible topological rings is used in section \ref{Se-bt2disp}.

\begin{Lemma}
\label{Le-pd-alg}
Let $R$ be a ring which is $I$-adically complete for an ideal $I\subseteq R$
such that $K=R/I$ is a perfect ring of characteristic $p$. 
Assume that $I=J+pR$ for an ideal $J\subseteq R$
such that $R/J^n$ has no $p$-torsion for each $n$.
For a projective $R$-module $t$ of finite type we consider 
the complete symmetric algebra 
\[
R[[t]]=\prod_{n\ge 0}\Sym_R^n(t).
\]
Let $(\Fa\subseteq S,\delta)$ be the divided power envelope of the ideal 
$tR[[t]]\subseteq R[[t]]$ and let $\hat S$ be the $I$-adic completion of $S$. 
Then 
\begin{enumerate}
\renewcommand{\theenumi}{\roman{enumi}}
\item
\label{It-pd-alg-pd}
$\hat S\to R$ is naturally a topological divided power extension
of admissible topological rings which is topologically compatible 
with the canonical divided powers of $p$, and
\item
\label{It-pd-alg-tors}
$\hat S$ has no $p$-torsion.
\end{enumerate}
\end{Lemma}

\begin{proof}
Let $\bar R_n=R/(p^nR+J^n)$ and $\bar S_n=S\otimes_R R_n$. 
We have $S=R\oplus\Fa$ and thus $\bar S_n=\bar R_n\oplus\bar\Fa_n$ 
with $\bar\Fa_n=\Fa\otimes_R\bar R_n$,
moreover the ideal $\bar\Fa_n$ carries divided powers $\delta_n$ induced by $\delta$; 
see \cite[Ch.~I, 1.7.1]{Berthelot:CohCristalline}.
In particular $\bar S_n$ is admissible. 
Since $\hat S\to R$ is the projective limit over $n$ of $\bar S_n\to\bar R_n$, to prove
\eqref{It-pd-alg-pd} it suffices to show that $\delta_n$ is compatible 
with the canonical divided powers of $p$.
Now $\Spec R\to\Spec R[[t]]$ is a regular immersion by Lemma \ref{Le:R[[T]]-regular},
thus $S$ is flat over $R$ by Proposition \ref{Pr:pd-flat}.
Since $R$ has no $p$-torsion the same holds for $S$, so the divided
powers on $\Fa$ extend canonically to the ideal $\Fb=\Fa+pS$.
We have $S/\Fb=R/pR$. The assumptions imply that 
$\Tor_1^R(R/J^n,R/pR)$ is zero. Hence there is an exact sequence
\[
0\to\Fb/J^n\Fb\to S/J^nS\to R/(pR+J^n)\to 0,
\]
which in turn gives an exact sequence
\[
0\to(\Fb/J^n\Fb)/p^n(S/J^nS)\to S/(J^nS+p^nS)\to R/(pR+J^n)\to 0.
\]
In both sequences the kernels carry divided powers which extend
the canonical divided powers of $p$ since the ideals $J^n\Fb$ of $S$
and $p^n(S/J^nS)$ of $S/J^nS$ are stable under the given divided powers.
Thus the divided powers $\delta_n$ on $\bar\Fa_n$ are compatible with the
canonical divided powers of $p$, which proves \eqref{It-pd-alg-pd}.

Let $S_n=S/J^nS$ and let $\hat S_n$ be its $p$-adic completion.
Since $S$ is flat over $R$ and since $R/J^n$ has no $p$-torsion, $S_n$ and $\hat S_n$
have no $p$-torsion. Using that $\hat S=\varprojlim_n\hat S_n$ it follows that $\hat S$ 
has no $p$-torsion, which proves \eqref{It-pd-alg-tors}.
\end{proof}

\subsection{Completeness}

For an admissible ring $R$, the Zink ring $\WW(R)$ is $p$-adically complete.
Indeed, $W(R_{\red})$ is $p$-adically complete, and
$\hat W(\NNN_R)$ is annihilated by a power of $p$
because this holds for $W(\NNN_R)$ by Lemma \ref{Le-W(N)}. 
The following topological variant of this fact seems to be less obvious.

\begin{Prop}
\label{Pr-WW-p-adic}
Let $R$ be an $I$-adically complete ring such that the ideal $I$ is
finitely generated and $K=R/I$ is a perfect ring of 
characteristic $p$. Then the ring\/ $\WW(R)$ is $p$-adically complete.
If $p=2$, the ring $\WW^+(R)$ is $p$-adically complete too.
\end{Prop}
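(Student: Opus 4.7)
The strategy is to reduce the proposition to a Mittag--Leffler argument on the inverse system $\{\hat W(I/I^n)\}_n$. Since $K=R/I$ has characteristic $p$, we have $p\in I$, so each $R/I^n$ is admissible with nilradical $I/I^n$ and $(R/I^n)_{\red}=K$. Lemma~\ref{Le-lift-W} provides a compatible family of sections $s_n\colon W(K)\to\WW(R/I^n)$, and passing to the inverse limit in $n$ gives a split short exact sequence
$$
0\to M\to\WW(R)\to W(K)\to 0,\qquad M:=\varprojlim_n\hat W(I/I^n).
$$
Since $W(K)$ is classically $p$-adic, the task reduces to showing that $M$ is $p$-adically complete and separated.

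The key observation is that the transition maps $\hat W(I/I^{n+1})\to\hat W(I/I^n)$ are surjective: they are induced coordinatewise by the surjection $I/I^{n+1}\twoheadrightarrow I/I^n$, and lifting preserves finite support. Thus the system $\{\hat W(I/I^n)\}_n$ is Mittag--Leffler, so $\varprojlim{}^1\hat W(I/I^n)=0$. Applying $\varprojlim_n$ to
$$
0\to\hat W(I/I^n)\xrightarrow{p^m}\hat W(I/I^n)\to\hat W(I/I^n)/p^m\hat W(I/I^n)\to 0
$$
yields, for every $m\ge 1$, a short exact sequence
$$
0\to M\xrightarrow{p^m}M\to\varprojlim_n\hat W(I/I^n)/p^m\hat W(I/I^n)\to 0.
$$
The injectivity of $p^m$ on $M$ gives $p$-adic separation, and identifies $M/p^m M$ with $\varprojlim_n\hat W(I/I^n)/p^m\hat W(I/I^n)$.

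To obtain $p$-adic completeness, take $\varprojlim_m$ and commute the two inverse limits. Since each $\hat W(I/I^n)$ is annihilated by a power of $p$ (Lemma~\ref{Le-W(N)}), it is trivially $p$-adically complete, whence
$$
\varprojlim_m M/p^m M\;\cong\;\varprojlim_n\varprojlim_m\hat W(I/I^n)/p^m\hat W(I/I^n)\;\cong\;\varprojlim_n\hat W(I/I^n)\;=\;M.
$$
This proves $M$, and hence $\WW(R)$, is $p$-adic. The main conceptual obstacle is the realization that the unbounded growth of the $p$-annihilation orders on $\hat W(I/I^n)$ as $n$ increases is harmless once surjectivity of the transition maps is invoked; no explicit Witt-vector gymnastics are needed.

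For the $v$-stabilised Zink ring when $p=2$: by Lemma~\ref{Le-WW+} applied at each level (and lifted to the inverse limit via the same Mittag--Leffler vanishing of $\varprojlim{}^1\WW(R/I^n)$), the quotient $\WW^+(R)/\WW(R)$ is an $R_{\red}$-module, hence annihilated by $p$. Thus $p\WW^+(R)\subseteq\WW(R)$, and the short exact sequence $0\to\WW(R)\to\WW^+(R)\to\WW^+(R)/\WW(R)\to 0$ immediately implies that $\WW^+(R)$ is $p$-adically complete and separated once $\WW(R)$ is.
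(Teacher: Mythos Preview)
Your argument contains a genuine error at its core step. The displayed sequence
\[
0\to\hat W(I/I^n)\xrightarrow{\,p^m\,}\hat W(I/I^n)\to\hat W(I/I^n)/p^m\hat W(I/I^n)\to 0
\]
is \emph{not} exact on the left: by Lemma~\ref{Le-W(N)} (which you cite yourself), each $\hat W(I/I^n)$ is annihilated by a power of $p$, so multiplication by $p^m$ has a large kernel. Hence you cannot pass to the inverse limit and conclude that $p^m$ is injective on $M$, nor that $M/p^mM\cong\varprojlim_n\hat W(I/I^n)/p^m\hat W(I/I^n)$. (Incidentally, even if $p^m$ were injective on $M$, that would only say $M$ is $p$-torsion-free; it would not give $p$-adic separation $\bigcap_m p^mM=0$. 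Separation is in fact easy for a different reason: each $\hat W(I/I^n)$ is $p$-adically separated, and separation passes to inverse limits.)

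The substantive point you are missing is exactly the identification $p^mM=\varprojlim_n p^m\hat W(I/I^n)$, equivalently that $p^mM$ is \emph{closed} in the inverse-limit topology on $M$. Mittag--Leffler only gives you that $\{p^m\hat W(I/I^n)\}_n$ has surjective transition maps, hence $\varprojlim^1$ vanishes; it does not tell you that a compatible family of elements each divisible by $p^m$ lifts to a single element of $M$ divisible by $p^m$. This closedness is precisely what the paper works to establish: after reducing to the case $R=W(K)[[t_1,\dots,t_r]]$, it shows that $p^rW(R)$ is closed in a suitable topology on $W(R)$ (adapting \cite[Lemmas~4--7]{Zink-Disp}), and then uses the key Lemma~\ref{Le-WW-p-adic}, namely $pW(R)\cap\WW(R)=p\WW(R)$ (respectively $pW(R)\cap\WW^+(R)=p\WW^+(R)$ for $p=2$), to conclude that $p^r\WW(R)$ is closed. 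This last lemma requires explicit Witt-vector computations and is not a formal consequence of Mittag--Leffler.

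Your deduction of the $\WW^+(R)$ case from the $\WW(R)$ case via the extension $0\to\WW(R)\to\WW^+(R)\to C\to 0$ with $pC=0$ is essentially correct (one checks that the $p$-adic filtrations on $\WW(R)$ induced from inside and from $\WW^+(R)$ are cofinal). Note, however, that the paper runs this in the opposite direction for $p=2$: it proves $\WW^+(R)$ is $p$-adic first (since Lemma~\ref{Le-WW-p-adic} is stated for $\WW^+$ when $p=2$) and then deduces the result for $\WW(R)$ from the same extension.
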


This is similar to \cite[Proposition 3]{Zink-Disp}, which says that
$W(R)$ is $p$-adically complete if this holds for $R$.

\begin{proof}
The ring $W(R)$ is $p$-adically separated because this
holds for each $W(R/I^n)$. Thus $\WW(R)$ is
$p$-adically separated too.
Let $S=W(K)[[t_1,\ldots, t_r]]$ and let $S\to R$ 
be a homomorphism which maps $t_1,\ldots, t_r$
to a set of generators of $I/I^2$. Then $S\to R$ is
surjective, and so is $\WW(S)\to\WW(R)$. Since
$\WW(R)$ is $p$-adically separated, in order to
show that $\WW(R)$ is $p$-adically complete 
we may assume that $R=S$. Consider the ideals
$J_n=p^nW(R)+W(I^n)$
of $W(R)$ and $\JJ_n=\WW(R)\cap J_n$ of $\WW(R)$.
Then 
$$
W(R)/J_n=W_n(K)\oplus W(I/I^n),
$$
$$
\WW(R)/\JJ_n=W_n(K)\oplus\hat W(I/I^n).
$$
It follows that $W(R)$ and $\WW(R)$ are complete and separated 
for the linear topologies generated by the ideals $J_n$ and
$\JJ_n$, respectively; moreover $\WW(R)$ 
is closed in $W(R)$.
The ring $W(R)$ is also complete and separated
for the linear topology generated by the ideals
$J_{n,m}'=\Ker(W(R)\to W_m(R/I^n))$. The $J$-topology is
finer than the $J'$-topology because $J_{2n}\subseteq J'_{n,n}$.

We claim that for each $r\ge 1$ the ideal $p^rW(R)$ of $W(R)$
is closed in the $J'$-topology. 
This is a variant of \cite[Lemma 6]{Zink-Disp} with essentially the same proof.
First, for $s\ge 1$ an element $x=(x_0,\ldots,x_m)$ of $W_{m+1}(R)$ 
satisfies $x_i\in I^s$ for all $i$ if and only if 
$w_i(x)\in I^{i+s}$ for all $i$; see the proof of \cite[Lemma 4]{Zink-Disp}.
Then the proof of \cite[Lemma 5]{Zink-Disp} shows that an element
$x\in W_m(R)$ is divisible by $p^r$ if and only
if for each $s$ the image $\bar x\in W_m(R/I^s)$ 
is divisible by $p^r$. Using this, the claim follows
from the proof of \cite[Lemma 6]{Zink-Disp}.

Thus $p^rW(R)$ is closed in the finer $J$-topology
as well. Assume that we have $p\WW(R)=pW(R)\cap\WW(R)$.
Then $p^r\WW(R)$ is closed in the $\JJ$-topology, which implies
that $\WW(R)$ is $p$-adically complete; see \cite[Lemma 7]{Zink-Disp}. 
Thus for $p\ge 3$ the proof is completed by 
Lemma \ref{Le-WW-p-adic} below. 
For $p=2$ the same reasoning shows that $\WW^+(R)$ is
$p$-adically complete. Now $\WW^+(R)/\WW(R)$ is isomorphic to $K$ as
abelian groups by the proof of Lemma \ref{Le-WW+}. 
We get exact sequences 
$$
0\to K\to \WW(R)/2^n\WW(R)\to\WW^+(R)/2^n\WW^+(R)\to K\to 0,
$$
where the transition maps from $n+1$ to $n$ are zero on
the left hand $K$ and the identity on the right
hand $K$. It follows that $\WW(R)$ is $p$-adically complete as well.
\end{proof}

\begin{Lemma}
\label{Le-WW-p-adic}
For a perfect ring $K$ of characteristic $p$
let $R=W(K)[[t_1,\ldots,t_r]]$ with the 
$(p,t_1,\ldots,t_r)$-adic topology. If $p\ge3$ then
$$
pW(R)\cap\WW(R)=p\WW(R).
$$
If $p=2$ then 
$$
2W(R)\cap\WW^+(R)=2\WW^+(R).
$$
\end{Lemma}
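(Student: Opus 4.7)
We handle the case $p\ge 3$; the case $p=2$ for $\WW^+(R)$ proceeds by the same strategy, with the extra $v(1)$-generator in $\WW^+(R)/\WW(R)$ (Lemma~\ref{Le-WW+}) compensating for the failure of the key inequality $N(p-1)\ge 2$ at $p=2,N=1$ below.

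Passing to the inverse limit of the sections from Lemma~\ref{Le-lift-W}, we obtain direct sum decompositions as abelian groups
\[
W(R)=s(W(K))\oplus W(I),\qquad \WW(R)=s(W(K))\oplus\hat W(I),
\]
with $I=(p,t_1,\ldots,t_r)$, $W(I)=\varprojlim_n W(I/I^n)$, and $\hat W(I)=\varprojlim_n\hat W(I/I^n)$. For $x\in\WW(R)\cap pW(R)$, write $x=py$ with $y\in W(R)$ uniquely (since $W(R)$ is $p$-torsion free), and decompose $x=s(a)+b$, $y=s(a')+z$ accordingly. The identity $x=py$ forces $a=pa'$ and $b=pz$, so the lemma reduces to: \emph{if $z\in W(I)$ and $pz\in\hat W(I)$, then $z\in\hat W(I)$}. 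Here an element $w=(w_i)\in W(I)$ lies in $\hat W(I)$ precisely when the coordinates $w_i\in I$ converge to $0$ in the $I$-adic topology of $R$.

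Two tools drive the argument. First, the universal Witt-vector formula
\[
(pz)_i=pz_i+P_i(z_0,\ldots,z_{i-1})
\]
with $P_i\in\ZZ[X_0,\ldots,X_{i-1}]$ homogeneous of weight $p^i$ for the grading $\deg(X_j)=p^j$: every monomial $\prod_jX_j^{e_j}$ appearing in $P_i$ satisfies $\sum_je_jp^j=p^i$. Second, the regularity of $R=W(K)[[t_1,\ldots,t_r]]$ with regular system of parameters $(p,t_1,\ldots,t_r)$ makes multiplication by $T_0$ injective on the associated graded $\mathrm{gr}_IR\cong K[T_0,\ldots,T_r]$, so for $x\in R$ and $N\ge 1$, $px\in I^N$ implies $x\in I^{N-1}$.

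One then proves by induction on $N\ge 1$ the existence of $M(N)$ with $z_i\in I^N$ for all $i\ge M(N)$; the base $N=1$ is immediate. For the step, $pz\in\hat W(I)$ yields $(pz)_i\in I^{N+2}$ for $i$ large, and a case analysis on the monomials of $P_i$ shows $P_i(z_0,\ldots,z_{i-1})\in I^{N+2}$ for $i$ large: \emph{pure low-index} monomials (all $e_j=0$ for $j\ge M(N)$) satisfy $\sum e_j\ge p^{i-M(N)+1}$ by the weight constraint and hence have arbitrarily large $I$-adic order; \emph{pure high-index} monomials have order $\ge Np\ge N+2$ (here $p\ge 3$ enters, via $N(p-1)\ge 2$); \emph{mixed} monomials combine an $I^N$-factor from a high-index variable with a low-index contribution that the weight constraint forces to be of total exponent at least $p\ge 3$, yielding order $\ge N+p\ge N+2$. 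Combining, $pz_i\in I^{N+2}$, and the regularity inequality gives $z_i\in I^{N+1}$, closing the induction. The main technical obstacle is precisely the monomial case analysis for $P_i$, which exploits the weight constraint $\sum e_jp^j=p^i$ together with the bound $j\le i-1$; the inequality $N(p-1)\ge 2$ is what breaks at $p=2,N=1$ and forces the passage to $\WW^+(R)$ for $p=2$.
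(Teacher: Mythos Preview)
Your argument for $p\ge 3$ is correct but follows a genuinely different route from the paper. The paper introduces the further filtration $0\subset W(pR)\subset W(I)\subset W(R)$ and reduces to two simpler statements: on the characteristic $p$ quotient $\bar I=I/pR\subset K[[t_1,\dots,t_r]]$ one has $p=(a_0,a_1,\dots)\mapsto(0,a_0^p,a_1^p,\dots)$, so $p\,W(\bar I)\cap\hat W(\bar I)=p\hat W(\bar I)$ follows from the elementary fact that $a^p\in\bar I^{pn}$ implies $a\in\bar I^n$; on $W(pR)$ the logarithm $\Log$ turns multiplication by $p$ into componentwise multiplication by $p$, and the claim for $\tilde W(pR)$ (resp.\ $\hat W(pR)$) becomes trivial. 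This is shorter and avoids any monomial bookkeeping. Your direct approach via the universal formula $(pz)_i=pz_i+P_i$ and weight analysis is a valid alternative; it is more elementary in that it uses only the regularity of $R$ and the homogeneity of $P_i$, at the cost of the case split. One point to tighten: your mixed-case assertion that the low-index total exponent is at least $p$ is true but not immediate from the weight constraint alone; it uses that for $i\ge M(N)$ the low-index weight $W_L=p^i-W_H$ is divisible by $p^{M(N)}$ (since each high-index term $e_jp^j$ is) and hence $W_L\ge p^{M(N)}$, whence $E_L\ge W_L/p^{M(N)-1}\ge p$.

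Your treatment of $p=2$, however, is not a proof. Saying that the extra generator $v(1)$ of $\WW^+(R)/\WW(R)$ ``compensates'' for the failure of $N(p-1)\ge 2$ at $N=1$ does not indicate any mechanism, and it is unclear how to run your induction for $\WW^+$ directly. The paper's filtration handles $p=2$ uniformly: the only change is that the $pR$-graded piece of $\WW^+(R)$ is $\tilde W(pR)$ rather than $\hat W(pR)$, and in log coordinates this is exactly the set of $I$-adically convergent sequences, on which multiplication by $p$ is componentwise; regularity then gives the result just as for $p\ge 3$. If you want to rescue your approach for $p=2$, you would need to identify the $I$-part of $\WW^+(R)$ as $\hat W(I)+\tilde W(pR)$ and argue separately on the $\tilde W(pR)$ summand, which effectively reproduces the paper's filtration.
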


\begin{proof}
Assume $p=2$. Let $I$ be the kernel of $R\to K$
and let $\bar I=I/pR$.
The filtration $0\subseteq W(pR)\subseteq W(I)\subseteq W(R)$ induces
a filtration of $\WW(R)$ with successive quotients
$\tilde W(pR):=\varprojlim_n\tilde W(pR/I^npR)$ and
$\hat W(\bar I):=\varprojlim_n\hat W(\bar I/\bar I^n)$ 
and $W(K)$. To prove the lemma it suffices to show that
$$
pW(\bar I)\cap\hat W(\bar I)=p\hat W(\bar I)
$$
and
$$
pW(pR)\cap\tilde W(pR)=p\tilde W(pR).
$$
The first equality holds because multiplication by $p$ 
on $W(\bar I)$ is given by 
$(a_0,a_1,\ldots)\mapsto(0,a_0^p,a_1^p,\ldots)$,
and for $a\in\bar I$ with $a^p\in\bar I^{pn}$
we have $a\in\bar I^n$. The second equality holds
because the isomorphism $\Log:W(pR)\cong (pR)^\NN$
induces an isomorphism between $\tilde W(pR)$ and the 
group of all sequences in $(pR)^\NN$
that converge to zero $I$-adically.
The proof for $p\ge 3$ is similar.
\end{proof}

\subsection{Divided powers}
\label{Subse-pd}

In section \ref{Se-bt2disp} we will use that
the augmentation ideals of the Zink ring and its variants 
carry natural divided powers, with some exception when $p=2$;
see also \ref{Subse-strange-pd}.

Let us first recall the canonical divided powers on the
Witt ring. If $R$ is a $\ZZ_{(p)}$-algebra, then
$W(R)$ is a $\ZZ_{(p)}$-algebra as well, and the
ideal $I_R$ carries divided powers $\gamma$ which
are determined by $(p-1)!\gamma_p(v(x))=p^{p-2}v(x^p)$.
Assume that $(B\to R,\delta)$ is a divided power
extension of $\ZZ_{(p)}$-algebras with kernel $\Fb\subseteq B$.
Let $I_{B/R}$ be the kernel of $W(B)\to R$.
If $i:\Fb\to W(\Fb)$ is defined by $\Log(i(b))=[b,0,0,\ldots]$,
we have $I_{B/R}=I_B\oplus i(\Fb)$, and
the divided powers $\gamma$ on $I_B$ extend to divided
powers $\gamma'=\gamma\oplus\delta$ on $I_{B/R}$ such
that $\gamma'_n(i(b))=i(\delta_n(b))$ for $b\in\Fb$.
If $p\in\Fb$ and if $\delta$ extends the
canonical divided powers of $p$, then $\gamma\oplus\delta$ 
extends the canonical divided powers
of $p$, and $f$ preserves $\gamma\oplus\delta$.
This is clear when $B$ has no $p$-torsion;
the general case follows because $(B\to R)$ can be written 
as the quotient of a divided power extension $(B'\to R')$,
where $B'$ is the divided power algebra of a free module over a polynomial ring $R''$ over $\ZZ_{(p)}$, and $R'=R''/pR''$.

These facts extend to the Zink ring as follows.

\begin{Lemma}
\label{Le-pd-II}
Let\/ $\II\subseteq\WW$ be one of the following.
\begin{enumerate}
\renewcommand{\theenumi}{\roman{enumi}}
\item
$\II=\II_R$ and\/ $\WW=\WW(R)$ for an admissible ring $R$
with $p\ge 3$,
\item
$\II=\II^+_R$ and\/ $\WW=\WW^+(R)$ for an admissible ring
$R$ with $p=2$.
\end{enumerate}
Then the divided powers $\gamma$ on $I_R$ induce
divided powers on\/ $\II$.
\end{Lemma}

\begin{proof}
Since $\WW$ is a $\ZZ_{(p)}$-algebra it suffices to show
that $\II$ is stable under the map $\gamma_p:I_R\to I_R$, which
is true because $\II=v(\WW)$ by Lemmas \ref{Le-WW-v} and
\ref{Le-WW-large-v}.
\end{proof}

\begin{Lemma}
\label{Le-pd-II**}
Let $(B\to R,\delta)$ be a divided power extension
of admissible rings with kernel $\Fb\subseteq B$,
and let\/ $\II_{B/R}$ be the kernel of\/ $\WW(B,\delta)\to R$.
Assume that $p\ge 3$; or that $p=2$ and\/ $p\in\Fb$ and $\delta$ 
extends the canonical divided powers of\/ $p$. 
Then the divided powers $\gamma\oplus\delta$ on $I_{B/R}$ 
induce divided powers on\/ $\II_{B/R}$.
If $p\in\Fb$ and if\/ $\delta$ extends the canonical
divided powers of $p$, then the divided powers on\/ 
$\II_{B/R}$ induced by $\gamma\oplus\delta$ extend the canonical 
divided powers of $p$ and are preserved by $f$.
\end{Lemma}

\begin{proof}
Let $\II'_B$ be the kernel of\/ $\WW(B/R)\to B$. 
Then $\II_{B/R}=\II'_B\oplus i(\Fb)$, and we have
$\II'_B=v(\WW(B/R))$ by Lemma \ref{Le-WW-large-v}.
Thus $\II'_{B}$ is stable under $\gamma$, and 
$\II_{B/R}$ is stable under $\gamma\oplus\delta$.
The second assertion follows from the corresponding
fact for the Witt ring.
\end{proof}


\section{Dieudonn\'e displays}

In this section, Dieudonn\'e displays and a number of 
variants related to divided power extensions are 
defined. We use the formalism of frames and windows 
introduced in \cite{Lau-Frames}.
First of all, let us recall a well-known fact.

\begin{Lemma}
\label{Le-W-f}
Let $A$ be a commutative, not necessarily unitary ring.
For $x\in W(A)$ we have $f(x)\equiv x^p$ modulo $pW(A)$.
Similarly, for $x\in\hat W(A)$ we have $f(x)\equiv x^p$ modulo 
$p\hat W(A)$.
\end{Lemma}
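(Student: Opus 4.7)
The strategy is a universal-case reduction plus a ghost component calculation. I set $R = \ZZ[X_0, X_1, \ldots]$ and let $\xi = (X_0, X_1, \ldots) \in W(R)$ be the generic Witt vector. Any $x \in W(A)$ is the image of $\xi$ under the ring homomorphism $W(R) \to W(A)$ induced by $X_i \mapsto x_i$, which commutes with $f$ and with the $p$-th power. So it suffices to prove $f(\xi) - \xi^p \in pW(R)$.

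Since $R$ is $p$-torsion free, $W(R)$ injects into $R^{\NN}$ via the ghost map, and an element lies in $pW(R)$ iff the ghost components divided by $p$ still satisfy Dwork's congruences (with respect to the Frobenius lift $\sigma(X_i) = X_i^p$). Using the identity $w_{n+1}(\xi) = w_n(\xi^{[p]}) + p^{n+1}X_{n+1}$ with $\xi^{[p]} = (X_0^p, X_1^p, \ldots)$, together with the standard implication ``$a \equiv b \pmod{p^k} \Rightarrow a^p \equiv b^p \pmod{p^{k+1}}$'' in a $p$-torsion-free ring, one gets inductively $w_n(\xi^{[p]}) \equiv w_n(\xi)^p \pmod{p^{n+1}}$, so $w_n(f\xi - \xi^p) = w_{n+1}(\xi) - w_n(\xi)^p$ is divisible by $p^{n+1}$. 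A parallel expansion, expanding $\sigma(w_{n-1}(f\xi - \xi^p))$ via the same identities, shows that the divided ghost sequence satisfies Dwork's congruences, yielding $z \in W(R)$ with $pz = f(\xi) - \xi^p$.

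For the $\hat W$ version, I first verify that $\hat W(A)$ is closed under $f$ and under products. Writing $x \in \hat W(A)$ as a finite sum $\sum V^i[a_i]$ with $a_i$ nilpotent, this follows from $f[a] = [a^p]$, $fV^i[a] = pV^{i-1}[a]$ for $i \geq 1$, and $V^i[a]V^j[b] = p^i V^j[a^{p^{j-i}}b]$ for $i \leq j$; the only delicate point is that the Witt coordinates of $p[a]$ are integer polynomials in $a$ whose lowest total degree grows to infinity, so they vanish past the nilpotence order of $a$. To show that the witness $z$ lies in $\hat W(A)$ when $x$ does, I proceed by induction on the number of summands of $x = \sum V^i[a_i]$. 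For a single generator, the identity $(V^i[a])^p = p^{i(p-1)}V^i[a^p]$ gives
\[
\frac{f(V^i[a]) - (V^i[a])^p}{p} = V^{i-1}[a] - p^{i(p-1)-1}V^i[a^p] \qquad (i \geq 1),
\]
and zero for $i = 0$, both manifestly in $\hat W(A)$. The induction step uses the additivity defect
\[
\frac{f(x'+x'') - (x'+x'')^p}{p} - \frac{f(x')-(x')^p}{p} - \frac{f(x'')-(x'')^p}{p} = -\sum_{k=1}^{p-1}\tfrac{1}{p}\tbinom{p}{k}(x')^k(x'')^{p-k},
\]
whose integer-coefficient cross terms lie in $\hat W(A)$ by closure under products. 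The main technical obstacle is precisely this finite-support argument: $f$ does not generically preserve finite support on $W(A)$, and only the nilpotence of the coefficients of elements of $\hat W(A)$ forces the higher Witt coordinates of $pV^{i-1}[a]$ and of the cross terms to vanish.
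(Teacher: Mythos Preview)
Your argument is correct, but it is considerably more elaborate than what the paper does. The paper's proof is essentially two lines: write $x=[x_0]+v(y)$; then $f(x)=[x_0^p]+py\equiv[x_0^p]\pmod{pW(A)}$ since $fv=p$, while $x^p\equiv[x_0]^p=[x_0^p]\pmod{pW(A)}$ because the binomial cross terms carry a factor $p$ and $v(y)^p=p^{p-1}v(y^p)$. The same decomposition works verbatim for $\hat W$, since $[x_0]\in\hat W(A)$, $y\in\hat W(A)$, and all the correction terms $py$, $\binom{p}{k}[x_0]^{p-k}v(y)^k$, $p^{p-1}v(y^p)$ visibly lie in $p\hat W(A)$.

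By contrast, you pass to the universal ring $\ZZ[X_0,X_1,\ldots]$ and run a ghost/Dwork argument for the $W$-statement, and then give a separate induction on the length of the expansion $\sum V^i[a_i]$ for $\hat W$, checking closure of $\hat W$ under $f$, products, and multiplication by $p$ along the way. All of these steps are valid, and your degree-growth observation for the Witt coordinates of $p[a]$ (equivalently, that they equal $c_na^{p^n}$ with $(c_n)$ the Witt coordinates of $p\in W(\ZZ)$) is the right way to justify the delicate point. What the paper's approach buys is brevity and a single argument that covers both $W$ and $\hat W$ at once; what your approach buys is that it makes the universal nature of the identity and the role of Dwork's criterion explicit, at the cost of a longer and more bookkeeping-heavy $\hat W$ case.
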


\begin{proof}
For $x\in W(R)$ write $x=[x_0]+v(y)$ with $x_0\in R$ 
and $y\in W(R)$. Then $f(x)\equiv[x_0^p]\equiv x^p$ modulo 
$pW(R)$ because $fv=p$ and $v(y)^p=p^{p-1}v(y^p)$. 
The same calculation applies with $\hat W$ in place of $W$.
\end{proof}

\subsection{Frames and windows}
\label{Subse-frames}

We recall the notion of frames and windows 
from \cite{Lau-Frames} with some additions.
A \emph{pre-frame} is a quintuple
$$
\FFF=(S,I,R,\sigma,\sigma_1)
$$
where $S$ and $R=S/I$ are rings,
where $\sigma:S\to S$ is a ring endomorphism with 
$\sigma(a)\equiv a^p$ modulo $pS$,
and where $\sigma_1:I\to S$ 
is a $\sigma$-linear map of $S$-modules
whose image generates $S$ as an $S$-module.
Then there is a unique element $\theta\in S$ with 
$\sigma(a)=\theta\sigma_1(a)$ for $a\in I$. 
The pre-frame $\FFF$ is called a \emph{frame} if 
$$
I+pS\subseteq\Rad(S).
$$
If in addition all projective $R$-modules of finite type
can be lifted to projective $S$ modules then $\FFF$ is
called a \emph{lifting frame}.

A homomorphism of pre-frames or frames $\alpha:\FFF\to\FFF'$
is a ring homomorphism $\alpha:S\to S'$ with 
$\alpha(I)\subseteq I'$ such that $\sigma'\alpha=\alpha\sigma$
and $\sigma_1'\alpha=u\cdot\alpha\sigma_1$
for a unit $u\in S'$, which is then determined by $\alpha$. 
It also follows that $\alpha(\theta)=u\theta'$. We say that $\alpha$ is a 
$u$-homomorphism of pre-frames or frames.
If $u=1$ then $\alpha$ is called \emph{strict}.

Let now $\FFF$ be a frame. An $\FFF$-\emph{window} is a quadruple
$$
\PPP=(P,Q,F,F_1)
$$
where $P$ is a projective $S$-module of finite type
with a submodule $Q$ such that there exists
a decomposition of $S$-modules
$P=L\oplus T$ with $Q=L\oplus IT$, called a \emph{normal
decomposition}, and where $F:P\to P$
and $F_1:Q\to P$ are $\sigma$-linear maps
of $S$-modules with 
$$
F_1(ax)=\sigma_1(a)F(x)
$$ 
for $a\in I$ and $x\in P$;
we also assume that $F_1(Q)$ generates $P$ as an $S$-module.
Then $F(x)=\theta F_1(x)$ for $x\in Q$. If $F$ is a lifting
frame, every pair $(P,Q)$ such that $P$ is a
projective $S$-module of finite type and $P/Q$ is a projective 
$R$-module admits a normal decomposition. In general,
for given $(P,Q)$ together with a normal 
decomposition $P=L\oplus T$, giving $\sigma$-linear
maps $(F,F_1)$ which make an $\FFF$-window $\PPP$
is equivalent to giving a $\sigma$-linear isomorphism
$$
\Psi:L\oplus T\to P
$$ 
defined by $F_1$ on $L$ and by $F$ on $T$.
The triple $(L,T,\Psi)$ is called a 
\emph{normal representation} of $\PPP$.

A frame homomorphism $\alpha:\FFF\to\FFF'$ 
induces a base change functor $\alpha_*$
from $\FFF$-windows to $\FFF'$-windows.
In terms of normal representations it is given by
$$
(L,T,\Psi)\mapsto(S'\otimes_SL,S'\otimes_ST,\Psi')
$$
with $\Psi'(s'\otimes l)=u\sigma'(s')\otimes\Psi(l)$
and $\Psi'(s'\otimes t)=\sigma'(s')\otimes\Psi(t)$.

A frame homomorphism $\alpha:\FFF\to\FFF'$ is called 
\emph{crystalline} if the functor
$\alpha_*$ is an equivalence of categories.
For reference we recall \cite[Theorem 3.2]{Lau-Frames}:

\begin{Thm}
\label{Th-frame-crys}
Let $\alpha:\FFF\to\FFF'$ be a homomorphism of frames
which induces an isomorphism $R\cong R'$ and a
surjection $S\to S'$ with kernel $\Fa$. 
We assume that there is a finite filtration of ideals
$\Fa=\Fa_0\supseteq\ldots\supseteq\Fa_n=0$ 
with $\sigma_1(\Fa_i)\subseteq\Fa_i$ and 
$\sigma(\Fa_i)\subseteq\Fa_{i+1}$,
that $\sigma_1$ is elementwise nilpotent on each
$\Fa_i/\Fa_{i+1}$,  
and that all projective $S'$-modules of finite type 
lift to projective $S$-modules of finite type.
Then $\alpha$ is crystalline.
\qed
\end{Thm}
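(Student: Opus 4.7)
My plan is to induct on the length $n$ of the filtration, reducing to the case $n=1$ in which $\sigma(\Fa)=0$ and $\sigma_1$ is elementwise nilpotent on $\Fa$. The reduction factors $\alpha$ as $\FFF\to\FFF/\Fa_{n-1}\to\FFF'$; the intermediate quotient inherits a frame structure because $\Fa\subseteq I\subseteq\Rad(S)$, where the inclusion $\Fa\subseteq I$ is forced by the isomorphism $R\cong R'$, and the lifting hypothesis transfers to both steps via Nakayama.

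For essential surjectivity in the reduced case, take an $\FFF'$-window $\PPP'$ with normal representation $(L',T',\Psi')$. Choose finitely generated projective $S$-modules $L,T$ lifting $L',T'$ using the hypothesis on lifting of projectives. Since $L\oplus T$ is projective over $S$, the $\sigma$-linear isomorphism $\Psi'$ lifts to some $\sigma$-linear map $\Psi\colon L\oplus T\to L\oplus T$. Because $\Fa\subseteq\Rad(S)$, Nakayama promotes $\Psi$ to an isomorphism, and $(L,T,\Psi)$ is a normal representation of an $\FFF$-window whose base change is $\PPP'$.

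For full faithfulness, let $\PPP_1,\PPP_2$ be $\FFF$-windows and let $\bar g$ be a morphism of their base changes. First produce an $S$-linear lift $g_0\colon P_1\to P_2$ carrying $Q_1$ into $Q_2$ by lifting block-wise along chosen normal decompositions. Since $\Fa\subseteq I$ gives $\Fa P_2\subseteq Q_2$, any correction $h\in\Hom_S(P_1,\Fa P_2)$ automatically preserves the filtration. Demanding that $g=g_0+h$ intertwine $F$ and $F_1$ yields a linear fixed-point equation $h=\eta_*+T(h)$, where $\eta_*\in\Hom_S(P_1,\Fa P_2)$ measures the failure of $g_0$ to commute with $F,F_1$, and $T$ is built from pre-composition by $F,F_1$ on $P_1$ combined with the restrictions of $F,F_1$ to $\Fa P_2$.

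The key point is that $\sigma(\Fa)=0$ forces $F$ to annihilate $\Fa P_2$, while the identity $F_1(ap)=\sigma_1(a)F(p)$ shows that the action of $F_1$ on $\Fa P_2$ is governed by $\sigma_1|_\Fa$ acting on the $\Fa$-coefficients. Consequently, evaluated on any element of a fixed finite generating set of $P_1$, the iterate $T^k(\eta_*)$ is a sum of terms of the form $\sigma_1^k(a)\cdot\xi$ with $a$ ranging over a fixed finite subset of $\Fa$; elementwise nilpotence of $\sigma_1$ on $\Fa$ then forces all sufficiently high iterates to vanish, so the formal Neumann series $h=\sum_{k\ge 0}T^k(\eta_*)$ terminates elementwise and delivers the unique correction, yielding both fullness and faithfulness. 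The main obstacle I anticipate is precisely this last reduction: one must set up $T$ carefully enough that its iterates reduce to iterated applications of $\sigma_1$ on finitely many elements of $\Fa$, so that the elementwise nilpotence hypothesis can be invoked without any topological completion assumption on $S$.
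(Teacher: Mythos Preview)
The paper does not prove this theorem; it simply records the statement and refers to \cite[Theorem~3.2]{Lau-Frames}. Your sketch is correct and is essentially the argument given there: reduce by the filtration to the case $\sigma(\Fa)=0$, lift normal representations for essential surjectivity, and for full faithfulness solve a fixed-point equation whose contracting operator is pointwise nilpotent.

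Two small comments on the execution. First, the operator you call $T$ is most cleanly written as
\[
U(h)=\bigl[\,L_1\ni l\mapsto F_1(h(l)),\ T_1\ni t\mapsto 0\,\bigr]^\sharp\circ(\Psi_1^\sharp)^{-1},
\]
i.e.\ post-composition with $F_1|_{\Fa P_2}$ on the $L_1$-part and pre-composition with the inverse of the linearised $\Psi_1$; your phrase ``pre-composition by $F,F_1$ on $P_1$'' is slightly off. Second, your key nilpotence claim does hold, and the reason is exactly the $\sigma$-linearity of $\sigma_1$: writing $\eta(e_\alpha)=\sum_j a_{\alpha j}p_j$ with $a_{\alpha j}\in\Fa$ and using $F_1(bx)=\sigma_1(b)F(x)$ together with $\sigma_1(sb)=\sigma(s)\sigma_1(b)$, one finds that every term of $U^k(\eta)$ carries a factor $\sigma_1^k(a_{\alpha j})$ for one of the finitely many $a_{\alpha j}$, so elementwise nilpotence of $\sigma_1$ on $\Fa$ kills $U^k(\eta)$ for $k$ large without any completeness assumption. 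This resolves the obstacle you anticipated.
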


Let us recall the operator $V^\sharp$ of a window.
For an $S$-module $M$ we write $M^{(\sigma)}=S\otimes_{\sigma,S}M$. 
A filtered $F$-$V$-module over $\FFF$ is a quadruple
$$
(P,Q,F^\sharp,V^\sharp)
$$ 
where $P$ is a projective $S$-module of finite type, 
$Q$ is a submodule of $P$ such that $P/Q$ is projective over 
$R$, and $F^\sharp:P^{(\sigma)}\to P$ and $V^\sharp:P\to P^{(\sigma)}$ 
are $S$-linear maps with $F^\sharp V^\sharp=\theta$ 
and $V^\sharp F^\sharp=\theta$. 

\begin{Lemma}
\label{Le-F-V-mod}
There is a natural functor from $\FFF$-windows 
to filtered $F$-$V$-modules over $\FFF$,
which is fully faithful if $\theta$ is not
a zero divisor in $S$.
\end{Lemma}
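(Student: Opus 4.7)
The plan is to construct the functor from a normal representation of the window, to promote the construction to an intrinsic one that is visibly compatible with window morphisms, and finally to use the non-zero-divisor hypothesis on $\theta$ to recover $F_1$ from $F$ for the fullness statement.

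Given an $\FFF$-window $\PPP=(P,Q,F,F_1)$ with normal representation $(L,T,\Psi)$, I would define $F^\sharp:P^{(1)}\to P$ as the standard linearisation of $F$ and use the $S$-linear isomorphism $\Psi^\sharp:L^{(1)}\oplus T^{(1)}\to P$ to identify $P^{(1)}=L^{(1)}\oplus T^{(1)}$. Because $F|_L=\theta F_1|_L=\theta\Psi|_L$ and $F|_T=\Psi|_T$, one sees that $F^\sharp=\Psi^\sharp\circ\mu$, where $\mu$ is multiplication by $\theta$ on $L^{(1)}$ and the identity on $T^{(1)}$. I would then set $V^\sharp=\nu\circ(\Psi^\sharp)^{-1}$, where $\nu$ is the identity on $L^{(1)}$ and multiplication by $\theta$ on $T^{(1)}$. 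The identities $F^\sharp V^\sharp=V^\sharp F^\sharp=\theta$ then follow from $\mu\nu=\nu\mu=\theta\cdot\mathrm{id}$. A short verification using $\sigma(i)=\theta\sigma_1(i)$ shows that $V^\sharp$ satisfies the intrinsic relation $V^\sharp(F_1(x))=1\otimes x\in P^{(1)}$ for all $x\in Q$: on $l\in L$ this reduces to $\nu(1\otimes l)=1\otimes l$, while on $it$ with $i\in I$, $t\in T$ one has $V^\sharp(F_1(it))=\sigma_1(i)V^\sharp(\Psi(t))=\sigma_1(i)(\theta\otimes t)=\sigma(i)\otimes t=1\otimes it$. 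Since $F_1(Q)$ generates $P$ as an $S$-module by the window axioms, this relation characterises $V^\sharp$ uniquely, so the construction is independent of the normal representation. Functoriality is then automatic: for a window morphism $g:\PPP\to\PPP'$ and $x\in Q$, $(V^\sharp)'(g(F_1(x)))=(V^\sharp)'(F_1'(g(x)))=1\otimes g(x)=g^{(1)}(1\otimes x)=g^{(1)}(V^\sharp(F_1(x)))$, so $(V^\sharp)'\circ g$ and $g^{(1)}\circ V^\sharp$ agree on the generating set $F_1(Q)$ and hence on $P$; compatibility of $g$ with $F^\sharp$ is immediate from $gF=F'g$.

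For the fully faithful claim, faithfulness is automatic since a window morphism and its image in the category of filtered $F$-$V$-modules share the same underlying map $g$. For fullness, let $g:P\to P'$ be $S$-linear, sending $Q$ into $Q'$ and commuting with $F^\sharp$ and $V^\sharp$. The condition on $F^\sharp$ is equivalent to $gF=F'g$, and for $x\in Q$ the relations $F|_Q=\theta F_1$ and $F'|_{Q'}=\theta F_1'$ then yield
$$
\theta\cdot g(F_1(x))=g(F(x))=F'(g(x))=\theta\cdot F_1'(g(x)).
$$
Since $\theta$ is not a zero divisor in $S$ and $P'$ is projective and hence flat over $S$, multiplication by $\theta$ is injective on $P'$, so $gF_1=F_1'g$ on $Q$ and $g$ is a window morphism. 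I expect the main obstacle to be the intrinsic characterisation of $V^\sharp$ in the second paragraph; once that is secured, both the independence of the normal representation and the fullness argument follow with little extra work.
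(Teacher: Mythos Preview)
Your proof is correct and follows essentially the same approach as the paper: the construction of $V^\sharp$ via a normal representation as $(1\oplus\theta)\circ(\Psi^\sharp)^{-1}$, the intrinsic characterisation $V^\sharp(F_1(x))=1\otimes x$ for $x\in Q$, and the recovery of $F_1$ from $F$ via the non-zero-divisor hypothesis all appear in the paper's argument. The only minor difference is in the organisation of the full faithfulness: the paper factors through the category of triples $(P,Q,F^\sharp)$ and shows that both windows and filtered $F$-$V$-modules embed fully faithfully there, whereas you argue directly that a filtered $F$-$V$-module morphism is a window morphism; your route is slightly more economical since it bypasses the separate verification that $V^\sharp$ is determined by $F^\sharp$.
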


\begin{proof}
The functor is $(P,Q,F,F_1)\mapsto(P,Q,F^\sharp,V^\sharp)$
where $F^\sharp$ is the linearisation of $F$, and $V^\sharp$
is the unique $S$-linear map such that $V^\sharp(F_1(x))=1\otimes x$
for $x\in Q$. Clearly this determines $V^\sharp$ if it exists.
In terms of a normal representation $(L,T,\Psi)$ of $\PPP$,
thus $P=L\oplus T$, one can define 
$V^\sharp=(1\oplus\theta)(\Psi^\sharp)^{(-1)}$.
The required relation $F^\sharp V^\sharp=\theta$ on $P$
is equivalent to $F^\sharp V^\sharp F_1=\theta F_1$
on $Q$, which is clear since $\theta F_1=F$. 
The required relation $V^\sharp F^\sharp=\theta$ on $P^{(\sigma)}$ 
holds if and only if it holds after multiplication with
$\sigma_1(a)$ for all $a\in I$. For $x\in P$ we calculate
$\sigma_1(a)V^\sharp F^\sharp(1\otimes x)=V^\sharp F_1(ax)
=\sigma(a)\otimes x=\theta\sigma_1(a)(1\otimes x)$.

Assume that $\theta$ is not a zero divisor in $S$.
It suffices to show that the forgetful functors 
from windows to triples $(P,Q,F)$ and from
filtered $F$-$V$-modules to triples $(P,Q,F^\sharp)$ 
are fully faithful. In the first case this holds
because $\theta F_1=F$. In the 
second case, for an endomorphism $\alpha$ of $P$ with 
$\alpha F^\sharp=F^\sharp\alpha^{(\sigma)}$ we calculate
$V^\sharp\alpha\theta=V^\sharp\alpha F^\sharp V^\sharp=
V^\sharp F^\sharp\alpha^{(\sigma)}V^\sharp=\theta\alpha^{(\sigma)}V^\sharp$,
which implies that $V^\sharp\alpha=\alpha^{(\sigma)} V^\sharp$.
\end{proof}

Finally, let us recall the duality formalism. 
Let $\FFF$ denote the $\FFF$-window $(S,I,\sigma,\sigma_1)$. 
A bilinear form between $\FFF$-windows 
$$
\beta:\PPP\times\PPP'\to\FFF
$$ 
is an $S$-bilinear map $\beta:P\times P'\to S$ 
such that $\beta(Q\times Q')\subseteq I$
and $\beta(F_1x,F_1'x')=\sigma_1(\beta(x,x'))$ for $x\in Q$ 
and $x'\in Q'$. For each $\PPP$, the functor 
$\PPP'\mapsto\Bil(\PPP\times\PPP',\FFF)$ is represented
by an $\FFF$-window $\PPP^t$, called the dual of $\PPP$.
The tautological bilinear form $\PPP\times\PPP^t\to S$ is 
a perfect bilinear map $P\times P^t\to S$. 
There is a bijection between normal representations 
$P=L\oplus T$ and normal representation $P^t=L^t\oplus T^t$ 
determined by
$\left<L,L^t\right>=0=\left<T,T^t\right>$. The associated
operators $\Psi:P\to P$ and $\Psi^t:P^t\to P^t$ are
related by $\left<\Psi x,\Psi^tx'\right>=\sigma\left<x,x'\right>$.

There is also an obvious duality of filtered $F$-$V$-modules
over $\FFF$: The dual of $\MMM=(P,Q,F^\sharp,V^\sharp)$ 
is $\MMM^t=(P^*,Q',V^{\sharp*},F^{\sharp*})$
where $P^*=\Hom_S(P,S)$, and $Q'$ is the submodule 
of all $y$ in $P^*$ with $y(Q)\subseteq I$.
It is easy to see that the functor in Lemma \ref{Le-F-V-mod}
preserves duality.

\subsection{Frames associated to the Witt ring}

For an arbitrary ring $R$ let $f_1:I_R\to W(R)$
be the inverse of the Verschiebung $v$. Then
$$
\WWW_R=(W(R),I_R,R,f,f_1)
$$
is a pre-frame with $\theta=p$. If $R$ is $p$-adically complete,
$\WWW_R$ is a lifting frame because $W(R)$ is
$I_R$-adically complete by \cite[Proposition 3]{Zink-Disp}, and windows 
over $\WWW_R$ are displays over $R$. 

For a divided power 
extension of rings $(B\to R,\delta)$ with kernel $\Fb\in B$
one can define a pre-frame
$$ 
\WWW_{B/R}=(W(B),I_{B/R},R,f,\tilde f_1)
$$
with $I_{B/R}=I_B+W(\Fb)$ such that $\tilde f_1:I_{B/R}\to W(B)$ 
is the unique extension of $f_1$ whose restriction
to $W(\Fb)$ is given by
$[a_0,a_1,a_2,\ldots]\mapsto[a_1,a_2,\ldots]$
in logarithmic coordinates; see \ref{Subse-enlarged-Zink}.
The projection $\WWW_B\to\WWW_R$ factors into strict 
pre-frame homomorphisms $\WWW_B\to\WWW_{B/R}\to\WWW_R$. 

As a special case, assume that $R$ is a perfect ring 
of characteristic $p$.
Then $f$ is an automorphism of $W(R)$, and $I_R=pW(R)$.
Let us call a Dieudonn\'e module over $R$ a triple $(P,F,V)$ 
where $P$ is a projective $W(R)$-module of finite type 
equipped with an $f$-linear endomorphism $F$ and an 
$f^{-1}$-linear endomorphism $V$ such that $FV=p$,
or equivalently $VF=p$.

\begin{Lemma}
\label{Le-disp-perf}
Displays over a perfect ring $R$ are equivalent to
Dieudonn\'e modules over $R$.
\end{Lemma}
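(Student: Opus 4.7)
The plan is to exhibit mutually inverse functors between displays and Dieudonn\'e modules over $R$, using throughout that for perfect $R$ the Frobenius $f$ is an automorphism of $W(R)$, that $I_R = pW(R)$, and that $f_1(pa) = f(a)$ for $a \in W(R)$.

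In the direction displays to Dieudonn\'e modules, I would start from a display $\PPP = (P, Q, F, F_1)$ and apply Lemma~\ref{Le-F-V-mod} to $\WWW_R$ (for which $\theta = p$) to obtain the $W(R)$-linear linearisation $V^\sharp : P \to P^{(1)}$ with $F^\sharp V^\sharp = p = V^\sharp F^\sharp$. Because $f$ is bijective, the assignment $a \otimes x \mapsto f^{-1}(a)x$ yields an $f^{-1}$-semilinear bijection $\iota : P^{(1)} \to P$ of abelian groups (clear on free modules and inherited by direct summands), so $V := \iota \circ V^\sharp$ is $f^{-1}$-linear with $FV = p = VF$. Conversely, given $(P, F, V)$ I would set $Q = V(P)$ and define $F_1 : Q \to P$ to be the left inverse of $V$. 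This is legitimate because $W(R)$ is $p$-torsion free for perfect $R$, hence so is the projective module $P$, and $V(x) = 0$ forces $px = FV(x) = 0$; moreover $V(P)$ is a $W(R)$-submodule since $aV(x) = V(f(a)x)$ by surjectivity of $f$, and $pP = VF(P) \subseteq V(P)$. The identity $F_1(ay) = f_1(a)F(y)$ for $a = pa' \in I_R$ and $y \in P$ follows from $f_1(pa') = f(a')$ combined with $F_1(pa'y) = F_1(VF(a'y)) = F(a'y) = f(a')F(y)$; and $F_1(Q) = P$ holds by construction.

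The main obstacle is to verify that $(P, V(P))$ admits a normal decomposition, equivalently that $P/V(P)$ is projective over $R$ and that $V(P)/pP$ is a direct summand of $P/pP$. Since $pP \subseteq V(P)$, this quotient equals the cokernel of the $R$-linear map $\bar V^\sharp : P/pP \to \phi^*(P/pP)$, where $\phi$ is the Frobenius of $R$. Using $p$-torsion-freeness of $P$ and $F^\sharp V^\sharp = V^\sharp F^\sharp = p$, one verifies after reduction modulo $p$ that $\operatorname{image}\bar V^\sharp = \ker \bar F^\sharp$ and $\operatorname{image}\bar F^\sharp = \ker \bar V^\sharp$. Upgrading this to a direct-sum decomposition $P/pP = \ker \bar V^\sharp \oplus \ker \bar F^\sharp$ is where the perfectness of $R$ enters essentially: at each prime of $\Spec R$ the perfect residue field makes the quotient a vector-subspace complement, and perfectness of $R$ lets one patch these local splittings into a global one, which lifts through $W(R) \to R$ (via the lifting-frame property of $\WWW_R$) to the required $L$ and $T$. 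Granted this, $F_1 = V^{-1}$ makes $(P, V(P), F, F_1)$ into a display.

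Finally, the two functors are mutually inverse. Starting from a display with normal representation $P = L \oplus T$, the formula $V^\sharp = (1 \oplus \theta)(\Psi^\sharp)^{-1}$ of Lemma~\ref{Le-F-V-mod} gives $V^\sharp(P) = L^{(1)} + pT^{(1)} = Q^{(1)}$, so under $\iota$ we recover $V(P) = Q$ and $F_1 = V^{-1}$ agrees with the original operator. Conversely, for a Dieudonn\'e module, the $V^\sharp$ extracted from the display it yields returns the original $V$. The normal-decomposition step is the hardest part, relying essentially on the perfectness of $R$ beyond the abstract relations $FV = VF = p$.
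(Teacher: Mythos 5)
Your two functors, the verification of the display axioms for $(P,V(P),F,V^{-1})$, and the mutual-inverse check all match the paper's construction. The genuine gap is in the step you yourself flag as the hardest one. The asserted splitting $P/pP=\Ker\bar V^\sharp\oplus\Ker\bar F^\sharp$ is false in general: take the Dieudonn\'e module of a supersingular elliptic curve over a perfect field $k$, say $P=W(k)e_1\oplus W(k)e_2$ with $F(e_1)=e_2$, $F(e_2)=pe_1$, $V(e_1)=e_2$, $V(e_2)=pe_1$. Here $\Ker\bar F=\Ker\bar V=\operatorname{im}\bar V=\operatorname{im}\bar F=k\bar e_2$, so the two kernels coincide and are one-dimensional while $P/pP$ is two-dimensional; no such direct sum exists. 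A normal decomposition only requires that $Q/pP=V(P)/pP=\Ker\bar F$ be a direct summand of $P/pP$, and the complement $\bar T$ is in general neither $\Ker\bar V$ nor canonically determined. Your fallback --- that perfectness lets one ``patch local splittings into a global one'' --- is precisely the assertion that needs proof and is not a formal consequence of perfectness: a finitely generated submodule of a projective module whose fibres are complemented is a direct summand only if its corank is locally constant, and nothing in your argument establishes that.

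The paper closes exactly this gap by a semicontinuity argument, which is the real content of the lemma. Set $M=P/V(P)=\Coker\bar V$ and $N=P/F(P)=\Coker\bar F$; both are finitely presented $R$-modules, and one has $\ell_M+\ell_N=\ell_{P/pP}$ as functions on $\Spec R$, where $\ell_{(-)}(\Fp)$ denotes fibre dimension at $\Fp$ (this is checked fibrewise, using that the residue fields of a perfect ring are again perfect so that the $p$-torsion-freeness argument $\operatorname{im}\bar V=\Ker\bar F$ applies over $W(\kappa(\Fp))$). Each of $\ell_M$, $\ell_N$ is upper semicontinuous and their sum is locally constant, so each must be locally constant; a finitely presented module of locally constant fibre rank over a reduced ring is projective. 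That gives projectivity of $P/V(P)$, hence a normal decomposition via the lifting-frame property of $\WWW_R$, without ever producing a distinguished complement. You would need to replace your decomposition claim by an argument of this kind (or some other proof that $\Coker\bar V$ has locally constant rank) for the proof to go through.
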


\begin{proof}
To a display $(P,Q,F,F_1)$ we associate the Dieudonn\'e
module $(P,F,V)$ where the linearisation of $V:P\to P$ is 
the operator $V^\sharp$ defined in Lemma \ref{Le-F-V-mod}.
Then $VF_1:Q\to P$ is the inclusion. 
Here $F_1$ is surjective since $f$ is bijective.
Thus $Q=V(P)$, and the functor is fully faithful;
see Lemma \ref{Le-F-V-mod}.
 It remains to show that for every Dieudonn\'e
module $(P,F,V)$ the $R$-module $M=P/V(P)$ is projective.
For $\Fp\in\Spec R$ let $\ell_M(\Fp)$ be the dimension
of the fibre of $M$ at $\Fp$. Let $N=P/F(P)$. Then
$\ell_M+\ell_N=\ell_{P/pP}$ as functions on $\Spec R$.
Since $M$ and $N$ are of finite type and since $P/pP$ is
projective, the functions $\ell_M$ and $\ell_N$ are 
upper semicontinuous, and $\ell_{P/pP}$ is locally constant. 
It follows that $\ell_M$ is locally constant, which implies 
that $M$ is projective because $R$ is reduced.
\end{proof}

\subsection{Dieudonn\'e frames}
\label{Subse-Dieud-frames}

For an admissible ring $R$ in the sense of Definition \ref{Def-admissible}
let $\II_R$ be the kernel of 
$w_0:\WW(R)\to R$, and let $\ff_1:\II_R\to\WW(R)$ be the inverse
of $\vv$, which is well-defined by Lemma \ref{Le-WW-vv}.
If $p$ is odd, then $\vv=v$ and $\ff_1=f_1$.

\begin{Lemma}
\label{Le-DDD-frame}
The quintuple 
$$
\DDD_R=(\WW_R,\II_R,R,f,\ff_1)
$$ 
is a lifting frame.
\end{Lemma}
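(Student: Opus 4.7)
The plan is to check the definition of lifting frame piece by piece: the pre-frame axioms, the frame condition $\II_R + p\WW(R) \subseteq \Rad(\WW(R))$, and the liftability of finitely generated projective $R$-modules to finitely generated projective $\WW(R)$-modules.

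For the pre-frame axioms, the identification $R = \WW(R)/\II_R$ is just the exact sequence of Lemma~\ref{Le-WW-vv}, and $\ff_1 = \vv^{-1}$ is well defined by the same sequence. The Frobenius congruence $f(a) \equiv a^p \pmod{p\WW(R)}$ reduces to Lemma~\ref{Le-W-f} once one decomposes $a = s(a_1) + a_2$ along $\WW(R) = sW(R_{\red}) \oplus \hat W(\NNN_R)$: each summand satisfies the congruence modulo $p$ in its own piece, and the cross terms in $(s(a_1) + a_2)^p$ are divisible by $p$. The $\sigma$-linearity $\ff_1(ay) = f(a)\ff_1(y)$ is immediate from the projection formula $a \cdot v(u_0 z) = v(u_0 f(a) z)$, i.e.\ $a\vv(z) = \vv(f(a)z)$. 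The image of $\ff_1$ generates $\WW(R)$ because $\ff_1(\vv(1)) = 1$. Finally, setting $\theta = f(\vv(1)) = f(v(u_0)) = pu_0$ and using $v(u_0) = p - [p]$, one gets $\theta = p - [p^p]$, which lies in $\WW(R)$ because $p \in \NNN_R$ implies $[p^p] \in \hat W(\NNN_R)$.

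For the frame condition and lifting I would exploit the factorisation $\WW(R) \twoheadrightarrow W(R_{\red}) \twoheadrightarrow R_{\red}$. The kernel $\hat W(\NNN_R)$ of the first arrow is a nil ideal by Lemma~\ref{Le-W(N)}, hence lies in $\Rad(\WW(R))$. The image of $\II_R + p\WW(R)$ in $W(R_{\red})$ lies in $I_{R_{\red}} = pW(R_{\red})$, and $pW(R_{\red}) \subseteq \Rad(W(R_{\red}))$ because a Witt vector over the perfect ring $R_{\red}$ is a unit iff its zeroth component is. Pulling back through a radical ideal gives the frame inequality. For lifting, given a finitely generated projective $R$-module $M$, first reduce to $M_{\red}$ over $R_{\red}$, then lift $M_{\red}$ to a finitely generated projective $W(R_{\red})$-module $P_{\red}$ using $p$-adic completeness of $W(R_{\red})$, and finally lift $P_{\red}$ to a finitely generated projective $\WW(R)$-module $P$ along the nil surjection $\WW(R) \twoheadrightarrow W(R_{\red})$. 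Since $P \otimes_{\WW(R)} R$ reduces to $M_{\red}$ modulo $\NNN_R$ and the nil surjection $R \twoheadrightarrow R_{\red}$ determines finitely generated projective modules up to isomorphism, one concludes $P \otimes_{\WW(R)} R \cong M$.

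The one place where genuine arithmetic is needed is verifying that $\theta = pu_0$ lands in $\WW(R)$ at $p = 2$; this is precisely what the modified Verschiebung $\vv$ is designed to make possible. Beyond that point, everything assembles formally from the preceding lemmas together with standard results on lifting projective modules along nil and $p$-adic surjections, and the two-step decomposition through $W(R_{\red})$ is the structural observation that unifies both the radical condition and the lifting step.
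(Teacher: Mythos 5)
Your proof is correct and follows essentially the same route as the paper: the Frobenius congruence is reduced to Lemma \ref{Le-W-f} on the two summands of $\WW(R)=sW(R_{\red})\oplus\hat W(\NNN_R)$, the radical condition comes from the nil ideal $\hat W(\NNN_R)$ together with $p$-adic completeness of $W(R_{\red})$, and projective modules are lifted by factoring through $R_{\red}$ and $W(R_{\red})$. The extra verifications you spell out ($\sigma$-linearity, generation, the value of $\theta$) are routine points the paper leaves implicit; only note that the formula $\theta=pu_0=p-[p^p]$ pertains to $p=2$, while for $p\ge 3$ one simply has $u_0=1$ and $\theta=p$.
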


We call $\DDD_R$ the Dieudonn\'e frame associated to $R$.

\begin{proof}
In order that $\DDD_R$ is a pre-frame we need that 
$f(a)\equiv a^p$ modulo $p\WW(R)$ for $a\in\WW(R)$, 
which follows from Lemma \ref{Le-W-f}
applied to $W(R_{\red})$ and to $\hat W(\NNN_R)$.
Since $\hat W(\NNN_R)$ is a nilideal by Lemma \ref{Le-W(N)} 
and since the quotient $\WW(R)/\hat W(\NNN_R)=W(R_{\red})$ 
is $p$-adically complete with $pW(R_{\red})=I_{R_{\red}}$,
the kernel of $\WW(R)\to R_{\red}$ lies in the radical
of $\WW(R)$,  and projective $R_{\red}$-modules of finite
type lift to projective $\WW(R)$-modules of finite type. 
It follows that $\DDD_R$ is a lifting frame.
\end{proof}

The inclusion $\WW(R)\to W(R)$ is a $u_0$-homomorphism
of frames $\DDD_R\to\WWW_R$.
Thus for $\DDD_R$ we have $\theta=p$ if $p$ is odd and
$\theta=2u_0=2-[4]$ if $p=2$.

\begin{Defn}
A Dieudonn\'e display over $R$ is a window over $\DDD_R$.
\end{Defn}

Thus a Dieudonn\'e display is a quadruple $\PPP=(P,Q,F,F_1)$ 
where $P$ is a projective $\WW(R)$-module of finite type with
a filtration $\II_RP\subseteq Q\subseteq P$ such that 
$P/Q$ is a projective $R$-module, $F:P\to P$ and 
$F_1:Q\to P$ are $f$-linear maps with 
$F_1(ax)=\ff_1(a)F(x)$ for $a\in\II_R$ and $x\in P$, 
and $F_1(Q)$ generates $P$. We write
$$
\Lie(\PPP)=P/Q.
$$
The \emph{height} of $\PPP$ is the rank of the 
$\WW(R)$-module $P$, and the \emph{dimension} of $\PPP$ 
is the rank of the $R$-module $\Lie(\PPP)$, both viewed 
as locally constant functions on $\Spec R$.
As in the case of general frames, 
we also denote by $\DDD_R$ the Dieudonn\'e display 
$(\WW(R),\II_R,f,\ff_1)$ over $R$.

\subsection{Relative Dieudonn\'e frames}

Let $(B\to R,\delta)$ be a divided power extension of
admissible rings with kernel $\Fb\subseteq B$.
Let $\WW(B/R)=\WW(B,\delta)$ as in \ref{Subse-enlarged-Zink} 
and let $\II_{B/R}$ be the kernel of the projection 
$\WW({B/R})\to R$, thus 
$$
\II_{B/R}=\II_B+\tilde W(\Fb).
$$

\begin{Lemma}
\label{Le-tilde-ff1}
There is a unique extension of\/ $\ff_1:\II_B\to\WW(B)$ to an 
$f$-linear map\/ $\tilde\ff_1:\II_{B/R}\to\WW(B/R)$
of\/ $\WW(B/R)$-modules such that the restriction of\/ $\tilde\ff_1$ 
to $\tilde W(\Fb)$ is given by
\begin{equation}
\label{Eq-tilde-ff1}
\tilde\ff_1([a_0,a_1,a_2,\ldots])=
[w_0(u_0^{-1})a_1,w_1(u_0^{-1})a_2,\ldots]
\end{equation}
in logarithmic coordinates. The quintuple
$$
\DDD_{B/R}=\DDD_{B/R,\delta}=(\WW(B/R),\II_{B/R},R,f,\tilde\ff_1)
$$
is a lifting frame.
\end{Lemma}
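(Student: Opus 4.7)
The plan is to define $\tilde\ff_1$ on $\tilde W(\Fb)$ by formula \eqref{Eq-tilde-ff1}, check that it agrees with $\ff_1$ on the intersection $\II_B\cap\tilde W(\Fb)$, and then verify all frame axioms separately on the two summands of $\II_{B/R}=\II_B+\tilde W(\Fb)$. Uniqueness is immediate from this decomposition.

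For the compatibility, an element $x\in\II_B\cap\tilde W(\Fb)$ satisfies $w_0(x)=0$, so its logarithmic coordinates read $[0,a_1,a_2,\ldots]$. Then $\ff_1(x)=\vv^{-1}(x)=u_0^{-1}v^{-1}(x)=u_0^{-1}\cdot[a_1,a_2,\ldots]$, and the $W(B)$-action on $\Fb^\infty$ recalled in Section \ref{Subse-enlarged-Zink} converts this into $[w_0(u_0^{-1})a_1,w_1(u_0^{-1})a_2,\ldots]$, which is exactly \eqref{Eq-tilde-ff1}. The $f$-linearity of $\tilde\ff_1$ on $\tilde W(\Fb)$ over $\WW(B/R)$ is then a direct log-coordinate computation using $w_i\circ f=w_{i+1}$, and the relation $f(x)=\theta\tilde\ff_1(x)$ with $\theta=pu_0$ reduces to the identity $w_i(pu_0)w_i(u_0^{-1})=w_i(p)=p$, which turns $\theta\tilde\ff_1(x)$ into $[pa_1,pa_2,\ldots]=f(x)$. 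Since $\tilde\ff_1(\vv(1))=\ff_1(\vv(1))=1$ the image of $\tilde\ff_1$ generates $\WW(B/R)$. The pre-frame congruence $f(a)\equiv a^p\pmod{p\WW(B/R)}$ reduces to Lemma \ref{Le-W-f} on $\WW(B)$ together with the analogous statement on $\tilde W(\Fb)$, the latter again following from the log-coordinate formulas.

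For the lifting frame condition I would exploit the short exact sequence $0\to\tilde W(\Fb)\to\WW(B/R)\to\WW(R)\to 0$. Surjectivity is built into the definition $\WW(B/R)=\WW(B)+\tilde W(\Fb)$, and the kernel calculation rests on $\WW(B)\cap\tilde W(\Fb)=\hat W(\Fb)$---the decomposition $\WW(B)=sW(B_{\red})\oplus\hat W(\NNN_B)$ forces any element of $\WW(B)\cap W(\Fb)$ into $\hat W(\NNN_B)\cap W(\Fb)=\hat W(\Fb)$---combined with $\WW(B)/\hat W(\Fb)=\WW(R)$. Since $\Fb\subseteq\NNN_B$ and $B$ is admissible, $\Fb$ is bounded nilpotent and annihilated by a power of $p$, so Lemma \ref{Le-W(N)} yields that $W(\Fb)$, and hence $\tilde W(\Fb)$, is a nilideal of $\WW(B/R)$, lying in $\Rad(\WW(B/R))$. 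Consequently the radical condition $\II_{B/R}+p\WW(B/R)\subseteq\Rad(\WW(B/R))$ pulls back from the corresponding statement for $\DDD_R$ provided by Lemma \ref{Le-DDD-frame}, and lifting of finitely generated projective $R$-modules factors analogously as $R\to\WW(R)\to\WW(B/R)$, the second step using the nilpotence of $\tilde W(\Fb)$.

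The main source of friction is bookkeeping in logarithmic coordinates: once the multiplicative action of $W(B)$ on $\Fb^\infty$, the shift formulas for $f$ and $v$, and the identity $v(u_0)=p-[p]$ are combined carefully, every step is a routine identification.
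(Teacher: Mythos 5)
Your proposal is correct and follows essentially the same route as the paper: both rest on the logarithmic-coordinate description of $\vv$ on $W(\Fb)$ (equivalently of its inverse, which is what Lemma \ref{Le-WW-large-vv} packages), and on the fact that $\tilde W(\Fb)$ is a nilideal with quotient $\WW(R)$, so that the radical and lifting conditions reduce to Lemma \ref{Le-DDD-frame}. The only cosmetic difference is that the paper extends $\vv^{-1}$ from the kernel $\II_B'$ of $\WW(B/R)\to B$ and then sets $\tilde\ff_1=0$ on $i(\Fb)$, whereas you glue $\ff_1$ on $\II_B$ with the explicit formula on $\tilde W(\Fb)$ and check agreement on the overlap; these amount to the same computation.
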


\begin{proof}
Clearly $\tilde\ff_1$ is determined by \eqref{Eq-tilde-ff1}.
Let $\II_B'$ be the kernel of\/ $\WW(B/R)\to B$. 
By Lemma \ref{Le-WW-large-vv}, the inverse of $\vv$ is
an $f$-linear map $\ff_1':\II_B'\to\WW(B/R)$
which extends $\ff_1$. In logarithmic coordinates,
the restriction of $\vv$ to $W(\Fb)$ is given by
$[a_0,a_1,\ldots]\mapsto[0,w_0(u_0)a_0,w_1(u_0)a_1,\ldots]$.
Thus $\ff_1'$ extends to the desired $\tilde\ff_1$.
As in the proof of Lemma \ref{Le-DDD-frame}, the
kernel of $\WW(B/R)\to R_{\red}$ lies in the radical
of $\WW(B/R)$, and projective $R_{\red}$-modules of finite type
lift to $\WW(B/R)$.
\end{proof}

We call $\DDD_{B/R}$ the relative Dieudonn\'e frame 
associated to the divided power extension $(B/R,\delta)$, 
and $\DDD_{B/R}$-windows
are called Dieudonn\'e displays for $B/R$.
There are natural strict frame homomorphisms
$$
\DDD_B\to\DDD_{B/R}\to\DDD_R.
$$
If the divided powers $\delta$ are nilpotent, then
$\WW(B)=\WW(B/R)$. 

\begin{Prop}
\label{Pr-DDD-crys}
The frame homomorphism $\DDD_{B/R}\to\DDD_R$ is crystalline.
\end{Prop}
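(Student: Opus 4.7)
The plan is to verify the hypotheses of Theorem \ref{Th-frame-crys} for the strict frame homomorphism $\DDD_{B/R}\to\DDD_R$. First I would identify the kernel $\Fa$ of $\WW(B/R)\to\WW(R)$: since $\WW(B/R)=\WW(B)+\tilde W(\Fb)$ and $\WW(B)\cap W(\Fb)=\hat W(\Fb)\subseteq\tilde W(\Fb)$, one has $\Fa=\tilde W(\Fb)$. Two hypotheses of the theorem come for free. Because $R$ is admissible, $B_{\red}=R_{\red}$ has characteristic $p$, so $p\in\NNN_B$; combined with the bounded nilpotence of $\NNN_B$ this gives $p^n B=0$ for some $n$, and then Lemma \ref{Le-W(N)} implies that $\Fa\subseteq W(\NNN_B)$ is bounded nilpotent. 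Hence $\Fa$ lies in the Jacobson radical of $\WW(B/R)$, ensuring that finitely generated projective $\WW(R)$-modules lift to $\WW(B/R)$.

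Next I would construct the filtration by setting $\Fa_i=p^i\tilde W(\Fb)$ for $i=0,\dots,n$, a finite decreasing filtration of ideals of $\WW(B/R)$ with $\Fa_0=\Fa$ and $\Fa_n=0$. The compatibility conditions with $f$ and $\tilde\ff_1$ are read off directly from the logarithmic-coordinate description $\tilde W(\Fb)\cong\Fb^{(\infty)}$ of section \ref{Subse-enlarged-Zink}. In these coordinates $\tilde\ff_1$ acts by the unit-twisted left-shift \eqref{Eq-tilde-ff1}, so $\tilde\ff_1(\tilde W(\Fb))\subseteq\tilde W(\Fb)$; the Witt Frobenius acts by $[a_0,a_1,\dots]\mapsto[pa_1,pa_2,\dots]$, so $f(\tilde W(\Fb))\subseteq p\tilde W(\Fb)$. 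Using $f$-linearity of $\tilde\ff_1$, the ring-homomorphism property of $f$, and $f(p)=p$, these upgrade to $\tilde\ff_1(\Fa_i)\subseteq\Fa_i$ and $f(\Fa_i)\subseteq\Fa_{i+1}$.

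The remaining condition, elementwise nilpotence of $\tilde\ff_1$ on $\Fa_i/\Fa_{i+1}$, is immediate from the same description: every element of $\tilde W(\Fb)$ has log-coordinates with finite support, so a large enough iterate of the left-shift $\tilde\ff_1$ annihilates it on the nose, and multiplication by $p^i$ does not affect this. Applying Theorem \ref{Th-frame-crys} then concludes the proof. The main point requiring care is the strict-decrease $f(\Fa_i)\subseteq\Fa_{i+1}$: this is the only step where one genuinely exploits (rather than merely preserves) the Frobenius action on $\tilde W(\Fb)$, and it is precisely the reason that indexing the filtration by powers of $p$ (rather than, for instance, by iterated left-shifts, which do not give a uniformly finite filtration) is the natural choice.
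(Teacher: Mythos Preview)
Your proof is correct and follows essentially the same approach as the paper: identify the kernel as $\Fa=\tilde W(\Fb)$, take the filtration $\Fa_i=p^i\Fa$ (finite by Lemma~\ref{Le-W(N)}), and verify the hypotheses of Theorem~\ref{Th-frame-crys} using the logarithmic-coordinate formulas for $f$ and $\tilde\ff_1$ on $\tilde W(\Fb)$. You spell out the lifting of projective $\WW(R)$-modules to $\WW(B/R)$ and the computation $\WW(B)\cap W(\Fb)=\hat W(\Fb)$ more explicitly than the paper does, but the argument is the same.
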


\begin{proof}
This follows from Theorem \ref{Th-frame-crys}. 
Indeed, let $\Fa$ denote the kernel of the surjective homomorphism
$\WW(B/R)\to\WW(R)$, thus 
$\Fa=\tilde W(\Fb)\cong\Fb^{(\NN)}$. 
The endomorphism $\tilde\ff_1$ of $\Fa$ is elementwise 
nilpotent by \eqref{Eq-tilde-ff1}. The required filtration 
of $\Fa$ can be taken to be $\Fa_i=p^i\Fa$; this is a
finite filtration by Lemma \ref{Le-W(N)}.
We have $\tilde\ff_1(\Fa_i)=\Fa_i$ by \eqref{Eq-tilde-ff1}, 
and $f(\Fa_i)=\Fa_{i+1}$ because the endomorphism $f$ of $\Fa$
is given by $[a_0,a_1,\ldots]\mapsto [pa_1,pa_2,\ldots]$
in logarithmic coordinates.
\end{proof}

\subsection{$v$-stabilised Dieudonn\'e frames}
\label{Subse-v-stab-D}

Assume that $p=2$.
The preceding constructions can be repeated with
$\WW^+$ and $v$ in place of\/ $\WW$ and $\vv$.
More precisely, for an admissible ring $R$ let
$\II_R^+$ be the kernel of $\WW^+(R)\to R$ and
let $f_1:\II_R^+\to\WW^+(R)$ be the inverse of $v$,
which is well-defined by Lemma \ref{Le-WW-large-v}.
The $v$-stabilised Dieudonn\'e frame associated to
$R$ is defined as
$$
\DDD^+_R=(\WW^+(R),\II^+_R,R,f,f_1).
$$
This is a lifting frame by the proof of Lemma \ref{Le-DDD-frame}. 
The inclusion $\WW(R)\to\WW^+(R)$ is a $u_0$-homomorphism of
frames $\DDD_R\to\DDD^+_R$, which is invertible if and only
if $2R=0$. Windows over $\DDD^+_R$ are called $v$-stabilised
Dieudonn\'e displays over $R$.

Assume again that $p=2$, and let $(B\to R,\delta)$ be 
a divided power extension
of admissible rings with kernel $\Fb\subseteq B$ which is
compatible with the canonical divided powers of $2$. 
Let $\II^+_{B/R}$ be the kernel of the natural map
$\WW^+_{B/R}\to R$, thus
$$
\II^+_{B/R}=\II^+_B+\tilde W(\Fb).
$$
There is a unique extension of $f_1:\II^+_B\to\WW^+(B)$ to an 
$f$-linear map of $\WW^+(B/R)$-modules 
$\tilde f_1:\II^+_{B/R}\to\WW^+(B/R)$ such that its restriction 
to $\tilde W(\Fb)$ is given by $[a_0,a_1,a_2,\ldots]\mapsto[a_1,a_2,\ldots]$
in logarithmic coordinates, and the quintuple 
$$
\DDD^+_{B/R}=(\WW^+(B/R),\II^+_{B/R},R,f,\tilde f_1)
$$
is a lifting frame.
This follows from the proof of Lemma \ref{Le-tilde-ff1}.
We have a $u_0$-homomorphism of frames $\DDD_{B/R}\to\DDD^+_{B/R}$,
which is invertible if and only if $2R=0$, and strict frame
homomorphisms 
$$
\DDD^+_B\to\DDD^+_{B/R}\to\DDD^+_R.
$$
If the divided powers induced by $\delta$ on $(\Fb+2B)/2B$
are nilpotent, then $\WW^+(B)$ is equal to $\WW^+(B/R)$. 

\begin{Cor} 
\label{Co-DDD+-crys}
The frame homomorphism $\DDD^+_{B/R}\to\DDD^+_R$ is crystalline.
\end{Cor}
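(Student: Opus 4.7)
The plan is to apply Theorem \ref{Th-frame-crys} to the homomorphism $\DDD^+_{B/R}\to\DDD^+_R$, in direct parallel with the proof of Proposition \ref{Pr-DDD-crys}. The hypothesis on lifting of finitely generated projective modules is satisfied because $\DDD^+_{B/R}$ is a lifting frame (stated in the setup preceding the corollary), so the substantive work reduces to two points: identifying the kernel $\Fa$ of $\WW^+(B/R)\to\WW^+(R)$, and exhibiting on it a finite filtration of the required type with respect to $\tilde f_1$ and $f$.

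The key step, and the only one that is not a literal transcription of the earlier argument, is to verify that $\Fa=\tilde W(\Fb)$. I would derive this from the sum decompositions $\WW^+(B/R)=\WW(B)+\tilde W(\Fb^+)$, with $\Fb^+=\Fb+pB$, and $\WW^+(R)=\WW(R)+\tilde W(pR)$. The short exact sequence $0\to\Fb\to\Fb^+\to pR\to 0$ induced by $B\to R$ yields, via the isomorphism $\Log$, an exact sequence $0\to\tilde W(\Fb)\to\tilde W(\Fb^+)\to\tilde W(pR)\to 0$; combined with the fact that the kernel of $\WW(B)\to\WW(R)$ lies in $\hat W(\Fb)\subseteq\tilde W(\Fb)$, a short diagram chase gives the claim.

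Once the kernel is identified with $\tilde W(\Fb)\cong\Fb^{(\infty)}$, take the filtration $\Fa_i=p^i\Fa$. This filtration is finite because $p\in\NNN_B$ and $\NNN_B$ is bounded nilpotent (forcing $p^n=0$ in $B$ for some $n$, hence $p^n$ annihilates $\Fb$), so by Lemma \ref{Le-W(N)} the ring $W(\Fb)$, and \emph{a fortiori} $\tilde W(\Fb)$, is annihilated by a power of $p$. On $\tilde W(\Fb)$ the operator $\tilde f_1$ is given in logarithmic coordinates by the shift $[a_0,a_1,\ldots]\mapsto[a_1,a_2,\ldots]$ — simpler than formula \eqref{Eq-tilde-ff1} because in $\DDD^+$ we invert $v$ rather than $\vv$, so the $u_0$-factors disappear — which preserves each $\Fa_i$ and is elementwise nilpotent on all of $\Fa$, since the entries of $\Fb^{(\infty)}$ are eventually zero. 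The Frobenius $f$ acts as $[a_0,a_1,\ldots]\mapsto[pa_1,pa_2,\ldots]$, hence sends $\Fa_i$ into $\Fa_{i+1}$.

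All hypotheses of Theorem \ref{Th-frame-crys} are then met. The main obstacle is the kernel identification in the second paragraph; the filtration analysis is a verbatim transcription of the argument for Proposition \ref{Pr-DDD-crys}, with the $u_0$-factors deleted.
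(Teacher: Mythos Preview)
Your proposal is correct and follows essentially the same approach as the paper, whose proof consists of the single line ``This follows from the proof of Proposition \ref{Pr-DDD-crys}.'' You have usefully spelled out the one point that is not a verbatim transcription, namely the identification of the kernel of $\WW^+(B/R)\to\WW^+(R)$ with $\tilde W(\Fb)$; your argument for this is sound, and the remaining filtration analysis is exactly as in the proof of Proposition \ref{Pr-DDD-crys} with the $u_0$-factors removed.
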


\begin{proof}
This follows from the proof of Proposition \ref{Pr-DDD-crys}.
\end{proof}

\subsection{The crystals associated to Dieudonn\'e displays}
\label{Subse-crystals}

Let $R$ be an admissible ring. 
We denote by $\Cris_{\adm}(R)$ the category of divided 
power extensions $(\Spec A\to\Spec B,\delta)$ where
$A$ is an $R$-algebra which is an admissible ring,
and where $p$ is nilpotent in $B$. Then the kernel of 
$B\to A$ is bounded nilpotent, so $B$ is an admissible
ring as well. 

Let $\PPP$ be a Dieudonn\'e display over $R$. 
For $(\Spec A\to\Spec B,\delta)$ in $\Cris_{\adm}(R)$ we 
denote the base change of $\PPP$ to $A$ by $\PPP\!_A$ and 
the unique Dieudonn\'e display for $B/A$ which 
lifts $\PPP\!_A$ by 
$$
\PPP_{B/A}=(P_{B/A},Q_{B/A},F,F_1);
$$ 
see Proposition \ref{Pr-DDD-crys}. A homomorphism 
of divided power extensions of admissible rings 
$\alpha:(B\to A,\delta)\to(B'\to A',\delta')$ induces a 
frame homomorphism $\DDD_\alpha:\DDD_{B/A}\to\DDD_{B'/A'}$,
and we have a natural isomorphism 
$$
(\DDD_{\alpha})_*(\PPP_{B/A})\cong\PPP_{B'/A'}.
$$
In more sophisticated terms this can be expressed as follows:
The frames $\DDD_{B/A}$ form a presheaf of frames
$\DDD_{**}$ on $\Cris_{\adm}(R)$, and 
 Proposition \ref{Pr-DDD-crys} implies that
 the category of
Dieudonn\'e displays over $R$ is equivalent to
the category of crystals in $\DDD_{**}$-windows
on $\Cris_{\adm}(R)$. Then $\PPP_{B/A}$ is the value in 
$(\Spec A\to\Spec B,\delta)$ of the crystal associated to $\PPP$.

For a Dieudonn\'e display $\PPP=(P,Q,F,F_1)$ over $R$,  
we define the Witt crystal $\KK(\PPP)$ on $\Cris_{\adm}(R)$ by
$$
\KK(\PPP)_{B/A}=P_{B/A}.
$$
This is a projective $\WW({B/A})$-module of finite type.
The Dieudonn\'e crystal $\DD(\PPP)$ on $\Cris_{\adm}(R)$ 
is defined by
$$
\DD(\PPP)_{B/A}=P_{B/A}\otimes_{\WW(B/A)}B.
$$
This is a projective $B$-module of finite type.
The Hodge filtration of $\PPP$ is the submodule
$$
Q/\II_RP\subseteq P/\II_RP=\DD(\PPP)_{R/R}.
$$

\begin{Cor}
\label{Co-deform-DDD}
Let $(B\to R,\delta)$ be a nilpotent divided power 
extension of admissible rings. The category of Dieudonn\'e
displays over $B$ is equivalent to the category of
Dieudonn\'e displays $\PPP$ over $R$ together with a lift
of the Hodge filtration of $\PPP$ to a direct summand of\/
$\DD(\PPP)_{B/R}$.
\end{Cor}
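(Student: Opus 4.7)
Since the divided powers $\delta$ are nilpotent, one has $\tilde W(\Fb) = \hat W(\Fb) \subseteq \hat W(\NNN_B) \subseteq \WW(B)$, so $\WW(B/R) = \WW(B)$ as rings; the frames $\DDD_B$ and $\DDD_{B/R}$ thus share the same underlying ring and Frobenius, and the restriction of $\tilde\ff_1$ to $\II_B$ coincides with $\ff_1$. The plan is to factor the base change $\DDD_B \to \DDD_R$ as $\DDD_B \to \DDD_{B/R} \to \DDD_R$, where the second arrow is crystalline by Proposition \ref{Pr-DDD-crys} and so gives an equivalence between Dieudonn\'e displays $\PPP$ over $R$ and $\DDD_{B/R}$-windows $\PPP_{B/R}$. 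It therefore suffices to exhibit an equivalence between $\DDD_B$-windows and pairs $(\PPP_{B/R}, M)$, where $M$ is a $B$-direct summand of $\DD := \DD(\PPP)_{B/R} = P/\II_B P$ whose reduction modulo $\Fb$ is the Hodge filtration of $\PPP$.

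Given a $\DDD_B$-window with normal decomposition $P = L \oplus T$, $Q = L \oplus \II_B T$, its image in $\DDD_{B/R}$ admits the same normal representation $(L, T, \Psi)$, yielding $\PPP_{B/R} = (P, Q', F, F_1')$ with $Q' = L \oplus \II_{B/R} T = Q + \tilde W(\Fb) P$ and with $F_1'$ extending $F_1$ via the formula \eqref{Eq-tilde-ff1}. The attached lift of the Hodge filtration is $M := Q/\II_B P \subseteq \DD$, which in normal form reads $M = \bar L$ with $\bar L, \bar T$ the reductions of $L, T$ modulo $\II_B$; this is manifestly a $B$-direct summand of $\DD = \bar L \oplus \bar T$, and its reduction modulo $\Fb$ coincides with the Hodge filtration of $\PPP$.

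Conversely, given $\PPP_{B/R}$ and such an $M$, I set $Q \subseteq P$ to be the preimage of $M$, so that $\II_B P \subseteq Q$ and $P/Q \cong \DD/M$ is projective over $B$. The essential check is the inclusion $Q \subseteq Q'$, equivalently $M \subseteq M' := Q'/\II_B P$, which ensures that $F_1 := F_1'|_Q$ is well defined and produces a $\DDD_B$-window. Since $\tilde W(\Fb) \subseteq \II_{B/R}$ gives $\tilde W(\Fb) P \subseteq Q'$, and since every element of $\tilde W(\Fb)$ acts on $\DD$ through its image in $\Fb$ under $w_0$, one has $\Fb \cdot \DD \subseteq M'$. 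Both $M$ and $M'$ reduce to the Hodge filtration modulo $\Fb$, hence $M \subseteq M + \Fb \DD = M' + \Fb \DD = M'$. A splitting of $M$ then provides a normal decomposition $P = L \oplus T$ with $Q = L \oplus \II_B T$, confirming that $(P, Q, F, F_1)$ is a genuine $\DDD_B$-window, and the two functors are mutually inverse by construction. The main delicacy is the inclusion $M \subseteq M'$, but this is forced by $\Fb \DD \subseteq M'$, which in turn is immediate from the nilpotence of $\delta$.
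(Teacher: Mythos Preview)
Your proof is correct and follows the same approach as the paper. The paper's proof is a two-line sketch: nilpotent divided powers give $\WW(B/R)=\WW(B)$, and then lifts of windows under $\DDD_B\to\DDD_{B/R}$ are in bijection with lifts of the Hodge filtration (a fact the paper had already asserted just before Proposition \ref{Pr-DDD-crys}); your argument simply unpacks this second assertion in detail, verifying in particular the inclusion $Q\subseteq Q'$ and that the chosen splitting of $M$ yields a normal decomposition simultaneously for $Q$ and for $Q'$.
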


\begin{proof}
If the divided powers are nilpotent, then $\WW(B/R)=\WW(B)$,
and lifts of windows under the frame homomorphism 
$\DDD_{B}\to\DDD_{B/R}$ are in bijection with lifts of the 
Hodge filtration.
\end{proof}

The preceding definitions have a $v$-stabilised variant. 
Let $\Cris_{\adm}(R/\ZZ_p)$
be the full subcategory of $\Cris_{\adm}(R)$ where the divided
powers are compatible with the canonical divided powers
of $p$. 
Assume now that $p=2$ and 
let $\PPP^+$ be a $v$-stabilised Dieudonn\'e display
over $R$, i.e.\ a window over $\DDD^+_R$. For
$(\Spec A\to\Spec B,\delta)$ in $\Cris_{\adm}(R/\ZZ_2)$
we denote by $\PPP^+_A$ the base change of
$\PPP^+$ to $\DDD^+_A$ and by 
$$
\PPP^+_{B/A}=(P^+_{B/A},Q^+_{B/A},F,F_1)
$$ 
the unique lift of $\PPP^+_A$ to a $\DDD^+_{B/A}$-window, 
which exists by Corollary \ref{Co-DDD+-crys}.
The $v$-stabilised Witt crystal $\KK^+(\PPP^+)$ 
and the $v$-stabilised Dieudonn\'e crystal $\DD^+(\PPP^+)$
on $\Cris_{\adm}(R/\ZZ_2)$ are defined by 
$\KK^+(\PPP^+)_{B/A}=P^+_{B/A}$ and
$$
\DD^+(\PPP^+)_{B/A}=P^+_{B/A}\otimes_{\WW^+(B/A)}B.
$$

\begin{Cor}
\label{Co-deform-DDD+}
Assume that $p=2$. Let $(B\to R,\delta)$ be a divided power extension
of admissible rings which is compatible with the
canonical divided powers of $2$ such that the divided powers
induced by $\delta$ on the kernel of $B/2B\to R/2R$ are
nilpotent. Then the category of $v$-stabilised Dieudonn\'e
displays over $B$ is equivalent to the category of 
$v$-stabilised Dieudonn\'e displays $\PPP^+\!$ over $R$
together with a lift of the Hodge filtration of $\PPP^+\!$
to a direct summand of\/ $\DD^+(\PPP^+)_{B/R}$.
\end{Cor}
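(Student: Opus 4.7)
The plan is to transfer the proof of Corollary \ref{Co-deform-DDD} verbatim to the $v$-stabilised setting. There are two ingredients. First, the crystalline property \ref{Co-DDD+-crys} of the frame homomorphism $\DDD^+_{B/R}\to\DDD^+_R$ yields an equivalence between $v$-stabilised Dieudonn\'e displays $\PPP^+$ over $R$ and $\DDD^+_{B/R}$-windows $\PPP^+_{B/R}$. Second, one identifies $\DDD^+_B$-windows with $\DDD^+_{B/R}$-windows equipped with a lift of the Hodge filtration; combining the two yields the corollary.

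The key input for the second step is the identification $\WW^+(B,\delta)=\WW^+(B)$, noted at the end of subsection \ref{Subse-v-stab-Zink}: this holds precisely when the divided powers induced by $\delta$ on $\Fb^+/pB$ are nilpotent. The hypothesis of the corollary says exactly that these induced divided powers on the kernel $\Fb^+/pB$ of $B/pB\to R/pR$ are nilpotent, so the identification applies. Consequently the underlying rings of $\DDD^+_B$ and $\DDD^+_{B/R}$ coincide, both frames share the same $f$, and they differ only by the inclusion of ideals $\II^+_B\subseteq\II^+_{B/R}$, with $\tilde f_1$ extending $f_1$.

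As in the proof of Corollary \ref{Co-deform-DDD}, given a $\DDD^+_{B/R}$-window $(P,Q,F,F_1)$ a lift to a $\DDD^+_B$-window amounts to a choice of submodule $Q'\subseteq Q$ containing $\II^+_B P$ such that $P/Q'$ is a projective $B$-module lifting $P/Q$; the relation $F_1(ax)=\tilde f_1(a)F(x)$ determines $F_1$ on $\tilde W(\Fb^+)P$, so restricting $F_1$ to $Q'$ produces the unique compatible $\DDD^+_B$-window structure. Such a $Q'$ is precisely a lift of the Hodge filtration to a direct summand of $\DD^+(\PPP^+)_{B/R}=P\otimes_{\WW^+(B,\delta)}B$. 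The only substantive point to verify is the equality $\WW^+(B,\delta)=\WW^+(B)$ under the nilpotence hypothesis; this is the $v$-stabilised analogue of the fact $\WW(B,\delta)=\WW(B)$ for nilpotent divided powers used in Corollary \ref{Co-deform-DDD}, and everything else is a direct parallel of that argument.
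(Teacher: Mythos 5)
Your proposal is correct and follows the same route as the paper: the paper's proof is exactly "analogous to Corollary \ref{Co-deform-DDD}, using that $\WW^+(B/R)=\WW^+(B)$ under the given assumptions on $\delta$," which is the identification you isolate as the key point (the hypothesis on the kernel of $B/pB\to R/pR$ being precisely the nilpotence condition on $\Fb^+/pB$ from the end of section \ref{Subse-v-stab-Zink}). The remaining bijection between lifts of windows along $\DDD^+_B\to\DDD^+_{B/R}$ and lifts of the Hodge filtration is stated in the paper in the same way you argue it, so nothing is missing.
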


\begin{proof}
This is analogous to Corollary \ref{Co-deform-DDD},
using that $\WW^+(B/R)=\WW^+(B)$ under the given 
assumptions on $\delta$; see the end of \ref{Subse-v-stab-Zink}.
\end{proof}

\begin{Lemma}
\label{Le-crys-DDD+}
Let $\PPP$ be a Dieudonn\'e display over an admissible ring
$R$ with $p=2$ and let $\PPP^+$ be its base change to $\DDD_R^+$.
Then\/ $\DD(\PPP^+)$ is naturally isomorphic to the 
restriction of\/ $\DD(\PPP)$ to $\Cris_{\adm}(R/\ZZ_2)$.
\end{Lemma}

\begin{proof}
For each
$(\Spec A\to\Spec B,\delta)$ in $\Cris_{\adm}(R/\ZZ_2)$,
the $\DDD^+_{B/A}$-window $\PPP^+_{B/A}$ is the base change of $\PPP_{B/A}$ by
the frame homomorphism $\DDD_{B/A}\to\DDD^+_{B/A}$ by its uniqueness.
The lemma follows easily.
\end{proof}

\begin{Remark}
\label{Re-crys-DDD+}
Lemma \ref{Le-crys-DDD+} does not imply that the infinitesimal deformations 
of $\PPP$ and of $\PPP^+$ coincide: Let $B$ be an admissible ring
with $4B=0$ and $2B\ne 0$ and let $R=B/2B$. The ideal
$2B$ carries the canonical divided powers
$\gamma$ and the trivial divided powers $\delta$.
Corollary \ref{Co-deform-DDD} applies to $(B\to R,\delta)$
but not to $(B\to R,\gamma)$, while Corollary \ref{Co-deform-DDD+}
and Lemma \ref{Le-crys-DDD+} apply to $(B\to R,\gamma)$ but not to $(B\to R,\delta)$.
\end{Remark}

\subsection{Passing to the limit}

The preceding considerations extend easily 
to the case of admissible topological rings
with a countable base of topology.
Let us begin with a standard lemma.
For a ring $A$ let $\V(A)$ be the category of projective $A$-modules of finite type.

\begin{Lemma}
\label{Le:V-limit}
Let $A=\varprojlim_{n\in\NN}A_n$ be an inverse limit of rings such that
the transition maps $\pi_n:A_{n}\to A_{n-1}$ are surjective with $\Ker(\pi_n)\subseteq\Rad(A_n)$.
Then the natural functor $\rho:\V(A)\to\varprojlim_n \V(A_n)$ is an equivalence.
\end{Lemma}

\begin{proof}
Since for $P\in\V(A)$ we have $P=\varprojlim_n(P\otimes_AA_n)$
the functor $\rho$ is fully faithful.
For a system of $P_n\in\V(A_n)$ with isomorphisms $P_n\otimes_AA_{n-1}\cong P_{n-1}$
we have to show that the $A$-module $P=\varprojlim_n P_n$ lies in $\V(A)$.
Choose a surjective homomorphism $q_1:A_1^r\to P_1$ and lift this to a compatible
system of homomorphisms $q_n:A_n^r\to P_n$. All $q_n$ are surjective by Nakayama's Lemma.
Let $S_n$ be the set of linear sections of $q_n$. 
Since $S_n$ carries a simply transitive action of
$\Hom(P_n,\Ker(q_n))$, the reduction maps $S_n\to S_{n-1}$ are surjective.
Thus the limit map $q:A^r\to P$ has a section, and we have $P\in\V(A)$.
This proves that $\rho$ is an equivalence. 
\end{proof}

For a ring $A$ let $\BT(A)$ be the category of $p$-divisible groups over $A$.

\begin{Lemma}
\label{Le:BT-limit}
For an inverse limit $A=\varprojlim_nA_n$ as in Lemma \ref{Le:V-limit}, 
the natural functor $\upsilon:\BT(A)\to\varprojlim_n\BT(A_n)$ is an equivalence.
\end{Lemma}

\begin{proof}
Cf.\ \cite[Ch.~II, Le.~4.16]{Messing-Crys}.
The functor $\rho$ of Lemma \ref{Le:V-limit}  preserves tensor products,  
and a complex $P\to P'\to P''\to 0$
in $\V(A)$ is exact if and only if its reduction to $A_1$ is exact. 
As in \cite[Ch.~II, Le.~4.16]{Messing-Crys} it follows that $\upsilon$ is an equivalence.
\end{proof}

For an admissible topological ring $R$ let $\DDD_R=\varprojlim_N\DDD_{R/N}$
where $N$ runs through the open ideals of $R$ contained in $\NNN_R$. 
As before, $\DDD_R$-windows are called Dieudonn\'e displays over $R$.

\begin{Lemma}
\label{Le:Disp-limit}
If $R$ is an admissible topological ring with a countable basis of topology, 
then Dieudonn\'e displays (or $p$-divisible groups) over $R$ are equivalent to 
compatible systems of Dieudonn\'e displays (or $p$-divisible groups) over ${R/N}$
for each open ideal $N$ contained in $\NNN_R$.
\end{Lemma}

\begin{proof}
One can write $R=\varprojlim_{n\in\NN}R_n$ for a surjective system of admissible
rings $R_n$ with $R_{\red}=(R_n)_{\red}$ for each $n$. 
Then the case of $p$-divisible groups follows from Lemma \ref{Le:BT-limit},
and the case of Dieudonn\'e displays follows from Lemma \ref{Le:V-limit}
applied to $R$ and to $\WW(R)=\varprojlim_n\WW(R_n)$; 
here the successive kernels are nil-ideals due to Lemma \ref{Le-W(N)}. 
Cf.\  \cite[Lemma 2.1]{Lau-Frames}.
\end{proof}


\section{From $p$-divisible groups to Dieudonn\'e displays}
\label{Se-bt2disp}

In this section we define a functor from $p$-divisible
groups over odd admissible rings to Dieudonn\'e displays.
In the non-odd case there is a $v$-stabilised version of this functor,
which will serve as a first step towards the true functor
in the next section. We begin with some preparations.

\subsection{Finiteness over admissible rings}
\label{Subse-finiteness}

We show that the categories of $p$-divisible groups or Dieudonn\'e displays over an admissible ring $R$ are the direct limit of the corresponding categories over the finitely generated $W(R_{\red})$-subalgebras of $R$, with fully faithful transition maps.

\begin{Prop}
\label{Pr-finiteness-disp}
Every Dieudonn\'e display over an admissible ring $R$
is defined over a finitely generated $W(R_{\red})$-subalgebra
of $R$. For an injective homomorphism of admissible rings
$R\to S$ such that $R_{\red}\to S_{\red}$ is bijective,
the base change of Dieudonn\'e displays from $R$ to $S$
is fully faithful.
\end{Prop}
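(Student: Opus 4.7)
The plan for the first assertion is to exhibit a Dieudonn\'e display $\PPP=(P,Q,F,F_1)$ over $R$ by a finite amount of data. I would choose a normal representation $(L,T,\Psi)$ of $\PPP$, realize $L$ and $T$ as images of idempotent matrices over $\WW(R)$, and write $\Psi$ together with its inverse as explicit matrices; all entries lie in $\WW(R)=s_R(W(R_{\red}))\oplus\hat W(\NNN_R)$, and their $\hat W$-components involve only finitely many nonzero Witt coefficients $a_1,\ldots,a_n\in\NNN_R$. Setting $R_0\subset R$ to be the subalgebra generated by $a_1,\ldots,a_n$ over the image of $W(R_{\red})\to R$ supplied by Lemma~\ref{Le-lift-W} applied to the bounded nilpotent ideal $\NNN_R$, I check that $R_0$ is admissible: its nilradical is contained in $\NNN_R$ and hence bounded nilpotent, while the composite $W(R_{\red})\to R_0\to R_0/(R_0\cap\NNN_R)\hookrightarrow R_{\red}$ equals the surjective Witt polynomial $w_0$ by the uniqueness in Lemma~\ref{Le-lift-W}. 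Uniqueness of the section $s$ then identifies $\WW(R_0)$ with the subring of $\WW(R)$ consisting of elements whose $\hat W$-part has coefficients in $\NNN_{R_0}$, so all matrix data of $\PPP$ lie in $\WW(R_0)$ and $\PPP$ descends to $R_0$.

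For the second assertion, the starting point is the identity $\WW(R)=\WW(S)\cap W(R)$ inside $W(S)$. Indeed, uniqueness of sections (Lemma~\ref{Le-lift-W}) together with $R_{\red}=S_{\red}$ forces $s_R$ to coincide with the restriction of $s_S$, and the injectivity of $R\to S$ gives $\NNN_R=R\cap\NNN_S$, hence $\hat W(\NNN_R)=W(R)\cap\hat W(\NNN_S)$. In particular $\WW(R)\hookrightarrow\WW(S)$ is injective, and the flatness of the projective modules $P,P'$ over $\WW(R)$ yields $P\hookrightarrow P_S$, $P'\hookrightarrow P'_S$, from which injectivity on Hom-sets is immediate.

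The main work is surjectivity. Given a morphism $\phi_S\colon\PPP_S\to\PPP'_S$, it suffices to show that the induced $\WW(R)$-linear composition $\bar\phi\colon P\to P'_S\to P'_S/P'\cong P'\otimes_{\WW(R)}(\WW(S)/\WW(R))$ vanishes; for then $\phi_S|_P$ lands in $P'$, descends to $\phi\colon P\to P'$, and its base change recovers $\phi_S$ by $\WW(S)$-linearity. The coefficient module $\WW(S)/\WW(R)\cong\hat W(\NNN_S)/\hat W(\NNN_R)$ is killed by a power of $p$ by Lemma~\ref{Le-W(N)}, and carries an exhaustive filtration whose graded pieces are $p$-torsion. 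On each graded piece, Lemma~\ref{Le-W-f} identifies $f$ with the $p$-th power, which is elementwise nilpotent since $\NNN_S$ is bounded nilpotent. The compatibilities $\bar\phi\circ F=\bar F'\circ\bar\phi$ and $\bar\phi\circ F_1=\bar{F'_1}\circ\bar\phi|_Q$, combined with the window property that $F_1(Q)$ generates $P$ over $\WW(R)$, then let one iterate the Frobenius and push $\bar\phi$ into arbitrarily deep layers of the filtration, yielding $\bar\phi=0$.

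The main obstacle I anticipate is making this filtration-plus-Frobenius iteration rigorous. One must verify that the chosen filtration of $\WW(S)/\WW(R)$ is stable under all the semilinear operators involved, and that the relation $F^\sharp V^\sharp=\theta=pu_0$ together with the $\vv$-structure produces an honest contraction rather than a tautology; the delicate interplay between $v$ and $\vv$ at the prime~$2$ is likely where the most care is needed.
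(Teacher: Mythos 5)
Your treatment of the first assertion is correct and is essentially the paper's own argument (normal representation, idempotent matrices, finitely many nonzero Witt coordinates in $\hat W(\NNN_R)$, and the uniqueness of the section $s$ to identify $\WW(R_0)$ inside $\WW(R)$). The second assertion is where the proposal breaks down, and precisely at the point you flagged. Injectivity on Hom-sets via $\WW(R)=\WW(S)\cap W(R)$ is fine. For surjectivity, the module $M=\bar\phi(P)\subseteq P'\otimes_{\WW(R)}(\WW(S)/\WW(R))$ is generated over $\WW(R)$ by $\bar F_1'(\bar\phi(Q))$, and to evaluate $\bar F_1'$ you must split $\bar\phi(Q)\subseteq Q'_S/Q'$ according to a normal decomposition $P'=L'\oplus T'$. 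On the $L'$-part the coefficients are moved by $f$, which is indeed elementwise nilpotent on $\hat W(\NNN_S)/\hat W(\NNN_R)$; but on the $T'$-part the coefficients lie in $\II_S$ and are moved by $\ff_1=\vv^{-1}$, which is a bijection $\II_S\to\WW(S)$ carrying $\II_R$ onto $\WW(R)$, hence does not advance any filtration of $\WW(S)/\WW(R)$. The resulting self-referential inclusion $M\subseteq\WW(R)\cdot\bar F_1'\bigl(M\cap(Q'_S/Q')\bigr)$ is exactly the kind of fixed-point condition a window satisfies tautologically, and it does not force $M=0$ without an additional nilpotence input; the iteration stalls wherever the Hodge filtration is nontrivial. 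Nor does $F^\sharp V^\sharp=\theta=pu_0$ rescue it: once $p^m$ kills $\WW(S)/\WW(R)$ one only gets $\bar F'^{\sharp m}\circ\bar\phi^{(m)}\circ V^{\sharp m}=0$, and $V^{\sharp m}(P)$ does not generate $P^{(m)}$.

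The paper supplies the missing contraction from a different source. It first notes that homomorphisms (like objects) are defined over finitely generated $W(R_{\red})$-subalgebras, so one may assume $\NNN_S^r=0$ and induct on $r$. With $\bar S=S/\NNN_S^{r-1}$, $\bar R=R/R\cap\NNN_S^{r-1}$ and $R'$ the preimage of $\bar R$ in $S$, the square-zero ideal $\NNN_S^{r-1}$ carries trivial nilpotent divided powers, so Corollary \ref{Co-deform-DDD} identifies Dieudonn\'e displays over $R$ (resp.\ $R'$) with displays over $\bar R$ together with a lift of the Hodge filtration to a direct summand of $\DD(\PPP)_{R/\bar R}$ (resp.\ $\DD(\PPP)_{R'/\bar R}$); injectivity of $R\to R'$ then gives full faithfulness from $R$ to $R'$, while the inductive hypothesis handles $\bar R\to\bar S$ and hence $R'\to S$. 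To repair your argument you would need to replace the Frobenius iteration on $\WW(S)/\WW(R)$ by this crystalline deformation step (or some equivalent descent statement for the Hodge filtration).
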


\begin{proof}
For a ring $A$ let $\V(A)$ be the category of projective $A$-modules of finite type. Since the ring $\WW(R)$ is the filtered union of\/ $\WW(R')$ where $R'$ runs through the finitely generated $W(R_{\red})$-subalgebras of $R$, the category $\V(\WW(R))$ is equivalent to the direct limit over $R'$ of $\V(\WW(R'))$. Since a Dieudonn\'e display over $R$ can be given by $L,T\in\V(\WW(R))$ together with an $f$-linear automorphism $\Psi$ of $L\oplus T$, the first assertion of the proposition follows. Similarly, every homomorphism of Dieudonn\'e displays over $R$ is defined over some finitely generated $R'$. Thus for the second assertion we may assume that $\NNN_S^r=0$. Let $\bar S=S/\NNN_S^{r-1}$ and $\bar R=R/R\cap\NNN_S^{r-1}$. Let $R''\subseteq S$ be the inverse image of $\bar R\subseteq\bar S$. By induction on $r$, the base change of Dieudonn\'e displays from $\bar R$ to $\bar S$ is fully faithful. It follows that the base change from $R''$ to $S$ is fully faithful as well. By Corollary \ref{Co-deform-DDD}, using trivial divided powers, Dieudonn\'e displays over $R$ or over $R''$ are equivalent to Dieudonn\'e displays over $\bar R$ together with a lift of the Hodge filtration to $R$ or to $R''$, respectively. Since $R\to R''$ is injective, it follows that the base change of Dieudonn\'e displays from $R$ to $R''$ is fully faithful.
\end{proof}

For the case of $p$-divisible groups we first recall some standard facts.

\begin{Lemma}
\label{Le-ker-trunc}
Let $B\to A$ be a surjective ring homomorphism 
with kernel $I$ such that $pI=0$ and $x^p=0$ for all $x\in I$. 
For an affine flat group scheme $H$ over $B$, the kernel
of $H(B)\to H(A)$ is annihilated by $p$.
\end{Lemma}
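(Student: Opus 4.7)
The plan is to reformulate $[p] \phi = \varepsilon$ as the vanishing of a certain map on the Hopf algebra $\OOO_H$, and then to exploit the Frobenius--Verschiebung factorisation of $[p]$ modulo $p$; commutativity of $H$ (implicit in the applications, where $H$ arises as a truncation of a $p$-divisible group) is used to define the Verschiebung.

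Let $\varepsilon : \OOO_H \to B$ be the counit and $J = \Ker(\varepsilon)$. An element of the kernel of $H(B) \to H(A)$ is a $B$-algebra homomorphism $\phi : \OOO_H \to B$ that agrees with $\varepsilon$ modulo $I$, so $D := \phi - \varepsilon$ is a $B$-linear map $\OOO_H \to I$ vanishing on $B \subseteq \OOO_H$. Expanding $\phi(xy) = \phi(x) \phi(y)$ shows $D(xy) = D(x) D(y)$ for $x, y \in J$. Since $\phi$ is an algebra homomorphism and $B$ is commutative, the convolution power computes as $\phi^{*p} = \phi \circ [p]^*$, where $[p]^* : \OOO_H \to \OOO_H$ is the map dual to multiplication by $p$ on $H$. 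Combined with $\varepsilon \circ [p]^* = \varepsilon$ this reduces the assertion $[p]\phi = \varepsilon$ to $D \circ [p]^* = 0$, and by $B$-linearity only $D([p]^*(J)) = 0$ has to be checked.

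To handle $[p]^*$ on $J$, I reduce modulo $p$. On $H_0 = H \otimes_B B/pB$, which lives in characteristic $p$, one has $[p]_{H_0} = V \circ F$ with $F$ the relative Frobenius and $V$ the Verschiebung, whence $[p]^*_{H_0} = F^* \circ V^*$. Since $V$ preserves the unit section, $V^*$ sends the augmentation ideal of $H_0$ into that of $H_0^{(p)}$, so for $y \in J$ one may write $V^*(y \bmod p) = \sum_i \bar y_i \otimes \bar b_i$ with $\bar y_i$ in the augmentation ideal of $H_0$, and then $[p]^*_{H_0}(y \bmod p) = \sum_i \bar b_i \bar y_i^p$. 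Lifting $\bar y_i$ and $\bar b_i$ arbitrarily to $y_i \in J$ and $b_i \in B$, one obtains in $\OOO_H$
\[
[p]^*(y) = \sum_i b_i y_i^p + p z
\]
for some $z \in \OOO_H$.

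Applying $D$ finishes the proof. By $B$-linearity and the multiplicativity of $D$ on $J$ one has $D(b_i y_i^p) = b_i D(y_i)^p$, while $D(pz) = p D(z) \in pI = 0$. Each $D(y_i)^p$ vanishes because $D(y_i) \in I$ and every element of $I$ has $p$-th power zero, so $D([p]^*(y)) = 0$ for every $y \in J$. Hence $D \circ [p]^* = 0$ and $[p]\phi = \varepsilon$, so $K$ is killed by $p$. The main point to secure is the multiplicativity of $D$ on $J$, which powers the step $D(y_i^p) = D(y_i)^p$; after that the lifting of the mod-$p$ factorisation is routine, the error term $pz$ being killed by $D$ automatically.
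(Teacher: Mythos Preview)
Your argument is correct. Both your proof and the paper's rest on the same two ingredients---the hypothesis $pI=0$ allows one to reduce to characteristic $p$, and the hypothesis $x^p=0$ for $x\in I$ kills the Frobenius contribution---but the packaging is genuinely different. The paper argues geometrically: it observes that $B\times_AB\cong B\times_{B_0}(B_0\oplus I)$ as rings (with the natural multiplication on $B_0\oplus I$), whence $\Ker(H(B)\to H(A))\cong\Ker(H_0(B_0\oplus I)\to H_0(B_0))$; the right-hand side lives entirely over the characteristic-$p$ ring $B_0\oplus I$, and since $p$-th powers vanish in $I$ it lies in the kernel of the relative Frobenius of $H_0$, hence in $H_0[p]$. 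Your approach stays on the Hopf-algebra side, lifting the mod-$p$ identity $[p]^*=F^*V^*$ to $\mathcal{O}_H$ up to a $p$-divisible error and checking by hand that $D=\phi-\varepsilon$ annihilates each piece. The paper's route is shorter and avoids any explicit manipulation of $[p]^*$, while yours is more self-contained in that it does not invoke the fibered-product identification or the functorial statement $\ker F\subseteq H_0[p]$ as a black box. Both implicitly use commutativity of $H$, which you rightly flag.
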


\begin{proof}
Let $B_0=B/pB$ and $H_0=H\otimes_BB_0$. 
The abelian group $B_0\oplus I$ 
becomes a ring with multiplication
$(a\oplus i)(a'\oplus i')=aa'\oplus(ai'+a'i+ii')$,
and one can identify
$B\times_{A}B$ with $B\times_{B_0}(B_0\oplus I)$.
Since the evaluation of affine schemes commutes with fibered 
products of rings, we obtain an isomorphism of
abelian groups
$$
\Ker(H(B)\to H(A))\cong\Ker(H_0(B_0\oplus I)\to H_0(B_0)).
$$
The right hand side lies
in the kernel of the Frobenius $F_{H_0}$ of $H_0$, which lies 
in $H_0[p]$ since $V_{H_0}\circ F_{H_0}=p$ by \cite[ VII$_\text{A}$ 4.3]{SGA3}. 
This proves the lemma.
\end{proof}

\begin{Lemma}
\label{Le-surj-trunc}
Let $B\to A$ be a surjective ring homomorphism with kernel $I$ such that $p$ is nilpotent in $B$ and $I$ is a nil-ideal. For a $p$-divisible group $G$ over $B$, the homomorphism $G(B)\to G(A)$ is surjective.
\end{Lemma}

\begin{proof}
For given $x\in G_n(A)$, since $G_n$ is finitely presented there is a finitely generated ideal
$I'\subseteq I$ such that $x$ lifts to an element $x'\in G_n(B/I')$. Now we can use that $G$ is formally smooth by \cite[Th.~3.3.13]{Messing-Crys}.
\end{proof}

\begin{Lemma}
\label{Le-red-BT}
Let $B\to A$ be a surjective ring homomorphism whose kernel is bounded nilpotent and such that $p$ is nilpotent in $B$. Then there is a number $r$ such that for two $p$-divisible groups $G$ and $H$ over $B$, the reduction homomorphism $\Hom(G,H)\to\Hom(G_A,H_A)$ is injective with kernel annihilated by $p^r$.
\end{Lemma}

\begin{proof}
This is an easy consequence of Lemmas \ref{Le-ker-trunc} and \ref{Le-surj-trunc};
cf.\ the proof of \cite[Lemma 1.1.3]{Katz:Serre-Tate}.
\end{proof}

\begin{Prop}
\label{Pr-finiteness-bt}
Every $p$-divisible group over an admissible ring $R$ is defined over a finitely generated $W(R_{\red})$-subalgebra of $R$. For an injective homomorphism of admissible rings $R\to S$ such that $R_{\red}\to S_{\red}$ is bijective, the base change of $p$-divisible groups from $R$ to $S$ is fully faithful.
\end{Prop}

\begin{proof}
For a $p$-divisible group $G$ over $R$ let $G_0=G\otimes_RR_{\red}$. Using Lemma \ref{Le-red-BT} we chose $r$ such that for two $p$-divisible groups $G$ and $H$ over $R$, the cokernel of $\Hom(G,H)\to\Hom(G_0,H_0)$ is annihilated by $p^r$. Now let $G$ be given, let $G''$ be a lift of $G_0$ to $W(R_{\red})$ and let $G'=G''\otimes_{W(R_{\red})}R$. There are homomorphisms $\varphi:G'\to G$ and $\psi:G\to G'$ which both lift the multiplication $p^r:G_0\to G_0$. Thus $\varphi\psi$ and $\psi\varphi$ are multiplication by $p^{2r}$. We obtain an isomorphism $G\cong G'/K_G$ where $K_G\subseteq G'$ is a finite locally free group scheme annihilated by $p^{2r}$; see Lemma \ref{Le:isogeny} below. In particular $K_G$ is finitely presented, and the first assertion of the proposition follows. To prove the second assertion, we consider two $p$-divisible groups $G$ and $H$ over $R$ and a homomorphism $\varphi_0:G_0\to H_0$ over $R_{\red}=S_{\red}$. There is a unique lift of $p^r\varphi_0$ to a homomorphism $\psi:G\to H$, and there is a lift of $\varphi_0$ to $R$ if $\psi$ vanishes on $G[p^r]$. Since $R\to S$ is injective, this holds if and only if the scalar extension $\psi_S$ vanishes on $G_S[p^r]$, which is equivalent to the existence of a lift of $\varphi_0$ to $S$.
\end{proof}

\begin{Lemma}
\label{Le:isogeny}
Let $\varphi:G\to H$ and $\psi:H\to G$ be homomorphisms of $p$-divisible groups over a scheme $S$ with $\varphi\psi=p^n$ and $\psi\varphi=p^n$. Then $\Ker(\varphi)$ and $\Ker(\psi)$ are finite locally free group schemes.
\end{Lemma}

\begin{proof}
Clearly $\Ker(\varphi)$ and $\Ker(\psi)$ are finite group schemes of finite presentation. Thus we may assume that $S=\Spec R$ for a local ring $R$ with residue field $k$. Let $\Ker(\psi)=\Spec A$ and $G_n=\Spec B$. Choose elements $a_1,\ldots,a_{p^r}\in A$ which map to a $k$-basis of $A_k$, so they generate $A$ as an $R$-module. We have a surjective homomorphism of fppf sheaves $\varphi:G_n\to\Ker(\psi)$. It follows that $B_k$ is a locally free $A_k$-module of some rank $p^s$, thus a free $A_k$-module since $A_k$ is finite. Choose $b_1,\ldots,b_{p^s}\in B$ which map to an $A_k$-basis of $B_k$. The elements $a_ib_j\in B$ map to a $k$-basis of $B_k$. Since $B$ is a free $R$-module they form an $R$-basis of $B$. It follows that $A$ is free over $R$ with basis $a_i$.
\end{proof}

\subsection{Deformation rings}

Let $\Lambda\to K$ be a surjective ring homomorphism
with finitely generated kernel $I\subseteq\Lambda$ 
such that $\Lambda$ is $I$-adically complete. The ring $K$ is not
assumed to be a field. Let
$\Nil_{\Lambda/K}$ be the category of $\Lambda$-algebras
$A$ together with a homomorphism of $\Lambda$-algebras
$A\to K$ with nilpotent kernel. 
We consider covariant functors 
$$
F:\Nil_{\Lambda/K}\to({\text{sets}})
$$
with the following properties (cf.\ \cite{Schlessinger}):

\begin{enumerate}
\setcounter{enumi}{\value{equation}}
\item \stepcounter{equation}
\label{Ax-def-1}
The set $F(K)$ has precisely one element.
\item \stepcounter{equation}
\label{Ax-def-2}
For a surjective homomorphism $A_1\to A$ in $\Nil_{\Lambda/K}$ 
the induced map $F(A_1)\to F(A)$ is surjective.
\item \stepcounter{equation}
\label{Ax-def-3}
For each pair of homomorphisms $A_1\to A\leftarrow A_2$ in 
$\Nil_{\Lambda/K}$ such that one of them is surjective 
the natural map
$F(A_1\times_AA_2)\to F(A_1)\times_{F(A)}F(A_2)$
is bijective. Then for each $K$-module $N$ the set
$F(K\oplus N)$ is naturally a $K$-module. In particular,
$t_F=F(K[\varepsilon])$ is a $K$-module, which is 
called the tangent space of $F$.
\item \stepcounter{equation}
\label{Ax-def-4}
For each $K$-module $N$ the natural homomorphism of $K$-modules 
$
t_F\otimes_KN\to F(K\oplus N)
$
is bijective. 
\item \stepcounter{equation}
\label{Ax-def-5}
The $K$-module $t_F$ is finitely presented.
\end{enumerate}

\noindent
The first three conditions imply that the functor $N\mapsto F(K\oplus N)$
preserves exact sequences of $K$-modules.
Thus \eqref{Ax-def-4} is automatic if $N$ is finitely presented.
Moreover \eqref{Ax-def-1}-\eqref{Ax-def-4} imply that the $K$-module
$t_F$ is flat, so \eqref{Ax-def-5} implies that $t_F$
is projective. 

\begin{Prop}
Assume that $F$ satisfies \eqref{Ax-def-1}-\eqref{Ax-def-5}. 
Then $F$ is pro-repre\-sented 
by a complete $\Lambda$-algebra $B$. 
Let $\tilde t$ be a projective $\Lambda$-module of finite type
which lifts $t_F$. Then $B$ is isomorphic to the 
complete symmetric algebra $\Lambda[[\tilde t^*]]$,
where ${}^*$ means dual.
This is a power series ring over $\Lambda$ if $t_F$
is a free $K$-module.
\end{Prop}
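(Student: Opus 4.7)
The plan is to adapt the classical Schlessinger-style pro-representability argument to this setting, where $K$ need not be a field. Set $B=\Lambda[[\tilde t^*]]$ with augmentation ideal $\Fm_B$, and let $J=\tilde t^*\cdot B\subseteq\Fm_B$. The finite quotients $B_{m,n}=B/(I^mB+J^n)$ form a cofinal inverse system among quotients of $B$ that belong to $\Nil_{\Lambda/K}$, and every $\Lambda$-algebra map from $B$ to an object of $\Nil_{\Lambda/K}$ factors through some $B_{m,n}$. For $A\in\Nil_{\Lambda/K}$ with nilpotent augmentation ideal $\Fm_A=\ker(A\to K)$, the finite-generated projectivity of $\tilde t$ over $\Lambda$ (which follows from (5) together with the fact that (1)--(4) make $t_F$ flat) combined with the nilpotence of $\Fm_A$ yield
$$
\Hom_{\Lambda\text{-cts}}(B,A)\;=\;\Hom_\Lambda(\tilde t^*,\Fm_A).
$$
For a trivial square-zero extension $A=K\oplus N$ this reduces to $\Hom_K(t_F^*,N)\cong t_F\otimes_KN$, which matches $F(K\oplus N)$ by (4) and pins down the identification at the tangent level.

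Next I would build a compatible system $\xi_{m,n}\in F(B_{m,n})$. Take $\xi_{1,1}\in F(K)$ the unique element from (1), and pick $\xi_{1,2}\in F(K\oplus t_F^*)\cong\End_K(t_F)$ corresponding to $\id_{t_F}$, which is the universal first-order family. For larger $(m,n)$ condition (2) provides lifts. The structural ingredient needed to coordinate the choices is the torsor principle: for any small square-zero extension $A'\twoheadrightarrow A$ with $K$-module kernel $N$, the isomorphism $A'\times_AA'\cong A'\times_K(K\oplus N)$ together with (3) and (4) makes the non-empty fibres of $F(A')\to F(A)$ into torsors under $F(K\oplus N)=t_F\otimes_KN$; the same torsor structure governs the fibres of $\Hom_{\Lambda\text{-cts}}(B,A')\to\Hom_{\Lambda\text{-cts}}(B,A)$ by the preceding paragraph. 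Choosing lifts along a countable cofinal subsystem (Mittag--Leffler is automatic from (2)) produces a pro-element $\hat\xi\in\varprojlim_{m,n}F(B_{m,n})$ and a natural transformation
$$
\eta_A:\Hom_{\Lambda\text{-cts}}(B,A)\to F(A),\qquad\phi\mapsto F(\phi)(\hat\xi).
$$

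Finally, I would prove $\eta_A$ is a bijection for all $A\in\Nil_{\Lambda/K}$ by induction on the nilpotence order of $\Fm_A$. Filter $A$ by $A\twoheadrightarrow A/\Fm_A^{r-1}\twoheadrightarrow\cdots\twoheadrightarrow K$, each step being a small square-zero extension with $K$-module kernel $\Fm_A^i/\Fm_A^{i+1}$. Both $F(A/\Fm_A^{i+1})\to F(A/\Fm_A^i)$ and its $\Hom_{\Lambda\text{-cts}}(B,-)$ counterpart are surjective with non-empty fibres that are torsors under the common group $t_F\otimes_K(\Fm_A^i/\Fm_A^{i+1})$, and naturality of $\eta$ makes it torsor-equivariant. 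Combined with the base case $\eta_K=\id_{\{*\}}$, this forces $\eta_A$ to be a bijection. The main technical obstacle is the coherent choice of lifts in the construction of $\hat\xi$: condition (3) (full bijectivity of the fibre-product map, strictly stronger than Schlessinger's H1) is exactly what makes the torsor structures on the two sides match at every stage, so that a single pro-element yields a bijective $\eta$ rather than a merely versal hull.
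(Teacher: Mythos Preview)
Your proposal is correct and follows essentially the same route as the paper: set $B=\Lambda[[\tilde t^*]]$, take the element of $F(K\oplus t_F^*)\cong\End_K(t_F)$ corresponding to the identity, lift it to a pro-element over $B$ using (2), and then show the resulting natural transformation is bijective by induction along square-zero extensions via the torsor structure supplied by (3) and (4). The paper's version is more compressed---it writes ``let $\xi\in F(B)$ be a lift of $\bar\xi$'' without making the tower $B_{m,n}$ explicit, and it phrases the induction as showing that the functor $B$ (which itself satisfies (1)--(5)) maps bijectively to $F$---but the underlying argument is the same.
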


\begin{proof}
The $K$-module $t_F$ is projective as explained above. Thus $\tilde t$ exists.
Let $B=\Lambda[[\tilde t^*]]$ and let $\bar B=K\oplus t_F^*$.
We have an obvious projection $B\to\bar B$. 
Let $\bar\xi\in F(\bar B)=t_F\otimes t_F^*=\End(t_F)$
correspond to the identity of $t_F$ and let
$\xi\in F(B)$ be a lift of $\bar\xi$.
We claim that the induced homomorphism of functors
$\xi:B\to F$ is bijective. Note that the functor
$B$ satisfies \eqref{Ax-def-1}-\eqref{Ax-def-5}. By induction it suffices
to show that if $A\to\bar A$ is a surjection
in $\Nil_{\Lambda/K}$ whose kernel $N$
is a $K$-module of square zero and if
$B(\bar A)\to F(\bar A)$ is bijective, then
$B(A)\to F(A)$ is bijective as well. We have a natural
isomorphism $A\times_{\bar A}A\cong A\times_K(K\oplus N)$.
It follows that the fibres of $B(A)\to B(\bar A)$
and the fibres of $F(A)\to F(\bar A)$ are
principal homogeneous sets under the $K$-modules
$B(K\oplus N)$ and $F(K\oplus N)$, respectively. 
The homomorphism $t_B\to t_F$ induced by $\xi$
is bijective by construction, so $B(K\oplus N)\to F(K\oplus N)$ 
is bijective, and the proposition follows.
\end{proof}

\begin{Cor}
A homomorphism of functors which satisfy \eqref{Ax-def-1}-\eqref{Ax-def-5} is an
isomorphism if and only if it induces an isomorphism 
on the tangent spaces.
\qed
\end{Cor}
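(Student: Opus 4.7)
The plan is to prove the nontrivial direction by induction on the nilpotence order of the augmentation ideal. The forward direction is immediate, since $t_F=F(K[\varepsilon])$ is part of the functor data and any natural isomorphism restricts to one on tangent spaces.

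First I would observe that, by condition~(4) applied to both $F$ and $G$ and the naturality of the isomorphisms $t_F\otimes_K N\cong F(K\oplus N)$ and $t_G\otimes_K N\cong G(K\oplus N)$, the hypothesis that $\phi$ induces an isomorphism $t_F\cong t_G$ upgrades to the statement that $\phi_{K\oplus N}:F(K\oplus N)\to G(K\oplus N)$ is bijective for every $K$-module $N$. This is the key technical input.

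Next I would proceed by induction on the least integer $n\ge 1$ with $J_A^n=0$, where $J_A=\Ker(A\to K)$, to show that $\phi_A$ is bijective for each $A\in\Nil_{\Lambda/K}$. The base case $n=1$ means $A=K$, where (1) gives singletons on both sides. For the inductive step, set $\bar A=A/J_A^{n-1}$; since $J_A\cdot J_A^{n-1}\subseteq J_A^n=0$, the ideal $J_A^{n-1}$ is a $K$-module, and there is a natural ring isomorphism
$$
A\times_{\bar A}A\cong A\times_K(K\oplus J_A^{n-1}),
$$
with $K\oplus J_A^{n-1}$ the square-zero extension, given by $(a_1,a_2)\mapsto(a_1,(\bar a_1,a_2-a_1))$. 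Applying (2) and (3) to both $F$ and $G$, the nonempty fibres of $F(A)\to F(\bar A)$ and $G(A)\to G(\bar A)$ become principal homogeneous sets under $F(K\oplus J_A^{n-1})$ and $G(K\oplus J_A^{n-1})$ respectively, and $\phi_A$ is equivariant for these torsor structures. The inductive hypothesis gives that $\phi_{\bar A}$ is bijective and the preliminary step gives that $\phi_{K\oplus J_A^{n-1}}$ is bijective, whence $\phi_A$ is bijective.

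The main obstacle, if any, is just the bookkeeping behind the torsor structure on the fibres, but this is a direct consequence of (3) combined with the ring isomorphism displayed above. One could alternatively invoke the preceding proposition: $\phi$ corresponds to a $\Lambda$-algebra homomorphism $B_G\to B_F$ between pro-representing rings, and the hypothesis is precisely that this is an isomorphism modulo the square of the augmentation ideal, which then forces it to be an isomorphism of completed symmetric algebras. The direct induction above avoids any completion argument and seems the cleaner route.
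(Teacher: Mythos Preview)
Your proof is correct and is essentially the argument the paper has in mind: the \qed\ indicates that the Corollary follows from the Proposition, whose proof consists precisely of the induction-on-nilpotence-order argument you spell out (there applied to the particular morphism $\xi:h_B\to F$, here to an arbitrary $\phi:F\to G$). Your alternative via the explicit pro-representing rings is also valid and is the other natural reading of the \qed.
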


\begin{Remark}
\label{Re-deform-functorial}
Let $\Lambda'\to K'$ be another pair as above 
and let $g:\Lambda'\to\Lambda$ be a ring homomorphism
which induces a homomorphism $\bar g:K'\to K$. 
For given functors $F$ on $\Nil_{\Lambda/K}$ and
$F'$ on $\Nil_{\Lambda'/K'}$, a homomorphism
$h:F\to F'$ over $g$ is a compatible system of maps
$$
h(A):F(A)\to F'(A\times_KK')
$$
for $A$ in $\Nil_{\Lambda/K}$; here $A\times_KK'$ 
is naturally an object of $\Nil_{\Lambda'/K'}$.
If $F$ and $F'$ satisfy \eqref{Ax-def-1}-\eqref{Ax-def-5} and if $B$ and $B'$
are the complete algebras which pro-represent $F$
and $F'$, respectively, then $h$ corresponds to a 
homomorphism $B'\to B$ compatible with $g$ and $\bar g$.
If $h(A)$ is bijective for all $A$, the induced
homomorphism $B'\hat\otimes_{\Lambda'}\Lambda\to B$ is an
isomorphism.
\end{Remark}

\begin{Defn}
Assume that $p$ is nilpotent in $K=\Lambda/I$ as above.
For a $p$-divisible group $G$ over $K$ let
$$
\Def_G:\Nil_{\Lambda/K}\to(\text{sets})
$$
be the deformation functor of $G$. This means that
$\Def_G(A)$ is the set of isomorphism classes
of $p$-di\-visible groups $G'$ over $A$
together with an isomorphism $G'\otimes_AK\cong G$.
Let $t_G=\Lie(G^\vee)\otimes_K\Lie(G)$.
\end{Defn}

\begin{Prop}
\label{Pr-Def-G}
The functor\/ $\Def_G$ is pro-represented by a
complete $\Lambda$-algebra $B$. 
Explicitly, if $\tilde t$ is a projective 
$\Lambda$-module which lifts $t_G$, then 
$B$ is isomorphic to the complete symmetric
algebra $\Lambda[[\tilde t^*]]$.
\end{Prop}

We note that Lemma \ref{Le:BT-limit} gives a universal $p$-divisible group over $B$.

\begin{proof}
The functor $\Def_G$ satisfies \eqref{Ax-def-1}-\eqref{Ax-def-5} with tangent space 
$t_G$ because for a surjective homomorphism $A'\to A$ in 
$\Nil_{\Lambda/K}$ whose kernel $N$ is a $K$-module of 
square zero and for $H\in\Def_G(A)$, the set of lifts of $H$ 
to $A'$ is a principally homogeneous set under
the $K$-module $t_G\otimes_KN$ by \cite{Messing-Crys}.
\end{proof}

\begin{Remark}
\label{Re-Def-G-functorial}
Let $g:\Lambda'\to\Lambda$ over $\bar g:K'\to K$ 
be as in Remark \ref{Re-deform-functorial}
such that $p$ is nilpotent in $K'$. Let $G$ over $K$
be the base change of a $p$-divisible group $G'$
over $K'$. For $A$ in $\Nil_{\Lambda/K}$
we have a natural map
$$
\Def_{G'}(A\times_KK')\to\Def_G(A).
$$
This map is bijective, and its inverse is a homomorphism
$\Def_G\to\Def_{G'}$ over $g$ in the sense of 
\ref{Re-deform-functorial}.
If $B$ and $B'$ pro-represent $\Def_G$ and 
$\Def_{G'}$, respectively, we get an isomorphism 
$B'\hat\otimes_{\Lambda'}\Lambda\cong B$.
\end{Remark}

\begin{Defn}
Assume that $K=\Lambda/I$ is an admissible ring.
For a Dieudonn\'e display $\PPP$ over $K$ we denote by
$$
\Def_{\PPP}:\Nil_{\Lambda/K}\to(\text{sets})
$$ 
the deformation functor of $\PPP$.
Let $t_\PPP=\Hom(Q/I_KP,P/Q)$. 
\end{Defn}

We are mainly interested in the case where $K$ is perfect
and $\Lambda=W(K)$. Then Dieudonn\'e displays over $K$
are displays because $\WW(K)=W(K)$.

\begin{Prop}
\label{Pr-Def-PPP}
The functor\/ $\Def_\PPP$ is pro-represented by a complete
$\Lambda$-algebra $B$. Explicitly, if $\tilde t$ is
a projective $\Lambda$-module which lifts $t_\PPP$, then
$B$ is isomorphic to the complete symmetric algebra
$\Lambda[[\tilde t^*]]$.
\end{Prop}

We note that Lemma \ref{Le:Disp-limit} gives a universal Dieudonn\'e display over $B$.

\begin{proof}
The functor $\Def_\PPP$ satisfies \eqref{Ax-def-1}-\eqref{Ax-def-5} with tangent space
$t_\PPP$ because for a surjective homomorphism $A'\to A$ in $\Nil_{\Lambda/K}$ whose kernel $N$ is a $K$-module of square zero and for $\PPP'\in\Def_\PPP(A)$, the set of lifts of $\PPP'$ to $A'$ is a principally homogeneous set under the $K$-module $t_\PPP\otimes_KN$ by Corollary \ref{Co-deform-DDD}.
\end{proof}

\begin{Remark}
\label{Re-Def-PPP-funct}
Let $g:\Lambda'\to\Lambda$ over $\bar g:K'\to K$ be as in
Remark \ref{Re-deform-functorial} such that $K$ and $K'$
are admissible rings. Assume that $\PPP$ is the base change
of a Dieudonn\'e display $\PPP'$ over $K'$.
If $B$ and $B'$ represent $\Def_\PPP$ and $\Def_{\PPP'}$,
respectively, then $B'\hat\otimes_{\Lambda'}\Lambda\cong B$. 
This is analogous to Remark \ref{Re-Def-G-functorial}.
\end{Remark}

\subsection{Crystals and frames}
\label{Subse-cryst-frame}

Let $\FFF=(S,I,R,\sigma,\sigma_1)$ be a frame as in
\ref{Subse-frames} such that 
$S$ and $R$ are $p$-adically complete, $S$ has no $p$-torsion, 
$I$ carries divided powers, and $\sigma=p\sigma_1$ on $I$. 
Thus $(S,\sigma)$ is a frame for each $R/p^nR$ in the
sense of \cite{Zink-Windows}.
By a well-known construction, the crystalline
Dieudonn\'e functor allows to associate to a $p$-divisible
group over $R$ an $\FFF$-window; this is explained in
the proof of \cite[Theorem 1.6]{Zink-Windows} 
for the Dieudonn\'e crystal of a nilpotent display, 
and in \cite{Kisin-crys, Kisin-2adic} for $p$-divisible groups.

The construction goes as follows. 
First, one can define a filtered $F$-$V$-module;
here it is not necessary to assume that $S$ has no $p$-torsion.

\begin{Constr}
\label{Const-fil-mod}
Let $\FFF=(S,I,R,\sigma,\sigma_1)$ be a frame such that $S$ and $R$ are $p$-adically complete,
$I$ is equipped with divided powers $\delta$ which are compatible 
with the canonical divided powers of $p$, 
and $\sigma=p\sigma_1$ on $I$.
Let $\delta'$ be the divided powers
on $I'=I+pS$ which extend $\delta$ and the canonical divided
powers of $p$. We assume that $\sigma$ preserves $\delta'$,
which is automatic if $S$ has no $p$-torsion. 
Then one can define a functor
\begin{gather*}
\Phi^o:(p\text{-divisible groups over $R$})\to
(\text{filtered $F$-$V$-modules over $\FFF$}) \\
G\mapsto(P,Q,F^\sharp,V^\sharp)
\end{gather*}
as follows. Let $R_0=R/pR$ and let $\sigma_0$ be its Frobenius endomorphism. 
For a given $p$-divisible group $G$ over $R$ put
$
P=\DD(G)_{S/R}=\DD(G_0)_{S/R_0},
$ 
where $\DD(G)$ is the \emph{covariant}%
\footnote{\label{Ft-DD-covariant}
This differs from the notation of \cite{BBM}, 
where $\DD(G)$ is contravariant. 
One can switch between the covariant and contravariant crystals 
by passing to the dual of $G$ or of $\DD(G)$, which amounts to
the same by the crystalline duality theorem \cite[5.3]{BBM}.
}
Dieudonn\'e crystal of $G$, and let $Q$ be
the kernel of the natural map $P\to\Lie(G)$. 
Since $\sigma$ preserves $\delta'$, there is a natural isomorphism
\[
P^{(\sigma)}\cong\DD(\sigma_0^*G_0)_{S/R_0}.
\]
Thus we can define
$V^\sharp:P\to P^{(\sigma)}$ to be induced by the Frobenius 
$F:G_0\to\sigma_0^*G_0$ and $F^\sharp:P^{(\sigma)}\to P$
to be induced by the Verschiebung $V:\sigma_0^*G_0\to G_0$.
\end{Constr}

In the second step one associates $F_1$.

\begin{Prop}
\label{Pr-fil-mod-win}
Let $\FFF$ be a frame as in the beginning of \ref{Subse-cryst-frame}.
For a $p$-divisible group $G$ over $R$ let\/ 
$\Phi^o(G)=(P,Q,F^\sharp,V^\sharp)$ be the filtered $F$-$V$-module 
over $\FFF$ given by Construction \ref{Const-fil-mod}. 
There is a unique $F_1:Q\to P$ such that 
$(P,Q,F,F_1)$ is an $\FFF$-window, and it gives back $V^\sharp$
by the functor of Lemma \ref{Le-F-V-mod}.
\end{Prop}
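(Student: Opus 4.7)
The argument splits into uniqueness, which is immediate, and existence, which reduces to a single key claim. For uniqueness, since $S$ has no $p$-torsion and $P$ is projective over $S$, $P$ has no $p$-torsion. Any $F_1$ making $(P,Q,F,F_1)$ into an $\FFF$-window must satisfy $F|_Q = \theta F_1 = pF_1$ (where $\theta = p$ because $\sigma = p\sigma_1$ on $I$), which determines $F_1$ from $F|_Q$.

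For existence, write $F = F^\sharp\circ\iota_\sigma$ with $\iota_\sigma: P\to P^{(1)}$, $x\mapsto 1\otimes x$. The heart of the proof is the claim $F(Q)\subseteq pP$; granting it, define $F_1: Q\to P$ by $pF_1(x) = F(x)$, well-defined by $p$-torsion-freeness of $P$. Then $F_1$ is $\sigma$-semilinear and satisfies $F_1(ax) = \sigma_1(a)F(x)$ for $a\in I$, $x\in P$, both verified by multiplying by $p$ and using $\sigma = p\sigma_1$ on $I$. Choose a normal decomposition $P = L\oplus T$ with $Q = L\oplus IT$, available since $\Lie(G) = P/Q$ is a projective $R$-module that lifts to a projective summand of $P$ over $S$. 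The $\sigma$-semilinear map $\Psi = F_1|_L\oplus F|_T: L\oplus T\to P$ has linearisation $\Psi^\sharp$ satisfying $F^\sharp = \Psi^\sharp\circ\mathrm{diag}(p,1)$; combined with $F^\sharp V^\sharp = p = V^\sharp F^\sharp$ and $p$-torsion-freeness, this forces $\Psi^\sharp$ to be an isomorphism, so $F_1(Q)$ generates $P$. Finally, the $V^\sharp_{\mathrm{win}}$ of the constructed window (as in Lemma \ref{Le-F-V-mod}) agrees with the given $V^\sharp$: both are $S$-linear with $V^\sharp F^\sharp = p$, and on $\Psi(l)$ for $l\in L$ and $\Psi(t)$ for $t\in T$ both send these to $1\otimes l$ (forced by multiplying $V^\sharp(F_1(l))$ by $p$ and using $p$-torsion-freeness of $P^{(1)}$) and $p\otimes t$ respectively.

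The main obstacle is the key claim $F(Q)\subseteq pP$. On $IP\subseteq Q$ it is trivial: $F(ax) = \sigma(a)F(x) = p\sigma_1(a)F(x)$. It therefore suffices to show the induced map $\bar F: Q/IP\to P/pP$ vanishes. Under the crystal identifications, $Q/IP$ is the Hodge filtration $\omega_{G^\vee}\subseteq \DD(G)_R$, and after further reduction modulo $pS$, $\bar F$ becomes (via the crystal identification $P/(I+pS)P = \DD(G_0)_{R_0}$ with $G_0 = G\otimes_R R/pR$ and $R_0 = R/pR$) the restriction of the Verschiebung morphism $V:\sigma_0^*G_0\to G_0$, transported through the crystal to an endomorphism of $\DD(G_0)_{R_0}$, applied to the Hodge filtration. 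This vanishes by the standard fact in covariant Dieudonn\'e theory that the Verschiebung of a $p$-divisible group in characteristic $p$ kills the Hodge filtration. Upgrading from the vanishing of $\bar F$ modulo $(I+pS)P$ to the vanishing of $\bar F$ itself relies on the horizontality of $V^\sharp$ with respect to the canonical connection on $P$ afforded by the divided powers $\delta$ on $I$, exactly as carried out in the proof of \cite[Thm.~1.6]{Zink-Windows} and in \cite{Kisin-crys,Kisin-2adic}.
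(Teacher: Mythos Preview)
Your overall structure matches the paper's: reduce to showing $F(Q)\subseteq pP$, that a normal decomposition exists, and that $F_1(Q)$ generates $P$; for $F(Q)\subseteq pP$ you invoke the same references as the paper (\cite{Zink-Windows}, \cite{Kisin-crys}). The discrepancy is in the surjectivity step.

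Your claim that $F^\sharp=\Psi^\sharp\circ\mathrm{diag}(p,1)$ together with $F^\sharp V^\sharp=p=V^\sharp F^\sharp$ and $p$-torsion-freeness \emph{forces} $\Psi^\sharp$ to be an isomorphism is not correct as a statement about filtered $F$-$V$-modules. Take $S=W(k)$, $P=S$ of rank one with $L=0$, $T=S$, so $Q=IT=pS$. Set $F^\sharp=p$ and $V^\sharp=1$. Then $F^\sharp V^\sharp=p=V^\sharp F^\sharp$, $D=\id$, and $\Psi^\sharp=F^\sharp=p$, which is injective but not surjective. Your argument does yield $V^\sharp\Psi^\sharp=\mathrm{diag}(1,p)$ after cancelling $p$, hence injectivity of $\Psi^\sharp$, but surjectivity does not follow from these relations alone. (The example above does not arise from a $p$-divisible group---it would force $\Lie(G)\ne 0$ while the Frobenius of $G_0$ is an isomorphism---so the input ``$G$ is a $p$-divisible group'' is genuinely needed here.)

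The paper fills this gap in two ways. In the proof proper, surjectivity is checked at each maximal ideal by choosing an embedding $R/\Fm\hookrightarrow k$ into a perfect field and constructing a frame homomorphism $\FFF\to\WWW_k$ via $S\to W(S)\to W(k)$; over $W(k)$ the statement is classical Dieudonn\'e theory. Alternatively, the remark following the proposition uses the crystalline duality theorem: applying the same construction to $G^\vee$ yields $\Psi^{\prime\sharp}$ whose dual is an explicit inverse to $\Psi^\sharp$. Either of these would complete your argument; the purely formal manipulation of $F^\sharp$ and $V^\sharp$ does not.
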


\begin{proof}
We have functors $(P,Q,F,F_1)\mapsto
(P,Q,F^\sharp,V^\sharp)\mapsto(P,Q,F^\sharp)$, 
which are fully faithful; see Lemma \ref{Le-F-V-mod}.
Thus we have to show that $F(Q)$ lies in $pP$ so that 
$F_1=p^{-1}F$ is well-defined, that $F_1(Q)$ generates $P$,
and that the pair $(P,Q)$ admits a normal decomposition.
Since $R$ and $S$ are $p$-adically complete and since the kernel of 
$S/pS\to R/pR$ is a nil-ideal due to its divided powers,
all projective $R$-modules of finite type lift to $S$.
Thus a normal decomposition exists.
The existence of $F_1$ and the surjectivity of its
linearisation are proved in \cite[Le.~A.2]{Kisin-crys}
if $S$ is local with perfect residue field, but the
proof can be easily adapted to the general case.
To prove surjectivity, for each maximal ideal of $S$,
which necessarily comes from a maximal ideal $\Fm$ of $R$, 
we choose an embedding of $R/\Fm$ into a perfect field $k$. 
There is a ring homomorphism $\alpha:S\to W(k)$ which lifts 
$R\to k$ such that $f\alpha=\alpha\sigma$; it can be 
constructed as $S\to W(S)\to W(k)$. 
Then $\alpha$ is a homomorphism of frames $\FFF\to\WWW_k$,
and the assertion is reduced to the case of $\WWW_k$,
which is classical. 
\end{proof}

\begin{Remark}
The surjectivity of $F_1$ in the proof of Proposition
\ref{Pr-fil-mod-win} can 
also be deduced from the crystalline duality theorem.
Let $P=L\oplus T$ be a normal decomposition and let 
$\Psi:P\to P$ be given by $F_1$ on $L$ and by $F$ on $T$. 
We have to show that the linearisation 
$\Psi^\sharp:P^{(\sigma)}\to P$ is an isomorphism.
Let $(P',Q',F',F_1')$ be the quadruple associated to the 
Cartier dual $G^\vee$. The duality theorem gives
a perfect pairing $P\times P'\to S$ such that
$\left<F(x),F'(x')\right>=p\sigma{\left<x,x'\right>}$.
It follows that
$\left<F(x),F_1'(x')\right>=\sigma{\left<x,x'\right>}$
and
$\left<F_1(x),F'(x')\right>=\sigma{\left<x,x'\right>}$
whenever this makes sense.
The unique decomposition $P'=L'\oplus T'$ with 
$\left<L,L'\right>=0=\left<T,T'\right>$ is a normal
decomposition of $P'$, and the dual of the associated
$\Psi^{\prime\sharp}$ is an inverse of $\Psi^\sharp$.
\end{Remark}

\subsection{The Dieudonn\'e display associated to 
a $p$-divisible group}
\label{Subse-Ddisp-assoc-to}

For an admissible ring $R$ with $p\ge 3$
we consider the Dieudonn\'e frame $\DDD_R$ 
defined in Lemma \ref{Le-DDD-frame}.
The ring $\WW(R)$ is $p$-adically complete by the remark 
preceding Proposition \ref{Pr-WW-p-adic}.
By Lemma \ref{Le-pd-II**} the ideal $\II_R$ carries 
natural divided powers compatible with the canonical 
divided powers of $p$, and the induced divided powers on
the kernel of $\WW(R)\to R/pR$ are preserved by the Frobenius.
Thus Construction \ref{Const-fil-mod} gives a functor
\[
\Phi^o_R:(\text{$p$-divisible groups over $R$})
\to(\text{filtered $F$-$V$-modules over $\DDD_R$})
\]
which is compatible with base change in $R$.

\begin{Thm}
\label{Th-bt2disp}
For each admissible ring $R$ with $p\ge 3$
there is a unique functor
$$
\Phi_R:(\text{$p$-divisible groups over $R$})
\to(\text{Dieudonn\'e displays over $R$})
$$
which is compatible with base change in $R$ such that 
the filtered $F$-$V$-module over $\DDD_R$ associated to
$\Phi_R(G)$ is equal to $\Phi^o_R(G)$. In particular
there is a natural isomorphism\/ $\Lie(G)\cong\Lie(\Phi_R(G))$.
\end{Thm}

\begin{proof}
Clearly $\Phi^o_R(G)=(P,Q,F^\sharp,V^\sharp)$ 
is functorial in $R$ and $G$.
We have to show that there is a unique operator 
$F_1:Q\to P$ which is functorial in $R$ and $G$ such that 
$\Phi_R(G)=(P,Q,F,F_1)$ is a Dieudonn\'e display over $R$.

Let $K=R_{\red}$ and $\Lambda=W(K)$. Let $\bar G=G\otimes_RK$ 
and let $B$ be the complete $\Lambda$-algebra which 
pro-represents the functor $\Def_{\bar G}$ on 
$\Nil_{\Lambda/K}$; see Proposition \ref{Pr-Def-G}.
Let $\GGG$ be the universal deformation of $G$ over $B$.
If $I$ denotes the kernel of $B\to K$, we can define
$$
\Phi^o_B(\GGG)=\varprojlim_n\Phi^o_{B/I^n}(\GGG\otimes_BB/I^n).
$$ 
On the other hand, the ring $\WW(B)$ is $p$-adically complete 
by Proposition \ref{Pr-WW-p-adic}. 
Therefore we can also define $\Phi^o_B(\GGG)$ 
be a direct application of Construction \ref{Const-fil-mod},
and this agrees with the limit definition.
The ring $\WW(B)$ has no $p$-torsion because 
$B$ has no $p$-torsion. Thus by Proposition 
\ref{Pr-fil-mod-win} there is a unique operator 
$F_1$ which makes $\Phi_B^o(\GGG)$ into a 
Dieudonn\'e display $\Phi_B(\GGG)$ over $B$. 

By Proposition \ref{Pr-finiteness-bt} there is a unique 
homomorphism $B\to R$ of augmented algebras 
such that $G=\GGG\otimes_BR$ as deformations of $\bar G$.
Necessarily we define $\Phi_R(G)$ as the base change 
of $\Phi_B(\GGG)$ under $B\to R$. It remains to
show that $\Phi_R(G)$ is functorial in $R$ and $G$. 

Assume that $G$ is the base change of a $p$-divisible
group $G'$ over $R'$ under a homomorphism of admissible
rings $R'\to R$. Let $K'$, $\Lambda'$, $\bar G'$, $B'$, $\GGG'$
have the obvious meaning. We have a natural homomorphism
of $W(K')$-algebras $B'\to B$ together with an
isomorphism $\GGG'\otimes_{B'}B\cong\GGG$;
see Remark \ref{Re-Def-G-functorial}. By the uniqueness
of $F_1$ over $B$ we see that $\Phi_B(\GGG)$
coincides with the base change of $\Phi_{B'}(\GGG')$.
It follows that $\Phi_R(G)$ is the base change of
$\Phi_{R'}(G')$.

Assume that $u:G\to G_1$ is a homomorphism of $p$-divisible
groups over $R$. Let $\bar G_1$, $B_1$, $\GGG_1$ 
have the obvious meaning. We have to show that
$\Phi^o_R(u)$ commutes with $F_1$. We may assume that $u$ is an 
isomorphism because otherwise one can pass to the automorphism
$\left(\begin{smallmatrix}1&0\\u&1\end{smallmatrix}\right)$
of $G\oplus G_1$. This reasoning uses that the natural isomorphism
$\Phi^o_R(G\oplus G_1)=\Phi^o_R(G)\oplus\Phi^o_R(G_1)$
preserves the operators $F_1$ defined on the three modules,
which follows from the uniqueness of $F_1$ over the
ring which pro-represents $\Def_{\bar G}\times\Def_{\bar G_1}$.
An isomorphism $u:G\to G_1$ induces an isomorphism
$\bar u:\bar G\cong\bar G_1$, which gives an isomorphism
$B\cong B_1$ together with an isomorphism
$\tilde u:\GGG\otimes_BB_1\cong\GGG_1$
that lifts $\bar u$. By the uniqueness of $F_1$
over $B_1$ it follows that 
$\Phi_{B_1}^o(\tilde u)$ preserves $F_1$. Since
$u$ is the base change of $\tilde u$ by the
homomorphism $B_1\to R$ defined by $G_1$,
it follows that $\Phi_R^o(u)$ preserves $F_1$ as well.
\end{proof}

In order to analyse the action of the functors $\Phi_R$
on infinitesimal deformations we need the following 
extension of Theorem \ref{Th-bt2disp}.
Let $(R'\to R,\delta)$ be a divided power extension 
of admissible rings with $p\ge 3$ which is compatible 
with the canonical divided powers of $p$. 
Again, the ring $\WW(R'/R)$ is $p$-adically complete, and
$\II_{R'/R}$ carries natural divided powers compatible
with the canonical divided powers of $p$ such that
$f$ preserves their extension to the kernel of 
$\WW(R'/R)\to R/pR$. Thus
Construction \ref{Const-fil-mod} gives a functor
\[
\Phi_{R'/R}^o:
(p\text{-divisible groups over }R)
\to
(\text{filtered $F$-$V$-modules over }\DDD_{R'/R})
\]
which is compatible with base change in the triple $(R'\to R,\delta)$.

\begin{Thm}
\label{Th-bt2disp-pd}
Assume that $p\ge 3$.
For each divided power extension of admissible rings 
$(R'\to R,\delta)$ compatible with the 
canonical divided powers of $p$ there is a unique functor 
$$
\Phi_{R'/R}:(\text{$p$-divisible groups over $R$})
\to(\text{Dieudonn\'e displays for $R'/R$})
$$
which is compatible with base change in the triple 
$(R'\to R,\delta)$
such that the filtered $F$-$V$-module over $\DDD_{R'/R}$ 
associated to\/ $\Phi_{R'/R}(G)$ is equal to\/ $\Phi_{R'/R}^o(G)$.
\end{Thm}

\begin{proof}
For a given $p$-divisible group $G$ over $R$ we choose
a lift to a $p$-divisible group $G'$ over $R'$, which
exists by \cite[th\'eor\`eme 4.4]{Illusie}.
The Dieudonn\'e display $\Phi_{R'}(G')$ is well-defined
by Theorem \ref{Th-bt2disp}, and necessarily 
$\Phi_{R'/R}(G)$ is defined as the base change of 
$\Phi_{R'}(G')$ by the frame homomorphism 
$\DDD_{R'}\to\DDD_{R'/R}$. We have to show
that the operator $F_1$ on $\Phi^o_{R'/R}(G)$ 
defined in this way does not depend on the choice
of $G'$. If this is proved it follows easily that 
$\Phi_{R'/R}(G)$ is functorial in $G$ and in $(R'\to R,\delta)$; 
here instead of arbitrary homomorphisms of $p$-divisible 
groups it suffices to treat isomorphisms.

Let $K$, $\Lambda$, $\bar G$, $B$, $\GGG$ be
as in the proof of Theorem \ref{Th-bt2disp}.
We have an isomorphism $B\cong\Lambda[[t]]$ for 
a finitely generated projective $\Lambda$-module $t$.
Let $C=B\hat\otimes_\Lambda B$. The automorphism $\tau=\left(\begin{smallmatrix}1&1\\0&1\end{smallmatrix}\right)$
of $t\oplus t$ defines an isomorphism 
$$
C=\Lambda[[t\oplus t]]\xrightarrow\tau
\Lambda[[t\oplus t]]=B[[t_B]]
$$
under which the multiplication homomorphism 
$\mu:C\to B$ corresponds to the augmentation 
$B[[t_B]]\to B$ defined by $t_B\mapsto 0$.
Let $I$ be the kernel of $B\to K$, let $S$ be the
divided power envelope of the ideal $t_BB[[t_B]]\subseteq B[[t_B]]$,
and let $C'$ be the $I$-adic completion of $S$. 
By Lemma \ref{Le-pd-alg}, $\mu$ extends
to a divided power extension of admissible topological
rings $\mu':C'\to B$ which is topologically compatible
with the canonical divided powers of $p$.\footnote{
The construction of $C'$ seems to depend on choosing one
of the two natural maps $B\to C$, but actually it is independent
of the choice as the $I$-adic topologies defined on $S$ 
by these two maps coincide.
}

Assume that $G_1$ and $G_2$ are two lifts of $G$
to $p$-divisible groups over $R'$. 
Let $\GGG_1$ and $\GGG_2$ be the $p$-divisible
groups over $C$ which are the base change of $\GGG$ 
by the two natural homomorphisms $B\to C$.
By Proposition \ref{Pr-finiteness-bt} there are
well-defined homomorphisms $\bar\alpha:B\to R$
and $\alpha:C\to R'$ such that $G=\GGG\otimes_{B,\bar\alpha}R$
and $G_i=\GGG_i\otimes_{C,\alpha}R'$ as deformations of $\bar G$. 
We have a commutative diagram of rings
$$
\xymatrix@M+0.2em@C+1em{
C \ar[r] \ar@/^3ex/[rr]^\alpha \ar[dr]_\mu & 
C' \ar@{-->}[r]_{\alpha'} \ar[d]^{\mu'} &
R' \ar[d] \\
& B \ar[r]^{\bar\alpha} & R
}
$$
where $\alpha'$ is constructed as follows.
There is a unique homomorphism $\alpha'':S\to R'$ which extends $\alpha$
and which commutes with the divided powers on the kernel of $S\to B$ and of $R'\to R$. 
Each of the two homomorphisms $B\to C\to R'$ factors over $B/I^n$ for some $n$. 
Thus $\alpha''$ induces a homomorphism $S/I^nS\to R'$, which gives the required $\alpha'$.
We obtain the following commutative diagram of frames,
where $\iota$ is given by $C\to C'$, and $\iota'$ is 
given by the identity of $R'$.
$$
\xymatrix@M+0.2em{
\DDD_C \ar[r]^-{\iota} \ar[d]_{\alpha} &
\DDD_{C'/B} \ar[d]^{\alpha'} \\
\DDD_{R'} \ar[r]^-{\iota'} & \DDD_{R'/R}
}
$$
We have to show that the isomorphism of filtered
$F$-$V$-modules over $\DDD_{R'/R}$
\begin{equation}
\label{Eq-first-isom}
\iota'_*(\Phi^o_{R'}(G_1))
\cong\Phi^o_{R'/R}(G)\cong
\iota'_*(\Phi^o_{R'}(G_2))
\end{equation}
commutes with the operator $F_1$ defined
on the outer terms by the functor $\Phi_{R'}$.
The construction of $\Phi^o$ can be
extended to topological divided power extensions
of admissible topological rings by passing to
the projective limit.
Then \eqref{Eq-first-isom} arises by $\alpha'_*$ from
the natural isomorphism of filtered $F$-$V$-modules 
over $\DDD_{C'/B}$
\begin{equation}
\label{Eq-second-isom}
\iota_*(\Phi^o_{C}(\GGG_1))
\cong\Phi^o_{C'/B}(\GGG)\cong
\iota_*(\Phi^o_{C}(\GGG_2)).
\end{equation}
Since $\alpha_*$ preserves $F_1$ it suffices to show 
that \eqref{Eq-second-isom} commutes with the operators 
$F_1$ defined on the outer terms by the functor $\Phi_C$.
This follows from the relation $pF_1=F$ because 
$\WW(C'/B)$ has no $p$-torsion by Lemma \ref{Le-pd-alg}.
\end{proof}

\begin{Cor}
\label{Co-bt2disp-DD}
Assume that $p\ge 3$.
For a $p$-divisible group $G$ over an admissible ring $R$
with associated Dieudonn\'e display  $\PPP=\Phi_R(G)$ 
there is a natural isomorphism of crystals on $\Cris_{\adm}(R/\ZZ_p)$
$$
\DD(G)\cong\DD(\PPP)
$$
which is compatible with the natural isomorphism $\Lie(G)\cong\Lie(\PPP)$.
\end{Cor}

The category $\Cris_{\adm}$ and the crystal $\DD(\PPP)$
were defined in \ref{Subse-crystals}.

\begin{proof}
Let $(R'\to R,\gamma)$ be a divided power extension
of admissible rings with $p\ge 3$ compatible with the canonical
divided powers of $p$. 
The Dieudonn\'e display $\Phi_{R'/R}(G)$ 
given by Theorem \ref{Th-bt2disp-pd} is the unique 
lift of $\PPP$ under the crystalline frame homomorphism 
$\DDD_{R'/R}\to\DDD_R$. By the construction of the
underlying filtered $F$-$V$-module $\Phi_{R'/R}^o(G)$ 
and by the definition of the crystal $\KK(\PPP)$ in \ref{Subse-crystals} 
we obtain a natural isomorphism of $\WW(R'/R)$-modules
$$
\DD(G)_{\WW(R')/R}\cong\KK(\PPP)_{R'/R}.
$$ 
The tensor product with the projection $\WW(R'/R)\to R'$, 
which is a homomorphism of divided power extensions of $R$, 
gives a natural isomorphism of $R'$-modules 
$\DD(G)_{R'/R}\cong\DD(\PPP)_{R'/R}$ which is compatible with
the natural isomorphism $\Lie(G)\cong\Lie(\PPP)$.
\end{proof}

Now Theorem \ref{Th-B} for odd primes can be deduced quite formally:

\begin{Cor}
\label{Co-bt2disp-DD-gen}
Assume that $p\ge 3$.
For a $p$-divisible group $G$ over an admissible ring $R$
with associated Dieudonn\'e display $\PPP=\Phi_R(G)$ there is
a natural isomorphism of crystals on $\Cris_{\adm}(R)$
\[
\DD(G)\cong\DD(\PPP)
\]
which is compatible with the natural isomorphism $\Lie(G)\cong\Lie(\PPP)$.
\end{Cor}

Here the covariant Dieudonn\'e crystal $\DD(G)$ can be defined for 
divided power extensions that are not necessarily compatible
with the canonical divided powers of $p$ by \cite[Ch.II \S 9]{Mazur-Messing};
see also \cite[\S 1.4]{BBM}.

\begin{proof}
Let $\DD'(G)=\DD(\Phi_R(G))$.
Consider a divided power extension $R'\to R$ of admissible rings
which need not be compatible with the canonical divided powers of $p$.
We claim that for two lifts  $G_1$ and $G_2$ of $G$ to $R'$
the following diagram of natural isomorphisms commutes.
\begin{equation}
\label{Diag-gen-pd}
\xymatrix@M+0.2em{
\DD(G_2)_{R'/R'}  \ar[d]^\sim &
\DD(G)_{R'/R} \ar[l]_-\sim \ar[r]^-\sim &
\DD(G_1)_{R'/R'} \ar[d]^\sim  \\
\DD'(G_2)_{R'/R'} \ar[r]^-\sim &
\DD'(G)_{R'/R} &
\DD'(G_1)_{R'/R'} \ar[l]_-\sim  
}
\end{equation}

This gives a well-defined isomorphism 
$\alpha(G):\DD(G)_{R'/R}\cong\DD'(G)_{R'/R}$.
It is easy to see that $\alpha(G)$ is compatible with the natural isomorphism
$\Lie(G)\cong\Lie(\PPP)$, that $\alpha(G)$ is functorial in the 
divided power extension $R'\to R$
and that $\alpha(G\oplus H)=\alpha(G)\oplus\alpha(H)$.
In order to show that $\alpha$ is functorial in $G$ it suffices to consider isomorphisms.
So let $u:G\to H$ be an isomorphism of $p$-divisible groups over $R$.
We can choose lifts $G_1$ of $G$ and $H_1$ of $H$ to $R'$ such that
$u$ extends to $\tilde u:G_1\to H_1$. Then the following diagram shows
that $\alpha$ commutes with $u$.
\[
\xymatrix@M+0.2em{
\DD(G)_{R'/R} \ar[r]^-\sim \ar[d]_{\DD(u)} &
\DD(G_1)_{R'/R'} \ar[r]^-\sim \ar[d]_{\DD(\tilde u)} &
\DD'(G_1)_{R'/R'} \ar[r]^-\sim \ar[d]^{\DD'(\tilde u)} &
\DD'(G)_{R'/R} \ar[d]^{\DD'(u)} \\
\DD(H)_{R'/R} \ar[r]^-\sim &
\DD(H_1)_{R'/R'} \ar[r]^-\sim &
\DD'(H_1)_{R'/R'} \ar[r]^-\sim &
\DD'(H)_{R'/R}
}
\]

It remains to show that \eqref{Diag-gen-pd} commutes.
Let $K,\Lambda,\bar G,B$ be as in the proof of Theorem \ref{Th-bt2disp}.
Let $C=B\hat\otimes_\Lambda B$ and $C'$ be as in the proof of 
Theorem \ref{Th-bt2disp-pd} so that the multiplication homomorphism
$\mu:C\to B$ extends to a topological divided power extension  
$\mu':C'\to B$ of admissible topological rings without $p$-torsion
which is topologically compatible with the canonical divided powers of $p$.
We have homomorphisms $B\to R$ defined by $G$
and $C\to R'$ defined by $(G_1,G_2)$, which extend
to a homomorphism of divided power extensions from
$(C'\to B)$ to $(R'\to R)$. Thus \eqref{Diag-gen-pd} is
the base change of a similar diagram for $(C'\to B)$,
which commutes by Corollary \ref{Co-bt2disp-DD}.
\end{proof}

\subsection{A $v$-stabilised variant} 

Let $R$ be an admissible ring with $p=2$. 
The $v$-stabilised Zink ring $\WW^+(R)$ considered in 
\ref{Subse-v-stab-Zink} and in \ref{Subse-v-stab-D}
is $2$-adically complete, and its ideal $\II^+_R$
carries natural divided powers which are compatible
with the canonical divided powers of $2$. 
The proof of Theorem \ref{Th-bt2disp} with
$\WW^+$ in place of $\WW$ shows the following.

\begin{Prop}
\label{Pr-bt2disp+}
For each admissible ring $R$ with $p=2$ there is
a unique functor
$$
\Phi_R^+:(\text{$2$-divisible groups over $R$})
\to(\text{$\DDD^+_R$-windows})
$$
which is compatible with base change in $R$ such that
the filtered $F$-$V$-module over $\DDD_R^+$ associated to
$\Phi^+_R(G)$ is given by Construction \ref{Const-fil-mod}.
\qed
\end{Prop}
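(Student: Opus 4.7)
The plan is to mimic the proof of Proposition \ref{Pr-bt2disp} verbatim, replacing $\WW$, $\DDD_R$, and $\ff_1$ systematically by $\WW^+$, $\DDD^+_R$, and $f_1=v^{-1}$. First I would verify that Construction \ref{Const-fil-mod} applies to the frame $\DDD^+_R$: the ring $\WW^+(R)$ is $p$-adic by Proposition \ref{Pr-WW-p-adic}; the ideal $\II^+_R$ carries divided powers compatible with the canonical divided powers of $p$ by Lemma \ref{Le-pd-II} (case (ii)); every element of $\II^+_R$ has the form $v(y)$ by Lemma \ref{Le-WW-large-v}, and then $f(v(y))=py=pf_1(v(y))$ gives $\sigma=p\sigma_1$ on $\II^+_R$; and $f$ preserves the extension of these divided powers to the kernel of $\WW^+(R)\to R/pR$, as in Lemma \ref{Le-pd-II**}. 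Hence the construction functorially associates to each $p$-divisible group $G$ over $R$ a filtered $F$-$V$-module $\Phi^{o,+}_R(G)$ over $\DDD^+_R$, and the task reduces to producing, uniquely and functorially, an operator $F_1$ promoting it to a $\DDD^+_R$-window.

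Next I would reduce to the universal deformation ring exactly as in the proof of Proposition \ref{Pr-bt2disp}. Put $K=R_{\red}$, $\Lambda=W(K)$, $\bar G=G\otimes_RK$, and let $B$ be the complete $\Lambda$-algebra pro-representing $\Def_{\bar G}$ with universal deformation $\GGG$. By Proposition \ref{Pr-Def-G}, $B$ is isomorphic to a power series ring over $\Lambda$, hence $p$-torsion-free; since $\WW^+(B)\subseteq W(B)$ and $W(B)$ inherits $p$-torsion-freeness from $B$, the ring $\WW^+(B)$ has no $p$-torsion either. Using that $\WW^+(B)$ is $p$-adic (Proposition \ref{Pr-WW-p-adic}), I define $\Phi^{o,+}_B(\GGG)$ either directly by Construction \ref{Const-fil-mod} or as the inverse limit of its restrictions to $B/I^n$, the two agreeing. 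Proposition \ref{Pr-fil-mod-win} then supplies a unique $F_1$ promoting $\Phi^{o,+}_B(\GGG)$ to a $\DDD^+_B$-window $\Phi^+_B(\GGG)$.

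For general $R$, Proposition \ref{Pr-finiteness-bt} \eqref{It-finiteness-bt-1} yields a unique augmented $W(K)$-algebra map $B\to R$ realising $G$ as a pullback of $\GGG$, and one sets $\Phi^+_R(G)$ to be the base change of $\Phi^+_B(\GGG)$ along the induced frame homomorphism $\DDD^+_B\to\DDD^+_R$. Functoriality in $R$ follows from the uniqueness of $F_1$ over $B$ combined with Remark \ref{Re-Def-G-functorial}; functoriality in $G$ reduces to the case of isomorphisms by passing to the graph automorphism $\left(\begin{smallmatrix}1&0\\u&1\end{smallmatrix}\right)$ on $G\oplus G_1$ and then again invokes the uniqueness of $F_1$ over the relevant universal deformation ring.

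The only step that is not literal transcription of the odd-$p$ argument is the verification of the hypotheses of Proposition \ref{Pr-fil-mod-win} for the frame $\DDD^+_B$, namely that $\WW^+(B)$ is $p$-adic and $p$-torsion-free for the universal deformation ring $B$. This is where one might worry, since $\WW^+(R)$ differs from $\WW(R)$ precisely by the extra generator $v(1)$ and the description of the quotient in Lemma \ref{Le-WW+} is somewhat delicate; but both required properties have already been established in the earlier sections (in Proposition \ref{Pr-WW-p-adic}, and by the inclusion $\WW^+(B)\subseteq W(B)$ respectively), so no genuinely new difficulty arises.
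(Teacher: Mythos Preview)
Your proposal is correct and follows precisely the approach the paper indicates: the paper's own proof consists of the single sentence that the proof of Proposition~\ref{Pr-bt2disp} with $\WW^+$ in place of $\WW$ carries over, and you have faithfully spelled out that substitution, including the verification that $\WW^+(B)$ is $p$-adic and $p$-torsion-free over the universal deformation ring so that Proposition~\ref{Pr-fil-mod-win} applies. The only minor quibble is that for a general admissible ring $R$ the $p$-adicness of $\WW^+(R)$ follows from the remark preceding Proposition~\ref{Pr-WW-p-adic} rather than from that proposition itself (which treats the topological case), but this does not affect the argument.
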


\begin{Cor}
\label{Co-bt2disp+}
For each admissible ring $R$ with $p=2$ and $2R=0$ there is a unique
functor
$$
\Phi_R:(\text{$2$-divisible groups over $R$})
\to(\text{Dieudonn\'e displays over $R$})
$$
which is compatible with base change in $R$ such that the filtered
$F$-$V$-module over $\DDD_R$ associated to $\Phi_R(G)$ is given by
Construction \ref{Const-fil-mod}.
\end{Cor}

\begin{proof}
Proposition \ref{Pr-bt2disp+} gives the functors $\Phi_R$
since $\DDD^+_R=\DDD_R$ when $2R=0$.
The uniqueness follows as in the proof of Theorem \ref{Th-bt2disp},
using $B/2B$ instead of $B$.
\end{proof}

Let $(R'\to R,\delta)$ be a divided power extension 
of admissible rings with $p=2$ which is compatible 
with the canonical divided powers of $2$. 
The ring $\WW^+(R'/R)$ is $2$-adically complete, and its ideal 
$\II^+_{R'/R}$ carries natural divided powers
compatible with the canonical divided
powers of $2$. The proof of
Theorem \ref{Th-bt2disp-pd} with
$\WW^+$ in place of $\WW$ gives the following.

\begin{Prop}
\label{Pr-bt2disp-pd+}
For each divided power extension of admissible rings
$(R'\to R,\delta)$ with $p=2$ such that $\delta$ is compatible
with the canonical divided powers of\/ $2$ there is
a unique functor
$$
\Phi^+_{R'/R}:(\text{$2$-divisible groups over $R$})
\to(\text{$\DDD^+_{R'/R}$-windows})
$$
which is functorial in the triple
$(R'\to R,\delta)$ such that the filtered $F$-$V$-module
over $\DDD^+_{R'/R}$ associated to $\Phi^+_{R'/R}(G)$
is given by Construction \ref{Const-fil-mod}. 
\qed
\end{Prop}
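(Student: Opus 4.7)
The plan is to transcribe the proof of Proposition \ref{Pr-bt2disp-pd} verbatim, substituting $\WW^+$ and $\DDD^+$ for $\WW$ and $\DDD$ throughout. The hypotheses of Construction \ref{Const-fil-mod} for the frame $\DDD^+_{R'/R}$ are satisfied by the discussion preceding the statement: $\WW^+(R'/R)$ is $p$-adic, $\II^+_{R'/R}$ carries natural divided powers compatible with the canonical divided powers of $p$, and $\sigma=p\sigma_1$ on $\II^+_{R'/R}$ because $\sigma_1$ extends the inverse of the unmodified Verschiebung $v$ (and not the modified $\vv$). This produces a filtered $F$-$V$-module $\Phi^{+,o}_{R'/R}(G)$ over $\DDD^+_{R'/R}$ that is functorial in the triple and in $G$.

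To upgrade this to a window, I would choose a lift $G'$ of $G$ to a $p$-divisible group over $R'$, which exists by \cite[th\'eor\`eme 4.4]{Illusie}, and define $\Phi^+_{R'/R}(G)$ as the base change of $\Phi^+_{R'}(G')$ along the strict frame homomorphism $\DDD^+_{R'}\to\DDD^+_{R'/R}$ arising from the identity of $R'$. The argument then reduces to verifying that the operator $F_1$ so obtained on $\Phi^{+,o}_{R'/R}(G)$ is independent of the chosen lift; once this is established, functoriality in $G$ and in $(R'\to R,\delta)$ follows by the same reduction of morphisms of $p$-divisible groups to isomorphisms via the trick of replacing $u\colon G\to G_1$ by the automorphism $\bigl(\begin{smallmatrix}1&0\\u&1\end{smallmatrix}\bigr)$ of $G\oplus G_1$, exactly as in the proof of Proposition \ref{Pr-bt2disp-pd}.

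For independence of the lift, I would take two lifts $G_1,G_2$ of $G$ to $R'$, the universal deformation $\GGG$ of $\bar G=G\otimes_RR_{\red}$ over $B\cong W(R_{\red})[[t]]$, the completed tensor product $C=B\hat\otimes_{W(R_{\red})}B$, and the $I$-adic completion $C'$ of the divided power envelope of the diagonal ideal, obtaining a divided power extension of admissible topological rings $\mu'\colon C'\to B$ by Lemma \ref{Le-pd-alg}\eqref{Le-pd-alg-1}. The commutative square of frames relating $\DDD^+_C$, $\DDD^+_{C'/B}$, $\DDD^+_{R'}$ and $\DDD^+_{R'/R}$ used in the proof of Proposition \ref{Pr-bt2disp-pd} carries over verbatim. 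The main (essentially the only) obstacle beyond that proof is to check that $\WW^+(C'/B)$ has no $p$-torsion: by Lemma \ref{Le-pd-alg}\eqref{Le-pd-alg-2}, $C'$ has no $p$-torsion, whence $W(C')$ has no $p$-torsion via injectivity of the ghost map on $p$-torsion-free rings, and $\WW^+(C'/B)$ is a subring of $W(C')$ by construction of the enlarged Zink ring. This forces the relation $F_1=p^{-1}F$ on $\Phi^+_{C'/B}(\GGG)$, so the same diagram chase as in Proposition \ref{Pr-bt2disp-pd} yields that both identifications $\iota'_*\Phi^+_{R'}(G_i)\cong\Phi^{+,o}_{R'/R}(G)$ for $i=1,2$ commute with $F_1$, completing the proof.
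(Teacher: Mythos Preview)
Your proposal is correct and matches the paper's approach exactly: the paper records this proposition with a $\qed$ and simply states that the proof of Proposition~\ref{Pr-bt2disp-pd} with $\WW^+$ in place of $\WW$ applies. Your elaboration of the substitution, including the observation that $\WW^+(C'/B)\subset W(C')$ inherits $p$-torsion-freeness from $C'$ via Lemma~\ref{Le-pd-alg}\eqref{Le-pd-alg-2}, is precisely the content the paper leaves implicit.
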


The proof of Corollary \ref{Co-bt2disp-DD} then shows the following.

\begin{Cor}
\label{Co-bt2disp-DD-2}
Assume that $p=2$. For a\/ $2$-divisible group $G$ over an
admissible ring $R$ with associated $v$-stabilised Dieudonn\'e
display $\PPP^+=\Phi^+_R(G)$ there is a natural isomorphism
of crystals on $\Cris_{\adm}(R/\ZZ_2)$
$$
\DD(G)\cong\DD^+(\PPP^+)
$$
which is compatible with the natural isomorphism 
$\Lie(G)\cong\Lie(\PPP^+)$.
\qed
\end{Cor}

There is no analogue of Corollary \ref{Co-bt2disp-DD-gen} for $\Phi_R^+$
because $\DD^+(\PPP^+)$ is only a crystal on $\Cris_{\adm}(R/\ZZ_2)$ 
and not on $\Cris_{\adm}(R)$; but see Corollary \ref{Co-DDG-DDP-2}.


\section{From $2$-divisible groups to Dieudonn\'e displays}

In this section we construct a functor $\Phi_R$ from $p$-divisible groups
over an admissible ring $R$ with $p=2$ to Dieudonn\'e displays.
When $2R=0$ this has been done in the previous section,
and the extension to
all $R$ is unique, as will be shown in the end of this section.
The construction relies on the following definition of divided powers on 
the ideal $\II_R\subseteq\WW(R)$ when $4R=0$.

\subsection{Divided powers on Zink rings}
\label{Subse-strange-pd}

We note that for a $\ZZ_{(2)}$-algebra
$B$ and an ideal $\Fb\subseteq B$, divided powers on $\Fb$ 
are equivalent to a map $\gamma:\Fb\to\Fb$ such that
\begin{gather}
\renewcommand{\theenumi}{\roman{enumi}}
\label{Ax-pd-pr}
\gamma(xy)=x^2\gamma(y)\text{ for $x\in B$ and $y\in\Fb$,} \\
\label{Ax-pd-sum}
\gamma(x+y)=\gamma(x)+xy+\gamma(y)\text{ for $x,y\in\Fb$.}
\end{gather}
Here \eqref{Ax-pd-pr} and \eqref{Ax-pd-sum} also give
$2\gamma(x)=x^2$ for $x\in\Fb$ since we can calculate
\[
4\gamma(x)=\gamma(2x)=\gamma(x+x)=2\gamma(x)+x^2.
\]

For an admissible ring $R$ with $p=2$, the canonical divided powers
on the ideal $I_R\subseteq W(R)$ defined by $\gamma(v(a))=v(a^2)$
induce divided powers on $\II_R\subseteq\WW(R)$ only if $2R=0$;
see \ref{Subse-pd}.
Using $\vv$ instead of $v$ we get a little further.

\begin{Prop}
\label{Pr-strange-pd}
For an admissible ring $R$ with $p=2$ we consider the map
\[
\gamma:\II_R\to\II_R,\qquad \gamma(\vv(a))=\vv(a^2).
\]

If $4R=0$, then $\gamma$ defines divided powers on $\II_R$ which
are compatible with the canonical divided powers of\/ $2$, and the 
corresponding extension of $\gamma$ to $\II_R+2\WW(R)$
is stable under the Frobenius $f$ of\/ $\WW(R)$.

If $8R=0$, let $\Kideal\subseteq\WW(R)$ be the set of all Witt vectors 
of the form $v([x])=(0,x,0,\ldots)$ with $x\in 4R$.
This is an ideal. Let $\tilde S=\WW(R)/\Kideal$. 
Then $\gamma$ induces divided powers on
the ideal\/ $\II_R/\Kideal$ of $\tilde S$, which can naturally be extended to
divided powers on $\II_R/\Kideal+2\tilde S$ that commute with the
endomorphism $\sigma$ on $\tilde S$ induced by $f$,
and the extended divided powers stabilise the ideal $2\tilde S$.
\end{Prop}

\begin{proof}
We will only consider the case $8R=0$ and show that the extended
divided powers satisfy $\gamma(2)=2-[4]$. Then the case $4R=0$ follows.

Since $4R$ is an ideal of square zero we have $\hat W(4R)=(4R)^{(\NN)}$
as $W(R)$-modules where $W(R)$ acts on the $i$-th component of the
right hand side by the $i$-th Witt polynomial $w_i$, and $f$ annihilates $\hat W(4R)$. 
Thus $\Kideal$ is an ideal of $\WW(R)$, and $f$ induces $\sigma:\tilde S\to\tilde S$.
Let us show that $\gamma$ factors over a map $\II_R/\Kideal\to\II_R$.
Indeed, for $a\in\WW(R)$ and $x\in 4R$ we have
\[
\gamma(\vv(a)+v([x]))=\gamma(\vv(a)+\vv([x]))=\vv((a+[x])^2)=\vv(a^2)=\gamma(\vv(a));
\]
here $v([x])=\vv([x])$ because $u_0$ maps to $1$ in $W(\FF_2)$ and thus $u_0[x]=[x]$.
Let us verify Axiom \eqref{Ax-pd-pr} for the map $\gamma:\II_R\to\II_R$.
For $a,b\in\WW(R)$ we have
\[
\gamma(a\vv(b))=\gamma(\vv(f(a)b))=\vv(f(a^2)b^2)
=a^2\vv(b^2)=a^2\gamma(\vv(b)).
\]
Consider now Axiom \eqref{Ax-pd-sum}. For $a,b\in\WW(R)$ we calculate
\[
\gamma(\vv(a)+\vv(b))
=\vv((a+b)^2)
=\gamma(\vv(a))+\vv(2ab)+\gamma(\vv(b))
\]
so that $\vv(2ab)$ has to be related with $\vv(a)\vv(b)$, which is
\[
\vv(a)\vv(b)=v(u_0a)v(u_0b)=v(2u_0^2ab)=\vv(2u_0ab).
\]
Since $2u_0=2-[4]$  we get 
\[
\vv(2ab)-\vv(a)\vv(b)=\vv([4]ab)=v([4a_0b_0])\in \Kideal.
\]
Thus \eqref{Ax-pd-sum} holds for $\gamma:\II_R/\Kideal\to\II_R/\Kideal$,
and $\gamma$ defines divided powers on this ideal.
We want to extend these to divided powers on the ideal
\[
\tilde I=\II_R/\Kideal+2\tilde S=\II_R/\Kideal+\hat W(2R)/\Kideal.
\]
Let 
\[
\Fb=\{(2a_0,4a_1,0,\ldots)\mid a_i\in R\}\subseteq\hat W(2R).
\]
This is an ideal of $\WW(R)$ with $\II_R\cap\Fb=\Kideal$, and we have
\[
\tilde I=\II_R/\Kideal\oplus\Fb/\Kideal.
\]
Thus the extension of $\gamma$ to $\tilde I$ corresponds to giving 
arbitrary divided powers on $\Fb/\Kideal\cong 2R$. 
We take $\gamma([2a])=[-2a^2]$ for $a\in R$.
Using $\vv(1)=2-[2]$ we obtain
\[
\gamma(2)=\gamma([2]+\vv(1))=[-2]+\vv(1)=[-2]+2-[2]=2-[4]
\]
in $\tilde S$ as announced.
Let us show that $\gamma\sigma=\sigma\gamma$ on $\tilde I$:
For $a\in\WW(R)$ we have
\begin{multline*}
\gamma\sigma(\vv(a))=
\gamma((2-[4])a)=
\gamma(2-[4])a^2=
\gamma(2)a^2\\
=(2-[4])a^2=
\sigma(\vv(a^2))=
\sigma\gamma(\vv(a)).
\end{multline*}
Finally we have $[4]=2[2]$ in $\Fb/\Kideal$, which implies that $\gamma(2)\in 2\tilde S$.
This finishes the proof of Proposition \ref{Pr-strange-pd}.
\end{proof}

\begin{Remark}
The proof shows that the extension of $\gamma$ is uniquely determined by
the condition that it commutes with $\sigma$. 
By choosing $\gamma([2a])=[2a^2]$ we get an extension
with $\gamma(2)=2$ but which does not commute with $\sigma$.
\end{Remark}

Let $R$ be an admissible ring with $4R=0$.
Proposition \ref{Pr-strange-pd} implies that its
Dieudonn\'e frame $\DDD_R$ satisfies the hypotheses
of Construction \ref{Const-fil-mod} so that we obtain
a functor
\[
\Phi_R^o:(2\text{-divisible groups over }R)
\to(\text{filtered $F$-$V$-modules over }\DDD_R).
\]
However, we cannot argue as in Theorem \ref{Th-bt2disp}
in order to get a $\DDD_R$-window because the
divided powers on $\II_R$ do not exist for universal deformation rings,
and thus Proposition \ref{Pr-fil-mod-win} cannot be applied directly.
The following modification will be sufficient for our purpose.

\subsection{A frame lift}
\label{Subse-extension}

Assume we are given a strict frame homomorphism
\[
\FFF'=(S',I',R',\sigma',\sigma_1')\xrightarrow\pi\FFF=(S,I,R,\sigma,\sigma_1)
\]
such that  $\pi:S'\to S$ and $I'\to I$ are surjective, and an
ideal $\Kideal\subseteq\Ker(\pi)$ which is stable under $\sigma'$.
Let
\[
\tilde S=S'/\Kideal,\qquad
J=\Ker(\pi)/\Kideal,\qquad
\tilde I=\Ker(\tilde S\to R),
\]
thus $S=\tilde S/J$ and $R=\tilde S/\tilde I$.
Let $\tilde\sigma:\tilde S\to\tilde S$ be the homomorphism induced by $\sigma'$,
let $\theta'\in S'$ be the element defined by the relation 
$\sigma'=\theta'\sigma'_1$ on $I'$, and let 
$\tilde\theta\in\tilde S$ and $\theta\in S$ be its images.
We assume that $\FFF$ satisfies the
conditions of Construction \ref{Const-fil-mod}, i.e.\
$S$ and $R$ are $p$-adically complete, 
$I$ carries divided powers compatible with the canonical
divided powers of $p$ and with $\sigma$, and $\theta=p$. 
Then Construction \ref{Const-fil-mod} gives a functor 
\[
\Phi^o:(p\text{-divisible groups over }R)
\to(\text{filtered $F$-$V$-modules over }\FFF).
\]
We also assume that the following conditions are satisfied.

\begin{enumerate}
\setcounter{enumi}{\value{equation}}
\item \stepcounter{equation}
\label{Ax-J}
We have $\tilde\sigma(J)=0$ and $J=\{x\in\tilde S\mid px=0\}$.
\item \stepcounter{equation}
\label{Ax-unit}
We have $\tilde\theta=p\tilde u$ for a unit $\tilde u\in\tilde S$.
\item \stepcounter{equation}
\label{Ax-pd}
The ideal $\tilde I+p\tilde S$ is equipped with divided powers
which lift the given divided powers on $I+pS$, 
which commute with $\tilde\sigma$,
and which stabilise the ideal $p\tilde S$.
\item \stepcounter{equation}
\label{Ax-Fa}
There is an ideal $\Fa\subseteq S$ with $\sigma(\Fa)\subseteq\Fa\subseteq\Rad S$ 
such that the ring $S/\Fa$ has no $p$-torsion.
\end{enumerate}
If $S$ has no $p$-torsion one can take $\FFF'=\FFF$ and all
axioms are clear. The following extends Proposition \ref{Pr-fil-mod-win}.
Note that the prime $p$ is arbitrary here.

\begin{Prop}
\label{Pr-win-frame-lift}
In this situation there is a well-defined functor
\[
\Phi:(p\text{-divisible groups over }R)\to(\FFF\text{-windows})
\]
such that for $\Phi^o(G)=(P,Q,F^\sharp,V^\sharp)$ 
the filtered $F$-$V$-module associated to $\Phi(G)$ 
is equal to $(P,Q,F^\sharp,uV^\sharp)$
where $u\in S$ is the image of $\tilde u\in\tilde S$.
\end{Prop}

\begin{proof}
Conditions \eqref{Ax-J} and \eqref{Ax-unit} imply that
multiplication by $\tilde\theta$ on $\tilde S$ 
induces an injective map $\tilde\theta:S\to\tilde S$
with image $\tilde\theta\tilde S=p\tilde S$.
Moreover $\tilde\sigma$ induces a homomorphism
$\tilde\sigma:S\to\tilde S$ that lifts $\sigma$.
The relation $\theta'\sigma_1'=\sigma'$ on $I'$ gives
$\tilde\theta\circ\sigma_1=\tilde\sigma$ as maps $I\to\tilde S$.

Let $G$ be a $p$-divisible group over $R$ and let $\Phi^{o}(G)$
be as above, i.e.\ 
\[
P=\DD(G)_{S/R}=\DD(G_{R_0})_{S/R_0}
\] 
with $R_0=R/pR$, the submodule $Q\subseteq P$ is the kernel of
$P\to\Lie G$, and $F^\sharp$ and $V^\sharp$ are induced 
by the Verschiebung and Frobenius of $G_{R_0}$.
The proof of \cite[A2]{Kisin-crys} shows that $F(Q)\subseteq pP$.
Let us recall the argument:
For $S_0=S/pS$, the kernel of $S_0\to R_0$ is a nil-ideal because it carries divided powers.
By \cite[th\'eor\`eme~4.4]{Illusie} there is a lift $G_{S_0}$ of $G_{R_0}$ to $S_0$, 
and we have $P=\DD(G_{S_0})_{S/S_0}$. Let $Q_1=\Ker(P\to\Lie G_{S_0})$.
Then $Q\subseteq Q_1+IP$, and the image of $F$ applied to both summands lies in $pP$.

Since $pJ=0$ and $S$ is $p$-adically complete, so is $\tilde S$. By \eqref{Ax-pd} we can define
\[
\tilde P=\DD(G)_{\tilde S/R}=\DD(G_{R_0})_{\tilde S/R_0}.
\] 
Here we use the (dual of the) Dieudonn\'e crystal of \cite[Ch.II \S 9]{Mazur-Messing},
which is defined for divided power extensions that are not necessarily compatible
with the canonical divided powers of $p$; see also \cite[\S 1.4]{BBM}.
Let $\tilde Q\subseteq\tilde P$ be the kernel of $\tilde P\to\Lie G$;
this is the inverse image of $Q$ under the projection
$\tilde P\to\tilde P/J\tilde P=P$.
Again, the Verschiebung and Frobenius of $G_{R_0}$ induce 
$\tilde S$-linear maps $\tilde F^\sharp:\tilde P^{(\tilde\sigma)}\to\tilde P$ 
and $\tilde V^\sharp:\tilde P\to\tilde P^{(\tilde\sigma)}$.
Since the divided powers stabilise the ideal $p\tilde S$, the argument
of \cite[A2]{Kisin-crys} again shows that $\tilde F(\tilde Q)\subseteq p\tilde P=\tilde\theta\tilde P$, 
where $\tilde F:\tilde P\to\tilde P$ is the $\tilde\sigma$-linear map corresponding to $\tilde F^\sharp$.
Since $\tilde\sigma$ annihilates $J$, the map $\tilde F$ 
induces a map $\tilde F:P\to\tilde P$ which lifts $F$.
Let $F_1:Q\to P$ be the composition
\[
F_1:Q\xrightarrow{\;\tilde F\;}\tilde\theta\tilde P\xleftarrow[\sim]{\;\;\tilde\theta\;\;}P
\]
i.e.\ $F_1=\tilde\theta^{-1}\circ\tilde F$. We define $\Phi(G)=(P,Q,F,F_1)$.
In order that this is an $\FFF$-window we have to verify that 
\begin{enumerate}
\renewcommand{\theenumi}{\roman{enumi}}
\item
\label{xcv-1}
for $x\in P$ and $a\in I$ we have $F_1(ax)=\sigma_1(a)F(x)$;
\item
\label{xcv-2}
the image $F_1(Q)$ generates $P$.
\end{enumerate}
Moreover, $uV^\sharp$ is the operator associated to $\Phi(G)$ 
as we have claimed if and only if
\begin{enumerate}
\renewcommand{\theenumi}{\roman{enumi}}
\addtocounter{enumi}{2}
\item
\label{xcv-3}
for $x\in Q$ we have $uV^\sharp(F_1(x))=1\otimes x$ in $P^{(\sigma)}$.
\end{enumerate}
The equation in \eqref{xcv-1} is equivalent
to $\tilde F(ax)=\tilde\theta(\sigma_1(a)F(x))$.
Since $\tilde F(ax)=\tilde\sigma(a)\tilde F(x)$ and 
$\tilde\sigma=\tilde\theta\circ\sigma_1$ this is clear.
To prove \eqref{xcv-2} it suffices to show that for each maximal
ideal $\Fm$ of $R$ and a perfect field extension $R/\Fm\subseteq k$
the vector space $\tilde P\otimes_{\tilde S}k$ is generated by $F_1(Q)$. 
Using \eqref{Ax-Fa} we get a sequence of $\sigma$-equivariant 
maps $S\to\tilde S/\Fa\to W(\tilde S/\Fa)\to W(k)$; 
the second arrow exists uniquely since $\tilde S/\Fa$ has no
$p$-torsion and carries a Frobenius lift induced by $\tilde\sigma$;
see \cite[IX, \S1.2, Prop.\ 2]{Bourbaki-Comm-Alg} and
the explanation following \cite[Th.~4]{Zink-Windows}.
By functoriality we are reduced to the case where $\FFF'=\FFF=\WWW_k$,
which is classical. Assertion \eqref{xcv-3} is equivalent to
$\tilde u\tilde V^\sharp(\tilde F(x))=\tilde\theta(1\otimes x)$ for $x\in Q$,
which holds since $\tilde V^\sharp(\tilde F(x))=p(1\otimes x)$
in $\tilde P^{(\tilde\sigma)}$ for all $x\in\tilde P$.
\end{proof}

Now we construct an example for Proposition \ref{Pr-win-frame-lift}
with $p=2$.
Let $A_{\red}$ be a perfect ring of characteristic $2$ and let $A=W(A_{\red})[[t]]$ 
where $t$ is a finitely generated projective $W(A_{\red})$-module.
Let $\Fm=(2,t)$ be the kernel of $A\to A_{\red}$.
We write $A_n=A/2^nA$ and $A_{n+}=A/2^n\Fm$.
Only the rings 
\[
A_{2+}\to A_{1+}\to A_1
\]
will play a role. 
We consider the frames 
$\FFF'=\DDD_{A_{2+}}\!\to\FFF=\DDD_{A_{1+}}$, i.e.:
\begin{align*}
S=\WW(A_{1+})&\qquad\, I=\II_{A_{1+}} \qquad\, R=A_{1+} \\
S'=\WW(A_{2+})&\qquad I'=\II_{A_{2+}} \qquad R'=A_{2+} 
\end{align*}
Then $\theta'=2-[4]$ in $S'$ and thus $\theta=2$ in $S$.
Let $\Kideal\subseteq S'$ 
be the ideal of all Witt vectors $v([x])$ with $x\in 4A_{2+}$
and let $\tilde S=S'/\Kideal$. As above we write
\[
J=\Ker(\tilde S\to S)=\hat W (2\Fm/4\Fm)/\Kideal
\]
and
\[
\tilde I=\Ker(\tilde S\to R)=(I'+\hat W(2\Fm/4\Fm))/\Kideal.
\]

\begin{Prop}
\label{Pr-frame-lift}
These data satisfy the axioms \eqref{Ax-J}-\eqref{Ax-Fa}.
\end{Prop}

\begin{proof}
The divided powers required in \eqref{Ax-pd} are given by
Proposition \ref{Pr-strange-pd}.
Since $2\Fm/4\Fm\subseteq A_{2+}$ is an ideal of square zero we have
\[
J':=\hat W(2\Fm/4\Fm)=(2\Fm/4\Fm)^{(\NN)}
\]
as $W(A_{2+})$-modules, where $W(A_{2+})$ acts on the $i$-th
component of the right hand side via the $i$-th Witt polynomial.
We have $\sigma'(J')=2J'=0$ and $J=J'/\Kideal$.
Thus $\tilde\sigma:\tilde S\to\tilde S$ is defined and vanishes on $J$, and $2J=0$.

\begin{Lemma}
\label{Le-mult-2}
Multiplication by $2$ induces an isomorphism of groups
\[
\hat W(2A_{1+})\xrightarrow\sim\hat W(4A_{2+})/\Kideal.
\]
\end{Lemma}

\begin{proof}
The divided Witt polynomials for the canonical divided powers of $2$
give an isomorphism $\Log:W(2A)\cong2A^\NN$. The composition
\[
W(2A)\xrightarrow{\Log}2A^\NN\to (2A/4A)^\NN
\]
is given by $(2a_0,2a_1,\ldots)\mapsto 2[a_0, a_0^2+a_1,a_1^2+a_2,\ldots]$,
while the composition 
\[
W(4A)\xrightarrow{\Log}4A^\NN\to (4A/8A)^\NN
\]
is simply $(4a_0,4a_1,\ldots)\mapsto 4[a_0,a_1,\ldots]$.
It follows that the homomorphism of the lemma is isomorphic to
the homomorphism
\[
A_{\red}^{(\NN)}\to A_{\red}^{(\NN)},\qquad
(a_0,a_1,\ldots)\mapsto(a_0,a_1^2+a_2,a_2^2+a_3,\ldots).
\]
Since $A_{\red}$ is perfect this map is bijective.
\end{proof}

Let us continue in the proof of Proposition \ref{Pr-frame-lift}.
To verify \eqref{Ax-J} let $x\in\tilde S$ with $2x=0$. 
Since $\WW(A_1)$ has no $2$-torsion we have $x\in\hat W(2A_{2+})/U$.
Lemma \ref{Le-mult-2} implies that $x\in J$, and \eqref{Ax-J} is proved.
Let $u=1-[2]$ in $\WW(A_{2+})$, which is a unit.
By the proof of Lemma \ref{Le-mult-2} we have $2u=2-(4,4,0,\ldots)
\equiv 2-[4]=\theta'$ modulo $\Kideal$, which proves \eqref{Ax-unit}.
In \eqref{Ax-Fa} we can take $\Fa=\hat W(2A_{1+})$.
\end{proof}

\subsection{The Dieudonn\'e display associated to a $2$-divisible group}

Let $p=2$ and let $u=1-[2]$ in $\WW(\ZZ_2)$.
We begin to construct the functor $\Phi_R$ in an initial case.
Recall that $\Phi^o_R$ was defined in the end of
\ref{Subse-strange-pd} when $4R=0$. 

\begin{Prop}
\label{Pr-Phi-initial}
For each admissible ring $R$ with $p=2$ and $2\NNN_R=0$ there is a functor 
\[
\Phi_R:(2\text{-divisible groups over }R)\to(\DDD_R\text{-windows})
\]
compatible with base change in $R$ such that for $\Phi^o_R(G)=(P,Q,F^\sharp,V^\sharp)$
the filtered $F$-$V$-module associated to $\Phi_R(G)$ 
is equal to $(P,Q,F^\sharp,uV^\sharp)$.
\end{Prop}

\begin{proof}
This is similar to the proof of Theorem \ref{Th-bt2disp}.
Propositions \ref{Pr-win-frame-lift} and \ref{Pr-frame-lift} give the desired 
system of functors $\Phi_R$ for topological admissible rings $R$ 
of the type $R=A_{1+}$ as above. 
For a $p$-divisible group $G$ over an admissible
ring $R$ as in the proposition let $\Lambda=W(R_{\red})$
and $\bar G=G\otimes_R R_{\red}$. Let $A$ be the
$\Lambda$-algebra that pro-represents the functor
$\Def_{\bar G}$ on $\Nil_{\Lambda/K}$ 
(this $A$ was denoted by $B$ in section \ref{Se-bt2disp}),
let $\GGG$ over $A$ be the universal
deformation, and let $\GGG_{1+}=\GGG\otimes_AA_{1+}$.
The unique homomorphism of $\Lambda$-algebras $A\to R$ 
with $G=\GGG\otimes_AR$ as deformations of $\bar G$
factors over a homomorphism $A_{1+}\!\to R$, and we define $\Phi_R(G)$
as the base change of $\Phi_{A_{1+}}(\GGG_{1+})$
under this map. 
We have to show that the operator $F_1$ attached to $\Phi^{o}_R(G)$
in this way is functorial in $G$ and in $R$. This is analogous to
the proof of Theorem \ref{Th-bt2disp}, using that $F_1$ is functorial
with respect to homomorphisms of rings of the type $A_{1+}$.
\end{proof}

For an admissible ring $R$ let $i:\DDD_R\to\DDD_R^+$ be the
natural homomorphism.

\begin{Prop}
\label{Pr-Phi-Phi+-initial}
Let $R$ be an admissible ring with $p=2$ and $2\NNN_R=0$.
For each $2$-divisible group $G$ over $R$ there is a natural
isomorphism of $\DDD_R^+$-windows 
\[
\Phi^+_R(G)\cong i_*\Phi_R(G).
\]
\end{Prop}

The functor $\Phi_R^+$ was defined in Proposition \ref{Pr-bt2disp+}.

\begin{proof}
Let us write $\Psi_R(G)=i_*\Phi_R(G)$ so that we have two functors 
\[
\Phi_R^+,\Psi_R:(2\text{-divisible groups over }R)\to(\DDD^+_R\text{-windows}).
\]
When $2R=0$, thus $\DDD_R=\DDD^+_R$, these functors coincide by the
uniqueness assertion of Corollary \ref{Co-bt2disp+}. The rest is quite formal.
Let $R_1=R/2R$.
For a $p$-divisible group $G$ over $R$ let $G_1=G\otimes_RR_1$ and
$\bar G=G\otimes_RR_{\red}$.
The canonical divided powers of $2$ make $R\to R_1$ into a divided power extension.
By Corollaries \ref{Co-deform-DDD+} and \ref{Co-bt2disp-DD-2}, the 
$\DDD_R^+$-windows $\Phi^+_R(G)$ and $\Psi_R(G)$ correspond
to two lifts of the Hodge filtration of $G$ to $\DD(G_1)_{R/R_1}$.
Their difference is measured by a homomorphism of $R_1$-modules
\[
h'_G:V(G_1)\to\Lie(G_1)\otimes_{R}2R
\]
where $V(G)$ is the kernel of\/ $\DD(G)_R\to\Lie(G)$.
We have to show that $h'_G$ is zero for all $G$.
Since $h'_G$ is functorial in $R$ we may assume that 
$R=A_{1+}/\Fm^n$ for some $n\ge 2$
where $A$ is the universal deformation ring of $\bar G$.
Then $2R$ is a free $R_{\red}$-module of rank one, 
so $h'_G$ corresponds to an element 
\[
h_G\in\Hom(V(\bar G),\Lie(\bar G)).
\]
Now an injective homomorphism $R_{\red}\to R'_{\red}$
gives an injective homomorphism of the associated rings $R\to R'$, 
while a product decomposition $R_{\red}=\prod R_{i,\red}$ gives $R=\prod R_i$.
Since $R_{\red}$ embeds into the product of its localisations
at minimal prime ideals we may assume that $k:=R_{\red}$ is a field.
There is a deformation $\bar G'$ of $\bar G$ over
$R'_{\red}:=k[[x]]^{per}$ with ordinary generic fibre. Let $A'$ be
its universal deformation ring and let $G'$ over $R'=A'_{1+}/\Fm^n$
be given by the universal deformation. 
By functoriality it suffices to show that $h_{G'}=0$. 
Again we can pass to the field of fractions $k((x))^{per}$.
Thus we are left to show that $h_G=0$ if $G$ is ordinary over $R=A_{1+}/\Fm^n$
where $k=R_{\red}$ is a perfect field.
There is a deformation $G''$ of $G_k$ over $R'':=W_2(k)$
which decomposes into the direct sum of its \'etale and multiplicative part.
Let $R\to R''$ be the unique homomorphism such that $G''=G\otimes_RR''$ as deformations of $G_k$. 
Since this does not change $h_G$ we may replace $G$ by $G''$. 
Since $\Psi_R$ and $\Phi^+_R$ both preserve
direct sums we may assume that $G$ is \'etale or of multiplicative type.
Then $h_G$ vanishes since $\Hom(V(\bar G),\Lie(\bar G))$ is zero.
\end{proof}

\begin{Lemma}
\label{Le-frames-cart}
Let $R$ be an admissible ring with $p=2$ and let $R_{1+}=R/2\NNN_R$.
The commutative diagram of frames
\[
\xymatrix@M+0.2em{
\DDD_R \ar[r]^i \ar[d] & \DDD^+_R \ar[d] \\
\DDD_{R_{1+}} \ar[r]^i & \DDD^+_{R_{1+}}
}
\]
is Cartesian on each component of the frames, and the associated 
diagram of window categories is $2$-Cartesian.
\end{Lemma}

\begin{proof}
The vertical arrows are surjective, and the horizontal arrows
are injective with equal cokernel by Lemma \ref{Le-WW+} and its proof.
Thus the diagram of frames is Cartesian on each component. 
For a ring $A$ let $\V(A)$ be the category of projective $A$-modules of finite type. 
The functor 
\[
\V(\WW(R))\to \V(\WW^+(R))\times_{\V(\WW^+(R_{1+}))}\V(\WW(R_{1+}))
\]
is fully faithful since the diagram is Cartesian, and it is essentially
surjective since $\V(\WW(R))\to \V(\WW(R_{1+}))$ and
$\V(\WW^+(R))\to \V(\WW^+(R_{1+}))$ are bijective on
isomorphism classes and surjective on automorphism groups. 
It follows easily that the diagram of window categories is $2$-Cartesian.
\end{proof}

\begin{Thm}
\label{Th-Phi-2}
For each admissible ring $R$ with $p=2$ there is a functor
\[
\Phi_R:(2\text{-divisible groups over }R)\to(\DDD_R\text{-windows})
\]
compatible with base change in $R$ such that $\Phi_R$ is given by
Proposition \ref{Pr-Phi-initial} when $2\NNN_R=0$, and such that
there is a natural isomorphism of $\DDD_R^+$-windows
\[
\Phi^+_R(G)\cong i_*\Phi_R(G).
\]
\end{Thm}

\begin{proof}
This is clear from Propositions \ref{Pr-bt2disp+} and
\ref{Pr-Phi-Phi+-initial} and Lemma \ref{Le-frames-cart}.
\end{proof}

\begin{Cor}
\label{Co-DDG-DDP-2}
Let $p=2$. For each $2$-divisible group $G$ over an admissible ring $R$ 
with associated Dieduonn\'e display $\PPP=\Phi_R(G)$ there is a natural
isomorphism of crystals on $\Cris_{\adm}(R)$
\[
\DD(G)\cong\DD(\PPP)
\]
which is compatible with the natural isomorphism $\Lie(G)\cong\Lie(\PPP)$.
\end{Cor}

\begin{proof}
We have a natural isomorphism of crystals on $\Cris_{\adm}(R/\ZZ_2)$
\[
\DD(G)\cong\DD(\Phi_R^+(G))\cong\DD(\Phi_R(G))
\]
by Corollary \ref{Co-bt2disp-DD-2}, Theorem \ref{Th-Phi-2}, and Lemma \ref{Le-crys-DDD+}. 
The isomorphism of crystals on $\Cris_{\adm}(R)$ 
follows as in the proof of Corollary \ref{Co-bt2disp-DD-gen}.
\end{proof}

\subsection{Uniqueness of the functor $\Phi_R$}

\begin{Prop}
\label{Pr-Phi-unique-2}
Assume that for each admissible ring $R$ with $p=2$ we have a functor
\[
\Phi'_R:(2\text{-divisible groups over }R)\to(\DDD_R\text{-windows})
\]
compatible with base change in $R$ such that $\Phi_R'=\Phi_R$ when $2R=0$.
Then there is a natural isomorphism $\Phi'_R\cong\Phi_R$ which is functorial
in $R$ and equal to the identity when $2R=0$.
\end{Prop}

\begin{proof}
We first show that $\Phi_R'\cong\Phi_R$ when $4R=0$.
Let $R_1=R/2R$. For a $p$-divisible group $G$ over $R$ let
$G_1=G\otimes_RR_1$ and let 
\[
\PPP_1=\Phi_{R_1}(G_1)=(P,Q,F,F_1)
\]
be its Dieudonn\'e display. If we take the trivial divided powers
on the ideal $2R$, Corollary \ref{Co-deform-DDD} implies that
the difference between $\Phi_R(G)$ and $\Phi'_R(G)$ as lifts
of $\PPP_1$ is measured by a homomorphism 
\[
h'_G:Q/\II_{R_1}P\to P/Q\otimes_{R_1}2R.
\]
Let $V(G)=\Lie(G^\vee)^\vee$. By Corollary \ref{Co-bt2disp+} and by the 
construction of $\Phi^o_R(G)$
we can view $h'_G$ as a homomorphism
\[
h_G:V(G_1)\to\Lie(G_1)\otimes_{R_1}2R.
\] 
We want to show that $h_G=0$. 
We may assume that $R=A/(\Fm^n+4A)$ where $A$ 
is the universal deformation ring of $G\otimes_RR_{\red}$ and $\Fm$ 
is the kernel of $A\to R_{\red}$. 
As in the proof of Proposition \ref{Pr-Phi-Phi+-initial} one reduces to the case where
$k=R_{\red}$ is a field and $G$ is ordinary. 
Assume that $G$ is an extension $0\to\mu_{p^\infty}\to G\to\QQ_p/\ZZ_p\to 0$.
Then $V(G)=V(\QQ_p/\ZZ_p)=R$ and $\Lie(G)=\Lie(\mu_{p^{\infty}})=R$
so that $h_G\in 2R$. Thus $G\mapsto h_G$ defines a map 
$g:\uExt^1(\QQ_p/\ZZ_p,\mu_{p^\infty})\to\GG_a$
of functors on the category of local Artin rings with residue field $k$ and annihilated by $4$.
It is easy to see that $g$
is additive. Here $\uExt^1(\;)=\mu_{p^\infty}$, and it follows that $g=0$.
This implies easily that $h'_G=0$ when $G$ is ordinary. 
Thus $\Phi_R\cong\Phi'_R$ when $4R=0$.
If for some $n\ge 1$ we know that $\Phi_R\cong\Phi_R'$ when $2^nR=0$, the
same reasoning shows that $\Phi_R\cong\Phi_R'$ when $2^{n+1}R=0$,
and the proposition follows.
\end{proof}


\section{Equivalence of categories}

Let $R$ be an admissible ring.
Dieudonn\'e displays over $R_{\red}$ are displays,
and they are equivalent to Dieudonn\'e modules
over $R_{\red}$ by Lemma \ref{Le-disp-perf}.
Under this equivalence, the functor $\Phi_{R_{\red}}$ 
corresponds to $\Phi_{R_{\red}}^o$.

\begin{Prop}
\label{Pr-Phi-Cartesian}
For an admissible ring $R$ the following diagram of categories is $2$-Cartesian.
$$
\xymatrix@M+0.2em@C+1em{
(\text{$p$-divisible groups over $R$}) 
\ar[r]^-{\Phi_R} \ar[d] &
(\text{Dieudonn\'e displays over $R$}) 
\ar[d] \\
(\text{$p$-divisible groups over $R_{\red}$}) 
\ar[r]^-{\Phi_{R_{\red}}} &
(\text{Dieudonn\'e modules over $R_{\red}$})
}
$$
\end{Prop}

\begin{proof}
The category of $p$-divisible groups (resp.\ Dieudonn\'e
displays) over $R$ is the direct limit of the corresponding
categories over all finitely generated $W(R_{\red})$-algebras
contained in $R$; see \ref{Subse-finiteness}.
Thus we may assume that the ideal $\NNN_R$ is nilpotent.
If $\Fa\subseteq R$ is an ideal equipped with nilpotent divided 
powers and if the proposition holds for $R/\Fa$ then it holds
for $R$. This follows from the comparison of crystals in
Corollaries \ref{Co-bt2disp-DD} and \ref{Co-DDG-DDP-2},
since lifts from $R/\Fa$ to $R$ of $p$-divisible groups and of
Dieudonn\'e displays are both classified by lifts of the Hodge filtration
by \cite{Messing-Crys} and by Corollary \ref{Co-deform-DDD}.
When $\Fa^2=0$ we can take the trivial divided powers on $\Fa$.
The proposition follows by induction on the order of nilpotence of $\NNN_R$.
\end{proof}

\begin{Remark}
Since $p$-divisible groups and Dieudonn\'e displays over a
perfect ring $K$ have universal deformation rings which are twisted
power series rings over $\Lambda=W(K)$, in order to prove
Proposition \ref{Pr-Phi-Cartesian} the case $R=K[\varepsilon]$ 
is sufficient.
In particular, for $p=2$ this means that as soon as the functors
$\Phi_R$ defined in Corollary \ref{Co-bt2disp+} when $2R=0$
are known to exist for all $R$, Proposition \ref{Pr-Phi-Cartesian} is automatic.
This reasoning does not apply to the functors $\Phi_R^+$
(which also extend the functors $\Phi_R$ for $2R=0$ to all $R$ but
which are not an equivalence in general)
because the deformation functors of $v$-stabilised Dieudonn\'e displays 
are not pro-representable.
\end{Remark}

We have the following result of Gabber, which is 
classical when $R_{\red}$ is a field.
It is also proved in \cite[Corollary~6.5]{Lau-Smoothness}.

\begin{Thm}
\label{Th-Phi-equiv-red}
The functor\/ $\Phi_{R_{\red}}$ is an equivalence of categories.
\qed
\end{Thm}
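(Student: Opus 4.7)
The result is Gabber's classification of $p$-divisible groups by Dieudonn\'e modules over an arbitrary perfect ring; a different proof is given in \cite{Lau-Smoothness}. Write $K=R_{\red}$. My plan is to treat full faithfulness and essential surjectivity separately, reducing as far as possible to the classical case of a perfect field.

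For full faithfulness, observe that for $p$-divisible groups $G,H$ over $K$, the $\ZZ_p$-module $\Hom(G,H)$ is torsion-free and hence $p$-adically separated, and similarly for $\Hom$ between Dieudonn\'e modules since $W(K)$ is $p$-torsion-free. The functor $\Phi_K$ is compatible with base change to any perfect $K$-algebra $K'$. Choosing $K'$ to be a faithfully flat product of perfections of residue fields of points of $\Spec K$ and invoking fpqc descent on both sides --- standard for $p$-divisible groups, and for Dieudonn\'e modules amounting to descent of projective $W(K)$-modules together with the semi-linear operators $F$ and $V$ --- reduces full faithfulness to the classical case of a perfect field, where it is the theorem of Dieudonn\'e.

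For essential surjectivity, the strategy is to decompose a given Dieudonn\'e module $(M,F,V)$ over $K$, Zariski-locally on $\Spec K$, into an \'etale part (where $V$ is an isomorphism), a multiplicative part (where $F$ is an isomorphism), and a bi-infinitesimal part (where both $F$ and $V$ are topologically nilpotent), using the idempotents produced by the stable images of $V^n$ and $F^n$ together with the fact that $K$ is reduced. The \'etale part corresponds to a finite locally free $\ZZ_p$-sheaf on $\Spec K$ and hence to an \'etale $p$-divisible group; the multiplicative part is Cartier dual to an \'etale one on both sides; and the bi-infinitesimal part is a $V$-nilpotent display for the $p$-adic ring $W(K)$, hence produces a formal $p$-divisible group by Zink's theorem \cite{Zink-Disp}.

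The hard part will be the assembly step: one must show that $\Ext^1$ between, say, a multiplicative piece and a bi-infinitesimal piece computed on the Dieudonn\'e side matches $\Ext^1$ of the corresponding $p$-divisible groups, and analogously for other mixed extensions. Over a perfect field this is part of the classical theory, but over a general perfect ring the comparison is genuinely non-trivial; this is the point at which one must invoke either Gabber's rigidification argument or the infinitesimal lifting techniques of \cite{Lau-Smoothness} that exploit the crystalline nature of both sides via Corollary \ref{Co-bt2disp-DD}.
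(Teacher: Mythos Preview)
The paper does not prove this theorem; it simply records it as a result of Gabber, with an alternative proof in \cite[Cor.~6.5]{Lau-Smoothness}. So there is no argument in the paper to compare against, and your proposal is in effect an attempt to sketch what such a proof might look like.

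Your outline has the right overall shape, and you are honest that the extension-matching step is where the genuine difficulty lies and ultimately requires the cited references. But the reduction you propose for full faithfulness has a gap: for a general perfect ring $K$, there is no reason for the product of residue fields $\prod_{\Fp}k(\Fp)$ (or their perfections, which are the same thing) to be faithfully \emph{flat} over $K$. It is injective because $K$ is reduced, but flatness fails already for simple examples such as $K=\FF_p[t^{1/p^\infty}]$. So fpqc descent from perfect fields is not available in the way you suggest, and full faithfulness over a perfect ring that is not a field is itself part of the content of Gabber's theorem rather than a formal consequence of the field case.

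On the essential surjectivity side, the decomposition into \'etale, multiplicative, and bi-infinitesimal pieces is correct over a perfect ring, and the identification of the bi-infinitesimal part with a nilpotent display handled by \cite{Zink-Disp} is right. But as you yourself say, the assembly step is the heart of the matter, and at that point you are deferring to the same sources the paper cites. So your proposal is best read not as an independent proof but as a structural outline that correctly locates the difficulty; the actual argument is in \cite{Lau-Smoothness} or in Gabber's unpublished work.
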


\begin{Cor}
\label{Co-Phi-equiv}
For every admissible ring $R$ the functor $\Phi_R$ is an equivalence of exact
categories. 
\end{Cor}

\begin{proof}
By Theorem \ref{Th-Phi-equiv-red} and Proposition \ref{Pr-Phi-Cartesian}
the functor $\Phi_R$ is an equivalence of categories. A short sequence
$0\to A\to B\to C\to 0$ of $p$-divisible groups or of Dieuonn\'e displays
over $R$ is exact if and only if all its scalar extensions to perfect fields
are exact. Thus $\Phi_R$ and its inverse preserve exact sequences since
this holds over perfect fields. 
\end{proof}

This proves Theorem \ref{Th-A}. 
Using Lemmas \ref{Le:BT-limit} and \ref{Le:Disp-limit} we also get:

\begin{Cor}
\label{Co-Phi-top-equiv}
For every admissible topological ring $R$ with a countable basis of topology,
$p$-divisible groups over $R$ are equivalent to Dieudonn\'e displays over $R$. \qed
\end{Cor}

Finally we note the following consequence of the crystalline 
duality theorem. The duality of windows is recalled 
in the end of \ref{Subse-frames}.

\begin{Cor}
Let $G$ be a $p$-divisible group over an admissible
ring $R$ and let $G^\vee$ be its Cartier dual. 
There is a natural isomorphism
$$
\Phi_R(G^\vee)\cong \Phi_R(G)^t.
$$
\end{Cor}

\begin{proof}
Assume first that $p$ is odd. The crystalline duality theorem \cite[5.3]{BBM} 
gives an isomorphism of filtered $F$-$V$-modules
$\Phi_R^o(G^\vee)^t\cong\Phi_R^o(G)$.
Since the functor from windows to filtered $F$-$V$-modules
preserves duality, the uniqueness part of
Theorem \ref{Th-bt2disp} implies that 
this isomorphism preserves $F_1$, i.e.\ it is 
an isomorphism of Dieudonn\'e displays $\Phi_R(G^\vee)^t\cong\Phi_R(G)$.
For $p=2$, using the uniqueness part of Corollary \ref{Co-bt2disp+}
we similarly get an isomorphism of Dieudonn\'e displays
$\Phi_R(G^\vee)^t\cong\Phi_R(G)$
when $2R=0$. Then Proposition \ref{Pr-Phi-unique-2} gives such an
isomorphism for all $R$.
\end{proof}


\section{Breuil-Kisin modules}
\label{Se-Breuil}

We recall the main construction of \cite{Lau-Frames} without restriction on $p$.
Let $R$ be a complete regular local ring with maximal ideal $\Fm_R$ and
with perfect residue field $k$ of characteristic $p$.
Choose a representation $R=\FS/E\FS$ with
$$
\FS=W(k)[[x_1,\ldots,x_r]]$$ 
such that $E$ is a power series with constant term $p$. 
Let $J\subset\FS$ be the ideal generated by $x_1,\ldots,x_r$. 
Choose a ring endomorphism $\sigma:\FS\to\FS$
which lifts the Frobenius of $\FS/p\FS$
such that $\sigma(J)\subseteq J$. Let 
$\sigma_1:E\FS\to\FS$ be defined by $\sigma_1(Ex)=\sigma(x)$
for $x\in\FS$. These data define a frame
$$
\BBB=(\FS,E\FS,R,\sigma,\sigma_1).
$$
For each positive integer $a$ let $R_a=R/\Fm_R^a$ 
and $\FS_a=\FS/J^a$. We have frames
$$
\BBB_a=(\FS_a,E\FS_a,R_a,\sigma,\sigma_1)
$$
where $\sigma$ and $\sigma_1$ are induced by the
corresponding operators of $\BBB$. 

The frames $\BBB$ and $\BBB_a$ are related with the Witt 
and Dieudonn\'e frames of $R$ and of $R_a$ as follows.
Let $\delta:\FS\to W(\FS)$ be
the unique lift of the identity 
of $\FS$ such that $f\delta=\delta\sigma$,
or equivalently $w_n\delta=\sigma^n$ for $n\ge 0$;
see \cite[IX, \S1.2, Prop.\ 2]{Bourbaki-Comm-Alg} and
the explanation following \cite[Th.~4]{Zink-Windows}.
The composition of $\delta$ with the
projection $W(\FS)\to W(R)$ is a ring homomorphism 
$$
\varkappa:\FS\to W(R)
$$
which lifts the projection $\FS\to R$ such that
$f\varkappa=\varkappa\sigma$.
The same construction gives compatible homomorphisms
$$
\varkappa_a:\FS_a\to W(R_a)
$$
for $a\ge 1$, which induce $\varkappa$ in the projective limit.
Since the element $\varkappa(E)$ maps to zero in $R$ it lies in 
the image of $v:W(R)\to W(R)$. Let
$$
u=v^{-1}(\varkappa(E))=f_1(\varkappa(E)).
$$
We will denote the image of $u$ in $W(R_a)$ also by $u$.

\begin{Lemma}
\label{Le-varkappa-W}
The element $u\in W(R)$ is a unit. 
The homomorphisms $\varkappa$ and $\varkappa_a$
are $u$-homomorphisms of frames $\varkappa:\BBB\to\WWW_R$ 
and $\varkappa_a:\BBB_a\to\WWW_{R_a}$.
\end{Lemma}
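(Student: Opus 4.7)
The key claim is that $u\in W(R)$ is a unit; once this is established the frame-homomorphism axioms follow by a short formal verification.

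To compute $w_0(u)$, recall that $\varkappa$ is the composition of $\delta\colon\FS\to W(\FS)$ with $W(\FS)\to W(R)$, where $\delta$ is characterised by $w_n\circ\delta=\sigma^n$. Writing $\delta(E)=(d_0,d_1,d_2,\ldots)$, the identities $d_0=E$ and $d_0^p+pd_1=\sigma(E)$ give $d_1=(\sigma(E)-E^p)/p\in\FS$, which is well-defined since $\sigma$ is a Frobenius lift. From $\varkappa(E)=v(u)$, comparing Witt coordinates on both sides (the zeroth coordinate of $v(u)$ is $0$, the first is $w_0(u)$) yields $w_0(u)=d_1\bmod E\FS$ in $R$. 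Now write $E=p+E_1$ with $E_1\in J\FS$; the hypothesis $\sigma(J)\subseteq J$ gives $\sigma(E_1)\in J\FS$, so modulo $J\FS$ we have $\sigma(E)\equiv p$ and $E^p\equiv p^p$, whence $pd_1\equiv p(1-p^{p-1})\bmod J\FS$. Because $\FS/J\FS=W(k)$ is $p$-torsion-free, this forces $d_1\equiv 1-p^{p-1}\bmod J\FS$, and since $p^{p-1}\equiv 0\bmod p$ for every $p\ge 2$, further reduction modulo $p\FS$ gives $d_1\equiv 1$ modulo the maximal ideal of $\FS$. Therefore $w_0(u)\equiv 1\bmod\Fm_R$, so $w_0(u)\in R^{\times}$.

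To promote this to $u\in W(R)^{\times}$, I will use $W(R)=\varprojlim_n W(R/\Fm_R^n)$. Each $R/\Fm_R^n$ is Artinian local with residue field $k$, and $\Fm_R/\Fm_R^n$ is nilpotent and annihilated by some power of $p$; by Lemma \ref{Le-W(N)}, $W(\Fm_R/\Fm_R^n)\subset W(R/\Fm_R^n)$ is bounded nilpotent, hence contained in the Jacobson radical. The image of $u$ in $W(k)$ has first Witt coordinate $1\in k^{\times}$, so it is a unit in the DVR $W(k)$ (whose maximal ideal is $vW(k)$). Consequently $u$ is a unit modulo every $\Fm_R^n$, and the compatible inverses assemble to an inverse of $u$ in $W(R)$.

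For the frame-homomorphism axioms, $\varkappa(E\FS)\subseteq I_R$ holds because $\varkappa(E)=v(u)\in I_R$; the intertwining $f\circ\varkappa=\varkappa\circ\sigma$ is built into the construction of $\varkappa$ via $f\delta=\delta\sigma$; and for any $Ex\in E\FS$,
\[
f_1(\varkappa(Ex))=f_1\bigl(v(u)\,\varkappa(x)\bigr)=u\cdot f(\varkappa(x))=u\cdot\varkappa(\sigma(x))=u\cdot\varkappa(\sigma_1(Ex)),
\]
using the identity $f_1(v(y)z)=yf(z)$ in $W(R)$. The same arguments apply to $\varkappa_a$: since $\sigma(J^a)\subseteq J^a$, $\sigma$ descends to $\FS_a$ and $\varkappa_a$ is obtained from the corresponding $\delta_a\colon\FS_a\to W(\FS_a)$; the image of $u$ in $W(R_a)$ is again a unit as the image of a unit under a ring homomorphism, and the four identities above pass to the quotient. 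The main obstacle is the Witt-coordinate computation giving $w_0(u)\equiv 1\bmod\Fm_R$ uniformly in $p$, which must be checked to go through for $p=2$ on an equal footing with the odd case.
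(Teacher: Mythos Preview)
Your proof is correct and follows essentially the same strategy as the paper: verify the $f_1$-compatibility by the identity $f_1(\varkappa(E)\varkappa(x))=f_1(\varkappa(E))\,f(\varkappa(x))=u\,\varkappa(\sigma_1(Ex))$, and check that $u$ is a unit by reducing to the residue field and using that the kernel of $W(R)\to W(k)$ lies in the Jacobson radical. The only difference is cosmetic: where you compute the Witt coordinate $d_1=(\sigma(E)-E^p)/p$ explicitly and reduce modulo $\Fm_\FS$, the paper simply observes that over $R_1=k$ one has $\FS_1=W(k)$, $E=p$, $\varkappa_1(p)=p=v(1)$, hence $u=1$ directly.
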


\begin{proof}
Cf.\ \cite[Proposition 6.1]{Lau-Frames}.
Since $W(R)\to W(k)$ is a local homomorphism, 
in order to show that $u$ is a unit we can work with $\varkappa_1$,
i.e.\ consider the case where $R=k$ and $\FS=W(k)$. 
Then $E=p$ and $u=1$. In order that $\varkappa$ and
$\varkappa_a$ are $u$-homomorphisms of frames
we need that $f_1\varkappa=u\cdot\varkappa\sigma_1$. 
For $x\in\FS$ we calculate
$f_1(\varkappa(Ex))=f_1(\varkappa(E)\varkappa(x))=
f_1(\varkappa(E))\cdot f(\varkappa(x))=u\cdot\varkappa(\sigma(x))=
u\cdot\varkappa(\sigma_1(Ex))$ as required.
\end{proof}

Let $\bar\sigma$ be the semi-linear endomorphism of the
free $W(k)$-module $J/J^2$ induced by $\sigma$. 
Since $\sigma$ induces the Frobenius modulo $p$, 
$\bar\sigma$ is divisible by $p$.

\begin{Prop}
\label{Pr-varkappa-image}
The following conditions are equivalent.
\begin{enumerate}
\renewcommand{\theenumi}{\roman{enumi}}
\item
\label{Cond-varkappa}
The image of $\varkappa:\FS\to W(R)$ lies in $\WW(R)$.
\item
\label{Cond-delta}
The image of $\delta:\FS\to W(\FS)$ lies in $\WW(\FS)$.
\item
\label{Cond-sigma-1}
The endomorphism $p^{-1}\bar\sigma$ of $J/J^2$ 
is nilpotent modulo $p$.
\end{enumerate}
\end{Prop}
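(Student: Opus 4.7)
My plan is to establish the cycle (ii)$\Rightarrow$(i)$\Rightarrow$(iii)$\Rightarrow$(ii). The first implication is immediate: the projection $\pi\colon W(\FS)\to W(R)$ induced by $\FS\to R$ carries $\WW(\FS)$ into $\WW(R)$ by functoriality of the Zink ring on admissible topological rings (both have residue field $k$, and the Teichm\"uller-like sections are natural), and $\varkappa=\pi\circ\delta$.

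For (i)$\Rightarrow$(iii), write $\sigma(x_i)=x_i^p+pc_i$ with $c_i\in J$, so $\phi(x_i)=c_i\bmod J^2$. Writing $\delta(x_i)=(a_0^{(i)},a_1^{(i)},\ldots)$, the defining relations $w_n(\delta(x_i))=\sigma^n(x_i)$ give the recursion
\[
p^n a_n^{(i)} = \sigma^n(x_i) - \sum_{k<n} p^k \bigl(a_k^{(i)}\bigr)^{p^{n-k}}.
\]
Since $\FS/J^2$ is a free $W(k)$-module and hence $p$-torsion free, a short induction yields $a_n^{(i)}\in J$ and $a_n^{(i)}\equiv\phi^n(x_i)\pmod{J^2}$. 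Condition (i) forces the image of $a_n^{(i)}$ in $R$ to lie in $\Fm_R^2$ for $n$ large. Because $E$ has constant term $p$ we have $p\in JR$, so $\Fm_R=JR$ and $R/\Fm_R^2\cong(\FS/J^2)/(E)$. Writing $E\equiv p+\bar e\pmod{J^2}$ with $\bar e\in J/J^2$, an element of $J/J^2$ whose class dies in $R/\Fm_R^2$ must be of the form $p\xi$ with $\xi\in J/J^2$. Hence $\phi^n(x_i)\in p(J/J^2)$ for $n$ large, which is (iii).

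The principal work lies in (iii)$\Rightarrow$(ii): I must show the coordinates $a_n^{(i)}$ of $\delta(x_i)$ tend to $0$ in the $\Fm_\FS$-adic topology, so that $\delta(x_i)\in\hat W(\Fm_\FS)\subseteq\WW(\FS)$. Expanding $\prod_j\sigma(x_{i_j})=\prod_j(x_{i_j}^p+pc_{i_j})$ shows $\sigma(J^k)\subseteq p^kJ^k+J^{k+1}$ for every $k$, and that $\bar\sigma$ acts on $J^k/J^{k+1}$ as $p^k\phi_k$ with $\phi_k$ induced from $\phi$ on the $k$-th symmetric power. Under (iii), each $\phi_k$ is nilpotent modulo $p$, and iterating gives $\sigma^{mN}(J^k)\subseteq p^{kmN+km}J^k+J^{k+1}$ for all $m\ge 1$. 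Combining these bounds across $k=1,\ldots,s$ produces joint $p$-divisibility and $J$-depth for $\sigma^n(x_i)$: for every $s$ there exists $n_0$ with $\sigma^n(x_i)\in p^n\Fm_\FS^s$ for $n\ge n_0$. Substituting into the Witt recursion and observing that $(a_k^{(i)})^{p^{n-k}}\in J^s$ as soon as $p^{n-k}\ge s$—so only boundedly many cross-terms contribute modulo $\Fm_\FS^s$—an induction on $s$ yields $a_n^{(i)}\in\Fm_\FS^s$ for $n\gg 0$. The principal obstacle is coordinating the $p$-adic and $J$-adic bookkeeping cleanly; a useful device is the two-parameter filtration $F_m=\sum_{a+2b\ge m}p^aJ^b$ on $\FS$, which satisfies $\sigma(F_m\cap J)\subseteq F_{m+1}\cap J$ unconditionally, with condition (iii) accelerating the shift along $p$-powers.
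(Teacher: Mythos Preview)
Your cycle (ii)$\Rightarrow$(i)$\Rightarrow$(iii)$\Rightarrow$(ii) matches the paper's, and your arguments for the first two implications are correct and parallel to the paper's (which cites \cite[Proposition~9.1]{Lau-Frames}): reducing the Witt coordinates modulo $J^2$ to obtain $a_n^{(i)}\equiv\phi^n(x_i)$ is exactly the right computation for (i)$\Rightarrow$(iii).

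The gap is in (iii)$\Rightarrow$(ii). Your intermediate claim that $\sigma^n(x_i)\in p^n\Fm_\FS^s$ for $n\gg 0$ is false. Take $\sigma(x_i)=x_i^p$, so $\phi=0$ and (iii) holds trivially; then $\sigma^n(x_i)=x_i^{p^n}$, which is never divisible by $p$. Your ``combining across $k=1,\ldots,s$'' cannot repair this, since the error terms landing in $J^{k+1}$ carry no $p$-divisibility, and dividing the Witt recursion by $p^n$ modulo $\Fm_\FS^s$ is illegitimate because $\FS/\Fm_\FS^s$ has $p$-torsion. The underlying problem is that the summand $x\mapsto x^p$ inside $\sigma$ goes deep in the $J$-filtration but not in the $p$-filtration; in the Witt recursion this summand cancels exactly against the cross-terms $a_k^{p^{n-k}}$, and your estimate discards precisely this cancellation.

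The paper's remedy is to replace $\sigma$ by $\tau(x)=(\sigma(x)-x^p)/p$, removing the troublesome $x^p$ from the outset. Lemma~\ref{Le-gr-tau} (which replaces \cite[Lemma~9.2]{Lau-Frames} for $p=2$) shows that $\tau$ preserves each $\Fm_\FS^nJ$ and, under (iii), acts nilpotently on every graded piece $\Fm_\FS^nJ/\Fm_\FS^{n+1}J$. Since $a_1=\tau(a_0)$ and the higher Witt coordinates are governed by iterates of $\tau$ together with correction terms lying in strictly deeper strata of this filtration, the argument of \cite[Proposition~9.1]{Lau-Frames} proceeds by induction along it to give $a_n\to 0$ in the $\Fm_\FS$-adic topology. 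Your two-parameter filtration $F_m$ becomes unnecessary once one works with $\tau$ and the single filtration by $\Fm_\FS^nJ$.
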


\begin{Remark}
In the special case $\sigma(x_i)=x_i^p$ the conditions of Propsition \ref{Pr-varkappa-image} hold.
This is easy to see directly:
We have $\delta(x_i)=[x_i]$, which gives \eqref{Cond-varkappa} and
\eqref{Cond-delta}, moreover \eqref{Cond-sigma-1} holds 
since $\bar\sigma$ is zero.
\end{Remark}

\begin{proof}[Proof of Proposition \ref{Pr-varkappa-image}]
For odd $p$ the equivalence between \eqref{Cond-varkappa}
and \eqref{Cond-sigma-1} is \cite[Proposition 9.1]{Lau-Frames}; 
its proof shows that 
\eqref{Cond-varkappa}
$\Rightarrow$
\eqref{Cond-sigma-1}
$\Rightarrow$
\eqref{Cond-delta}
$\Rightarrow$
\eqref{Cond-varkappa}.
The proof also applies for $p=2$ if \cite[Lemma 9.2]{Lau-Frames}
is replaced by the following Lemma \ref{Le-gr-tau}.
\end{proof}

\begin{Lemma}
\label{Le-gr-tau}
For $x\in\FS$ let $\tau(x)=(\sigma(x)-x^p)/p$.
Let $\Fm$ be the maximal ideal of $\FS$.
For $n\ge 0$
the map $\tau$ preserves $\Fm^nJ$ and induces a 
$\sigma$-linear endomorphism $gr_n(\tau)$ of the $k$-module
$gr_n(J)=\Fm^nJ/\Fm^{n+1}J$. The endomorphism
$gr_0(\tau)$ is equal to $p^{-1}\bar\sigma$ modulo $p$. 
For $n\ge 1$ there is a surjective $k$-linear map
$$
\pi_n:gr_n(J)\to gr_0(J)
$$
such that $gr_0(\tau)\pi_n=\pi_ngr_n(\tau)$ and 
such that $gr_n(\tau)$ vanishes on $\Ker(\pi_n)$.
In particular, $p^{-1}\bar\sigma$ is nilpotent
modulo $p$ if and only if $gr_0(\tau)$ is nilpotent,
which implies that $gr_n(\tau)$ is nilpotent for each $n$.
\end{Lemma}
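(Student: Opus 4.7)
My plan is to exploit two identities throughout: the product formula $\tau(ab) = a^p \tau(b) + \tau(a) b^p + p \tau(a) \tau(b)$ (immediate from $p\tau = \sigma - (\,\cdot\,)^p$) together with the fact that $\sigma(p) = p$, which yields the cleaner expression
\[
\tau(p^a x^\alpha) = p^{a-1}\sigma(x^\alpha) - p^{ap-1} x^{p\alpha}
\]
for any $a \ge 0$ and $|\alpha| \ge 1$. The key structural input is a coefficientwise description of the filtration: writing $y = \sum c_\alpha x^\alpha$ in $\FS = W(k)[[x_1,\ldots,x_r]]$, one checks that $y \in \Fm^n J$ if and only if $c_0 = 0$ and $v_p(c_\alpha) \ge \max(0,n+1-|\alpha|)$ for all $\alpha$. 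This yields both the intersection identity $\Fm^{n+1} J \cap p\FS = p\Fm^n J$ and an explicit $k$-basis of $gr_n(J)$ given by the classes $[p^a x^\alpha]$ with $a + |\alpha| = n+1$ and $|\alpha| \ge 1$.

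Since $\sigma$ is a Frobenius lift preserving $J$ one has $\sigma(x_i) = x_i^p + p \sum_\alpha t_{i,\alpha} x^\alpha$ with $t_{i,\alpha} \in W(k)$ and $\sigma(x_i) \in \Fm J$, which gives $\sigma(\Fm^n J) \subseteq \Fm^{n+1} J$; combined with $(\Fm^n J)^p \subseteq \Fm^{np+p-1} J \subseteq \Fm^{n+1} J$ this places $\sigma(x) - x^p$ in $\Fm^{n+1} J$ for $x \in \Fm^n J$, and the intersection identity forces $\tau(x) \in \Fm^n J$. Additivity modulo $\Fm^{n+1} J$ on $\Fm^n J$ follows from $\tau(x+y) - \tau(x) - \tau(y)$ being an integer linear combination of terms $x^k y^{p-k}$, all lying in $(\Fm^n J)^p$, and the product identity gives $\tau(ab) \equiv \sigma(a) \tau(b) \pmod{\Fm^{n+1} J}$ for $b \in \Fm^n J$, so $gr_n(\tau)$ is a well-defined $\sigma$-linear endomorphism. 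From the expression for $\sigma(x_i)$ one reads off $\tau(x_i) = \sum_\alpha t_{i,\alpha} x^\alpha$, and reducing modulo $\Fm J$ keeps only the linear part $\sum_j t_{i,e_j} x_j$; since modulo $J^2$ one also has $\bar\sigma(\bar x_i) = p\sum_j t_{i,e_j} \bar x_j$, the maps $gr_0(\tau)$ and $p^{-1}\bar\sigma \bmod p$ coincide.

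Finally, I define $\pi_n$ on the basis of $gr_n(J)$ by $[p^{n-1} x_i] \mapsto [x_i]$ and $[p^a x^\alpha] \mapsto 0$ for $|\alpha| \ge 2$; this is manifestly surjective and $k$-linear. Using the displayed formula for $\tau(p^a x^\alpha)$ together with $\sigma(x^\alpha) \in (\Fm J)^{|\alpha|} \subseteq \Fm^{2|\alpha|-1} J$, a short estimate shows that $\tau(p^a x^\alpha) \in \Fm^{n+1} J$ whenever $|\alpha| \ge 2$, giving the vanishing of $gr_n(\tau)$ on $\Ker \pi_n$. For the remaining basis element $[p^n x_i]$, substituting the expression for $\sigma(x_i)$ into $\tau(p^n x_i) = p^{n-1}\sigma(x_i) - p^{np-1} x_i^p$ and reducing modulo $\Fm^{n+1} J$ produces the class $\sum_j \bar t_{i,e_j} [p^n x_j]$ plus contributions lying in $\Ker \pi_n$; this gives $\pi_n \circ gr_n(\tau) = gr_0(\tau) \circ \pi_n$. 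The resulting unique factorisation $gr_n(\tau) = \phi_n \circ \pi_n$ with $\pi_n \circ \phi_n = gr_0(\tau)$ yields $gr_n(\tau)^k = \phi_n \circ gr_0(\tau)^{k-1} \circ \pi_n$, and all the nilpotence statements follow from this combined with the identification of $gr_0(\tau)$ with $p^{-1}\bar\sigma \bmod p$. The main subtlety, and the reason the lemma is phrased this way, is the case $p = 2$: the term $p^{n-1} x_i^p = 2^{n-1} x_i^2$ appearing in $\tau(p^n x_i)$ is not in $\Fm^{n+1} J$ and contributes the nonzero class $[p^{n-1} x_i^2]$ to $gr_n(\tau)([p^n x_i])$; however this class has $|\alpha| = 2$, hence lies in $\Ker \pi_n$, so the intertwining is undisturbed.
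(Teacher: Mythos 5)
Your proof is correct and follows essentially the same route as the paper's: the same filtration $\Fm^nJ$, the same monomial basis $\{p^a x^\alpha : a+|\alpha|=n+1\}$ of $gr_n(J)$, the same map $\pi_n$, and the same computation of $\tau(p^nx_i)$ with the $p=2$ term $p^{n-1}x_i^2$ absorbed into $\Ker(\pi_n)$; your coefficientwise description of $\Fm^nJ$ merely makes explicit the identity $p\FS\cap\Fm^{n+1}J=p\Fm^nJ$ that the paper uses silently. Two small points: the generator of $gr_n(J)$ that $\pi_n$ sends to $[x_i]$ should be $[p^{n}x_i]$, not $[p^{n-1}x_i]$ (you use the correct element in the subsequent computation, so this is only a slip); and for the kernel basis elements with $a=0$, i.e.\ $|\alpha|=n+1\ge 2$, the two terms of your displayed formula for $\tau(p^ax^\alpha)$ are not separately in $\FS$, so the estimate must be run on $\sigma(x^\alpha)-x^{p\alpha}\in\Fm^{2n+1}J\cap p\FS=p\Fm^{2n}J$ before dividing by $p$ --- which your intersection identity supplies, so nothing is missing in substance.
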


\begin{proof}
We have $\sigma(J)\subseteq J^p+pJ\subseteq\Fm J$ 
and thus $\sigma(\Fm^nJ)\subseteq\Fm^{n+1}J$. 
It follows that 
$p\tau(\Fm^nJ)\subseteq p\FS\cap\Fm^{n+1}J=p\Fm^nJ$
and $\tau(\Fm^nJ)\subseteq\Fm^nJ$.
For $x,y\in\Fm^nJ$ the element $\tau(x+y)-\tau(x)-\tau(y)$
is a multiple of $xy$ and thus lies in $\Fm^{2n+1}J$.
Hence $\tau$ induces an additive
endomorphism $gr_n(\tau)$ of $gr_n(J)$. It
is $\sigma$-linear because for $a\in\FS$ and $x\in\Fm^nJ$ 
the element $\tau(ax)-\sigma(a)\tau(x)=\tau(a)x^p$ 
lies in $\Fm^{pn}J^p\subseteq\Fm^{n+1}J$.
Let us write $\sigma(x_i)=x_i^p+py_i$ with $y_i\in J$.
We have $\tau(x_i)=y_i$ and $p^{-1}\bar\sigma(x_i)\equiv y_i$
modulo $J^2$. Thus $gr_0(\tau)$ coincides with 
$p^{-1}\bar\sigma$ modulo $p$.

For each $n\ge 0$, a basis of $gr_n(J)$ is given by
all elements $p^b\underline x^{\underline c}$ with
$\underline c\in\NN^r$ and
$1\le|\underline c|\le n+1$ and $b+|\underline c|=n+1$.
Let $n\ge 1$ and define $\pi_n$ to be the $k$-linear
map with $\pi_n(p^nx_i)=x_i$ and 
$\pi_n(p^b\underline x^{\underline c})=0$ if $|\underline c|>1$. 
Then $gr_n(\tau)$ vanishes on $\Ker(\pi_n)$
because $\sigma(J)\subseteq\Fm J$, thus 
$\sigma(J^2)\subseteq\Fm^2 J^2$, and because for
$x\in\Fm^n J$ we have $x^p\in\Fm^{n+2}J$.
The relation $gr_0(\tau)\pi_n=\pi_ngr_n(\tau)$ holds
since $\tau(p^nx_i)\equiv p^{n-1}x_i^p+p^n y_i$ 
modulo $\Fm^{n+1}(J)$.
The last assertion of the lemma is immediate.
\end{proof}

\begin{Lemma}
\label{Le-varkappa-WW}
If the equivalent conditions of Proposition \ref{Pr-varkappa-image} hold, 
then $\varkappa$ and $\varkappa_a$ are $\uu$-homomorphisms of frames 
$$
\varkappa:\BBB\to\DDD_R,\qquad
\varkappa_a:\BBB_a\to\DDD_{R_a},
$$
where the unit $\uu\in\WW(R)$ is given by
$$
\uu=\vv^{-1}(\varkappa(E))=\ff_1(\varkappa(E)).
$$
In $W(R)$ we have\/ $\uu=u$ if $p$ is odd and 
$\uu=(v^{-1}(2-[2]))^{-1}u$ if $p=2$.
\end{Lemma}

\begin{proof}
The proof of Lemma \ref{Le-varkappa-W} with $f_1$
replaced by $\ff_1$ shows that $\uu$ is a unit
of $\WW(R)$ and that $\varkappa$ and $\varkappa_a$
are $\uu$-homomorphisms of frames as indicated. The
relation between $\uu$ and $u$ follows from the fact
that $\DDD_R\to\WWW_R$ is a $u_0$-homomorphism where
$u_0=1$ if $p$ is odd and $v(u_0)=2-[2]$ if $p=2$.
\end{proof}

\begin{Thm}
\label{Th-varkappa-crys}
If the equivalent conditions of Proposition \ref{Pr-varkappa-image}
hold, the frame homomorphisms $\varkappa:\BBB\to\DDD_R$ and
$\varkappa_a:\BBB_a\to\DDD_{R_a}$ are crystalline.
\end{Thm}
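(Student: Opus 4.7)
The strategy is to adapt the proof for odd $p$ from \cite{Lau-Frames} by systematically replacing $v$, $f_1$, $u$ with the modified versions $\vv$, $\ff_1$, $\uu$ of Lemma \ref{Le-varkappa-WW}; Sections 1 and 2 have been set up precisely so that this substitution works uniformly at $p=2$. First I would reduce to the finite-level statements by passing to the projective limit: since $\BBB=\varprojlim_a\BBB_a$ and $\DDD_R=\varprojlim_a\DDD_{R_a}$, windows over these limit frames are compatible systems of finite-level windows (cf.\ \cite[Lemma 2.1]{Lau-Frames}), so crystallinity of each $\varkappa_a$ will imply crystallinity of $\varkappa$.

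Next, I would factor each $\varkappa_a$ through an intermediate frame. Using condition \eqref{Cond-delta} of Proposition \ref{Pr-varkappa-image}, namely that $\delta(\FS)\subseteq\WW(\FS)$, I would construct a frame $\tilde\BBB_a$ with underlying ring $\WW(\FS_a)$, ideal $\II_{\FS_a}+\delta_a(E)\WW(\FS_a)$, Frobenius $f$, and operator extending $\ff_1$ by $\delta_a(E)\mapsto\uu$, yielding the factorization
\[
\varkappa_a:\BBB_a\xrightarrow{\;\delta_a\;}\tilde\BBB_a\xrightarrow{\;\pi_a\;}\DDD_{R_a}.
\]
The projection $\pi_a$ is induced by the surjection $\WW(\FS_a)\twoheadrightarrow\WW(R_a)$ whose kernel is a nilideal annihilated by a power of $p$ by Lemma \ref{Le-W(N)}; a direct application of Theorem \ref{Th-frame-crys} with the filtration by powers of $p$ (or by powers of the kernel) shows $\pi_a$ is crystalline.

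The harder step is to show that $\delta_a:\BBB_a\to\tilde\BBB_a$ is crystalline: this map is \emph{injective} rather than surjective, so Theorem \ref{Th-frame-crys} does not apply directly. Instead, using a normal representation, one shows by hand that every $\tilde\BBB_a$-window descends uniquely to a $\BBB_a$-window by inductively absorbing the non-$\delta_a(\FS_a)$ contributions to the structure matrix via Frobenius iteration. This iteration converges precisely because $p^{-1}\bar\sigma$ is nilpotent on $J/J^2$ modulo $p$ (condition \eqref{Cond-sigma-1} of Proposition \ref{Pr-varkappa-image}), a property propagated to higher filtration steps on $\WW(\FS_a)$ by Lemma \ref{Le-gr-tau}. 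The main obstacle will be keeping the book-keeping consistent at $p=2$: the unit $\uu$ involves the auxiliary $u_0$, and every arrow in the factorization must be checked to be a $u$-homomorphism with the correct normalization. Once the nilpotence calculus of Lemma \ref{Le-gr-tau} and the $u_0$-bookkeeping are in place, the descent argument is essentially the same as in the odd-prime case and the theorem follows.
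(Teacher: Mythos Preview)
Your proposal takes a genuinely different route from the paper, and the hard step is not adequately justified.

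The paper (following \cite[Theorem~9.3]{Lau-Frames} verbatim) does \emph{not} factor $\varkappa_a$ through a frame built on $\WW(\FS_a)$. Instead it argues by induction on $a$: one introduces an intermediate frame $\tilde\BBB_{a+1}=(\FS_{a+1},\tilde I,R_a,\sigma,\tilde\sigma_1)$ with the \emph{same} underlying ring $\FS_{a+1}$ as $\BBB_{a+1}$ but with enlarged ideal $\tilde I=E\FS_{a+1}+\bar J^a$, and compares the factorisation $\BBB_{a+1}\xrightarrow{\iota}\tilde\BBB_{a+1}\xrightarrow{\pi}\BBB_a$ to the factorisation $\DDD_{R_{a+1}}\xrightarrow{\iota'}\DDD_{R_{a+1}/R_a}\xrightarrow{\pi'}\DDD_{R_a}$ (trivial divided powers on $\Fm_R^a/\Fm_R^{a+1}$). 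One checks directly that $\varkappa_{a+1}$ is a $\uu$-homomorphism $\tilde\BBB_{a+1}\to\DDD_{R_{a+1}/R_a}$; the only nontrivial point is the identity $\tilde\ff_1(\varkappa_{a+1}(x))=\uu\cdot\varkappa_{a+1}(\tilde\sigma_1(x))$ on $\bar J^a$, which follows from the explicit formula $\delta(x)=(x,\tilde\sigma_1(x),\tilde\sigma_1^2(x),\ldots)$. Since $\pi,\pi'$ are crystalline (Theorem~\ref{Th-frame-crys}) and lifts under $\iota,\iota'$ are both classified by lifts of the Hodge filtration from $R_a$ to $R_{a+1}$, crystallinity of $\varkappa_a$ implies that of $\varkappa_{a+1}$. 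The base case $\varkappa_1$ is an isomorphism.

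In your scheme, the serious gap is the claim that $\delta_a:\BBB_a\to\tilde\BBB_a$ is crystalline. This is an \emph{injective} frame map, so Theorem~\ref{Th-frame-crys} does not apply, and your proposed ``iterative absorption'' of matrix entries into $\delta_a(\FS_a)$ is not supported by the tools you cite: the nilpotence of $p^{-1}\bar\sigma$ on $J/J^2$ (and its propagation via Lemma~\ref{Le-gr-tau}) is exactly what guarantees $\delta(\FS)\subseteq\WW(\FS)$, but it gives no control over the cokernel of $\delta_a$ as a ring map, which is what a descent argument would need. There is no analogue here of the relation $\sigma(S)\subseteq S_{<m>}$ that drives the descent in Proposition~\ref{Pr-varkappa-S-1}: it is certainly false that $f(\WW(\FS_a))\subseteq\delta_a(\FS_a)$. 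Separately, your $\pi_a$ step is more delicate than stated, since $\FS_a$ is only admissible \emph{topological} for $a\ge 2$, so the kernel of $\WW(\FS_a)\to\WW(R_a)$ is a projective limit and not literally bounded nilpotent. The paper's inductive approach sidesteps both issues entirely.
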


\begin{proof}
The proof for odd $p$ in \cite[Theorem 9.3]{Lau-Frames}
works almost literally for $p=2$ as well. Let us recall the
essential parts of the
argument. Fix an integer $a\ge 1$. One can define
a factorisation of the projection $\BBB_{a+1}\to\BBB_a$
into strict frame homomorphisms
\begin{equation}
\label{Eq-BBB-factor}
\BBB_{a+1}\xrightarrow{\iota}\tilde\BBB_{a+1}
\xrightarrow{\pi}\BBB_a
\end{equation}
such that 
$\BBB_{a+1}=(\FS_{a+1},\tilde I,R_a,\sigma,\tilde\sigma_1)$.
This determines $\tilde I$ and $\tilde\sigma_1$ uniquely as follows.
Let $\bar J^a=J^a/J^{a+1}$. 
We have $\tilde I=E\FS_{a+1}+\bar J^a$ and
$E\FS_{a+1}\cap\bar J^a=p\bar J^a$. 
The endomorphism $\bar\sigma$ of $\bar J^a$
induced by $\sigma$ is divisible by $p^a$,
and the operator $\tilde\sigma_1:\tilde I\to\FS_{a+1}$
is the unique extension of $\sigma_1$ such that
$\tilde\sigma_1(x)=p^{-1}\bar\sigma(x)$
for $x\in\bar J^a$. On the other hand, 
we consider the factorisation
\begin{equation}
\label{Eq-DDD-factor}
\DDD_{R_{a+1}}\xrightarrow{\iota'}
\DDD_{R_{a+1}/R_a}\xrightarrow{\pi'}\DDD_{R_a}
\end{equation}
with respect to the trivial divided powers on the
kernel $\Fm_R^{a}/\Fm_R^{a+1}$. 
Then $\varkappa_{a+1}$ is a $\uu$-homomorphism 
of frames $\tilde\BBB_{a+1}\to\DDD_{R_{a+1}/R_a}$.
Indeed, the only condition to be verified is that
for $x\in\bar J^a$ we have 
\begin{equation}
\label{Eq-non-triv}
\tilde\ff_1(\varkappa_{a+1}(x))=
\uu\cdot\varkappa_{a+1}(\tilde\sigma_1(x))
\end{equation}
in the $k$-vector space $\hat W(\Fm_R^a/\Fm_R^{a+1})$. 
On this space $\uu$ acts as the identity.
Let $y=(y_0,y_1,\ldots)$ in $W(\bar J^a)$ be defined
by $y_n=\tilde\sigma_1^n(x)$. Then $\delta(x)=y$
because the Witt polynomials give 
$w_n(y)=p^n\tilde\sigma_1^n(x)=\sigma^n(x)=w_n(\delta(x))$ 
as required. Thus $\varkappa_{a+1}(x)$ is the reduction of $y$.
Since $\tilde\ff_1$ acts on $\hat W(\Fm_R^a/\Fm_R^{a+1})$
by a shift to the left, the relation \eqref{Eq-non-triv}
follows. We obtain compatible $\uu$-homomorphisms of frames 
$\varkappa_*:\eqref{Eq-BBB-factor}\to\eqref{Eq-DDD-factor}$.
The homomorphisms $\pi$ and $\pi'$ are crystalline;
see the proof of \cite[Theorem 9.3]{Lau-Frames}.
Lifts of windows under $\iota$ and under $\iota'$
are both classified by lifts of the Hodge filtration
from $R_a$ to $R_{a+1}$ in a compatible way.
Thus if $\varkappa_a$ is crystalline then so is
$\varkappa_{a+1}$, and Theorem \ref{Th-varkappa-crys}
follows by induction, using that $\varkappa_1$ is
an isomorphism.
\end{proof}

Following the \cite{Vasiu-Zink} terminology, 
a Breuil window relative to $\FS\to R$ is 
a pair $(Q,\phi)$ where $Q$ is a free $\FS$-module of
finite rank and where $\phi:Q\to Q^{(\sigma)}$ is an
$\FS$-linear map with cokernel annihilated by $E$. 
For such $(Q,\phi)$ there is a unique linear map $\psi:Q^{(\sigma)}\to Q$ 
with $\psi\phi=E$; the pairs $(Q,\psi)$ are usually called
Breuil-Kisin modules or Kisin modules.
The category of $\BBB$-windows
is equivalent to the category of Breuil windows relative
to $\FS\to R$ by the assignment $(P,Q,F,F_1)\mapsto
(Q,\phi)$, where $\phi$ is the composition of the inclusion 
$Q\to P$ with the inverse of $F_1^\sharp:Q^{(\sigma)}\cong P$;
see \cite[Lemma 8.2]{Lau-Frames}.

\begin{Cor}
\label{Co-B-win}
If the equivalent conditions of Proposition \ref{Pr-varkappa-image}
hold, there is an equivalence of exact categories between
$p$-divisible groups over $R$ and Breuil windows relative
to $\FS\to R$. 
\end{Cor}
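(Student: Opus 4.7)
The plan is to assemble the equivalence from three equivalences that are already available. First, I would invoke \cite[Lemma 8.2]{Lau-Frames} (restated above just before the corollary), which identifies the category of $\BBB$-windows with the category of Breuil windows relative to $\FS\to R$; this equivalence is clearly exact since on both sides exactness means exactness of underlying sequences of projective $\FS$-modules. Second, I would apply Theorem \ref{Th-varkappa-crys} to pass from $\BBB$-windows to Dieudonn\'e displays over $R$. Third, I would apply the equivalence $\BT_R$ from Proposition \ref{Pr-BT-equiv} (in its topological form from the end of Section 5) to pass from Dieudonn\'e displays over $R$ to $p$-divisible groups over $R$.

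To glue steps two and three I need to pass from the Artinian quotients $R_a=R/\Fm_R^a$ to $R$ itself. Since $R$ is a complete regular local ring with perfect residue field, it is an admissible topological ring in the sense of Definition \ref{Def-top-admissible}. By the passing-to-the-limit remarks at the end of Sections 2, 3, and 5, each of the three categories in question is the $2$-projective limit of its analogue over the $R_a$: Dieudonn\'e displays over $R$ are compatible systems of Dieudonn\'e displays over the $R_a$, $p$-divisible groups over $R$ are compatible systems of $p$-divisible groups over the $R_a$, and a $\BBB$-window $(P,Q,F,F_1)$ is determined by the compatible system of base changes $(P/J^aP,\ldots)$ since $P$ is a finitely generated projective $\FS$-module and $\FS=\varprojlim \FS_a$. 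Theorem \ref{Th-varkappa-crys} gives the equivalence $\varkappa_{a*}$ at each finite level compatibly in $a$, so taking the limit yields an equivalence between $\BBB$-windows and Dieudonn\'e displays over $R$. Similarly, Proposition \ref{Pr-BT-equiv} applied to each admissible local ring $R_a$ (no restriction on $p$) gives compatible equivalences $\BT_{R_a}$, which assemble to the topological $\BT_R$.

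The composition of the three equivalences defines the asserted functor from Breuil windows relative to $\FS\to R$ to $p$-divisible groups over $R$. To check that it is an equivalence of \emph{exact} categories, it suffices to check that each of the three functors is exact and reflects exactness. For the Lau-Frames identification (step one) this is immediate from the definitions. For $\varkappa_*$ (step two), exactness is transparent from the description of the functor on normal representations in Section \ref{Subse-frames}, as tensor product along $\varkappa_a\colon\FS_a\to\WW(R_a)$ applied to projective modules. For $\BT_R$ (step three), exactness is part of the statement of Proposition \ref{Pr-BT-equiv}, and since its quasi-inverse is also exact (as noted in the proof of \ref{Pr-BT-equiv}), exactness is reflected as well.

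The only step requiring genuine input is Theorem \ref{Th-varkappa-crys}, which has already been proved. Consequently the remaining work is purely formal: one needs to verify the compatibility of the three equivalences with the inverse systems indexed by $a$, which in each case reduces to the observation that finitely generated projective modules over $\FS$ (resp. $\WW(R)$, resp. the underlying rings of the $p$-divisible groups) are determined by their finite-level reductions. I expect no essential obstacle in this formal assembly; the one point that deserves care is to verify that the passage to the limit commutes with the equivalence $\varkappa_*$, i.e. that a compatible system of $\BBB_a$-windows coming from a $\BBB$-window goes under $\varkappa_{a*}$ to a compatible system of Dieudonn\'e displays that in fact comes from a Dieudonn\'e display over the admissible topological ring $R$. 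This follows once one checks that the unit $\uu$ of Lemma \ref{Le-varkappa-WW} and the normal-representation description of base change are compatible with the projections $R_{a+1}\to R_a$, which is automatic from the functoriality of $\varkappa$.
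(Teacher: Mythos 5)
Your proposal is correct and follows essentially the same route as the paper, which simply composes the identification of $\BBB$-windows with Breuil windows (\cite[Lemma 8.2]{Lau-Frames}), the crystalline homomorphism $\varkappa:\BBB\to\DDD_R$ of Theorem \ref{Th-varkappa-crys}, and the equivalence $\BT_R$ of Proposition \ref{Pr-BT-equiv} in its topological form. The only redundancy is your limit argument for step two: Theorem \ref{Th-varkappa-crys} already asserts that $\varkappa:\BBB\to\DDD_R$ itself (not just each $\varkappa_a$) is crystalline, so no separate passage to the limit is needed there.
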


\begin{proof}
This is analogous to \cite[Corollary 8.3]{Lau-Frames},
using Corollary \ref{Co-Phi-equiv}.
\end{proof}

Following \cite{Vasiu-Zink} again, 
a Breuil module relative to $\FS\to R$
is a triple $(M,\phi,\psi)$ where $M$ is a finitely
generated $\FS$-module annihilated by a power of $p$
and of projective dimension at most one, and where
$\phi:M\to M^{(\sigma)}$ and $\psi:M^{(\sigma)}\to M$
are $\FS$-linear maps with $\phi\psi=E$ and
$\psi\phi=E$. If $R$ has characteristic zero, such
triples are equivalent to pairs $(M,\phi)$ or $(M,\psi)$;
see \cite[Lemma 8.6]{Lau-Frames}.
Again, the pairs $(M,\psi)$ are usually called Breuil-Kisin modules
or Kisin modules.

\begin{Cor}
If the equivalent conditions of Proposition \ref{Pr-varkappa-image}
hold, there is an equivalence of exact categories between
commutative
finite locally free group schemes of $p$-power order over $R$
and Breuil modules relative to $\FS\to R$.
\end{Cor}

\begin{proof}
This is analogous to \cite[Theorem 8.5]{Lau-Frames}.
\end{proof}

\begin{Example}
\label{Ex-motiv}
Let $R=W(k)$ and $\FS=W(k)[[t]]$ with $\sigma(t)=t^p$. 
Define $\FS\to R$ by $t\mapsto p$, thus $E=p-t$. We
have $\varkappa(E)=p-[p]$ and thus $u=v^{-1}(p-[p])$.
Assume that $p=2$. Then $u=u_0$, and $\BBB\to\DDD_R$ is
a strict frame homomorphism. This example has motivated
the definition of Dieudonn\'e displays for $p=2$.
\end{Example}


\section{Breuil-Kisin modules and crystals}
\label{Se-Breuil-crys}

We keep the notation of section \ref{Se-Breuil}
and assume that the equivalent conditions of Proposition
\ref{Pr-varkappa-image} hold. Assume that
$R$ has characteristic zero. Let $S$ be the
$p$-adic completion of the divided power envelope of
the ideal $E\FS\subset\FS$ and let $I$ be the kernel 
of $S\to R$.
Since $\sigma:\FS\to\FS$ preserves the ideal $(E,p)$
it extends to $\sigma:S\to S$. It is easy to see that
$\sigma(I)\subseteq pS$, thus $\sigma:S\to S$ is a 
Frobenius lift again.

\begin{Prop}
\label{Pr-Breuil-crys}
Let $(Q,\phi)$ be a Breuil window relative to $\FS\to R$
and let $G$ be the associated $p$-divisible group over $R$;
see Corollary \ref{Co-B-win}. There is a natural isomorphism 
$$
\DD(G)_{S/R}\cong S\otimes_{\FS}Q^{(\sigma)}
$$
such that the Hodge filtration of\/ $\DD(G)_{S/R}$ 
corresponds to the submodule generated by 
$\phi(Q)+IQ^{(\sigma)}$, and the Frobenius
of\/ $\DD(G)_{S/R}$ corresponds to the
$\sigma$-linear endomorphism of $Q^{(\sigma)}$ 
defined by $x\mapsto 1\otimes\phi^{-1}(Ex)$. 
\end{Prop}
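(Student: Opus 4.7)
The plan has three steps: introduce a frame $\SSS$ for $S\to R$, base-change the Breuil window to $\SSS$, and identify the resulting $\SSS$-window with the value of the Dieudonn\'e crystal of $G$ at $S$.

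First, I will set $\SSS=(S,I,R,\sigma,\sigma_1)$ with $\sigma_1=\sigma/p$ on $I$. This is well-defined because $\sigma(I)\subseteq pS$: the ideal $I$ is generated by the divided powers $E^{[n]}$, and $\sigma(E^{[n]})=\sigma(E)^n/n!$ lies in $pS$ since $\sigma(E)\in pS$ and $p^n/n!\in p\ZZ_p$ for $n\ge 1$. The element $\sigma_1(E)$ is a unit in $S$ (its image in the residue field $k$ equals $1$ by a direct computation using $\sigma(p)=p$ and $\sigma(E)\equiv E^p\pmod p$), and $I+pS\subseteq\Rad(S)$ follows from the $I$-adic and $p$-adic completeness of $S$ over the local ring $R$. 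Hence $\SSS$ is a lifting frame, and the inclusion $\FS\hookrightarrow S$ is a $u$-frame homomorphism $\iota:\BBB\to\SSS$ with $u=\sigma(E)/p\in S^*$.

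Second, I will compute $\iota_*(\PPP_\BBB)$, where $\PPP_\BBB$ is the $\BBB$-window associated to $(Q,\phi)$ by \cite[Lemma 8.2]{Lau-Frames}. Using the isomorphism $F_1^\sharp:Q^{(\sigma)}\cong P_\BBB$ and choosing a normal decomposition, the base change formulas of section \ref{Subse-frames} show that $\iota_*(\PPP_\BBB)$ has underlying module $P_\SSS=S\otimes_\FS Q^{(\sigma)}$, Hodge filtration $Q_\SSS$ the submodule generated by $\phi(Q)+IQ^{(\sigma)}$, and Frobenius the $\sigma$-linear extension of the map $Q^{(\sigma)}\to P_\SSS$, $x\mapsto 1\otimes\phi^{-1}(Ex)$. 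This formula makes sense because $EQ^{(\sigma)}\subseteq\phi(Q)$ and $\phi$ is injective (indeed $\phi\otimes_\FS\FS[1/E]$ is a surjective endomorphism of a free $\FS[1/E]$-module of finite rank, hence an isomorphism).

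Third, I will identify $P_\SSS$ with $\DD(G)_{S/R}$. Let $\PPP=\varkappa_*(\PPP_\BBB)$, so that $G=\BT_R(\PPP)$ by Corollary \ref{Co-B-win} and $\DD(G)\cong\DD(\PPP)$ on admissible divided power extensions by Corollary \ref{Co-bt2disp-DD} and Theorem \ref{Th-BT-DD}. The value $\DD(\PPP)_{S/R}$ can be computed by lifting $\PPP$ to a window over the frame $\DDD_{S/R}$ (respectively $\DDD^+_{S/R}$ when $p=2$) and applying $w_0$; the unique such lift exists by the crystalline property (Proposition \ref{Pr-DDD-crys}, Corollary \ref{Co-DDD+-crys}). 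To see that this lift, pushed to $S$, gives exactly $P_\SSS$, I will extend $\varkappa$ to a frame homomorphism $\varkappa_S:\SSS\to\DDD_{S/R}$ via the $\delta$-construction of section \ref{Subse-enlarged-Zink} applied to $S$; then the base change $(\varkappa_S)_*\iota_*(\PPP_\BBB)$ lifts $\PPP$, hence equals the canonical lift, and tensoring along $w_0:\WW(S,\delta)\to S$ recovers $P_\SSS$ with the stated Hodge filtration and Frobenius. The main obstacle will be the construction of $\varkappa_S$ together with the handling of topological issues arising from $S$ being admissible topological (Definition \ref{Def-top-admissible}) rather than admissible, which requires the passing-to-the-limit formalism and the $p$-adic completeness of $\WW(S,\delta)$ from Proposition \ref{Pr-WW-p-adic}.
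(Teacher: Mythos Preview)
Your first two steps match the paper's approach. The gap is in the third step: you propose to work with the frame $\DDD_{S/R}$ (or $\DDD^+_{S/R}$) and to evaluate the crystal $\DD(\PPP)_{S/R}$ there, but this frame is not available in general. As the paper notes explicitly (see the Remark closing Section~9), the ring $S$ is an admissible topological ring in the sense of Definition~\ref{Def-top-admissible} if and only if $r=1$; for $r\ge 2$ the quotients $S/p^nS$ are not admissible, so neither $\WW(S,\delta)$ nor $\DDD_{S/R}$ is defined by the machinery of Sections~1--2, and Proposition~\ref{Pr-WW-p-adic} does not apply. Your plan to build $\varkappa_S:\SSS\to\DDD_{S/R}$ and then compare lifts over $\DDD_{S/R}$ therefore breaks down outside the discrete valuation ring case.

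The paper avoids evaluating the Dieudonn\'e-display crystal at $S$ altogether. Instead it observes that $\SSS$ itself satisfies the hypotheses of Section~\ref{Subse-cryst-frame} ($S$ is $p$-adic and $p$-torsion free, $I$ carries divided powers, $\sigma=p\sigma_1$ on $I$), so Construction~\ref{Const-fil-mod} and Proposition~\ref{Pr-fil-mod-win} produce directly from $\DD(G)$ an $\SSS$-window $\PPP'_{\!\SSS}$. The homomorphism $\varkappa_S$ is taken with target $\DDD^+_R$ (not $\DDD^+_{S/R}$), and one compares $\varkappa_{S*}\PPP'_{\!\SSS}=\Phi^+_R(G)$ with $\varkappa_{S*}\PPP_{\!\SSS}$ via Corollary~\ref{Co-BT-equiv}. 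The missing ingredient, which does the real work, is Theorem~\ref{Th-varkappa-S}: the frame homomorphism $\varkappa_S:\SSS\to\DDD^+_R$ is crystalline, hence $\varkappa_{S*}$ is fully faithful, and the isomorphism over $\DDD^+_R$ descends to $\PPP'_{\!\SSS}\cong\PPP_{\!\SSS}$. Proving this theorem for $r\ge 2$ is nontrivial (it occupies all of Section~9, passing through auxiliary subframes $\SSS_{\!<m>}$ built from noetherian subrings of $S$); your proposal does not account for it.
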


In Kisin's theory (when $R$ is one-dimensional) 
the analogous result is immediate from the construction. 
To prove Proposition \ref{Pr-Breuil-crys}
we consider the frame
$$
\SSS=(S,I,R,\sigma,\sigma_1)
$$
with $\sigma_1(x)=\sigma(x)/p$ for $x\in I$.
The inclusion $\FS\to S$ is a $u$-homomorphism of
frames $\iota:\BBB\to\SSS$ with $u=\sigma(E)/p\in S$. 
This element is a unit as required since the arrow $\FS\to R$ is mapped 
surjectively onto $W(k)\to k$, which gives a local homomorphism 
$S\to W(k)$ that maps $u$ to $1$. Recall that we have
frames $\DDD_R\to\DDD_R^+$ when $p=2$ and let us write
$\DDD^+_R=\DDD_R$ when $p\ge 3$. Then we have
the following commutative diagram of frames.
$$
\xymatrix@M+0.2em{
\BBB \ar[r]^\iota \ar[d]_\varkappa & 
\SSS \ar[d]^{\varkappa_S} \\
\DDD_R \ar[r] &
\DDD_R^+
}
$$
Indeed, since $\WW^+(R)\to R$ is a divided power extension
of $p$-adically complete rings,  the ring homomorphism $\varkappa:\FS\to\WW^+(R)$ 
extends to $\varkappa_S:S\to\WW^+(R)$, 
which is a strict frame homomorphism $\SSS\to\DDD_R^+$.
Here $\varkappa$ is crystalline by Theorem \ref{Th-varkappa-crys}.
The proof of Proposition \ref{Pr-Breuil-crys} will use the following fact.

\begin{Thm}
\label{Th-varkappa-S}
The frame homomorphism $\varkappa_S$ is crystalline.
\end{Thm}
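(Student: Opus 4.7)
The plan is to adapt the inductive argument from the proof of Theorem \ref{Th-varkappa-crys}. First I reduce to finite level by introducing, for each $a \ge 1$, an auxiliary frame $\SSS_a$ with underlying ring $S_a$ equal to the $p$-adic completion of the divided power envelope of $E\FS_a \subset \FS_a$, together with the $\sigma$ and $\sigma_1$ induced from $\SSS$. The strict projections $\SSS \to \SSS_a$ and $\varkappa_{S,a} : \SSS_a \to \DDD^+_{R_a}$ assemble into a compatible system, and since both $\SSS$-windows and $\DDD^+_R$-windows are determined by their reductions to $\SSS_a$ and $\DDD^+_{R_a}$ respectively (the former because finitely generated projective $S$-modules are $p$-adically complete; the latter by the definition of $\WW^+$ in the topological setting), it suffices to prove that each $\varkappa_{S,a}$ is crystalline. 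The base case $a = 1$ is immediate: $\FS_1 = W(k)$, and since $E$ has constant term $p$ and $W(k)$ already carries the canonical divided powers on $pW(k)$, we get $S_1 = W(k) = \WW^+(k)$, so $\varkappa_{S,1}$ is the identity.

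For the induction step, I factor $\varkappa_{S,a+1}$ as a morphism between factorizations parallel to \eqref{Eq-BBB-factor} and \eqref{Eq-DDD-factor}, fitting into the commutative diagram
$$
\xymatrix@M+0.2em{
\SSS_{a+1} \ar[r]^-{\tilde\iota} \ar[d]_-{\varkappa_{S,a+1}} &
\tilde\SSS_{a+1} \ar[r]^-{\tilde\pi} \ar[d] &
\SSS_a \ar[d]^-{\varkappa_{S,a}} \\
\DDD^+_{R_{a+1}} \ar[r]^-{\iota'} &
\DDD^+_{R_{a+1}/R_a} \ar[r]^-{\pi'} &
\DDD^+_{R_a}\!
}
$$
where $\tilde\SSS_{a+1}$ has underlying ring $S_{a+1}$, enlarged ideal $\tilde I = I_{a+1} + \bar J^a$ with $\bar J^a = J^a/J^{a+1}$, quotient $R_a$, and operator $\tilde\sigma_1$ extending $\sigma_1$ by $p^{-1}\bar\sigma$ on $\bar J^a$, mirroring the definition of $\tilde\BBB_{a+1}$ in the proof of Theorem \ref{Th-varkappa-crys}. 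The right-hand arrows $\tilde\pi$ and $\pi'$ are crystalline by direct application of Theorem \ref{Th-frame-crys} with the filtration $\Fa_i = p^i \bar J^a$ of the kernel (this parallels the proof of Proposition \ref{Pr-DDD-crys} for $\pi'$). The left-hand arrows $\tilde\iota$ and $\iota'$ are crystalline because lifts along either are classified by lifts of the Hodge filtration from $R_a$ to $R_{a+1}$, and the two classifications are compatible via $\varkappa_{S,a+1}$ by Corollary \ref{Co-deform-DDD+}.

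The main obstacle is the verification that $\varkappa_{S,a+1}$ actually defines a $\uu$-homomorphism $\tilde\SSS_{a+1} \to \DDD^+_{R_{a+1}/R_a}$, i.e.\ the analogue of \eqref{Eq-non-triv}: for $x \in \bar J^a$ one must check that $\tilde f_1(\varkappa_{S,a+1}(x)) = \uu \cdot \varkappa_{S,a+1}(\tilde\sigma_1(x))$ in the $k$-vector space $\hat W(\Fm_R^a/\Fm_R^{a+1})$. The logarithmic coordinate computation of Theorem \ref{Th-varkappa-crys}, using the identification $\delta(x) = (\tilde\sigma_1^n(x))_{n \ge 0}$ and the left-shift description of $\tilde f_1$ on $\hat W$, should transfer essentially verbatim once condition \eqref{Cond-delta} of Proposition \ref{Pr-varkappa-image} is invoked so that $\delta$ lands in $\WW(\FS) \subseteq \WW^+(\FS)$. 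A secondary technical point is the compatibility of the divided powers on $I_a \subset S_a$ (inherited from the divided power envelope) with the divided powers on $\II^+_{R_a}$ used to construct $\DDD^+_{R_{a+1}/R_a}$; this should follow from Lemma \ref{Le-pd-II**} and the universal property of the divided power envelope. The $u_0$-twist present at $p = 2$ is already absorbed by the definition of $\varkappa_S$ as a strict homomorphism into $\DDD^+_R$ rather than $\DDD_R$, so no further adjustment is needed.
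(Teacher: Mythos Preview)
Your reduction to finite level is the crux, and it has a genuine gap. You define $S_a$ as the $p$-adic completion of the divided power envelope of $E\FS_a\subset\FS_a$, which is fine and indeed gives $S_a\cong S/J^aS$. But to pass from $\SSS$-windows to compatible systems of $\SSS_a$-windows you need $S=\varprojlim_a S_a$, i.e.\ that $S$ is $J$-adically complete. You justify this by saying finitely generated projective $S$-modules are $p$-adically complete --- but that addresses the $p$-direction, not the $J$-direction. The paper remarks (just after the statement of the theorem) that $S$ is an admissible topological ring only when $r=1$; for $r\ge 2$ the ring $S$ is not the projective limit of admissible quotients in any evident way, and this is exactly the obstruction to running your argument. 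For $r=1$ your approach is essentially Zink's original argument and works.

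The paper circumvents this by a genuinely different route. It introduces the subrings $S_{<m>}\subset S$ generated (and closed) over $\FS$ by $E^i/i!$ for $i\le p^m$. The key observation is that $\sigma(S)\subseteq S_{<1>}$ and $\sigma_1(I)\subseteq S_{<1>}$, which makes these subrings absorbing under the frame operations; this immediately yields that $\SSS_{\!<m>}\to\SSS$ is crystalline (Proposition~\ref{Pr-varkappa-S-1}). The point of working with $S_{<m>}$ rather than $S$ is that $S_{<m>}$ is a noetherian complete local ring (Proposition~\ref{Pr-S-m}), so Lemma~\ref{Le-lim-surjective} controls the projective limit of its images $T_n$ in $\WW^+(R_n)$. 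The inductive argument you sketch is then carried out not on quotients of $S$ but on these images $T_n$, yielding that $\SSS_{\!<m>}\to\DDD_R^+$ is crystalline (Proposition~\ref{Pr-varkappa-S-2}); note also the refined index set $N=\{1,2,\ldots\}\cup\{e^+\}$ needed to handle the step across $pR$. Combining the two propositions gives the theorem. Your factorization idea and the logarithmic computation for $\tilde f_1$ do appear inside Proposition~\ref{Pr-varkappa-S-2}, but applied to the noetherian frames $\TTT_n$ built from the $T_n$, not to quotients $S_a$ of $S$.
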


This is a variant of the main result of \cite{Zink-Windows}.
It is easy to see that $S$ is an admissible topological ring 
in the sense
of Definition \ref{Def-top-admissible} if and only if $r=1$,
i.e.\ if $R$ is a discrete valuation ring.
In that case, the methods of \cite{Zink-Windows} 
apply directly, but additional effort is needed to prove 
Theorem \ref{Th-varkappa-S} in general.
The proof is postponed to the next section.

\begin{proof}[Proof of Proposition \ref{Pr-Breuil-crys}]
Let $\PPP_0=(P,Q,F,F_1)$ be the $\BBB$-window associated to
$(Q,\phi)$, thus $P=Q^{(\sigma)}$, the inclusion map $Q\to P$ 
is $\phi$, and $F:P\to P$ is
the $\sigma$-linear endomorphism of $Q^{(\sigma)}$
defined by $x\mapsto 1\otimes\phi^{-1}(Ex)$.
By definition we have $\Phi_R(G)=\varkappa_*(\PPP_0)$,
which implies that $\Phi_R^+(G)=\varkappa_{S*}\iota_*(\PPP_0)$;
here we use Theorem \ref{Th-Phi-2} when $p=2$.
On the other hand,
the frames $\SSS$ and $\DDD_R^+$ both satisfy the
hypotheses of the beginning of \ref{Subse-cryst-frame}.
Thus Construction \ref{Const-fil-mod} and Proposition \ref{Pr-fil-mod-win}
applied to $G$ give an $\SSS$-window $\PPP_1$ with an isomorphism
$\varkappa_{S*}(\PPP_1)\cong\Phi_R^+(G)$, using the characterisation of
$\Phi_R^+$ in Theorem \ref{Th-bt2disp} and Proposition \ref{Pr-bt2disp+}.
Since the base change functor $\varkappa_{S*}$ is
fully faithful by Theorem \ref{Th-varkappa-S},
the isomorphism $\varkappa_{S*}(\PPP_1)\cong\Phi_R^+(G)\cong
\varkappa_{S*}\iota_*(\PPP_0)$ descends 
to an isomorphism $\PPP_1\cong\iota_*(\PPP_0)$,
which proves the proposition.
\end{proof}


\subsection{Proof of Theorem \ref{Th-varkappa-S}}

Let us begin with a closer look on the $p$-adically complete ring $S$.
For $m\ge 0$ let $S_{<m>}\subseteq S$
be the closure of the $\FS$-algebra 
generated by $E^i/i!$ for $i\le p^m$. 

\begin{Prop}
\label{Pr-S-m}
For $m\ge 1$ there is a surjective homomorphism of $\FS$-algebras
$\FS[[t_1,\ldots,t_m]]\to S_{<m>}$ defined by 
$t_i\mapsto E^{p^i}/p^i!$.
\end{Prop}

In particular, $S_{<m>}$ is a noetherian complete local ring.

\begin{Lemma}
\label{Le-lim-surjective}
Let $A$ be a noetherian complete local ring with a descending
sequence of ideals 
$A\supseteq\Fa_0\supseteq\Fa_1\supseteq\cdots$. 
Then $A\to\varprojlim_i A/\Fa_i$ is surjective.
\end{Lemma}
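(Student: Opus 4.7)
The plan is to reduce the lemma to Chevalley's theorem, which asserts that in a Noetherian complete local ring $(A,\Fm)$, any descending chain of ideals $(\Fb_i)$ with $\bigcap_i\Fb_i=0$ is cofinal with the $\Fm$-adic filtration: for each $n$ there exists $i$ with $\Fb_i\subseteq\Fm^n$. Once this is available, the desired surjectivity is just a formal completeness argument.

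First, I would replace $A$ by $A/\bigcap_i\Fa_i$, which is again Noetherian and complete local. Since $A\to\varprojlim_i A/\Fa_i$ factors as $A\twoheadrightarrow A/\bigcap_i\Fa_i\to\varprojlim_i A/\Fa_i$ with the first arrow surjective, it suffices to treat the second arrow; thus one may assume $\bigcap_i\Fa_i=0$. Next, given a compatible sequence $(\bar a_i)\in\varprojlim_i A/\Fa_i$, I would pick arbitrary lifts $a_i\in A$ of $\bar a_i$, so that $a_j-a_i\in\Fa_i$ whenever $j\ge i$.

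By Chevalley's theorem applied to $(\Fa_i)$, for every $n\ge 0$ there is an $i_0$ with $\Fa_{i_0}\subseteq\Fm^n$; hence for $j,k\ge i_0$ one has $a_j-a_k\in\Fa_{i_0}\subseteq\Fm^n$, so $(a_i)$ is Cauchy in the $\Fm$-adic topology. By $\Fm$-adic completeness of $A$ it converges to some $a\in A$. Fixing $i$, the congruences $a_j\equiv a_i\pmod{\Fa_i}$ for $j\ge i$ pass to the $\Fm$-adic limit and give $a-a_i$ in the $\Fm$-adic closure of $\Fa_i$; but $A/\Fa_i$ is Noetherian and hence $\Fm$-adically separated by Krull's intersection theorem, so $\Fa_i$ is $\Fm$-adically closed in $A$. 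Therefore $a-a_i\in\Fa_i$ for every $i$, which means that $a$ maps to $(\bar a_i)$ under $A\to\varprojlim_i A/\Fa_i$, as required.

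The only non-formal ingredient is Chevalley's theorem; this is the one place where the Noetherian and complete hypotheses are actually used. Since the authors label the lemma as well-known, I expect they have the same reduction in mind, and the proof they write will either invoke Chevalley directly or reprove the cofinality statement by the standard argument comparing the two filtrations via the Artin-Rees lemma on the graded ring $\bigoplus_n\Fm^n/\Fm^{n+1}$.
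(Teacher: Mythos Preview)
Your proof is correct, but the paper takes a different and somewhat more direct route. Instead of reducing to $\bigcap_i\Fa_i=0$ and invoking Chevalley's theorem, the paper works directly with the double system $A/(\Fa_i+\Fm^r)$: since each $A/\Fm^r$ is Artinian, the images of the $\Fa_i$ in $A/\Fm^r$ stabilise to some ideal $\bar\Fa_r$, so that
\[
\varprojlim_i A/\Fa_i \;=\; \varprojlim_{i,r} A/(\Fa_i+\Fm^r) \;=\; \varprojlim_r (A/\Fm^r)/\bar\Fa_r,
\]
and then the short exact sequences $0\to\bar\Fa_r\to A/\Fm^r\to (A/\Fm^r)/\bar\Fa_r\to 0$ have surjective transition maps on the left term, so the limit sequence stays exact and $A=\varprojlim_r A/\Fm^r$ surjects onto the right-hand limit. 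This avoids both the reduction step and the appeal to Chevalley and Krull; the only ingredient is the DCC in $A/\Fm^r$. Your argument, by contrast, makes the Cauchy-sequence picture explicit and isolates Chevalley's theorem as the essential input, which is conceptually clarifying but a bit heavier in citations.
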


\begin{proof}
Let $\Fm$ be the maximal ideal of $A$. For each $r$, 
the images of $\Fa_i\to A/\Fm^r$ stabilise for $i\to\infty$
to an ideal $\bar\Fa_r\subseteq A/\Fm^r$.
We have 
$$
\varprojlim_i A/\Fa_i=\varprojlim_{i,r}A/(\Fa_i+\Fm^r)
=\varprojlim_r(A/\Fm^r)/\bar\Fa_r.
$$
Since the ideals $\bar\Fa_r$ form a surjective system,
taking the limit over $r$ of the exact sequences 
$0\to\bar\Fa_r\to A/\Fm^r\to(A/\Fm^r)/\bar\Fa_r\to 0$ 
proves the lemma. 
\end{proof}

\begin{proof}[Proof of Proposition \ref{Pr-S-m}]
Since the image of $E^{p^i}/p^i!$ in $S/p^nS$ is nilpotent,
there is a well-defined homomorphism
$\pi_{m,n}:\FS[[t_1,\ldots,t_m]]\to S/p^nS$
with $t_i\mapsto E^{p^i}/p^i!$.
By definition, $S_{<m>}$ is the projective limit over $n$ 
of the image of $\pi_{n,m}$. 
The proposition follows by Lemma \ref{Le-lim-surjective}.
\end{proof}

Let $K=W(k)\otimes\QQ$ and $\FS_\QQ=K[[x_1,\ldots,x_r]]$.
Since $\sigma:\FS\to\FS$ preserves the ideal $J=(x_1,\ldots,x_r)$ 
it extends to a homomorphism $\sigma:\FS_\QQ\to\FS_\QQ$.
For $r=1$ it is easy to describe $S$ and $S_{<m>}$ as
explicit subrings of $\FS_\QQ$ since instead of the divided 
powers of $E$ one can take the divided powers of $x_1^e$
where $e$ is defined by $pR=\Fm_R^e$.
For $r\ge 2$ the situation is more complicated.

\begin{Prop}
\label{Pr-S}
The natural embedding $\FS\to\FS_\QQ$
extends to an injective homomorphism $S\to\FS_\QQ$
that commutes with $\sigma$.
\end{Prop}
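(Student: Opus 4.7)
The plan is to construct $\phi:S\to\FS_\QQ$ via the expansion $E=p+E_0$ with $E_0\in J\FS$, and to reduce injectivity to the analogous statement for the PD envelope of the ideal $E_0\FS$, which is more amenable because $E_0$ lies entirely in $J\FS$.

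First I would construct $\phi$ on the PD envelope $S'$: the inclusion $\FS\hookrightarrow\FS_\QQ$ extends by the universal property of the PD envelope to an injective $\FS$-algebra homomorphism $S'\to\FS_\QQ$ sending $E^{[n]}$ to $E^n/n!$, whose image is the $\FS$-subalgebra $\sum_n\FS\cdot E^n/n!$ of $\FS_\QQ$. To pass to the $p$-adic completion, expand
\[
E^n/n!=\sum_{j=0}^{n}\frac{p^{n-j}}{(n-j)!}\cdot\frac{E_0^j}{j!}
\]
so that a formal expression $\sum_i a_iE^{[i]}$ may be rewritten as $\sum_j c_j\cdot E_0^j/j!$ with $c_j=\sum_{k\geq 0}a_{j+k}\cdot p^k/k!$. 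The inner sum converges $p$-adically in $\FS$ because $p^k/k!\in\ZZ_p$ (its valuation $s_p(k)/(p-1)$ being bounded below by $0$), while the outer sum converges $J$-adically in $\FS_\QQ$ since $E_0^j/j!\in J^j\FS_\QQ$. The assignment $(a_i)\mapsto(c_j)$ and its inverse $a_j=\sum_{\ell\geq 0}(-p)^\ell c_{j+\ell}/\ell!$ are continuous mutually inverse transformations between sequences in $\FS$ tending $p$-adically to $0$, so the rearrangement extends to all $s\in S$ and defines $\phi:S\to\FS_\QQ$.

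The $\sigma$-equivariance is immediate on $\FS\subset S$, where $\phi$ is just the inclusion; by continuity of the rearrangement under $p$-adic perturbation and $\sigma$-invariance of the scalars $p^k/k!\in\ZZ_p$, the commutation $\phi\circ\sigma=\sigma\circ\phi$ extends to all of $S$.

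The main obstacle is injectivity. The key observation is that the transformation $(a_i)\mapsto(c_j)$ intertwines the defining PD relations of the two envelopes: $E\cdot E^{[n]}-(n+1)E^{[n+1]}=0$ in $S'$ corresponds under the transformation to $E_0\cdot E_0^{[n]}-(n+1)E_0^{[n+1]}=0$ in the PD envelope $\tilde S'$ of $E_0\FS\subset\FS$. Thus $\phi$ factors as $S\xrightarrow{\sim}\tilde S\hookrightarrow\FS_\QQ$, where $\tilde S$ is the $p$-adic completion of $\tilde S'$. This reduces injectivity to the statement that $\tilde S$ embeds in $\FS_\QQ$, where the relevant element $E_0$ now lies in $J\FS$ rather than being of ``mixed type'' like $E$. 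Since $R$ has characteristic zero, $E_0\notin p\FS$, so some coefficient of $E_0$ is a unit in $W(k)$; after a Weierstrass preparation applied to $E_0$ in $\FS_\QQ=K[[x_1,\ldots,x_r]]$ (possibly following a $W(k)$-linear change of coordinates), $E_0$ becomes a regular parameter of $\FS_\QQ$, and a filtration argument on the $E_0$-adic expansion of elements of $\FS_\QQ$ separates the contributions of the various $E_0^{[j]}$, yielding the required injectivity.
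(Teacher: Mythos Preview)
Your overall strategy---replacing $E$ by $E_0 = E - p \in J$ so that the divided powers $E_0^{[j]}$ lie in $J^j\FS_\QQ$ and the formal sum converges---is natural, and the observation that $S' = \FS[E^{[i]}]$ and $\tilde S' = \FS[E_0^{[j]}]$ coincide as subrings of $\FS_\QQ$ is correct. The paper makes a similar move, replacing $E$ by an element $E'$ congruent to $E$ modulo $p\FS$. One point you skate over is why the assignment $(a_i) \mapsto \sum_j c_j E_0^j/j!$ descends from representing sequences to a well-defined map on $S$ itself; the paper handles this by first constructing an auxiliary $p$-adically complete ring $\tilde S \subset \FS_\QQ$ (a projective limit of images of $S_0$ modulo powers of $E'$) and then using its $p$-adic completeness to receive the map from $S$.

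The substantive gap is your injectivity argument. The assertion that a $W(k)$-linear change of coordinates followed by Weierstrass preparation makes $E_0$ a regular parameter of $\FS_\QQ$ is false in general: the condition $E_0 \notin p\FS$ only guarantees that \emph{some} monomial coefficient of $E_0$ is a unit, not that a \emph{linear} one is. Take $r = 1$ and $E = p + x_1^2$; then $R = W(k)[[x_1]]/(p + x_1^2)$ is a complete regular local ring of characteristic zero, yet $E_0 = x_1^2$ lies in $\Fm_{\FS_\QQ}^2$ and no automorphism of $K[[x_1]]$ can extract a parameter from it---Weierstrass preparation only produces a degree-two polynomial. In general $E_0$ is a parameter of $\FS_\QQ$ if and only if $E$ has a nonzero linear term in the $x_i$, which is not assumed.

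The paper's fix is to choose $E' \in J^e$ with $e$ \emph{maximal} (equivalently $p \in \Fm_R^e \setminus \Fm_R^{e+1}$), so that the order of vanishing of $E'$ is the same in $\FS/p\FS$ and in $\FS_\QQ$. This matching is exactly what makes the graded map $gr_{E'}(\FS) \to gr_{E'}(\FS_\QQ)$ injective, and that graded injectivity is the engine behind the key lemma $S_0 \cap p\tilde S = pS_0$, proved by an induction on the length of an $E'$-adic expansion. Your ``filtration argument on the $E_0$-adic expansion'' would have to confront precisely the mismatch between the orders of $E_0$ in $\FS/p\FS$ and in $\FS_\QQ$, and that is where the real work lies.
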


Thus $S_{<m>}$ is the image of $\FS[[t_1,\ldots,t_m]]\to\FS_\QQ$
as in Proposition \ref{Pr-S-m}.

\begin{proof}[Proof of Proposition \ref{Pr-S}]
Recall that $J=(x_1,\ldots,x_r)$ as an ideal of $\FS$.
Choose $E'\in J^e$ with $E-E'\in p\FS$ such that $e$ is maximal,
thus $p\in\Fm_R^e\setminus\Fm_R^{e+1}$.
Let us write $gr_{E'}^i(\FS)=E^{\prime i}\FS/E^{\prime i+1}\FS$ etc.

\begin{Lemma}
\label{Le-graded-inj}
The map of graded rings 
$gr_{E'}(\FS)\to gr_{E'}(\FS_{\QQ})$ is injective.
\end{Lemma}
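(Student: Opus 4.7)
The plan is to analyze, for each $i\ge 0$, the kernel
\[
K_i=(E'^{i+1}\FS_{\QQ}\cap E'^{i}\FS)/E'^{i+1}\FS
\]
of $gr_{E'}^i(\FS)\to gr_{E'}^i(\FS_{\QQ})$. Since $\FS_{\QQ}$ is an integral domain and $E'\ne 0$ in it, every $x\in E'^{i+1}\FS_{\QQ}\cap E'^{i}\FS$ has a unique presentation $x=E'^{i}a$ with $a\in\FS$, and the condition $x\in E'^{i+1}\FS_{\QQ}$ translates to $a=E'b$ for some $b\in\FS_{\QQ}=\FS[1/p]$. Hence the vanishing of every $K_i$ reduces to the single statement $\FS\cap E'\FS_{\QQ}=E'\FS$, or equivalently to $p$ being a non-zero-divisor on $\FS/E'\FS$.

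I would then deduce this by showing that $E',p$ is a regular sequence in $\FS$. Because $\FS$ is a regular, hence Cohen--Macaulay, local ring and both $E'$ and $p$ lie in its maximal ideal, regular sequences there may be permuted, so it is equivalent to check that $p,E'$ is regular. The element $p$ is a non-zero-divisor on the domain $\FS$, so the only remaining point is that the image $\bar E'\in\FS/p\FS=k[[x_1,\ldots,x_r]]$ is nonzero. Since $E-E'\in p\FS$ we have $\bar E'=\bar E$, and $\bar E\ne 0$ because otherwise $E\in p\FS$ would give $R=\FS/E\FS=k[[x_1,\ldots,x_r]]$, contradicting the standing hypothesis that $R$ has characteristic zero.

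Granted that $E',p$ is regular, the condition $\FS\cap E'\FS_{\QQ}=E'\FS$ follows by a short induction on the $p$-denominator of $b$: writing $b=p^{-n}b'$ with $b'\in\FS$ and $n\ge 0$ minimal, the equation $p^{n}a=E'b'$ together with the fact that $E'$ is a non-zero-divisor modulo $p$ forces $b'\in p\FS$, and iterating lowers $n$ to zero. The main conceptual step is the reduction to a regular-sequence statement; after that, the only genuine input is the characteristic zero hypothesis on $R$, while the maximality of $e$ in the choice of $E'$ does not appear to enter the argument for this lemma.
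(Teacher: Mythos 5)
Your reduction of the lemma to the statement $\FS\cap E'\FS_\QQ=E'\FS$ is fine, but the next step contains a real error: $\FS_\QQ=K[[x_1,\ldots,x_r]]$ is \emph{not} $\FS[1/p]$. The latter consists of power series whose coefficients have bounded $p$-denominators, whereas $\FS_\QQ$ contains, e.g., $\sum_i p^{-i}x_1^i$. So the equation $a=E'b$ only yields $b\in K[[x_1,\ldots,x_r]]$; the condition is not equivalent to $p$ being a non-zero-divisor on $\FS/E'\FS$, and your induction on ``the $p$-denominator of $b$'' cannot start because that denominator need not be finite. The gap is not cosmetic: what you actually prove (that $E',p$ is a regular sequence) holds for \emph{any} $E'$ with $E-E'\in p\FS$ and $E'\ne 0$ mod $p$, while the conclusion fails for such $E'$ in general. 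Take $\FS=\ZZ_p[[x]]$, $E=p+x^2$ and $E'=x^2+px=x(x+p)$ (allowed by your argument, since you discard the maximality of $e$): then $p$ is a non-zero-divisor on $\FS/E'\FS$, but $x+p$ is a unit in $\QQ_p[[x]]$, so $x\in\FS\cap E'\FS_\QQ$ while $x\notin E'\FS$. This shows that the maximality of $e$ is essential; its role is to guarantee that $E'$ has the same order $e$ in the two regular local rings $\FS/p\FS$ and $\FS_\QQ$.

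That is exactly the input the paper uses. It reduces to the injectivity of $\FS/E'\FS\to\FS_\QQ/E'\FS_\QQ$ and compares Hilbert functions: since $E'$ has order $e$ in both $\FS/p\FS$ and $\FS_\QQ$, the $k$-dimension of $\FS/(p\FS+E'\FS+J^n)$ equals the $K$-dimension of $\FS_\QQ/(E'\FS_\QQ+J^n\FS_\QQ)\cong\FS/(E'\FS+J^n)\otimes\QQ$; this forces the finitely generated $W(k)$-module $\FS/(E'\FS+J^n)$ to be free, hence to inject into its rationalisation, and one passes to the limit over $n$. If you want to keep your more elementary style, you must add an argument bounding denominators, e.g.\ showing that $E'b\in\FS$ with $b\in\FS_\QQ$ forces $p^{m}b\in\FS+J^n\FS_\QQ$ with $m$ independent of $n$ --- and any such bound will again have to come from the maximality of $e$.
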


\begin{proof}
It suffices to show that $\FS/E'\FS\to\FS_\QQ/E'\FS_\QQ$
is injective.
The choice of $E'$ implies that the image of $E'$
in the regular local rings $\FS/p\FS$ and $\FS_{\QQ}$ lies
in the same power of the maximal ideals. Therefore
the $k$-dimension of $\FS/(p\FS+E'\FS+J^n)$ 
is equal to the $K$-dimension of $\FS_{\QQ}/(E'\FS_{\QQ}+J^n\FS_\QQ)$.
Since the last module is isomorphic to $\FS/(E'\FS+J^n)\otimes\QQ$
it follows that $\FS/(E'\FS+J^n)$ is a free $W(k)$-module and
injects into $\FS_{\QQ}/(E'\FS_{\QQ}+J^n\FS_\QQ)$.
Since $\FS/E'\FS$ and $\FS_\QQ/E'\FS_\QQ$ are $J$-adically complete
the lemma follows.
\end{proof}

Let $S_0\subseteq\FS_\QQ$
be the $\FS$-algebra generated by $E^{\prime i}/i!$
for $i\ge 1$, or equivalently by $E^i/i!$ for $i\ge 1$,
so $S$ is the $p$-adic completion of $S_0$.
Let $S_{0,n}$ be the image of 
$S_{0}\to\FS_\QQ/E^{\prime n}\FS_\QQ$ 
and let $\tilde S=\varprojlim_nS_{0,n}$.
Each $S_{0,n}$ is a noetherian complete local ring with
residue field $k$ and thus a $p$-adically complete ring.
Since $S_{0,n}$ has no $p$-torsion it follows that
$\tilde S$ is $p$-adically complete.
We obtain a homomorphism $S\to\tilde S\subseteq\FS_\QQ$
which extends $S_0\subseteq\tilde S\subseteq\FS_\QQ$.

\begin{Lemma}
\label{Le-S0-tildeS}
We have $S_0\cap p\tilde S=pS_0$ inside $\tilde S$.
\end{Lemma}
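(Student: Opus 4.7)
The plan is to translate the lemma into a separatedness statement modulo $p$. Since $\tilde S \subset \FS_\QQ$ and $\tilde S$ is $p$-adic (both established in the preceding paragraph), multiplication by $p$ is injective on $\tilde S$, so the lemma is equivalent to the injectivity of $S_0/pS_0 \to \tilde S/p\tilde S$. The transition maps $S_{0,n+1}\to S_{0,n}$ are surjective, so $\varprojlim^1$ vanishes and $\tilde S/p\tilde S = \varprojlim_n S_{0,n}/pS_{0,n}$. Writing $F^n = S_0\cap E^{\prime n}\FS_\QQ$, the kernel of $S_0/pS_0\to S_{0,n}/pS_{0,n}$ is $(pS_0+F^n)/pS_0$. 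Since $p$ is a unit in $\FS_\QQ$, we have $pS_0\cap F^n = pF^n$, so this kernel identifies with $F^n/pF^n$. Hence the lemma reduces to showing $\bigcap_n(pS_0+F^n) = pS_0$, equivalently that the induced filtration $\bar F^n := (pS_0+F^n)/pS_0$ on $\bar S_0 := S_0/pS_0$ satisfies $\bigcap_n \bar F^n = 0$.

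To prove this Hausdorff property I would invoke Lemma \ref{Le-graded-inj}, which gives $gr_{E'}(\FS)\hookrightarrow gr_{E'}(\FS_\QQ)$. This injection implies that an element of $\FS$ lying in $E^{\prime n}\FS_\QQ$ already lies in $E^{\prime n}\FS$, a property that should propagate to $S_0$ using its structure as the divided power envelope $D_{(E)}(\FS)$. After reducing modulo $p$, the image $\bar E$ of $E'$ lies in the maximal ideal of $\FS/p\FS = k[[\vec x]]$ (because $E'\in J$), and the $\bar E$-adic topology on $\FS/p\FS$ is Hausdorff, being coarser than the maximal-ideal-adic topology on a noetherian complete local ring. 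A faithful $\FS$-module argument should then transport this separatedness to $\bar S_0$.

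The main obstacle is that $S_0$ is \emph{not} a free $\FS$-module on the divided powers $\{E^{[i]}\}$: the relation $E\cdot E^{[i]} = (i+1)\,E^{[i+1]}$ mixes adjacent levels, so each element admits multiple expressions $\sum a_i E^{[i]}$. Carrying out the reduction above therefore requires a careful choice of presentation. A natural device is to exploit the congruence $E^{[i]}\equiv E^{\prime\,[i]}\pmod{pS_0}$, which holds because $E=E'+p\alpha$ and $p^k/k!\in p\ZZ_p$ for $k\ge 1$. Thus modulo $p$ the algebra $\bar S_0$ reduces to a divided power algebra built from $\bar E'$, making the filtration $\bar F^n$ tractable: its associated graded is controlled by $gr^{\bullet}_{E'}(\FS)/p$, and separatedness follows from the Krull intersection theorem applied to $\FS/p\FS$.
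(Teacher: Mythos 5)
Your first paragraph is correct, but it only unwinds definitions: since each $S_{0,n}=S_0/F^n$ is $p$-torsion free, the statement $S_0\cap p\tilde S=pS_0$ is literally equivalent to $\bigcap_n(pS_0+F^n)=pS_0$, i.e.\ to the separatedness of $\bar S_0=S_0/pS_0$ for the filtration $\bar F^n$. All of the content of the lemma sits in that separatedness, and this is exactly where your argument stops being a proof. The closing step --- ``its associated graded is controlled by $gr_{E'}^{\bullet}(\FS)/p$, and separatedness follows from the Krull intersection theorem applied to $\FS/p\FS$'' --- does not go through. Separatedness of a filtration is never a consequence of information about its associated graded alone, and Krull intersection applies to ideal-adic filtrations of finitely generated modules over Noetherian rings: here $\bar F^n$ is not an ideal-adic filtration, and $\bar S_0$ is neither Noetherian nor a subring of $\FS/p\FS$ (it has nilpotents: $E^{\prime\,p}=p!\,E^{\prime\,[p]}\in pS_0$, so $\bar E^{\prime\,p}=0$ in $\bar S_0$). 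The congruence $E^{[i]}\equiv E^{\prime\,[i]}\pmod{pS_0}$ is true but does not make the filtration tractable, precisely because of the obstacle you yourself flag: an element has many presentations $\sum a_iE^{\prime\,n_i}/n_i!$, and one must control how the hypothesis ``$x\in pS_0+F^n$ for all $n$'' interacts with a chosen presentation.

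The paper's proof consists exactly of that missing control. It fixes a presentation $x=\sum_{i=0}^s a_iE^{\prime\,n_i}/n_i!$ and inducts on $n_s-n_0$, using Lemma~\ref{Le-graded-inj} to read off from $x\in pS_{0,n_0+1}$ that the leading coefficient $a_0$ is congruent modulo $E'\FS$ to a multiple of $p$; it then alternates two rewriting moves (absorbing a factor $E'$ of $a_0$ into the divided power, or subtracting the visible multiple of $p$), and the induction terminates because each move multiplies the leading coefficient by $n_0+1$, so that after at most $p$ steps one of the factors $n_0+1,\dots,n_0+p$ is divisible by $p$ and $x$ lands in $pS_0$. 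This arithmetic interplay between the divided power denominators and divisibility by $p$ is the heart of the lemma and is absent from your sketch. In short: correct reduction, correct key input (Lemma~\ref{Le-graded-inj}), correctly identified obstacle, but the actual proof of $\bigcap_n\bar F^n=0$ is missing.
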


\begin{proof}
Let $x\in S_0\cap p\tilde S$ be given.
We have to show that $x$ lies in $pS_0$.
Assume that $x\ne 0$ and choose an expression
$(\star)$ $x=\sum_{i=0}^sa_iE^{\prime n_i}/n_i!$ 
with $a_i\in\FS$ such that $n_0<\ldots<n_s$. 
We use induction on $n_s-n_0$. 

Suppose $E'$ divides $a_0$ in $\FS$.
Then $a_0E^{\prime n_0}/n_0!=a_0'E^{\prime n_0'}/(n_0')!$ 
with $n_0'=n_0+1$ and $a_0'=n_0'a_0/E'$.
If $s>0$ this allows to find a new expression of $x$ 
of the type $(\star)$ with smaller value of $n_s-n_0$,
and we are done by induction. 
If $s=0$ we replace the expression $(\star)$
by $x=a_0'E^{\prime n_0'}/n_0'!$; 
call this a modification of the first type.

Suppose $E'$ does not divide $a_0$ in $\FS$.
Lemma \ref{Le-graded-inj} implies that the image of $x$
in $gr^{n_0}_{E'}(\FS_\QQ)$ is non-zero.
In $S_{0,n_0+1}$ we have $x=py$.
Choose an expression $y=\sum_{i=\ell}^{n_0}c_iE^{\prime i}/i!$ 
with $c_i\in\FS$ such that $\ell$ is maximal.
Then $E'$ does not divide $c_\ell$ in $\FS$,
and Lemma \ref{Le-graded-inj} implies that $y$ has
non-zero image in $gr^\ell_{E'}(\FS_\QQ)$. 
Thus $\ell=n_0$. Using Lemma \ref{Le-graded-inj} again, 
it follows that the image of $a_0$ in $\FS/E'\FS$ is divisible
by $p$. Let $a_0=pb_0+b_1E'$ with $b_i\in\FS$ and let
$x'=x-pb_0E^{\prime n_0}/n_0!$. Then $x-x'\in pS_0$;
thus $x'\in S_0\cap p\tilde S$,
and we have to show that $x'\in pS_0$.
If $s>0$ we get an expression of $x'$ of the type $(\star)$
with smaller value of $n_s-n_0$, and we are done
by induction. If $s=0$ we replace $x$ by $x'$ and
take for $(\star)$
the expression $x'=a'_0E^{\prime n'_0}/n_0'!$
with $n_0'=n_0+1$ and $a_0'=n_0'b_1$; 
call this a modification of the second type.

If $s>0$ the inductive step is already finished.
So we may assume that $s=0$. 
We successively apply modifications of the first or 
second type depending on whether $E'$ divides $a_0$.
After at most $p$ steps, the new value of $a_0$ becomes
divisible by $p$, and thus $x$ lies in $pS_0$.
\end{proof}

Lemma \ref{Le-S0-tildeS} implies that $S_0/p^nS_0\to\tilde S/p^n\tilde S$ is injective,
so the projective limit $S\to\tilde S$ is injective, and thus $S\to\FS_\QQ$ is injective.
In order that this map commutes with $\sigma$
it suffices to show that $S\to\FS_\QQ/J^n\FS_\QQ$ commutes
with $\sigma$ for each $n$; 
this is true since $S_0\to\FS_\QQ/J^n\FS_\QQ$
commutes with $\sigma$, and the image of this map is $p$-adically complete.
Thus Proposition \ref{Pr-S} is proved.
\end{proof}

We turn to the frames associated to the rings $S$ and $S_{<m>}$.

\begin{Lemma}
\label{Le-SSS-m}
For $m\ge 1$ we have a sub-frame of $\SSS=(S,I,R,\sigma,\sigma_1)$,
$$
\SSS_{\!<m>}=(S_{<m>},I_{<m>},R,\sigma,\sigma_1).
$$
\end{Lemma}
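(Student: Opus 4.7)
The plan is to set $I_{<m>} := I \cap S_{<m>}$, which equals $\ker(S_{<m>} \to R)$, and then verify three things: ($\sigma$) that $\sigma$ sends $S_{<m>}$ into itself, ($\sigma_1$) that $\sigma_1$ sends $I_{<m>}$ into $S_{<m>}$, and (Ax) that the remaining frame axioms hold for the resulting quintuple. The hypothesis $m \ge 1$ enters only through ($\sigma$).

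The key input for ($\sigma$) is the observation that the unit $u = \sigma(E)/p$ of $S$ already lies in $S_{<1>} \subseteq S_{<m>}$. Writing $\sigma(E) = E^p + pg$ with $g \in \FS$, one has $u = (p-1)!\,(E^p/p!) + g$, so $u \in S_{<1>}$; since $S \to W(k)$ sends $u$ to $1$, the element $u$ is a unit in the local ring $S_{<m>}$. A direct computation then yields
$$\sigma(E^{p^i}/p^i!) = p^{c_i}\,u^{p^i}, \qquad c_i := p^i - v_p(p^i!) \ge 1,$$
which lies in $S_{<m>}$; by $p$-adic continuity of $\sigma$, the inclusion $\sigma(S_{<m>}) \subseteq S_{<m>}$ follows from the fact that $S_{<m>}$ is the closure of the $\FS$-algebra generated by these divided powers.

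For ($\sigma_1$), Proposition \ref{Pr-S-m} lets us realise $I_{<m>}$ as the image of the ideal $(E,t_1,\ldots,t_m) \subset \FS[[t_1,\ldots,t_m]]$, so $I_{<m>}$ is finitely generated over $S_{<m>}$ by $E$ and $E^{p^i}/p^i!$ for $1 \le i \le m$. The computations $\sigma_1(E) = u$ and $\sigma_1(E^{p^i}/p^i!) = p^{c_i - 1}\,u^{p^i}$ give values in $S_{<m>}$, and $\sigma$-linearity of $\sigma_1$ extends this to all of $I_{<m>}$. For (Ax): by Proposition \ref{Pr-S-m}, $S_{<m>}$ is a noetherian complete local ring with residue field $k$, so $I_{<m>} + pS_{<m>}$ lies in its maximal ideal and projective $R$-modules of finite type lift; since $u = \sigma_1(E)$ is a unit, $\sigma_1(I_{<m>})$ generates $S_{<m>}$ as an $S_{<m>}$-module.

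The one step I expect to be delicate is descending the Frobenius congruence $\sigma(a) \equiv a^p \pmod{pS_{<m>}}$ from $S$ to $S_{<m>}$: this amounts to verifying $pS \cap S_{<m>} = pS_{<m>}$. My approach is to use the embedding $S \subseteq \FS_\QQ$ from Proposition \ref{Pr-S} together with the explicit relations $(E^{p^i}/p^i!)^p = p\,\epsilon_i\,(E^{p^{i+1}}/p^{i+1}!)$ with $\epsilon_i$ a $p$-adic unit (the multinomial $(p^{i+1})!/(p^i!)^p$ has exact $p$-adic valuation one), which show that $S_{<m>}$ is $p$-torsion free and that $S_{<m>}/pS_{<m>}$ injects into $S/pS$. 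This purity is the only point where more than a routine verification is needed.
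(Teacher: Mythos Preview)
Your verification of $(\sigma)$ and $(\sigma_1)$ is correct, but the paper's proof establishes something strictly stronger that you do not: it shows $\sigma(S)\subseteq S_{<1>}$ and $\sigma_1(I)\subseteq S_{<1>}$, i.e.\ \emph{all} of $S$ and $I$ land in $S_{<1>}$, not merely $S_{<m>}$ in $S_{<m>}$. The argument is a single estimate on all divided powers at once: writing $\sigma(E)=px$ with $x\in\FS[E^p/p]$, one gets $\sigma_1(E^i/i!)=p^{\,i-1-v_p(i!)}\cdot(\text{unit})\cdot x^i\in\FS[E^p/p]$ for every $i\ge 1$, using $1+v_p(i!)\le i$. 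Since $I/p^nI$ is generated over $\FS$ by the $E^i/i!$, this forces $\sigma_1(I)\subseteq S_{<1>}$, and then $\sigma(S)=\sigma(\FS+I)\subseteq S_{<1>}$. This stronger fact is precisely what the proof of Proposition~\ref{Pr-varkappa-S-1} invokes (``a formal consequence of the relations $\sigma(S)\subseteq S_{<m>}$ and $\sigma_1(I)\subseteq S_{<m>}$ verified in the proof of Lemma~\ref{Le-SSS-m}''): to descend an arbitrary $\SSS$-window one needs $F_1(Q)\subseteq P_{<m>}$, which uses $\sigma_1$ on the full $I$, not just on $I_{<m>}$. So while your argument suffices for the lemma as stated, it does not deliver what the next step requires.

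On the Frobenius congruence: your instinct that this is delicate is right, and in fact the paper's proof does not address it either. But your proposed fix does not work as sketched. The very relation you write down, $(E^{p^m}/p^m!)^p=p\,\epsilon_m\,(E^{p^{m+1}}/p^{m+1}!)$, shows that $t_m^p\in pS_{<m>}$ would force $E^{p^{m+1}}/p^{m+1}!\in S_{<m>}$, i.e.\ $S_{<m>}=S_{<m+1>}$ and hence $S_{<m>}=S$; there is no reason to expect this. Since both $a\mapsto\sigma(a)$ and $a\mapsto a^p$ are ring endomorphisms of $S_{<m>}/pS_{<m>}$ that agree on $\FS$ and on $t_i$ for $i<m$, the congruence on $S_{<m>}$ is in fact \emph{equivalent} to $t_m^p\in pS_{<m>}$. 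So the relations you cite point against, not toward, the purity statement you want. Fortunately, the subsequent arguments in Propositions~\ref{Pr-varkappa-S-1} and~\ref{Pr-varkappa-S-2} use only the stability of $\sigma$ and $\sigma_1$, not the congruence, so this is a harmless informality rather than a gap in the paper.
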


\begin{proof}
Necessarily $I_{<m>}=I\cap S_{<m>}$.
We have to show that $\sigma:S\to S$ stabilises $S_{<m>}$
and that $\sigma_1=p^{-1}\sigma:I\to S$ maps
$I_{<m>}$ into $S_{<m>}$. We will show that
$\sigma(S)$ and $\sigma_1(I)$ are even contained in $S_{<1>}$.
Namely, we have $\sigma(E)=px$ with $x\in\FS[E^p/p]$.
Thus $\sigma_1(E^i/i!)=(p\cdot i!)^{-1}(px)^i$ lies in 
$\FS[E^p/p]$, using that $1+v_p(i!)\le i$ for $i\ge 1$.
Since $I/p^n I$ is the kernel of $S/p^nS\to R/p^n R$,
this ideal is generated as an $\FS$-module by 
the elements $E^i/i!$ for $i\ge 1$.
Thus the image of the map $I/p^{n+1}I\to S/p^nS$ 
induced by $\sigma_1$ lies in the image of $S_{<1>}$,
and it follows that $\sigma_1(I)\subseteq S_{<1>}$.
Since $S=\FS+I$ we get $\sigma(S)\subseteq S_{<1>}$.
\end{proof}

\begin{Prop}
\label{Pr-varkappa-S-1}
For $m\ge 1$ the inclusion $\SSS_{\!<m>}\to\SSS$ is crystalline.
\end{Prop}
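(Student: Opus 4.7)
The strategy is to decompose the inclusion $\SSS_{<m>} \to \SSS$ into the tower of finite steps $\SSS_{<m>} \to \SSS_{<m+1>} \to \SSS_{<m+2>} \to \cdots$, show that each single step is crystalline, and then pass to the $p$-adic limit. Each $S_{<n>}$ is a noetherian complete local ring by Proposition \ref{Pr-S-m}, which makes the machinery of Theorem \ref{Th-frame-crys} available at each finite level.

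For a single step $\SSS_{<n>} \to \SSS_{<n+1>}$, the underlying ring map is injective rather than surjective, so Theorem \ref{Th-frame-crys} cannot be applied directly. I would instead construct an auxiliary frame $\tilde\SSS_n$ and a strict factorization, mirroring the factorization $\BBB_{a+1} \to \tilde\BBB_{a+1} \to \BBB_a$ used in the proof of Theorem \ref{Th-varkappa-crys}. The idea is to arrange that lifts of windows under the first arrow of the factorization correspond bijectively to lifts of the Hodge filtration (the standard mechanism for crystalline-ness in this framework), while the second arrow becomes a surjection to which Theorem \ref{Th-frame-crys} applies. The essential input making the nilpotence hypothesis of Theorem \ref{Th-frame-crys} work is the bound $\sigma(S) \subseteq S_{<1>}$ from the proof of Lemma \ref{Le-SSS-m}, which forces $\sigma_1 = p^{-1}\sigma$ to act elementwise nilpotently on the successive graded pieces of the relevant kernels.

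Finally I would pass to the limit. Since $S$ is $p$-adic (arguing analogously to Proposition \ref{Pr-WW-p-adic}) and the subrings $S_{<n>}$ exhaust $S$ up to $p$-adic closure, every $\SSS$-window is determined by a $p$-adically compatible family of $\SSS_{<n>}$-windows. A standard successive-approximation argument, exploiting that windows are specified by a finite amount of matrix data over the underlying rings, produces a descent to an $\SSS_{<m>}$-window together with unique descent of morphisms.

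The main obstacle is the finite-level statement: constructing the auxiliary frame $\tilde\SSS_n$ and verifying that $\sigma_1$ is elementwise nilpotent on the relevant successive quotients. Because the only new generator at each step is the single divided power $E^{p^{n+1}}/(p^{n+1})!$, explicit computation with the power-series representation of $S_{<n+1>}$ over $S_{<n>}$ from Proposition \ref{Pr-S-m}, together with careful tracking of how $\sigma$ interacts with the divided-power grading (in the spirit of Lemma \ref{Le-gr-tau}), will be unavoidable.
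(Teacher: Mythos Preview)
Your approach is a significant detour, and the paper's proof is far more direct. You correctly identified the key fact from the proof of Lemma~\ref{Le-SSS-m}, namely that $\sigma(S)\subseteq S_{<1>}$ and $\sigma_1(I)\subseteq S_{<1>}$ (hence both land in $S_{<m>}$ for every $m\ge 1$). But the paper uses this fact \emph{immediately} to descend any $\SSS$-window in one step, without any tower, auxiliary frames, or limit argument.

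Here is the paper's argument. Let $\PPP=(P,Q,F,F_1)$ be an $\SSS$-window with normal representation $(L,T,\Psi)$. Because $\Psi$ is built from $F$ and $F_1$, which are $\sigma$-linear, and because $\sigma(S)\subseteq S_{<m>}$, the $S_{<m>}$-submodule $P_{<m>}:=S_{<m>}\cdot\Psi(L\oplus T)$ of $P$ is free with $S\otimes_{S_{<m>}}P_{<m>}=P$. Moreover $F(P)\subseteq P_{<m>}$ and $F_1(Q)\subseteq P_{<m>}$ for the same reason. Setting $Q_{<m>}=Q\cap P_{<m>}$, one checks that $(P_{<m>},Q_{<m>},F,F_1)$ is an $\SSS_{<m>}$-window (the only issue is invertibility of the determinant of the new $\Psi_{<m>}$, which follows since $S_{<m>}\to S$ is local). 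This gives essential surjectivity. Full faithfulness is equally short: any $h\in\End(\PPP)$ satisfies $h(F_1(Q))=F_1(h(Q))\subseteq F_1(Q)\subseteq P_{<m>}$, and $F_1(Q)$ generates $P_{<m>}$ over $S_{<m>}$, so $h$ preserves $P_{<m>}$.

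Your plan, by contrast, tries to force the situation into the mold of Theorem~\ref{Th-frame-crys}, which is designed for \emph{surjective} ring maps with a filterable kernel. You acknowledge the map $S_{<n>}\to S_{<n+1>}$ goes the wrong way and propose an auxiliary frame $\tilde\SSS_n$ by analogy with the $\tilde\BBB_{a+1}$ of Theorem~\ref{Th-varkappa-crys}, but that analogy does not carry over cleanly: in the $\BBB$-tower the underlying ring map $\FS_{a+1}\to\FS_a$ is surjective and only the target $R$-ring changes, whereas here the underlying rings $S_{<n>}$ are genuinely growing. You never construct $\tilde\SSS_n$, and it is not clear what it should be. The limit step is also not free: $S$ is the $p$-adic closure of $\bigcup_n S_{<n>}$, and pushing window data through such a completion requires care you have not supplied. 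None of this is needed once you observe that a single application of $\sigma$ already lands in $S_{<m>}$.
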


\begin{proof}
This is a formal consequence of the relations
$\sigma(S)\subseteq S_{<m>}$ and
$\sigma_1(I)\subseteq S_{<m>}$
verified in the proof of Lemma \ref{Le-SSS-m}.

Indeed, let $\PPP=(P,Q,F,F_1)$ be an $\SSS$-window.
Choose a normal decomposition $P=L\oplus T$,
and let $\Psi:L\oplus T\to P$ be the $\sigma$-linear
isomorphism defined by $F_1$ on $L$ and by $F$ on $T$.
Then $P_{<m>}:=S_{<m>}\Psi(L\oplus T)$ is a free
$S_{<m>}$-module with $S\otimes_{S_{<m>}}P_{<m>}=P$.
Moreover $F_1(Q)\subseteq P_{<m>}$ and $F(P)\subseteq P_{<m>}$.
Let $Q_{<m>}=Q\cap P_{<m>}$. 
Then $P_{<m>}/Q_{<m>}=P/Q$ is a projective $R$-module.
Let $P_{<m>}=L_{<m>}\oplus T_{<m>}$ be a normal decomposition
and let $\Psi_{<m>}:L_{<m>}\oplus T_{<m>}\to P_{<m>}$
be the $\sigma$-linear map defined by $F_1$ on $L_{<m>}$
and by $F$ on $T_{<m>}$. 
In order that the quadruple $\PPP_{\!<m>}=(P_{<m>},Q_{<m>},F,F_1)$ 
is an $\SSS_{\!<m>}$-window with base change $\PPP$ we need that
the determinant of $\Psi_{<m>}$ is invertible. But the
determinant of $\Psi_{<m>}$ becomes invertible in $S$ because 
$\PPP$ is a window, and $S_{<m>}\to S$ is a local homomorphism. 
Thus the base change functor from $\SSS_{\!<m>}$-windows
to $\SSS$-windows is essentially surjective.

In order that the functor is fully faithful it suffices to
show that it induces a bijection $\End(\PPP_{\!<m>})\to\End(\PPP)$.
Clearly the map is injective. We have to show that every
$h\in\End(\PPP)$ stabilises $P_{<m>}$. But 
$h(F_1(Q))=F_1(h(Q))\subseteq F_1(Q)\subseteq P_{<m>}$,
and $F_1(Q)$ generates $P_{<m>}$ as an $S_{<m>}$-module.
This proves the proposition.
\end{proof}

\begin{Prop}
\label{Pr-varkappa-S-2}
For $m\ge 1$ the composition 
$\SSS_{\!<m>}\to\SSS\xrightarrow{\varkappa_S}\DDD_R^+$ 
is crystalline.
\end{Prop}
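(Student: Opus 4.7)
The plan is to prove Proposition \ref{Pr-varkappa-S-2} by reducing to an Artinian setting and mimicking the induction argument from the proof of Theorem \ref{Th-varkappa-crys}.

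First, by Proposition \ref{Pr-S-m} the ring $S_{<m>}$ is a noetherian complete local ring, and in particular $p$-adic. I choose a cofinal system of ideals $\mathfrak{b}_a \subseteq S_{<m>}$ compatible with the $\Fm_R$-adic filtration on $R$, yielding local Artin quotients $S_{<m>,a} := S_{<m>}/\mathfrak{b}_a$ with $S_{<m>,a}/I_{<m>,a} = R_a := R/\Fm_R^a$, so that the frame structure on $\SSS_{\!<m>}$ descends to frames $\SSS_{\!<m>,a}$. The map $\varkappa_S$ restricts to a compatible system of frame homomorphisms $\varkappa_{S,a}\colon\SSS_{\!<m>,a} \to \DDD^+_{R_a}$. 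Using Proposition \ref{Pr-WW-p-adic} and the $p$-adic completeness of $S_{<m>}$, I then verify that windows over $\SSS_{\!<m>}$ (resp.\ over $\DDD^+_R$) are equivalent to compatible systems of windows over the $\SSS_{\!<m>,a}$ (resp.\ the $\DDD^+_{R_a}$), thereby reducing the proposition to showing each $\varkappa_{S,a}$ is crystalline.

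I argue by induction on $a$, in close analogy with the proof of Theorem \ref{Th-varkappa-crys}. The base case $a=1$ is immediate, since $R_1=k$ and both $S_{<m>,1}$ and $\WW^+(k)=W(k)$ reduce to the usual frame $(W(k),pW(k),k,f,f_1)$. For the inductive step from $a$ to $a+1$, I construct a factorisation of strict frame homomorphisms
$$\SSS_{\!<m>,a+1} \xrightarrow{\iota} \tilde\SSS_{\!<m>,a+1} \xrightarrow{\pi} \SSS_{\!<m>,a}$$
sitting over the corresponding factorisation
$$\DDD^+_{R_{a+1}} \xrightarrow{\iota'} \DDD^+_{R_{a+1}/R_a} \xrightarrow{\pi'} \DDD^+_{R_a},$$
where the divided powers on the kernel $\Fm_R^a/\Fm_R^{a+1}$ are chosen compatible with the canonical divided powers of $p$ (refining the filtration if $p$ lies in $\Fm_R^a$). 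I construct $\tilde\SSS_{\!<m>,a+1}$ by modifying the divided Frobenius on the kernel of $\SSS_{\!<m>,a+1}\to\SSS_{\!<m>,a}$ so that lifts of windows along $\iota$ correspond bijectively to lifts of the Hodge filtration, matching the analogous property of $\iota'$. By the inductive hypothesis together with the crystalline nature of $\pi$ and $\pi'$ (Theorem \ref{Th-frame-crys} applied to the nilpotent divided power kernel), the intermediate map $\tilde\SSS_{\!<m>,a+1}\to \DDD^+_{R_{a+1}/R_a}$ is crystalline, and the Hodge filtration classification for $\iota,\iota'$ then gives the crystalline property of $\varkappa_{S,a+1}$.

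The main obstacle is the construction of the modified frame $\tilde\SSS_{\!<m>,a+1}$ together with the verification of an analogue of equation \eqref{Eq-non-triv}: for $x$ in the kernel of $S_{<m>,a+1}\to S_{<m>,a}$ one must check the identity $\tilde\ff_1(\varkappa_S(x)) = \uu\cdot \varkappa_S(\tilde\sigma_1(x))$ in $\hat W^+(\Fm_R^a/\Fm_R^{a+1})$. This reduces to a computation in the logarithmic coordinates on $\tilde W$: using $w_n\delta=\sigma^n$, I identify $\varkappa_S(x)$ with the sequence $(\tilde\sigma_1^n(x))_{n\ge 0}$, so that the leftward shift that corresponds to $\tilde\ff_1$ produces exactly $\varkappa_S(\tilde\sigma_1(x))$, as required. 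The secondary technical point is ensuring that the trivial divided powers chosen on the successive kernels are always compatible with the canonical divided powers of $p$; this is handled by interleaving the $\Fm_R$-adic and $p$-adic filtrations in a standard way.
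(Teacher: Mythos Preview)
Your inductive strategy is right in spirit and matches the paper's overall plan, but the implementation via quotients $S_{<m>,a}=S_{<m>}/\mathfrak b_a$ has a genuine gap. To get a frame on $S_{<m>,a}$ you need $\sigma_1(\mathfrak b_a\cap I_{<m>})\subseteq\mathfrak b_a$, and to build the intermediate frame $\tilde\SSS_{<m>,a+1}$ you must \emph{define} an operator $\tilde\sigma_1$ on the enlarged ideal. In the proof of Theorem~\ref{Th-varkappa-crys} this works because the kernel $J^a/J^{a+1}$ is a free $W(k)$-module and $\bar\sigma$ is divisible by $p^a$, so $\tilde\sigma_1=p^{-1}\bar\sigma$ makes sense. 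Here the kernel $\mathfrak b_a/\mathfrak b_{a+1}$ is not so simple: it receives contributions from the divided powers $E^i/i!$, the quotient may have $p$-torsion, and you give no construction of $\tilde\sigma_1$ on it. Your verification of the analogue of \eqref{Eq-non-triv} via $w_n\delta=\sigma^n$ only handles elements coming from $J^n/J^{n+1}$ and says nothing about the divided-power generators.

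The paper circumvents all of this by working with the \emph{image} $T_n$ of $S_{<m>}$ in $\WW^+(R_n)$ rather than with abstract quotients. Then the frame structure on $T_n$ and on the intermediate $\TTT_{n^+/n}$ is inherited from $\DDD^+_{R_n}$ and $\DDD^+_{R_{n^+}/R_n}$; nothing has to be constructed, one only has to check that $\tilde f_1$ maps $\tilde K_n$ back into $T_{n^+}$. That closure is verified on generators of the ideal $M_n=\ker(S_{<m>}\to R_n)$, which come in three flavours: elements of $I_{<m>}$ (handled because $\SSS_{<m>}\to\DDD^+_{R_{n^+}}$ is already a frame homomorphism), elements of $J^n$ (handled by exactly the $\delta$-computation you wrote down), and the element $p$ at the special index $n=e^+$ where one must use the canonical rather than trivial divided powers (handled by $\Log(p-v(1))=[p,0,\ldots]$, giving $\tilde f_1(p)=1$). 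A further subtlety you do not see from the quotient picture is that one does not know $\varkappa_S$ is injective on $S_{<m>}$, so the limit $\TTT=\varprojlim\TTT_n$ need not equal $\SSS_{<m>}$; the paper closes this with a separate d\'evissage on the ideals $\Fa_n=\ker\varkappa_S\cap J^n\FS_\QQ$, using Proposition~\ref{Pr-S} and the nilpotence condition of Proposition~\ref{Pr-varkappa-image}.
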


This is the main step in the proof of Theorem \ref{Th-varkappa-S}.
The proof of Proposition \ref{Pr-varkappa-S-2} is a variant
of the proof of Theorem \ref{Th-varkappa-crys}.

\begin{proof}
We choose $e$ such that $p\in\Fm_R^e\setminus\Fm_R^{e+1}$
and consider the index set $N=\{1,2,\ldots\}\cup\{e+\}$,
ordered by the natural order of $\ZZ$ and $e<e+<e+1$.
For $n\in N$ let $n+\in N$ be its successor.
Let $\Fm_R^{e+}=\Fm_R^{e+1}+pR$.
For $n\in N$ let $R_n=R/\Fm_R^{n}$. We equip the 
ideal $\Fm_R^n/\Fm_R^{n+}$ of $R_{n+}$ 
with the trivial divided powers if
$n\ne e+$ and with the canonical divided powers of $p$
if $n=e+$; these are again trivial if $p$ is odd.
In all cases the divided powers are compatible with
the canonical divided powers of $p$, and we obtain frames
$$
\DDD^+_{R_{n+}/R_n}=
(\WW^+(R_{n+}),\II^+_{R_{n+}/R_n},R_n,f,\tilde f_1).
$$

Let $T_n$ be the image of 
$S_{<m>}\xrightarrow{\varkappa_S}\WW^+(R)\to\WW^+(R_n)$. 
Since $\varkappa_S\sigma=f\varkappa_S$, 
the ring $T_n$ is stable under $f$.
Let $K_n$ be the kernel of $T_n\to R_n$ and let
$\tilde K_{n}$ be the kernel of $T_{n+}\to R_n$.

We claim that $\tilde f_1(\tilde K_n)\subseteq T_{n+}$.

To prove this, let $M_n$ be the kernel of $S_{<m>}\to R_n$, 
so $\tilde K_n$ is the image of $M_n\to\WW^+(R_{n+})$. 
Since $\varkappa_S\sigma=f\varkappa_S$
and since $f_1$ is $f$-linear it suffices
to show that a set of generators $x_i$ of the ideal $M_n$
with images $\varkappa_S(x_i)=\bar x_i\in\tilde K_n$ satisfies 
$f_1(\bar x_i)\in T_{n+}$. 
Since $\Fm_R=JR$, for $n\ne e+$ the ideal $M_n$ is generated 
by $I_{<m>}$ and $J^n$, while $M_{e+}$
is generated by $I_{<m>}$ and $J^{e+1}$ and $p$.
We check these generators case by case.

First, for $x\in I_{<m>}$ we have $f_1(\bar x)\in T_{n+}$
because $\SSS_{\!<m>}\to\DDD^+_{R_{n+}}$ is a frame homomorphism.

Assume that $n\ne e+$.
The homomorphism $\delta:J^n/J^{n+1}\to W(J^n/J^{n+1})$ 
is given by $\delta(x)=(x,\sigma_1(x),(\sigma_1)^2(x),\ldots)$.
Indeed, applying the Witt polynomial $w_n$ to this equation
gives $\sigma^n(x)=p^n(\sigma_1)^n(x)$, which is true.
Since the divided powers on $\Fm_R^{n}/\Fm_R^{n+}$ are trivial,
the endomorphism $\tilde f_1$ of $W(\Fm_R^{n}/\Fm_R^{n+})$ is
given by a shift to the left. Thus the map 
$\varkappa_S:J^n/J^{n+1}\to W(\Fm_R^{n}/\Fm_R^{n+})$
satisfies $\varkappa_S\sigma_1=\tilde f_1\varkappa_S$,
and we see that $f_1(\bar x)\in T_{n+}$ for
$x\in J^n$.

Assume now that $n=e+$. Since $J^{e+1}$ maps to
zero in $W(R_{e+1})$ it remains to show that 
$\tilde f_1(p)\in T_{n+}$. Now $\Log(p-v(1))=[p,0,0,\ldots]$;
cf.\ the proof of Lemma \ref{Le-WW-large-v}.
Thus $\tilde f_1(p)=f_1(v(1))=1$, and the claim is proved.

We obtain frames
$$
\TTT_n=(T_n,K_n,R_n,f,f_1), 
$$
$$
\TTT_{n+/n}=(T_{n+},\tilde K_n,R_n,f,\tilde f_1),
$$
and a commutative diagram of frames with strict homomorphisms:
$$
\xymatrix@M+0.2em{
\TTT_{n+} \ar[r]^-{\psi'} \ar[d]_{\iota_{n+}} &
\TTT_{n+/n} \ar[r]^-{\pi'} \ar[d] &
\TTT_n \ar[d]^{\iota_n} \\
\DDD^+_{R_{n+}} \ar[r]^-\psi &
\DDD^+_{R_{n+}/R_n} \ar[r]^-\pi &
\DDD^+_{R_n}
}
$$

Here $\pi$ is crystalline because the hypotheses of
Theorem \ref{Th-frame-crys} are satisfied; see the
proof of Corollary \ref{Co-DDD+-crys}. 
Since the vertical arrows are injective, it follows
that $\pi'$ satisfies the hypotheses of Theorem 
\ref{Th-frame-crys} as well, thus $\pi'$ is crystalline. 
Moreover lifts of windows under $\psi$ and under $\psi'$
correspond to lifts of the Hodge filtration from $R_n$
to $R_{n+}$ in the same way. 
Since $\iota_1$ is bijective, it follows that
$\iota_n$ is crystalline for each $n$.
Consider the limit frame
$$
\TTT=\varprojlim_n\TTT_n=(T,K,R,f,f_1).
$$
The inclusion $\iota:\TTT\to\DDD^+_R$
is the projective limit over $n$ of $\iota_n$ 
and thus crystalline.
Since $S_{<m>}$ is noetherian by Proposition \ref{Pr-S-m}, 
Lemma \ref{Le-lim-surjective} implies that 
$T=\varprojlim_nT_n$ is the image of 
$\varkappa_S:S_{<m>}\to\WW^+(R)$.
If $\varkappa_S$ is injective, we get
$\SSS_{\!<m>}=\TTT$, so $\SSS_{<m>}\to\DDD^+_R$
is crystalline as required.

Since we have not proved that $\varkappa_S$ is injective
we need an extra argument. Let $\Fa$ be the kernel of
$\varkappa_S:S_{<m>}\to\WW^+(R)$
and let $\Fa_n=\Fa\cap J^n\FS_\QQ$ for $n\ge 1$; 
here we use
that $S$ is a subring of $\FS_\QQ$ by Proposition \ref{Pr-S}.
We have $\Fa=\Fa_1$.
The ideals $\Fa_n$ of $S_{<m>}$ are stable under $\sigma$,
and they are also stable under $\sigma_1$ since
$S_{<m>}/\Fa$ and $\Fa_n/\Fa_{n+1}$ have no $p$-torsion.
Thus we can define frames 
$\SSS_{\!<m>,n}=(S_{<m>}/\Fa_n,I_{<m>}/\Fa_n,R,\sigma,\sigma_1)$.
We have $\SSS_{\!<m>,1}=\TTT$, 
and the projective limit over $n$ of $\SSS_{\!<m>,n}$
is isomorphic to $\SSS_{\!<m>}$ by Lemma \ref{Le-lim-surjective}
and Proposition \ref{Pr-S}.
The ideal $\Fa_n/\Fa_{n+1}$ is a finitely generated
$W(k)$-submodule of $(J^n/J^{n+1})\otimes\QQ$. 
Since the conditions of Proposition \ref{Pr-varkappa-image}
are satisfied, the endomorphism
$\sigma_1$ of $\Fa_n/\Fa_{n+1}$ is $p$-adically nilpotent.
Thus $\SSS_{\!<m>,n+1}\to\SSS_{\!<m>,n}$ is crystalline;
see the proof of \cite[Theorem 9.3]{Lau-Frames}.
It follows that $\SSS_{\!<m>}\to\TTT$ is crystalline, 
so $\SSS_{<m>}\to\DDD^+_R$ is crystalline too.
\end{proof}

Theorem \ref{Th-varkappa-S} follows from
Propositions \ref{Pr-varkappa-S-1} and \ref{Pr-varkappa-S-2}.
\qed

\begin{Remark}
Assume that $r=1$, i.e.\ $R$ is a discrete valuation ring. 
If $pR=\Fm_R^e$, the ring $S$ is the $p$-adic completion
of $W(k)[[t]][\{t^{em}/m!\}_{m\ge 1}]$. It is easy
to see that each quotient $S/p^nS$ is admissible,
so the $p$-adically complete ring $S$ is an admissible topological ring.
In particular, $\WW^+(S)$ is defined.
Since we assumed that the image of $\delta:\FS\to W(\FS)$
lies in $\WW(\FS)$, it follows that
the image of $\delta:S\to W(S)$ lies in $\WW^+(S)$,
using that $\WW^+(S)\to R$ is the projective limit of the divided
power extensions $\WW^+(S/p^nS)\to R/p^nR$ and that each
$\WW^+(S/p^nS)$ is $p$-adically complete.
If $p$ is odd this means that $\SSS$ is a Dieudonn\'e frame 
in the sense of \cite[Definition 3.1]{Zink-Windows}, and
Theorem \ref{Th-varkappa-S} becomes a special case of
\cite[Theorem 3.2]{Zink-Windows}. 
For $p=2$ the proof of loc.cit.\ works as well.
The starting point is the construction of an inverse
functor of $\varkappa_{S*}$; it maps a $\DDD_R^+$-window 
$\PPP$ to the value of its crystal $\DD^+(\PPP)_{S/R}$, 
equipped with an appropriate $\SSS$-window structure.

If $r\ge 2$, the ring $S$ is not admissible and thus
the crystal of a $\DDD_R^+$-window can not be evaluated at $S/R$. 
However, one can define by hand a sub-frame
$\DDD^+_{S/R}$ of $\WWW_{S/R}$ such that 
$\DDD^+_{S/R}\to\DDD^+_R$ is crystalline.
This allows to evaluate the crystal at $S/R$ and
to define an inverse functor of $\varkappa_{S*}$ as before.
The underlying ring of $\DDD^+_{S/R}$ is defined as follows.
Let $S_{m,n}$ be the image of $S_{<m>}\to S/p^nS$
and let $I_{m,n}$ be the kernel of $S_{m,n}\to R/p^nR$.
The divided Witt polynomials define an isomorphism
$\Log:W(I/p^nI)\cong (I/p^nI)^\NN$, and our ring is
$\varprojlim_n\varinjlim_m$ of the rings
$\WW^+(S_{0,n})+\Log^{-1}((I_{m,n})^{<\NN>})$.
The $\varprojlim_n$ of these rings for fixed $m\ge 1$
gives a frame $\DDD^+_{S_{<m>}/R}$ with a crystalline
homomorphism to $\DDD^+_R$.
This allows to construct the inverse functor from 
$\DDD^+_R$-windows to $\SSS_{\!<m>}$-windows.
We leave out the details.
\end{Remark}


\section{Rigidity of $p$-divisible groups}
\label{Se-Rigidity}

In this section we record a rigidity property of the category of
$p$-divisible groups that will be used in Section \ref{Se-BT}.
As a preparation, for a local ring $R$ we consider the additive 
category $\FFF_R$ of commutative finite locally free $p$-group schemes over $R$. 
It is known that $\FFF_R$ is equivalent to the full subcategory 
of the bounded derived category of the exact 
category of $p$-divisible groups over $R$ formed by the complexes of 
length one which are isogenies; cf.\ \cite[(2.3.5)]{Kisin-crys}.
In elementary terms this equivalence can be expressed as follows.

\begin{Prop}
\label{Pr-finite-derived}
For a local ring $R$ 
let $\III_R$ be the category with isogenies of $p$-divisible
groups over $R$ as objects and
homomorphisms of complexes modulo homotopy as homomorphisms.
The set $S$ of quasi-isomor\-phisms in $\III_R$ allows a 
calculus of right fractions. In particular, 
the localised category $S^{-1}\III_R$ is additive.
It is equivalent to the additive category $\FFF_R$.
\end{Prop}

For completeness let us prove this directly.

\begin{proof}
Let $\tilde\III_R$ be the category with isogenies of $p$-divisible groups
over $R$ as objects and homomorphisms of complexes as homomorphisms.
We denote isogenies by $X=[X^0\to X^1]$.
Let $h^0(X)$ be the kernel of $X^0\to X^1$.
A homomorphism $f:X\to Y$ in $\tilde\III_R$ is homotopic to zero
if and only if $h^0(f)$ is the zero map; 
the homotopy is unique if it exists. We claim:

($\star$) For each homomorphism $f:X\to Y$ in $\tilde\III_R$ one can find
a quasi-isomorphism $t:Z\to X$ in $\tilde\III_R$ and a homomorphism
$g:Z\to Y$ in $\tilde\III_R$ which is an epimorphism in
both components such that $ft$ is homotopic to $g$.
Namely, embed $h^0(X)$ into $Z^0=X^0\oplus Y^0$ by $(1,f)$ 
and put $Z^1=Z^0/h^0(X)$. Define $t$ and $g$ by the projections 
$Z^0\to X^0$ and $Z^0\to Y^0$. There is a homotopy
between $ft$ and $g$ because $ft=g$ on $h^0(Z)$,
and ($\star$) is proved.

Next, for given homomorphisms $X\xrightarrow fY\xleftarrow sY'$ in $\tilde\III_R$, 
where $s$ is a quasi-iso\-mor\-phism, one can find an isogeny
$X'$ with a homomorphism $g:X'\to Y'$ and a
quasi-isomorphism $t:X'\to X$ such that $ft$
is homotopic to $sg$. 
Indeed, by ($\star$) we can assume that the components of $f$ 
are epimorphisms. Then take $X'=X\times_YY'$ componentwise.
It follows easily that $S$ allows a calculus of right fractions.
We have an additive  functor $h^0:S^{-1}\III\to\FFF$. 
It is surjective on isomorphism classes by a theorem of Raynaud
\cite[Th.~3.1.1]{BBM}. Let $X$ and $Y$ be isogenies.
The functor $h^0$ is full because for a given homomorphism
$f_0:h^0(X)\to h^0(Y)$, the construction in ($\star$) 
allows to represent $f_0$ as $h^0(gt^{-1})$. 
The functor is faithful because if a right fraction 
$gt^{-1}:X\to Y$ induces zero on $h^0$ then $g$ induces
zero on $h^0$ and thus $g$ is homotopic to zero.
\end{proof}

Let $\ArtC$ be the category of local Artin schemes
with perfect residue field of characteristic $p$,
and let $\pdivC\to\ArtC$ be the fibered category of 
$p$-divisible groups over schemes in $\ArtC$.

\begin{Lemma}
\label{Le-Aut-pdiv}
Assume that $u$ is an exact automorphism of the fibered 
category $\pdivC$ over $\ArtC$ such that for the group 
$E=\QQ_p/\ZZ_p$ over $\Spec\FF_p$ there is an isomorphism 
$u(E)\cong E$. Then $u$ is isomorphic to the identity functor. 
\end{Lemma}

\begin{proof}
For each $U$ in $\ArtC$ we are given a functor 
$G\mapsto G^u$ from the category of $p$-divisible groups
over $U$ to itself, which preserves short exact sequences, 
compatible with base change in $U$,
such that $\Hom(G,H)\cong\Hom(G^u,H^u)$.
We have to show that there is an natural isomorphism 
$G^u\cong G$ for all $G$, compatible with base change in $U$.
Let $\pfinC\to\ArtC$ be the fibered category of commutative 
finite locally free $p$-group schemes over schemes in $\ArtC$.
By Proposition \ref{Pr-finite-derived}, $u$ induces 
an automorphism of $\pfinC$ over $\ArtC$.
Let $H\in\pfinC$ over $U\in\ArtC$ be given. 
Assume that $p^n$ annihilates $H$ and $H^u$. For
each $T\to U$ in $\ArtC$ there is a natural isomorphism
$$
H(T)\cong\Hom_T(\ZZ/p^n\ZZ,H_T)\cong
\Hom_T(\ZZ/p^n\ZZ,H^u_T)\cong H^u(T),
$$
using that $(\ZZ/p^n\ZZ)^u\cong\ZZ/p^n\ZZ$. 
Since commutative finite locally free group schemes over $U$ form a full 
subcategory of the category of abelian presheaves on 
$\ArtC/U$, we get a natural isomorphism $H\cong H^u$,
which induces a natural isomorphism $G\cong G^u$ for
all $p$-divisible groups $G$ over $U$.
\end{proof}


\section{The reverse functor}
\label{Se-BT}

We fix an admissible ring $R$ which is local of dimension zero,
thus $k=R_{\red}$ is a perfect field of characteristic $p$.
In this case one can write down an inverse of the functor $\Phi_R$ as follows. 
The construction appears in \cite{Lau-Dual} when $p\ge 3$ or $pR=0$ 
and extends to the general case with appropriate changes.

\begin{Defn}
Let $\JJJ_R$ be the category of $R$-algebras $A$ such that 
$\NNN_A$ is bounded nilpotent and $A_{\red}$ is the union 
of finite dimensional $k$-algebras.
\end{Defn}

We call a ring homomorphism $A\to B$ ind-\'etale if
$B$ is the filtered direct limit of \'etale $A$-algebras.

\begin{Lemma}
\label{Le-JJJR}
Every $A\in\JJJ_R$ is admissible. 
The category $\JJJ_R$ is stable under tensor products. 
If $A\to B$ is an ind-\'etale or a quasi-finite ring 
homomorphism with $A\in\JJJ_R$ then $B\in\JJJ_R$. 
\end{Lemma}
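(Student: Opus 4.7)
The plan is to handle the four assertions in sequence, and the common thread is that everything reduces to tracking how the nilradical and the finite-dimensional-$k$-algebra structure behave under the relevant operations.

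For admissibility of $A\in\JJJ_R$, the only point is that $A_{\red}$ is perfect of characteristic $p$. By hypothesis $A_{\red}$ is a filtered union of finite dimensional reduced $k$-subalgebras; each such subalgebra is a finite \'etale $k$-algebra (a finite product of finite separable extensions of the perfect field $k$), hence perfect. A filtered colimit of perfect $\FF_p$-algebras is perfect, so $A_{\red}$ is perfect.

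For stability under tensor product, I will exhibit $(A\otimes_R B)_{\red}=A_{\red}\otimes_k B_{\red}$ and show both parts of the $\JJJ_R$ condition. The ring $A_{\red}\otimes_k B_{\red}$ is the filtered union of the tensor products $A_0\otimes_k B_0$ taken over finite dimensional reduced subalgebras $A_0\subseteq A_{\red}$ and $B_0\subseteq B_{\red}$; each $A_0\otimes_k B_0$ is again a finite \'etale $k$-algebra, hence reduced and finite dimensional. Therefore $A_{\red}\otimes_k B_{\red}$ is reduced and a union of finite dimensional $k$-algebras. The kernel of the surjection $A\otimes_R B\to A_{\red}\otimes_R B_{\red}=A_{\red}\otimes_k B_{\red}$ is the ideal generated by the images of $\NNN_A\otimes B$ and $A\otimes\NNN_B$, and is bounded nilpotent by binomial expansion using that $\NNN_A,\NNN_B$ are bounded nilpotent; since the target is reduced, this kernel is exactly $\NNN_{A\otimes_R B}$.

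For the ind-\'etale and quasi-finite cases, the key input is that \'etale extensions of a reduced ring are reduced, so a flat base change by an \'etale morphism sends a nilideal of order $n$ to an ideal of the same order. Write an ind-\'etale $A\to B$ as $B=\varinjlim B_i$ with $B_i$ \'etale of finite presentation over $A$. Then $B_i/\NNN_A B_i$ is \'etale over $A_{\red}$, hence reduced, so $\NNN_{B_i}=\NNN_A B_i$ has the same bounded order as $\NNN_A$; passing to the colimit bounds $\NNN_B$. For the structural condition, $(B_i)_{\red}$ is \'etale and finitely presented over $A_{\red}=\varinjlim A_j$; by standard descent it arises as $A_{\red}\otimes_{A_{j_0}}C_{j_0}$ for some \'etale $C_{j_0}$ over a finite dimensional $A_{j_0}$. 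Since $A_{j_0}$ is Artinian, $C_{j_0}$ is finite over $A_{j_0}$, hence a finite dimensional $k$-algebra, and $(B_i)_{\red}=\varinjlim_{j\ge j_0}(A_j\otimes_{A_{j_0}}C_{j_0})$ is a filtered union of finite dimensional $k$-algebras; taking the colimit in $i$ gives the same for $B_{\red}$.

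Finally, for a quasi-finite morphism $A\to B$, Zariski's main theorem factors it as $A\to\bar B\to B$ with $A\to\bar B$ finite and $\bar B\to B$ an open immersion, hence \'etale. For the finite step, $\bar B/\NNN_A\bar B$ is finite over $A_{\red}$, descends to a finite $C_{j_0}$ over some finite dimensional $A_{j_0}$, and the same \'etale-descent argument (using that each $A_j$ is finite \'etale over $A_{j_0}$) yields a uniform bound on the nilpotence in $A_j\otimes_{A_{j_0}}C_{j_0}$ as well as the union-of-finite-dimensional structure of $\bar B_{\red}$, so $\bar B\in\JJJ_R$. The open immersion $\bar B\to B$ is ind-\'etale, and the preceding paragraph gives $B\in\JJJ_R$. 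The main obstacle is the uniform bound on nilpotence in the descent step; this is precisely where one uses that flat base change by an \'etale map of a reduced algebra stays reduced, so nilpotent ideals are carried to nilpotent ideals of no larger order.
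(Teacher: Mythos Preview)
Your argument has a real gap in the handling of bounded nilpotence. Recall that ``bounded nilpotent'' means there is an $n$ with $x^n=0$ for every element $x$; it does \emph{not} mean the ideal itself is nilpotent (in $\FF_2[x_1,x_2,\ldots]/(x_i^2)$ every element of the maximal ideal squares to zero, yet $x_1\cdots x_m\ne 0$ for all $m$). Consequently your ``binomial expansion'' for the tensor-product kernel does not work as written: expanding $\bigl(\sum n_i\otimes b_i\bigr)^N$ produces monomials in several $n_i$, and these need not vanish just because each $n_i^n=0$. The same problem occurs in your ind-\'etale step: the assertion that $\NNN_A B_i$ has ``the same bounded order as $\NNN_A$'' is unjustified, since an element of $\NNN_A B_i$ is a $B_i$-linear combination of images from $\NNN_A$ and there is no reason its $n$-th power vanishes.

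The repair uses that $p$ is nilpotent in any admissible ring (because $p\in\NNN_A$). If $p^m=0$ in $B$ and $p^r\ge n$, then for $z=\sum n_ib_i\in\NNN_AB$ one has $z^{p^r}\equiv\sum n_i^{p^r}b_i^{p^r}=0\pmod{pB}$ by the Frobenius identity modulo $p$, hence $z^{mp^r}\in p^mB=0$. This gives a uniform bound and fixes both steps; note the bound genuinely increases, so ``same bounded order'' should be weakened to ``bounded nilpotent''.

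The paper's route is different and avoids this bookkeeping. It first records that $\NNN_AB$ is bounded nilpotent (the fact above, stated without proof) and deduces that $B\in\JJJ_R$ iff $B/\NNN_AB\in\JJJ_R$; this reduces every assertion to the case where $A$ is reduced. Then $A$ is a filtered union of finite \'etale $k$-subalgebras, so any \'etale or quasi-finite $A$-algebra, being of finite type, descends to a finite algebra over one such subalgebra; the result follows from closure under tensor products, with no appeal to Zariski's Main Theorem. Your more explicit approach is salvageable once the nilpotence bound is corrected, but the paper's reduction to the reduced case is shorter and sidesteps the subtlety entirely.
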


\begin{proof}
Since a reduced finite $k$-algebra is \'etale 
and thus perfect, every $A$ in $\JJJ_R$ is admissible. 
Let $A\to B$ a ring homomorphism with $A\in\JJJ_R$. 
Then $\NNN_AB$ is bounded nilpotent, so $B$ is lies in 
$\JJJ_R$ if and only if $B/\NNN_AB$ lies in $\JJJ_R$.
For given homomorphisms $B\leftarrow A\to C$ in $\JJJ_R$
we have to show that $B\otimes_AC$ lies in $\JJJ_R$.
By the preceding remark we may assume that $A,B,C$
are reduced. Then $B\otimes_AC$ is the direct limit
of \'etale $k$-algebras and thus lies in $\JJJ_R$.
Let $g:A\to B$ be an ind-\'etale or quasi-finite
ring homomorphism with $A\in\JJJ_R$. In order to show
that $B\in\JJJ_R$ we may assume that $A$ is reduced.
Then every finitely generated $k$-subalgebra of $A$ is 
\'etale. Thus each \'etale $A$-algebra is defined 
over an \'etale $k$-subalgebra of $A$. If $g$ is ind-\'etale
it follows that $B$ lies in $\JJJ_R$. 
Assume that $g$ is quasi-finite. Then $B$ 
is defined over an \'etale $k$-subalgebra of $A$. 
Since all finite $k$-algebras lie in $\JJJ_R$ and since
$\JJJ_R$ is stable under tensor products it follows that
$B\in\JJJ_R$. 
\end{proof}

Let $S=\Spec R$ and let $\JJJ_S$ be the category of
affine $S$-schemes $\Spec A$ with $A\in\JJJ_R$.
If $\tau$ is either ind-\'etale or fpqc, we consider 
the $\tau$-topology on $\JJJ_S$ in which coverings 
are finite families of morphisms $(\Spec B_i\to\Spec A)$ 
such that the associated homomorphism $A\to\prod_i B_i$ is 
faithfully flat, and ind-\'etale if $\tau$ is ind-\'etale.
Let $\tilde S_{\JJJ,\tau}$ be the category of $\tau$-sheaves on $\JJJ_S$.

\begin{Lemma}
\label{Le-WW-sheaf}
The presheaf of rings $\WW$ on $\JJJ_S$ is an fpqc sheaf.
\end{Lemma}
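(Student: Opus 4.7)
The plan is to verify the fpqc sheaf condition directly. First I would reduce to the case of a single faithfully flat homomorphism $A \to B$ in $\JJJ_R$: by definition a covering is a finite family $\{\Spec B_i \to \Spec A\}$ with $A \to \prod_i B_i$ faithfully flat, and $\prod_i B_i$ lies in $\JJJ_R$ by the closure properties of Lemma \ref{Le-JJJR}. It then suffices to show exactness of
$$
0 \to \WW(A) \to \WW(B) \rightrightarrows \WW(B \otimes_A B).
$$
The starting point is that the ordinary Witt ring functor $W$, being represented by the affine scheme $\Spec \ZZ[x_0, x_1, \ldots]$, is an fpqc sheaf on the category of all rings. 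So by descent for $W$, any element $y \in \WW(B) \subseteq W(B)$ whose two images in $W(B \otimes_A B)$ coincide descends uniquely to some $y_0 \in W(A)$, and the remaining issue is to verify $y_0 \in \WW(A)$.

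For this I would exploit the direct-sum decomposition $\WW(C) = s_C(W(C_{\red})) \oplus \hat W(\NNN_C)$ together with the functoriality of the section $s_C$. Functoriality follows from the uniqueness clause in Lemma \ref{Le-lift-W}: for any homomorphism $C \to C'$ of admissible rings, both $W(C \to C') \circ s_C$ and $s_{C'} \circ W(C_{\red} \to C'_{\red})$ are ring homomorphisms $W(C_{\red}) \to W(C')$ lifting $C_{\red} \to C'_{\red}$ modulo $W(\NNN_{C'})$, hence coincide. Writing $y_0 = s_A(\bar y_0) + z$ with $\bar y_0 \in W(A_{\red})$ and $z \in W(\NNN_A)$, functoriality identifies the image of $y_0$ in $W(B)$ as $s_B(\bar y) + z_B$, where $z_B$ denotes the image of $z$ in $W(\NNN_A B) \subseteq W(\NNN_B)$. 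Since this image equals $y \in \WW(B)$ and the analogous decomposition of $W(B)$ is unique, we deduce $z_B \in \hat W(\NNN_B)$.

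The proof concludes by noting that a faithfully flat ring map $A \to B$ is injective; since the ring operations of Witt vectors are polynomial in the coordinates, $W(A) \to W(B)$ is injective coordinatewise as well. Hence if $z_B = (z_0, \ldots, z_n, 0, 0, \ldots)$ has only finitely many nonzero entries in $B$, the same is true of $z$ in $A$, giving $z \in \hat W(\NNN_A)$ and therefore $y_0 \in \WW(A)$. I do not expect any serious obstacle; the three ingredients — representability of $W$, functoriality of the section $s$, and coordinatewise injectivity of $W$ on injective maps — combine without trouble, and the only mild subtlety, the inclusion $\NNN_A B \subseteq \NNN_B$, is automatic because $\NNN_A$ is bounded nilpotent.
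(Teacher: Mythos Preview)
Your proposal is correct and follows essentially the same approach as the paper: both reduce to showing that for an injective homomorphism $A\to B$ in $\JJJ_R$ one has $\WW(A)=\WW(B)\cap W(A)$ inside $W(B)$, using that $W$ is an fpqc sheaf, functoriality of the section $s$, and the identity $\hat W(\NNN_A)=\hat W(\NNN_B)\cap W(A)$ (which is your coordinatewise injectivity argument). Your write-up simply fills in the details that the paper leaves as ``easily verified.''
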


\begin{proof}
See \cite[Lemma 1.5]{Lau-Dual}.
Since the presheaf $W$ is an fpqc sheaf, it suffices to
show that for an injective ring homomorphism $A\to B$ in
$\JJJ_R$ we have $\WW(A)=\WW(B)\cap W(A)$ in $W(B)$.
This is easily verified using that $A_{\red}\to B_{\red}$ 
is injective and $\hat W(\NNN_A)=\hat W(\NNN_B)\cap W(A)$ 
in $W(\NNN_B)$.
\end{proof}


Let $\PPP$ be a Dieudonn\'e display over $R$.
For $\Spec A$ in $\JJJ_S$ let $\PPP_A=(P_A,Q_A,F,F_1)$ 
be the base change of $\PPP$ to $A$. 
We define two complexes $Z(\PPP)$ and $Z'(\PPP)$ 
of presheaves of abelian groups on $\JJJ_S$ by
\begin{gather}
\label{Eq-ZPPP}
Z(\PPP)(\Spec A)=[Q_A\xrightarrow{F_1-1}P_A],\\
\label{Eq-ZprimePPP}
Z'(\PPP)(\Spec A)=[Q_A\xrightarrow{F_1-1}P_A]\otimes[\ZZ\to\ZZ[1/p]],
\end{gather}
such that $Z(\PPP)$ lies in degrees $0,1$ and
$Z'(\PPP)$ lies in degrees $-1,0,1$, so the second
tensor factor lies in degrees $-1,0$.

\begin{Prop}
\label{Pr-BT(PPP)}
The components of $Z'(\PPP)$ are fpqc sheaves on $\JJJ_S$.
The ind-\'etale (and thus the fpqc) cohomology 
sheaves of $Z'(\PPP)$ vanish outside degree zero,
and the cohomology sheaf in degree zero is represented
by a well-defined $p$-divisible group $\BT_R(\PPP)$ over $R$. 
This defines an additive and exact functor
$$
\BT_R:(\text{Dieudonn\'e displays over $R$})
\to(\text{$p$-divisible groups over $R$}).
$$
\end{Prop}

\noindent
One can also express the definition of the functor 
$\BT_R$ by the formula
$$
\BT_R(\PPP)=[Q\xrightarrow{F_1-1} P]\otimes^L\QQ_p/\ZZ_p
$$
in the derived category of either ind-\'etale or fpqc 
abelian sheaves on $\JJJ_S$.

\begin{proof} 
This is essentially proved in \cite{Lau-Dual},
but we recall the arguments for completeness and
because there is a small modification when $p=2$. 
To begin with, $p$-divisible groups over $R$ form
a full subcategory of the abelian presheaves on $\JJJ_S$ 
because finite group schemes over $R$
lie in $\JJJ_S$; see Lemma \ref{Le-JJJR}.
Hence $\BT_R$ is a well-defined additive functor if the 
assertions on the cohomology of $Z'(\PPP)$ hold.
Since an exact sequence of Dieudonn\'e displays over $R$
induces an exact sequence of the associated complexes
of presheaves $Z'$, the functor $\BT_R$ is exact
if it is defined.

The components of $Z(\PPP)$ and $Z'(\PPP)$ are
fpqc sheaves on $\JJJ_S$ by Lemma \ref{Le-WW-sheaf}.
These complexes carry two filtrations. 
First, a Dieudonn\'e display is called \'etale if
$Q=P$, and nilpotent if $V^\sharp$
is topologically nilpotent. Every Dieudonn\'e
display over $R$ is naturally an extension of an
\'etale by a nilpotent Dieudonn\'e display,
which induces exact sequences of the associated
complexes $Z(\ldots)$ and $Z'(\ldots)$. Thus  
we may assume that $\PPP$ is \'etale or nilpotent. 
Second, for every $\PPP$
we have an exact sequence of complexes of presheaves
$$
0\to\hat Z(\PPP)\to Z(\PPP)\to Z_{\red}(\PPP)\to 0,
$$
defined by $Z_{\red}(\PPP)(\Spec A)=Z(\PPP)(\Spec A_{\red})$.
The same holds for $Z'$ instead of $Z$.
We write $\hat Z(\PPP)=[\hat Q\to\hat P]$.

Assume that $\PPP$ is \'etale. Then $F_1:P\to P$ is an $f$-linear
isomorphism. Thus $F_1:\hat P\to\hat P$ is elementwise nilpotent,
and the complex $\hat Z(\PPP)$ is acyclic.
It follows that $Z(\PPP)$ is quasi-isomorphic to the complex 
$Z_{\red}(\PPP)=Z_{\red}$,
which is the projective limit of the complexes 
$Z_{\red,n}=Z_{\red}/p^nZ_{\red}$.
In the \'etale topology, each $Z_{\red,n}$ is a 
surjective homomorphism of
sheaves whose kernel is a locally constant sheaf 
$G_n$ of free $\ZZ/p^n\ZZ$-modules of rank equal to the
rank of $P$. The system $(G_n)_n$ defines an \'etale
$p$-divisible group $G$ over $R$, and $Z_{\red}$
is quasi-isomorphic to $T_pG=\varprojlim G_n$ as ind-\'etale
sheaves. It follows that
$Z'(\PPP)\simeq Z'_{\red}(\PPP)$ is quasi-isomorphic to 
the complex $[T_pG\to T_pG\otimes\ZZ[1/p]]$ in degrees $-1,0$,
which is quasi-isomorphic to $G$ in degree zero (as ind-\'etale sheaves).

Assume that $\PPP$ is nilpotent. Then the complex 
$Z_{\red}(\PPP)$ is acyclic because its value over 
$\Spec A$ is isomorphic to
$[1-V:P_{A_{\red}}\to P_{A_{\red}}]$ where $V$ is a
topologically nilpotent $f^{-1}$-linear homomorphism. 
Thus $Z(\PPP)$ is quasi-isomorphic to $\hat Z(P)$. 
To $\PPP$ we associate a nilpotent display by the
$u_0$-homomorphism of frames $\DDD_R\to\WWW_R$.
By \cite[Theorem 81 and Corollary 89]{Zink-Disp} 
there is a formal $p$-divisible group $G$ over $R$
associated to this display such that for each 
$A\in\JJJ_R$ there is an exact sequence
$$
0\to\hat Q(A)\xrightarrow{u_0F_1-1}\hat P(A)\to G(A)\to 0;
$$
this is the direct limit of the corresponding sequences for
the finitely generated (nilpotent) subalgebras of $\NNN_A$.
Since $u_0\in W(\ZZ_p)$ maps to $1$ in $W(\FF_p)$, 
there is a unique $c\in W(\ZZ_p)$ which maps to $1$ 
in $W(\FF_p)$ such that $u_0=cf(c^{-1})$, 
namely $c=u_0f(u_0)f^2(u_0)\cdots$. 
Multiplication by $c$ in both components defines an isomorphism 
of complexes
$$
[\hat Q(A)\xrightarrow{F_1-1}\hat P(A)]
\cong[\hat Q(A)\xrightarrow{u_0F_1-1}\hat P(A)]
$$
It follows that $Z'(\PPP)\simeq\hat Z'(\PPP)$ 
is quasi-isomorphic to $G$ in degree zero. 
\end{proof} 

\begin{Remark}
Recall that $\DDD_R=(\WW(R),\II_R,f,\ff_1)$ is viewed
as a Dieudonn\'e display over $R$.
We have $\BT_R(\DDD_R)=\mu_{p^{\infty}}$ by \cite[(211)]{Zink-Disp}.
\end{Remark}

\begin{Lemma}
\label{Le-BT-change}
Let $R\to R'$ be a homomorphism of admissible rings which are local of dimension zero.
For each Dieudonn\'e display $\PPP$ over $R$ there is a natural isomorphism 
$$
\BT_R(\PPP)_{R'}\cong\BT_{R'}(\PPP_{R'}).
$$
\end{Lemma}

\begin{proof}
If the residue field of $R'$ is an algebraic extension of $k$,
every ring in $\JJJ_{R'}$ lies in $\JJJ_R$, and the assertion
follows directly from the construction of $\BT_R$. In general,
let $\EEE_R$ be the category of all $R$-algebras which are
admissible rings,
and let $\EEE_S$ be the category of affine $S$-schemes $\Spec A$
with $A\in\EEE_R$, endowed with the ind-\'etale topology. 
The complexes of presheaves $Z(\PPP)$ and $Z'(\PPP)$
on $\JJJ_S$ defined in \eqref{Eq-ZPPP} and \eqref{Eq-ZprimePPP}
extend to complexes of presheaves on $\EEE_S$ defined by the
same formulas. The proof of Lemma \ref{Le-JJJR} shows that
for an ind-\'etale ring homomorphism $A\to B$ with $A\in\EEE_R$
we have $B\in\EEE_R$ as well.
Using this, the proof of Proposition \ref{Pr-BT(PPP)}
shows that the ind-\'etale cohomology sheaves of $Z'(\PPP)$ on
$\EEE_S$ vanish outside degree zero, and $H^0(Z'(\PPP))$
is naturally isomorphic to $\BT_R(\PPP)$ as a sheaf on
$\EEE_S$. Since every ring in $\EEE_{R'}$ lies in $\EEE_R$,
the lemma follows as in the first case.
\end{proof}


\begin{Prop}
\label{Pr-BT-equiv}
The functor $\BT_R$ is an equivalence of exact categories
which is a quasi-inverse of the functor $\Phi_R$.
\end{Prop}


\begin{proof}
By \ref{Subse-finiteness} we may assume 
that $R$ is a local Artin ring.
Since $p$-divisible groups and Dieudonn\'e displays over
$k$ have universal deformation rings which are power series
rings over $W(k)$, once the functor $\BT_R$ is defined,
in order to show that it is an equivalence of categories
it suffices to consider the cases $R=k$ and $R=k[\varepsilon]$.
In particular, if $p=2$, we may assume that $pR=0$, 
so that the
results of \cite{Zink-DDisp} and \cite{Lau-Dual} can
be applied. The category $\CCC_R$ used in \cite{Lau-Dual}
is the category of all $A\in\JJJ_R$ such that $\NNN_A$
is nilpotent. Since this subcategory is stable under
ind-\'etale extensions, it does not make a difference
whether the functor $\BT_R$ is defined in terms of
$\CCC_R$ or $\JJJ_R$. Thus $\BT_R$ is an equivalence
by \cite[Theorem 1.7]{Lau-Dual}, 
which relies on the equivalence proved in \cite{Zink-DDisp}.
It is easily verified that $\BT_R(\Phi_R(\QQ_p/\ZZ_p))$
is isomorphic to $\QQ_p/\ZZ_p$. Thus $\BT_R$ is a quasi-inverse
of $\Phi_R$ by Lemmas \ref{Le-Aut-pdiv} and \ref{Le-BT-change}. 
It is easily verified that the functors $\BT_R$ and $\Phi_R$
preserve short exact sequences.
\end{proof}

\appendix

\section{PD envelopes of regular immersions}

This section provides a reference for the flatness of the divided power envelope
of a regular immersion, which is used in the proof of Lemma \ref{Le-pd-alg}.
Let us recall regular immersions following \cite[VII]{SGA6}.
For a ring $A$, a projective $A$-Module $M$ of finite type, and a linear map $f:M\to A$ 
one defines the Koszul complex 
\[
K_*(A,f)=[\ldots\to\Lambda^2M\to\Lambda^1M\to A]
\]
with differential given by $x_1\wedge\ldots\wedge x_n\mapsto\sum(-1)^{i+1}f(x_i)x_1\wedge\ldots\widehat x_i\ldots\wedge x_n$. Let $I=f(M)\subseteq A$.
One calls $f$ regular if the augmentation $K_*(A,f)\to A/I$ is a quasi-isomorphism.
If $x_1,\ldots,x_r$ is a regular sequence in $A$ and $f:A^r\to A$ is given by $f(a)=\sum a_ix_i$,
then $f$ is regular in the previous sense. For a ring homomorphism $A\to A'$ let $f':M'\to A'$
be the scalar extension of $f$, and let $I'=f'(M')$. If both $f$ and $f'$ are regular, then
$\Tor_i^A(A/I,A')=0$ for $i\ge 1$ and thus $I'=I\otimes_AA'$. 
A closed immersion of schemes $Y\to X$ is called
regular if locally in $X$ it takes the form $\Spec A/I\to\Spec A$ where $I=f(M)$ for
a regular homomorphism $f:M\to A$.

\begin{Prop}
\label{Pr:pd-flat}
Let $S$ be a scheme and\/ $i:Y\to X$ a regular closed immersion of flat $S$-schemes.
Then the divided power envelope $\DDD_X(Y)$ is flat over $S$.
\end{Prop}

Under additional hypotheses this is proved in \cite[Le.~2.3.3]{BBM}.
We use the following description of the divided power polynomial algebra.

\begin{Lemma}
\label{Le:pd-algebra}
For a ring $R$ let $A_0=R[T_1,\ldots,T_n]$ and let $B_0=R\left<T_1,\ldots,T_n\right>$
be the divided power envelope of $I_0=(T_1,\ldots,T_n)\subseteq A_0$. 
Then one can write $B_0=\varinjlim_rM_{0,r}$ as an $A_0$-module, the direct limit
taken over $r\in\NN$ ordered multiplicatively, such that there are exact sequences
of $A_0$-modules
\[
0\to J_{0,r}\to M_{0,r}\to N_{0,r}\to 0
\]
with $J_{0,r}=(T_1^r,\ldots,T_n^r)$ and where $N_{0,r}$ has a finite filtration
with quotients isomorphic to $A_0/I_0=R$. 
\end{Lemma}

\begin{proof}
The assertion is stable under base change in $R$, so we may take $R=\ZZ$. 
Then $B_0$ is the $A_0$-subalgebra of $A_0\otimes\QQ$ generated by all $T_i^m/m!$. 
Let $M_{0,r}=B_0\cap r^{-1}A_0$ inside $A_0\otimes\QQ$. 
Then $r^{-1}J_{0,r}$ is contained in $M_{0,r}$, and the quotient $N_{0,r}$ 
coincides with the image of $M_{0,r}$ in $(A_0/J_{0,r})\otimes\QQ$. 
Any maximal filtration of the latter by monomial ideals gives the 
required filtration of $N_{0,r}$.
\end{proof}

\begin{proof}[Proof of Proposition \ref{Pr:pd-flat}]
We may assume that $S=\Spec R$ and $X=\Spec A$ and $Y=\Spec A/I$ where $I$ is the image of a regular map $f:A^r\to A$. We have $f(a)=\sum a_ix_i$ for a sequence $x_1,\ldots,x_r$ in $A$.
Let $A_0=\ZZ[T_1,\ldots,T_n]$ and $M_0=A_0^n$ with $f_0:M_0\to A_0$ given
by $a\mapsto\sum a_iT_i$. Let $I_0=f_0(M_0)$.
We consider the homomorphism $A_0\to A$ defined by $T_i\mapsto x_i$.
Let $B=\DDD_A(I)$ and $B_0=\DDD_{A_0}(I_0)$ be the divided power envelopes.
Since $f$ and $f_0$ are regular we have $I=I_0\otimes_AA_0$.
As in \cite[(3.4.8)]{Berthelot:CohCristalline} it follows that $B=B_0\otimes_{A_0}A$.
Using Lemma \ref{Le:pd-algebra} we get $B=\varinjlim M_r$ with $M_r=M_{0,r}\otimes_{A_0}A$.
Moreover, since $\Tor_1^{A_0}(A_0/I_0,A)=0$, we obtain exact sequences of $A$-modules
\[
0\to J_r\to M_r\to N_r\to 0
\]
with $J_r=J_{0,r}\otimes_{A_0}A$ and $N_r=N_{0,r}\otimes_{A_0}A$,
and we obtain filtrations of $N_r$ with quotients isomorphic to $A/I$.
Similarly there are exact sequences of $A$-modules
\[
0\to J_r\to A\to A/J_r\to 0
\]
and filtrations of $A/J_r$ with quotients isomorphic to $A/I$.
Since $A$ and $A/I$ are flat over $R$, it follows that $J_r$
and $M_r$ and $B$ are flat over $R$.
\end{proof}

We will use the following example of regular immersions.

\begin{Lemma}
\label{Le:R[[T]]-regular}
For a ring $R$ and a projective $R$-module $T$ of finite type we consider the complete symmetric algebra $A=R[[T]]=\prod_{n\ge 0}\Sym^n_R(T)$ and $M=T\otimes_RA$. Then the homomorphism $f:M\to A$ given by $t\otimes a\mapsto ta$ is regular.
\end{Lemma}

\begin{proof}
The complex $K_*(M,f)$ is the direct product over $m\ge 0$ of complexes $K_*^{(m)}$ 
with $K_n^{(m)}=\Lambda^nT\otimes_R\Sym^{m-n}(T)$, using the
convention $\Sym^r(T)=0$ for $r<0$. Since the complexes $K_*^{(m)}$ are
compatible with base change in $R$, the general case can be reduced to the case where $T$ is free.
Then an $R$-basis of $T$ is a regular sequence in $A$, and the assertion follows. 
\end{proof}


\end{document}